\newcommand{\J}[1]{{\color{red}[J: #1]}} 
\newcommand{\F}[1]{{\color{black!40!blue}[F: #1]}} 
\newcommand{\takeout}[1]{}
\newcommand{\np}{}
\def \Dm {{\mathbb D}}
\def \Xm {{\mathbb X}}
\def \Wm {{\mathbb W}}
\def \G {{\mathcal G}}
\def \SS {{\mathcal S}}
\def \rel {\sim_1}
\def \relTT {\sim_2}
\def \relNA {\sim_3}
\def \relAT {\sim_4}
\newcommand{\abs}[1]{\left\lvert #1 \right\rvert}
\newcommand{\aabs}[1]{\left\| #1 \right\|}
\newcommand{\ip}[2]{\left\langle #1,#2\right\rangle}
\newcommand{\iip}[2]{\left( #1,#2\right)}
\newcommand{\vgrad}{\operatorname{\overset{\tt{v}}{\nabla}}}
\newcommand{\hgrad}{\operatorname{\overset{\tt{h}}{\nabla}}}
\newcommand{\vgradE}{\operatorname{\overset{\tt{v}}{\nabla}}{}^E}
\newcommand{\hgradE}{\operatorname{\overset{\tt{h}}{\nabla}}{}^E}
\newcommand{\vdiv}{\operatorname{\overset{\tt{v}}{\mbox{\rm div}}}}
\newcommand{\hdiv}{\operatorname{\overset{\tt{h}}{\mbox{\rm div}}}}
\newcommand{\Z}{{\mathcal Z}}
\newcommand{\Ter}{\beta_{\mathrm{Ter}}}
\title{Integral geometry on manifolds with boundary and applications}
\author{Joonas Ilmavirta\thanks{Department of Mathematics and Statistics, University of Jyv\"askyl\"a, P.O. Box 35, FI-40014 University of Jyv\"askyl\"a, Finland. \href{mailto:joonas.ilmavirta@jyu.fi}{joonas.ilmavirta@jyu.fi}} \and Fran\c{c}ois Monard\thanks{Department of Mathematics, University of California Santa Cruz, 1156 High St., Santa Cruz CA 95064, USA. \href{mailto:fmonard@ucsc.edu}{fmonard@ucsc.edu}}}
\date{}
\begin{document}
\maketitle

\begin{abstract}
    We survey recent results on inverse problems for geodesic X-ray transforms and other linear and non-linear geometric inverse problems for Riemannian metrics, connections and Higgs fields defined on manifolds with boundary.
\end{abstract}

\tableofcontents

\newpage
\section{Introduction}\label{sec:intro}

Johann Radon and his contemporaries formulated several integral geometric problems, not only in linear but also in non-linear settings \cite{Herglotz1905,Wiechert1907}. Such problems, namely {\em travel-time tomography} and {\em boundary rigidity} as later formulated in \cite{Mukhometov1977,Michel1981}, are concerned with recovering a Riemannian metric from the shortest length between any two boundary points. Such problems and their cousins (described below), now make the field of {\em integral geometry}, or how to reconstruct geometric features of a manifold from integral functionals defined over that manifold. 

Nowadays this field forms the basis of several non-invasive approaches to imaging internal properties of materials: seismology \cite{Herglotz1905,Wiechert1907}, or how to reconstruct the density inside the Earth from first arrival times of seismic wavefronts; medical imaging since the development of X-ray Computerized Tomography \cite{Natterer2001,Stotzka2002,Finch2004}; Single-Photon Emission Computerized Tomography using the attenuated X-ray transform \cite{Novikov2002,Natterer2001a,Novikov2002a}; vector tomography in helio-seismology \cite{Kosovichev1996,Plaza2001,Kramar2016}; ocean imaging \cite{Munk1979}; X-ray diffraction strain tomography \cite{Lionheart2015,Desai2016} and tomography in elastic media \cite[Ch. 7]{Sharafutdinov1994}\cite{Sharafutdinov2012}; neutron imaging, as applied to the imaging of vertebrate remains \cite{Schwarz2005} and shales \cite{Chiang2017}. Non-linear integral geometric problems also continue to find new applications: recently, Neutron Spin Tomography \cite{Sales2017} as a means to measure magnetic fields in materials, has arised as a novel method which can be of use in electrical engineering, superconductivity, etc. The transform to invert in this case is a non-linear operator, the so-called ``non-abelian X-ray transform'' of the magnetic field, see Problem \ref{pb3} below. 

Recent breakthroughs have fuelled the field, exploiting a combination of old and new methods. Examples of such methods are: the systematic use of analysis on the unit sphere bundle combining energy methods (also coined ``Pestov identities''), initiated by Mukhometov \cite{Mukhometov1975} and generalized in \cite{Pestov1988,Sharafutdinov1994}, and harmonic analysis on the tangent fibers \cite{Dairbekov2007,Paternain2011a,Paternain2015}; in dimensions three and higher, the discovery in \cite{Uhlmann2015} that the existence of a foliation of the domain by strictly convex hyperfsurfaces, local or global, yields a powerful and robust approach to integral geometric inversions \cite{Stefanov2014,Stefanov2017,Zhou2013,Zhou2016}, via a successful use of Melrose's scattering calculus \cite{Melrose1995}; the systematic use of analytic microlocal analysis to produce 'generic' results, implying the unique identifiability of unknown parameters in an open and dense subset of all cases \cite{Stefanov2005,Homan2017,Zhou2017}; finally, recent results in the context of Anosov flows, leading to positive results for certain geometries with trapped sets \cite{Guillarmou2014,Guillarmou2014a,Guillarmou2015}.  

This review article aims at giving an overview of the arsenal of these methods, and to describe to what extent they help coping with various geometric settings, whose complexity is mainly governed by two features of the flow considered: the presence of conjugate points and/or infinite-length trajectories. 

\paragraph{Scope of the article.} The article will be devoted to manifolds with variable curvature, with less emphasis on homogeneous spaces for which the methods employed in, e.g. \cite{Helgason1999}, exploit homogeneity to a large extent and may not generalize. The emphasis will be put on manifolds with boundary, though many results enjoy counterparts in the realm of closed manifolds. The focus will be on mostly analytic methods, rather than topological or purely geometrical. The integration will be done over rays (no integration over higher-dimensional manifolds, see however the recent preprint involving an integral transform over two-dimensional leaves \cite{Salo2017}). Recent topical reviews have been published on some of the topics covered in what follows \cite{Paternain2012b,Paternain2013,Uhlmann2017}, and we have attempted to minimize overlap. 

It is our hope that this review article does justice to the field and its community, and we apologize in advance for any missing reference which would deserve to be included here. Let us mention that although the following topics are directly related to the current article, lack of time has prevented us to discuss range characterization issues, as provided e.g. in \cite{Pestov2004,Ainsworth2014,Paternain2013a,Sadiq2015,Monard2016d,Assylbekov2017} and cases where the boundary is non-convex, for which recent results appear in~\cite{Guillarmou2017a}.

\paragraph{Notation:}

\begin{itemize}
    \item $(M,g)$, $\partial M$, $TM$, $T^*M$, $SM$, $\partial_{+/-} SM$: a typical Riemannian manifold, its boundary, its tangent, cotangent, unit tangent bundles, and incoming/outgoing boundaries. 
    \item $\varphi_t(x,v) = (\gamma_{x,v}(t), \dot \gamma_{x,v}(t))$: geodesic flow on $SM$. 
    \item $C^\infty(A; B)$: space of smooth sections of a bundle $B\stackrel{\pi}{\to} A$, that is, a smooth map $f:A\to B$ such that $f(x) \in \pi^{-1}(x)$ for every $x\in A$.
    \item $d_g$: boundary distance function of a metric $g$, defined on $\partial M\times \partial M$. 
    \item $\tau:SM\to \Rm$: first exit time of the geodesic $\gamma_{x,v}(\cdot)$ out of $M$. 
    \item $\SS_g$: scattering relation of a metric $g$.
    \item $X$: geodesic vector field on $SM$.
    \item $\I$: ray transform over functions on $SM$.
    \item $I_0$: restriction of $\I$ to functions on $M$.
    \item $I_\perp h := \I [X_\perp h]$: restriction of $\I$ to solenoidal one-forms in two dimensions
    \item $\I_{A,\Phi}$ or $\I_{\nabla,\Phi}$: transform with connection $\nabla$ (associated with connection one-form $A$) and Higgs field $\Phi$ over sections of a bundle $E\to SM$.
    \item $I_{A,\Phi,0}$: restriction of $\I_{A,\Phi}$ to $C^\infty(M; E)$.
    \item $C_{A,\Phi}$: scattering data of the pair $(A,\Phi)$
\end{itemize}

\np

\subsection{Main problems}\label{sec:mainpbs}

\noindent\begin{minipage}{0.6\textwidth} 
We fix $(M,\partial M,g)$ a Riemannian manifold with boundary and $\G$ the set of all geodesics through $M$ and $\nabla$ the Levi-Civita connection. The manifold $(M,g)$ has a unit tangent bundle 
\begin{align}
    SM = \{(x,v)\in TM,\ |v|^2_{g(x)} = 1\}
    \label{eq:SM}
\end{align}
with inward ($+$) and outward ($-$) boundaries 
\[ \partial_\pm SM = \{(x,v)\in SM,\ x\in \partial M, \pm g(v,\nu_x) >0\},  \]
and where the geodesic flow $\varphi_t\colon SM\to SM$ is well-defined, with infinitesimal generator the geodesic vector field $X = \frac{d}{dt} \varphi_t(x,v)|_{t=0}$. 
\end{minipage}
\hfill%
\begin{minipage}{0.4\textwidth}
    \centering
    \includegraphics[width=0.95\linewidth]{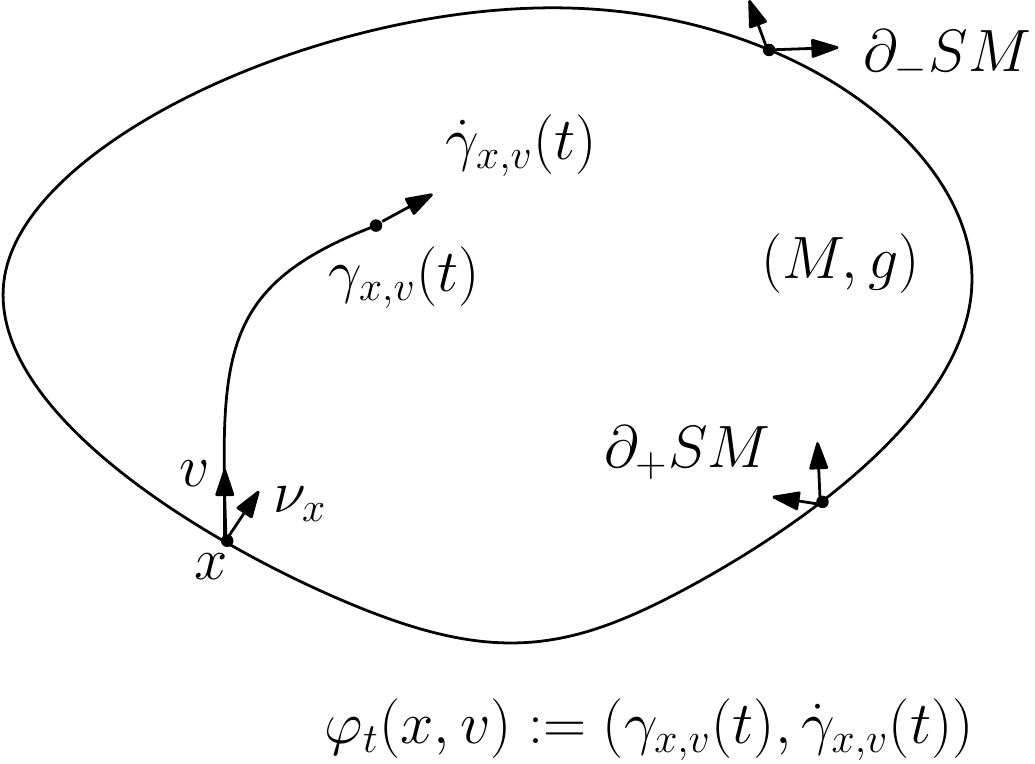}
\end{minipage}%

Given $(x,v) \in SM$, we denote $\tau(x,v)$ the first time $t\ge 0$ for which $\gamma_{x,v} (t) \in \partial M$, and we call $M$ {\bf non-trapping} if $\sup_{SM} \tau$ is finite. We say that $\partial M$ is {\bf strictly convex} if the second fundamental form is positive definite. 

In what follows, symmetric (covariant) tensors of degree $m\ge 0$ will be denoted $S^m(T^* M)$. We will restrict our attention to smooth metrics, unless otherwise explicitly stated.

\subsubsection{Reconstruction of functions, metrics and tensor fields} \label{sec:manifolds}

Given two boundary points $(x,x')\in \partial M\times \partial M$, we define the boundary distance 
\[ d_g(x,x') := \inf_{\gamma} \int |\dot\gamma(t)|_{g(\gamma(t))}\ dt, \]
where the infinimum is taken over all curves in $M$ with endpoints $x,x'$. This defines a {\bf boundary distance function} $d_g\colon\partial M\times \partial M\to [0,\infty)$. We also define the {\bf scattering relation} $\SS_g\colon\partial_+ SM \to \partial_- SM$, given by $\SS_g(x,v) = \varphi_{\tau(x,v)} (x,v)$. 

    Both maps above have a natural invariance: if $\psi\colon M\to M$ is a diffeomorphism fixing every boundary point of $M$, then $d_{\psi^* g} = d_g$ and $\SS_{\psi^* g} = \SS_g$. This invariance is written as an equivalence relation: $g \rel g'$ iff there exists $\psi\colon M\to M$ diffeomorphism fixing $\partial M$ such that $g' = \psi^* g$. We can now formulate three non-linear inverse problems:

\begin{problem}[Boundary, Lens and Scattering Rigidity]\label{pb1} Given $(M,g)$ a Riemannian manifold with boundary:
    \begin{description}
	\item[Boundary Rigidity:] Does $d_g$ determine $g$ modulo $\rel$?
	\item[Lens Rigidity:] Does $(\tau|_{\partial_+ SM},\SS_g)$ determine $g$ modulo $\rel$?
	\item[Scattering Rigidity:] Does $\SS_g$ determine $g$ modulo $\rel$? 
    \end{description}    
\end{problem}
In this article, we will not discuss Scattering Rigidity. In addition, it is well-known that Lens Rigidity is equivalent to Boundary Rigidity for simple manifolds, while Lens Rigidity is a more natural setting in general. 

On to the linear problem, fixing $f$ a symmetric $m$-tensor, the {\bf geodesic X-ray transform} $If\colon\G\to \Rm$ is defined by 
\begin{align}
    If (\gamma) = \int f_{\gamma(t)} (\dot\gamma(t)^{\otimes m})\ dt, \qquad \gamma\in \G.
    \label{eq:Xray}
\end{align}
Such a linear transform has a natural kernel for $m\ge 1$, namely: if $h$ is an $m-1$-tensor vanishing at $\partial M$, and $\sigma$ denotes symmetrization, then $I (\sigma \nabla h) \equiv 0$. This kernel is therefore made of so-called {\em potential tensors}, and we write $f\relTT f'$ iff they differ by a potential tensor field. $\relTT$ is an equivalence relation. In general, the X-ray transform of a function $f\colon SM\to\Rm$ can be defined as
\begin{equation}
\label{eq:SM-XRT}
\I f(\gamma)
=
\int f(\gamma(t),\dot\gamma(t))\ dt.
\end{equation}
This can be seen as a generalization of~\eqref{eq:Xray}; see Section~\ref{sec:tf-sh}.

\begin{problem}[Tensor Tomography (TT($m$))]\label{pb2} Does $If$ determine $f\in S^m(T^* M)$ modulo $\relTT$?  If $m=0$, does $If$ determine $f$?  
\end{problem}

Problem \ref{pb2} for $m = 0$ and $m=2$ arises as a linearization of Problem \ref{pb1}. When TT($m$) is true for $m\ge 1$, we also say that $I$ is {\em solenoidal-injective} (or in short, {\em s-injective}), or injective over solenoidal tensors. This is because by virtue of Sharafudtinov's decomposition, every $m$-tensor $f$ with $L^2$ components is $\relTT$-equivalent to a unique solenoidal tensor field $f^s$ (i.e., satisfying $\delta f^s = 0$ with $\delta$ the formal adjoint of $-\sigma \nabla$), satisfying a continuity estimate of the form $\|f^s\|_{L^2}\le C \|f\|_{L^2}$ for some constant $C(M,m)$.

\begin{figure}[htpb]
    \centering
    \includegraphics[height=0.2\textheight]{./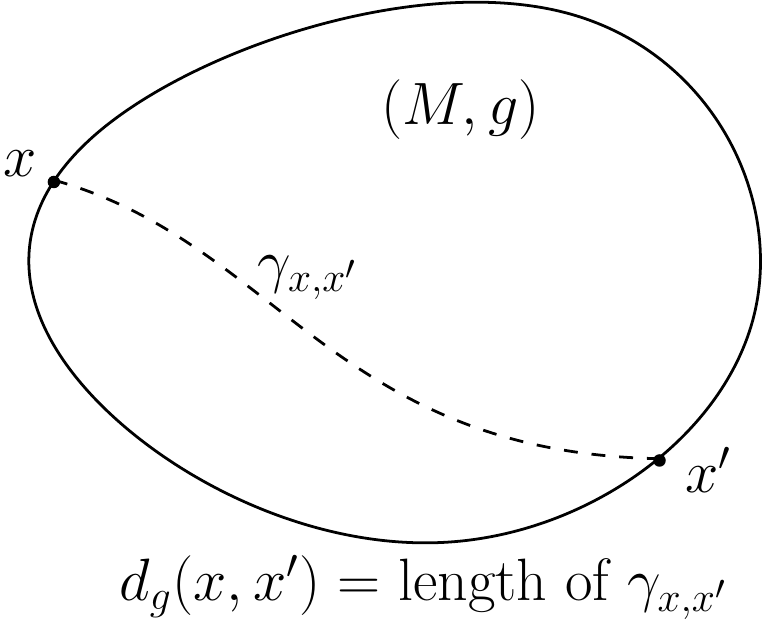} \; \;\; \;\;
    \includegraphics[height=0.2\textheight]{./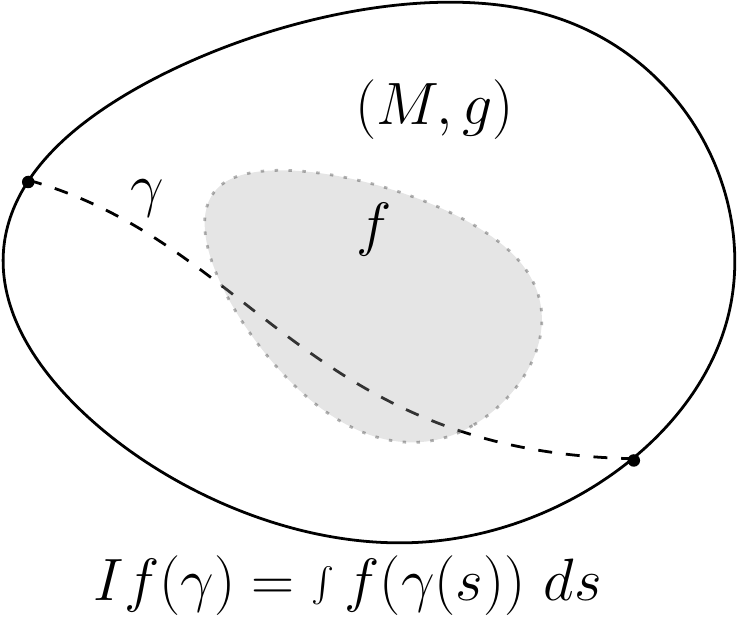}
    \caption{Settings for Problems \ref{pb1} (left) and \ref{pb2} (right).}
    \label{fig:pb1}
\end{figure}

\subsubsection{Reconstruction of connections, Higgs fields, and sections of bundles} \label{sec:bundles}

Now fix an $n$-dimensional vector bundle $E \stackrel{\pi}{\to} M$ and $(A,\Phi)$ a (connection, Higgs field) pair on this bundle, see also Section \ref{sec:ConnectionsHiggs} below. In a local trivialization, $A$ is an $n\times n$ matrix of one-forms and $\Phi$ is an $n\times n$ matrix of functions, and such quantities allow to lift any path $c(t)$ on $M$ into a path $\tilde c(t)$ on $E$ (in the sense that $\pi(\tilde c(t)) = c(t)$ for every $t$) by solving the ODE
\begin{align*}
    \frac{d\tilde c}{dt} + \left( A_{c(t)}(\dot c(t)) + \Phi(c(t))\right) \tilde c(t) = 0.
\end{align*}
If $(x,v)\in \partial_+ SM$, and let $S$ belong to the fiber above $x$. Assuming that the geodesic $\gamma_{x,v}$ exits $M$ for the first time at $\tau>0$ with $x' = \gamma_{x,v}(\tau) \in \partial M$, then the solution $\tilde \gamma_{x,v}$ of the ODE above with curve $c = \gamma_{x,v}$, augmented with the initial condition $\tilde \gamma_{x,v}(0) = S$ allows to uniquely ``parallel-transport'' the state $S$ to the state $\tilde \gamma_{x,v} (\tau)$ above $x'$, which we denote $C_{A,\Phi}(x,v) S$. A natural question is to ask whether the {\bf scattering data} (or {\bf non-abelian ray transform}) $C_{A,\Phi} (x,v)S$, known for all $(x,v,S)\in \partial_+ SM\times \Cm^n$, determines the pair $(A,\Phi)$. To formulate this problem, we first rule out a natural obstruction. 

We write $(A,\Phi) \relNA (B,\Psi)$ if there exists $Q\in C^\infty(M,GL(n,\Cm))$ with $Q|_{\partial M} = Id$, such that $B = Q^{-1} d Q + Q^{-1} A Q$ and $\Phi = Q^{-1} \Psi Q$. When this is true, it is easy to see that $C_{A,\Phi}(x,v) S = C_{B,\Psi}(x,v) S$, since in this case, if $\tilde \gamma_{x,v}$ is the $(A,\Phi)$-lift of $\gamma_{x,v}$, then $Q \tilde \gamma_{x,v}$ is the $(B,\Psi)$-lift of $\gamma_{x,v}$, and both lifts agree at both endpoints. 

\begin{problem}[Non-abelian X-ray transform]\label{pb3} Does $C_{A,\Phi}$ determine $(A,\Phi)$ modulo $\relNA$? 
\end{problem}

In the case $n=1$, it is easy to see that $C_{A,\Phi} (x,v) 1 = \exp (\I[A + \Phi](x,v))$ so that the problem is a usual X-ray transform. In the case $n=3$, problem \ref{pb3} also applies to Neutron Spin Tomography \cite{Desai2016}, a case where $A=0$ and where $\Phi$, valued in the Lie algebra ${\mathfrak so}(3)$, models the unknown magnetic field. 

\takeout{
\J{This is the usual X-ray transform if $A$ and $\Phi$ are real. If not, we are blind to multiples of $2\pi i$. This is not much of an issue with full data, though. Do we want to elaborate?}
}

The linear counterpart of Problem \ref{pb3} is as follows: let $E,A,\Phi$ as above, and fix $m$ a tensor order. If $f$ a section of $E\otimes S^m (T^*M)$ (an $E$-valued symmetric $m$-tensor), for $\gamma\in \G$, the {\bf attenuated X-ray transform}\footnote{Or X-ray transform with connection and Higgs field.} $I_{A,\Phi}f (\gamma)$ is the integral over $\gamma$ of all values of $f$ above each point of $\gamma$ (paired $m$ times with $\dot \gamma$), parallel-transported to a common point via $(A,\Phi)$. If $m\ge 1$, this problem has a natural obstruction and to point it out, it is natural to view the transform as defined over sums of $m$-tensor/$m-1$-tensor denoted by $f_m + f_{m-1}$: we say that $f_m + f_{m-1} \relAT f'_m + f'_{m-1}$ if there exist an $m-1$ tensor $p$ vanishing at $\partial M$ such that $f_m = f'_m + \sigma \nabla p + A p$ and $f_{m-1} = f'_{m-1} + \Phi p$. Whenever $f_m + f_{m-1} \relAT f'_m + f'_{m-1}$, we have that $I_{A,\Phi}(f_m+f_{m-1}) = I_{A,\Phi}(f'_m+f'_{m-1})$. The natural question is therefore:

\begin{problem}[Attenuated X-ray transform]\label{pb4} Does $I_{A,\Phi} (f_m+f_{m-1})$ determine $(f,h)$ modulo $\relAT$? If $m=0$ ($f$ is a section of $E$), does $I_{A,\Phi} f$ determine $f$?
\end{problem}

\begin{figure}[htpb]
    \centering
     \includegraphics[height=0.2\textheight]{./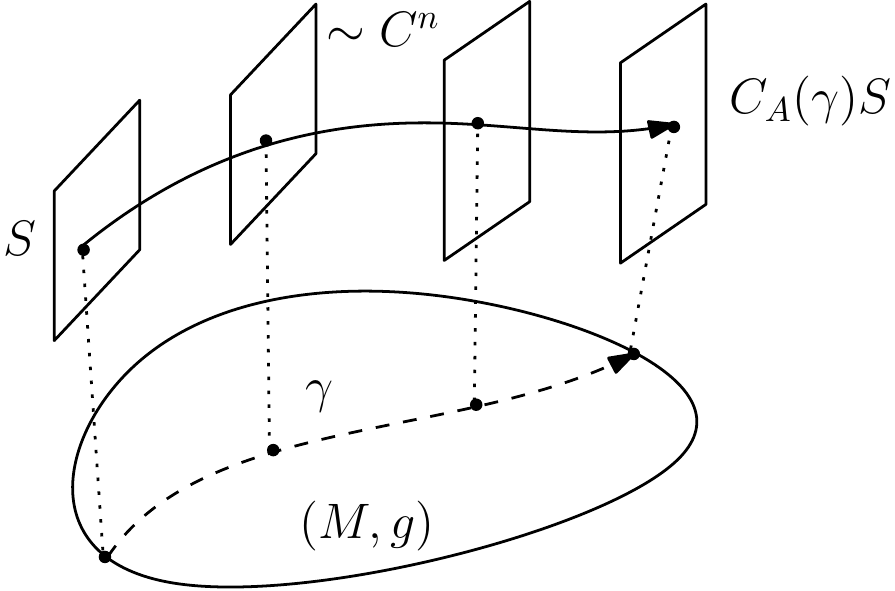} \; \;\; \;\;
    \includegraphics[height=0.2\textheight]{./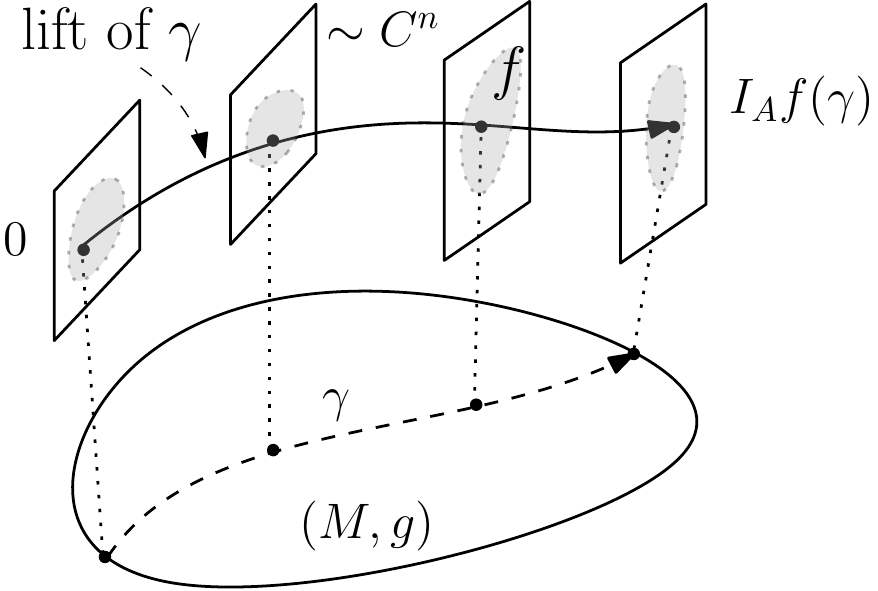}    
    \caption{Settings for Problems \ref{pb3} (left) and \ref{pb4} (right).}
    \label{fig:pb3}
\end{figure}

\subsection{The inverse problems agenda in a geometric context}

For each one of Problems \ref{pb1}--\ref{pb4}, one may ask the typical inverse problems questions: 
\begin{itemize}
    \item[$(i)$] Is the operator injective, modulo the natural obstructions?
    \item[$(ii)$] If yes, in what topology is the inverse continuous?
    \item[$(iii)$] How to explicitly and efficiently invert the operator? 
    \item[$(iv)$] How to characterize the range of the operator?
    \item[$(v)$] In the presence of noisy data, what is a proper regularization approach and how is it statistically optimal?	
\end{itemize}

The answers to questions $(i)-(v)$ strongly depend on the underlying geometric features of the manifold, the geometry and topology of $M$ (namely, the presence or absence of conjugate points\footnote{Two points $x,y$ are {\em conjugate} along a geodesic $\gamma$ if there exists a Jacobi field over $\gamma$ vanishing at both $x$ and~$y$.} and/or trapped geodesics\footnote{The {\em trapped set} of $(M,g)$ is the set of points $(x,v)\in SM$ such that the length of $\gamma_{x,v}\cap M$ is infinite, where $\gamma_{x,v}$ is the unique maximal geodesic satisfying $\gamma(0)=x$ and $\dot\gamma(0) = v$.}), the structure of the connection and Higgs field (rank, structure group, etc.), the dimension of the manifold (including significant differences in the landscapes of results between dimension two, and higher dimensions), and the presence of weights in the transforms. 

Many answers are positive in the case of homogeneous spaces \cite{Funk1916,Radon1917,Helgason1999} and in the case of {\bf simple\footnote{A Riemannian manifold $(M,g)$ is {\em simple} if it is non-trapping, $\partial M$ is strictly convex and $M$ contains no conjugate points.} geometries}: in the case of simple surfaces, it is known that such surfaces are boundary distance rigid \cite{Michel1981,Pestov2005}, and that ray transforms are injective over functions \cite{Mukhometov1977} and solenoidal tensors of any order \cite{Paternain2011a}, also when one includes many types of connections and Higgs fields \cite{Vertgeim1992,Novikov2002b,Eskin2004,Sharafutdinov2000,Finch2001,Paternain2012,Monard2016d}; for higher-dimensional simple manifolds, generic injectivity and stability results are known \cite{Stefanov2005,Zhou2017}, injectivity of X-ray transforms is known over functions and vector fields, and for higher-order tensor fields, the result is true under stronger assumptions on the geometry \cite{Romanov1974,Mukhometov1975,Anikonov1978,Anikonov1997,Sharafutdinov1994,Pestov1988,Sharafutdinov2007,Sharafutdinov1997}. 

In geometries with {\bf conjugate points}, another separation between two- and higher-than-three dimensions occurs: in two dimensions, conjugate points on surfaces unconditionally destroy stability of X-ray transforms \cite{Stefanov2012a,Monard2013b} while the question needs to be refined in higher dimensions and exhibits a tradeoff between the order of conjugate points considered and the dimension of the manifold \cite{Holman2015}. In fact, there is more at play in higher dimensions: the mere existence of a foliation by strictly convex hypersurfaces allows to prove global injectivity and stability \cite{Uhlmann2015,Stefanov2014,Paternain2016a}. Such a criterion allows for conjugate points and some form of trapped geodesics as well, and as such shifts the focus to the following question: which manifolds admit strictly convex foliations? Injectivity questions remain open on surfaces with conjugate points, except for the case of circularly symmetric ones, where injectivity over solenoidal tensor fields is known to hold \cite{Sharafutdinov1997}, and injectivity over piecewise constant functions holds~\cite{Ilmavirta2017}.

In geometries with {\bf trapped geodesics}, one may easily construct counterexamples to injectivity, and thus one must assume some thing about the trapped set. Under the crucial assumption that the trapped set be {\bf hyperbolic} for the geodesic flow (a condition which is always true on manifolds with negative sectional curvatures), injectivity and stability can be restored in many cases \cite{Guillarmou2014,Guillarmou2014a,Guillarmou2015}.

See also the recent topical reviews \cite{Uhlmann2017} on Problems \ref{pb1}--\ref{pb2}, and \cite{Paternain2012b} on Problems \ref{pb3}--\ref{pb4}.

\subsection{Outline}

The remainder of the article is organized as follows. 

We devote section~\ref{sec:geometricSetting} to introductory material and notation, describing the geometrical framework needed to discuss integral geometry. This includes basic geometry of the sphere bundle (on which the geodesic flow lives) and natural operators on it, transport equations, tensor fields, conjugate points, and two-dimensional structure. We also briefly discuss trapping (section~\ref{sec:trapping}) and connections (section~\ref{sec:ConnectionsHiggs}). 

Sections~\ref{sec:Pestov}--\ref{sec:nonlinear} then present results, arranged by methods.
\begin{itemize}
    \item[-] In section~\ref{sec:Pestov} we discuss energy estimates known as Pestov identities. We give the fundamental commutators in section~\ref{sec:comDerSM} before deriving a Pestov identity on simple manifolds (section~\ref{sec:simple}). We then extend the methods to other geometrical settings (section~\ref{sec:PestovOther}), connections and Higgs fields (section~\ref{sec:PestovConnection}) and generalized geodesic flows (section~\ref{sec:PestovMagnetic}).
    \item[-] In section~\ref{sec:reconstruction} we discuss explicit approaches to injectivity and inversion in two dimensions.
    \item[-] Section~\ref{sec:invariant} covers invariant distributions and their relation to tensor tomography, including their construction via iterated Beurling series (section~\ref{sec:Beurling}).
    \item[-] In section~\ref{sec:microlocal} we discuss applications of microlocal analysis to integral geometry. This includes analysis of cases with and without conjugate points, geometry of Fourier Integral Operators, and general families of curves.
    \item[-] In section~\ref{sec:convexity} we turn to layer stripping arguments and methods based on considerations of convexity. These rely on a combination of local support theorems and a global foliation of the manifold. We discuss different methods to obtain local support theorems.
    \item[-] While the results up to this point have been mainly linear, we discuss relations between linear and non-linear problems in section~\ref{sec:nonlinear}.
\end{itemize}
Section~\ref{sec:open} concludes with a small collection of open questions.

\np
\section{Geometric setting and tools}\label{sec:geometricSetting}

A natural reformulation of integral geometric problems involving the integration of objects along curves, is by viewing the integrand as a source term for a 'geometric' transport equation posed on the tangent bundle, and apply various PDE methods (energy identities, etc.) to that equation. Such ideas are not new and form the basis of V.A. Sharafutdinov's pioneering monograph \cite{Sharafutdinov1994}. The main difference of our presentation (which largely follows \cite{Paternain2015}) is in how to represent integrands of tensor field type as natural objects to be integrated over a flow in phase space: in \cite{Sharafutdinov1994}, a section of $S^m(T^*M)$ is identified with a so-called semibasic tensor field on $TM$ (i.e., covariant in horizontal directions and contravariant in vertical ones in a certain sense). Here tensor fields are regarded as scalar functions on the sphere bundle $SM$, whose tensorial nature is encoded in the finite expansions in spherical harmonics on the unit tangent fibers. This latter identification somewhat allows to bypass the proliferation of indices as one increases the tensor order. 

\subsection{The geometry of the unit sphere bundle}

\subsubsection{Vertical and horizontal vectors on the sphere bundle}
\label{sec:v/h}

Given $(M,g)$ a Riemannian manifold, local charts on the tangent bundle $TM$ may be written as $(x,y)$, where the tangent vector looks locally like $y = y^i \frac{\partial}{\partial x_i}$. The double tangent bundle $T(TM)$ admits a horizontal-vertical splitting which embodies whether one is differentiating vertically (along a fiber), or horizontally (along the base, while keeping a tangent vector ``fixed''). Horizontal directions depend on the Riemannian metric, while vertical ones only on the smooth structure. 

Specifically, the {\bf vertical subbundle} of $T(TM)$ is defined so that the fiber $V(x,y)$ at $(x,y)\in TM$ is $V(x,y)=\ker(d_{(x,y)}\pi)$, where $\pi\colon TM\to M$ is the canonical projection. To define the {\bf horizontal subbundle}, we define a connection map $K\colon T(TM)\to TM$ fiber by fiber. Take any $\xi\in T_{(x,y)} TM$ and $\sigma\colon(-\epsilon,\epsilon)\to TM$ a curve with $\sigma(0)=(x,y)$ and $\dot\sigma(0)=\xi$. We may write this curve as $\sigma(t)=(\gamma(t),Z(t))$, where $\gamma$ is a curve on $M$ and $Z$ a vector field along it. Upon defining $K_{(x,y)} \xi := (\nabla_\gamma Z)(0)$, the fiber of the horizontal bundle at $(x,y)$ is then $H(x,y):=\ker(K_{(x,y)})$. Each fiber of the $T(TM)$ then decomposes as 
\[ T_{(x,y)} TM=H(x,y)\oplus V(x,y), \qquad (x,y)\in TM. \] 
In local coordinates $V(x,y) = \langle \frac{\partial}{\partial y^i},\ 1\le i\le n\rangle$ while $H(x,y) = \langle \delta_{x_i} := \frac{\partial}{\partial x_i} - \Gamma_{ij}^k y^j \frac{\partial}{ \partial y^k},\ 1\le i\le n\rangle$. With the splitting above, the maps $d_{(x,y)} \pi|_{H(x,y)}\colon H(x,y)\to T_xM$ and $K_{(x,y)}|_{V(x,y)}\colon V(x,y)\to T_xM$ are linear isomorphisms, allowing us to freely identify horizontal and vertical vectors on $T_{(x,y)} TM$ with vectors on $T_xM$. These isomorphisms become isometries (and the splitting, orthogonal) upon introducing the {\bf Sasaki metric} at $(x,y)\in TM$ defined by
\begin{align*}
    \langle \xi, \eta\rangle_{x,y} := g_x (K_{(x,y)}(\xi), K_{(x,y)}(\eta)) + g_x (d_{(x,y)}\pi (\xi), d_{(x,y)} \pi (\eta)), \qquad \xi,\eta\in T_{(x,y)} TM,
\end{align*}
or equivalently in coordinates, with $\xi = X^i \delta_{x_i} + Y^i \frac{\partial}{\partial y^i}$ and $\eta = \widetilde{X}^i \delta_{x_i} + \widetilde{Y}^i \frac{\partial}{\partial y^i}$,
\begin{align*}
    \langle \xi, \eta\rangle_{x,y} = g_{ij} X^i \widetilde{X}^j + g_{ij} Y^i \widetilde{Y}^j.
\end{align*}

The unit sphere bundle $SM$ of a Riemannian manifold $M$ is the subbundle of $TM$ consisting of unit tangent vectors of unit length:
\begin{equation}
SM = \{(x,v);\ x\in M,\ v\in T_xM,\ \abs{v}=1\}.
\end{equation}
There the horizontal-vertical splitting becomes
\begin{align}
    T_{x,v} SM = \Rm X(x,v) \oplus {\cal H}(x,v) \oplus {\cal V}(x,v), \qquad (x,v)\in SM, 
    \label{eq:SMsplitting}
\end{align}
where $\Rm X \oplus {\cal H}(x,v) = H(x,v)$ and ${\cal V}(x,v) = \ker d_{x,v} (\pi|_{SM})$. Elements of ${\cal H}(x,v)$ and ${\cal V}(x,v)$, when identified as vectors of $T_xM$, are both orthogonal to $v$, so smooth sections of ${\cal H}$ and ${\cal V}$ can be isomorphically identified with smooth sections in $\Z := C^\infty(SM,N)$, where we define the bundle $N\to SM$ by
\begin{align}
    N := \bigcup_{\mathclap{(x,v)\in SM}} \{v\}^\perp, \qquad \{v\}^\perp := \{w\in T_x M,\ g_x(w,v) = 0\}.
    \label{eq:N}
\end{align}

According to the decomposition \eqref{eq:SMsplitting}, the total gradient of a scalar function $u$ on the sphere bundle $SM$ consists of three Sasaki-orthogonal components: the geodesic derivative $Xu$ (scalar-valued), and the vertical and horizontal gradients $\vgrad u$ and $\hgrad u$ (each identified with elements of $\Z$). In particular, we have two differential operators
\begin{align*}
    \vgrad\colon C^\infty(SM)\to \Z, \qquad \hgrad\colon C^\infty(SM)\to \Z.    
\end{align*}
Roughly speaking, the vertical gradient of $u(x,v)$ is the gradient of with respect to $v$ and the horizontal gradient is the component of the gradient with respect to $x$ orthogonal to $v$. If $\dim(M)=2$, then these two gradients can be regarded as scalars as done in Section~\ref{sec:2d-SM} below. 

The adjoints are the vertical and horizontal divergences which we denote $-\vdiv$ and $-\hdiv$ with the following mapping properties
\begin{align*}
    \vdiv\colon \Z\to C^\infty(SM), \qquad \hdiv \colon \Z\to C^\infty(SM).
\end{align*}
The geodesic vector field also acts on $\Z$ by covariant differentiation along the geodesic flow.

\subsubsection{The X-ray transform and transport equations on $SM$}\label{sec:transport}

In the framework just described, given $F\in L^2(SM)$, the X-ray transform of $F$ defined on \eqref{eq:Xray} can be viewed as the inward restriction $u|_{\partial_+ SM}$ of the solution $u$ to a transport problem
\begin{align}
    Xu = -F \qquad (SM), \qquad u|_{\partial_- SM} = 0.
    \label{eq:transport}
\end{align}
With this setting in mind, injectivity questions and inversion formulas can be tackled by classical PDE methods on manifolds: for instance, injectivity over functions means: if $Xu = -F(x)$ throughout $SM$ and $u|_{\partial SM} =0$, does this imply $F = 0$?

Similarly, to address tensor tomography, there is a natural way to identify a symmetric $m$-tensor field $f$ on $M$ with a scalar field $\ell_m f$ on $SM$, given by a mapping 
\begin{align}
    \begin{split}
	\ell_m &\colon C^\infty(S^m (T^* M)) \to C^\infty(SM), \qquad \ell_m\colon L^2(S^m(T^* M)) \to L^2(SM) \\
	\ell_m f (x,v) &= f_x (v,\dots,v), \qquad (x,v)\in SM.	
    \end{split}
    \label{eq:ellm}
\end{align}
Via this identification, the X-ray transform of $f$ is again given by $If := u|_{\partial_+ SM}$, where $u$ solves the transport problem \eqref{eq:transport} with right-hand side $\ell_m f$. Whenever the context allows, we will not distinguish $f$ and~$\ell_m f$.

\subsubsection{Tensor fields and spherical harmonics}\label{sec:tf-sh}

The $L^2$ space of every fiber of the sphere bundle can be decomposed into eigenspaces of the vertical Laplacian 
\[ -\vdiv\vgrad\colon C^\infty(SM)\to C^\infty(SM). \]
Namely, on each spherical fiber, the vertical Laplacian coincides with the Laplacian of the function $v\mapsto u(x,v)$ on the manifold $(S_x M,g_x)$, whose spectrum is the same as that of the spherical Laplacian $\Delta_{\Sm^{n-1}}$, given by $\lambda_m  = m(n+m-2)$ for $m=0,1,2\dots$, with eigenfunctions the spherical harmonics. The corresponding eigenspaces induce an $L^2(SM)$-orthogonal decomposition 
\begin{align}
    L^2(SM) = \bigoplus_{m=0}^\infty H_m(M), \qquad H_m := \ker (-\vdiv\vgrad - \lambda_m Id) \cap L^2(SM),
    \label{eq:orthoDecomp}
\end{align}
which on each fiber over $M$ is just the spherical harmonic decomposition in $S^{n-1}$. Let us also set $\Omega_m := H_m(M) \cap C^\infty(SM)$. Then any function $f\in L^2(SM)$ splits as $f=\sum_{m=0}^\infty f_m$ so that for almost every $x\in M$ the function $v\mapsto f_m(x,v)$ is a spherical harmonic of order $m$. The zeroth component $f_0$ of a function on the sphere bundle is the fiberwise average.  

\paragraph{Tensr fields and finite harmonic content.}
In the decomposition above, an $m$th order tensor field $f$, via its identification \eqref{eq:ellm} with $\ell_m f$, can be regarded as a function on $SM$ which only contains spherical harmonics up to order $m$ and of the same parity as $m$. Conversely, if a scalar function $u$ on $SM$ contains spherical harmonics up to a finite order $m$ and they all have the same parity, then there is a tensor field $f$ so that $\ell_m f=u$.

Since $\ell_{m+1} (\sigma\nabla h) = X (\ell_m h)$ and $If=\I (\ell_m f)$ (see equations \eqref{eq:Xray}, \eqref{eq:SM-XRT}, \eqref{eq:ellm}), the tensor tomography problem~\ref{pb2} can be recast as follows:
If $f\colon SM\to\Rm$ only contains spherical harmonics up to order $m$ and integrates to zero over all (lifted) geodesics of $M$, is there a function $h$ with spherical harmonics up to order $m-1$ so that $f=Xh$?
In terms of the transport equation~\eqref{eq:transport}, the question is whether the spherical harmonic expansion of $u$ ends at order $m-1$.

\paragraph{Decomposition of $X$.} The geodesic vector field behaves nicely with respect to the decomposition \eqref{eq:orthoDecomp}: it maps $\Omega_m$ into $\Omega_{m-1} + \Omega_{m+1}$ \cite[Proposition 3.2]{Guillemin1980a}. Hence on $\Omega_m$ we can write 
\[  X = X_+ + X_-, \quad \text{where} \quad X_\pm\colon \Omega_m \to \Omega_{m\pm 1}, \qquad (\text{convention}:\Omega_{-1} \equiv 0) \]
and such that, for $u\in \Omega_m$ and $w\in \Omega_{m+1}$ and one of them vanishes on $\partial SM$, we have
\begin{align*}
    \iip{X_+ u}{v} = - \iip{u}{X_-v}.
\end{align*}

In particular, the transport equation \eqref{eq:transport}, upon projecting onto each harmonic subspace $\Omega_k$, can be equivalently viewed as the tridiagonal system of equations
\begin{align}
    X_+ u_{m-1} + X_- u_{m+1} = - f_m, \qquad m = 0,1,\dots
    \label{eq:tridiagonal}
\end{align}

\subsubsection{Jacobi fields and conjugate points}

Given a geodesic $\gamma$ and $p,q$ two distinct points on it, we say that $p$ and $q$ are {\bf conjugate} along $\gamma$ if there exists a non-trival Jacobi field along $\gamma$ which vanishes at both $p$ and $q$. Specifically, if $p = \gamma(t_1)$ and $q = \gamma(t_2)$ for some $t_1<t_2$, there exists $J(t) \in T_{\gamma(t)}M$ a non-trivial solution of 
\begin{align*}
    D_t^2 J(t) + R(J(t), \dot\gamma(t))\dot\gamma(t) = 0, \qquad J(t_1) =0, \qquad J(t_2) = 0,
\end{align*}
where $D_t$ denotes Levi-Civita covariant differentiation and $R$ denotes the Riemannian curvature tensor. Since a pair of points can be conjugate along more than one geodesic (e.g., antipodal points on a sphere), it can be useful to keep track along which geodesic a pair of points is conjugate. A way to do this is to keep track of the tangent vectors, and to consider conjugate pairs as a subset of $SM\times SM$, see also Section \ref{sec:FIOnD}.

An equivalent definition which is more amenable to generalizing this concept to other flows, is to say that, with $\varphi_t(x,v) = (\gamma_{x,v}(t), \dot\gamma_{x,v}(t))$ denoting the geodesic flow on $SM$, the points $(x,v)$ and $\varphi_t(x,v)$ are conjugate (along the geodesic $\gamma_{x,v}$) if 
\begin{align*}
    \V(x,v) \cap d\varphi_{-t}|_{\varphi_t(x,v)} \V (\varphi_t(x,v)) \ne \{0\}.
\end{align*}
In other words, conjugate points occurs when the differential of the flow maps vertical vectors into vertical vectors. 

As we will see below, many positive results hold in the absence of conjugate points. In their presence, two-dimensional problems usually become unstable, and higher-dimensional ones require further discussion, see in particular Sections \ref{sec:microlocal} and~\ref{sec:convexity}.

\subsubsection{Additional structure in two dimensions} \label{sec:2d-SM}

In two dimensions, the unit circle bundle $SM$ admits a global framing by three global sections of $T(SM)$: a first section is the geodesic vector field $X = \frac{d}{dt}|_{t=0} \varphi_t(x,v)$; a second is the generator of the rotation group on the fibers $V = \frac{d}{dt}|_{t=0} \rho_t(x,v)$ (assuming the surface to be oriented, giving rise to a rotation-by-$\pi/2$ operator $v\mapsto v_\perp$, then $\rho_t(x,v) = (x,(\cos t)v + (\sin t) v_\perp)$); finally, their commutator $X_\perp := [X,V]$ gives the third one. Such vector fields admits the structure equations
\begin{align}
    [X,V] = X_\perp, \qquad [X_\perp,V] = -X, \qquad [X,X_\perp] = -\kappa V \qquad (\kappa\colon\text{ Gauss curvature}),
    \label{eq:structure2D}
\end{align}
encoding the whole geometry. One may define the {\em Sasaki metric} on $SM$, making $(X,X_\perp,V)$ orthonormal, and with volume form the so-called Liouville measure denoted $d\Sigma^3$. 

Locally (or globally, if $M$ is simply connected), $SM$ can be parameterized in {\em isothermal coordinates} $(x,y,\theta)$, where $g = e^{2\lambda(x,y)} (dx^2 + dy^2)$, $\theta$ is the angle between a tangent vector $v$ and $\partial_x$, namely a tangent vector $v$ sitting above $(x,y)$ has the expression $v = e^{-\lambda(x,y)}\binom{\cos\theta}{\sin\theta}$, the Liouville form reads $d\Sigma^3 = e^{2\lambda} dx\ dy\ d\theta$, and the canonical frame reads
\begin{align*}
    X &= e^{-\lambda} (\cos\theta \partial_x + \sin \theta\partial_y + (-\sin\theta \partial_x \lambda + \cos\theta \partial_y \lambda) \partial_\theta), \qquad V = \partial_\theta, \\
    X_\perp &= - e^{-\lambda} (-\sin\theta \partial_x + \cos\theta \partial_y - (\cos\theta \partial_x \lambda + \sin\theta \partial_y \lambda) \partial_\theta ).
\end{align*}

Note that in two dimensions, we identify $\hgrad u = -(X_\perp u) v^\perp$ and $\vgrad u = (Vu) v^\perp$ (smooth sections of $N$ defined in \eqref{eq:N}) with the functions $X_\perp u$ and $Vu$ (smooth functions on $SM$), respectively.

\paragraph{Jacobi fields.} The structure equations \eqref{eq:structure2D} make it convenient to compute Jacobi fields (or variations of the exponential map). For $\xi\in T_{(x,v)}(SM)$, we may decompose $d\varphi_t(\xi)$ along the frame $\{X(t), X_\perp(t), V(t)\}$ at the basepoint $\varphi_t(x,v)$ as 
\[ d\varphi_t(\xi) = \zeta_1(x,v,t) X(t) + \zeta_2(x,v,t) X_\perp (t) + \zeta_3 (x,v,t) V(t).   \]
Equations \eqref{eq:structure2D} provide us a differential system in $t$ for the coefficients $\zeta_j$ (see e.g. \cite[Section 4.2]{Merry2011}): 
\begin{align*}
    \dot\zeta_1 = 0, \qquad \dot\zeta_2 + \zeta_3 = 0, \qquad \zeta_3 - \kappa(\gamma_{x,v}(t)) \zeta_2 = 0.
\end{align*}
In particular, we may express the variation fields $d\varphi_t(X_\perp) = a X_\perp (t) - \dot a V(t)$ and $d\varphi_t(V) = -bX_\perp (t) + \dot b V(t)$ in terms of two functions $a(x,v,t)$, $b(x,v,t)$ defined for $(x,v)\in SM$ and $t\in (-\tau(x,-v), \tau(x,v))$, solving the scalar Jacobi equation
\begin{align}
    \ddot a + \kappa(\gamma_{x,v}(t)) a = \ddot b + \kappa(\gamma_{x,v}(t)) b = 0, \qquad \left[ \begin{array}{cc} a & b \\ \dot a & \dot b \end{array}
    \right](0) = \left[ \begin{array}{cc} 1 & 0 \\ 0 & 1 \end{array}
    \right] 
    \label{eq:scalarJacobi}
\end{align}
Here the function $b(x,v,t)$ is the one that detects conjugate points on $M$. Specifically, if $t>0$ is such that $b(x,v,t) = 0$, then the points $x$ and $x' = \gamma_{x,v}(t)$ are conjugate along $\gamma_{x,v}$ as the Jacobi field $J(t) = b(x,v,t) \dot \gamma_{x,v}^\perp (t) $ vanishes at both $x$ and~$x'$.

\subsection{Connections and Higgs fields}\label{sec:ConnectionsHiggs}

To set the stage similarly to Section \ref{sec:bundles}, let $E\to M$ a hermitian vector bundle\footnote{In the sense that each fiber is a vector space endowed with a hermitian inner product $\iip{\cdot}{\cdot}_E$.} over $M$. We assume that the fiber over each point is a copy of $\Cm^r$, where $r$ is called the {\bf rank} of the bundle. Let $\nabla^E$ a connection on $E$. We say that $\nabla^E$ is {\bf hermitian} (or {\bf unitary}) if the following identity is satisfied
\begin{align*}
    Y \iip{u}{u'}_E = \iip{\nabla_Y^E u}{u'}_E + \iip{u}{\nabla_Y^E u'}_E,
\end{align*}
for all vector fields $Y$ on $M$ and sections $u,u' \in C^\infty(M; E)$. Via the canonical projection $\pi:SM\to M$, such a bundle and its connection can be pulled back into a bundle 
\[ \pi^* E := \{ (x,v; e), (x,v)\in SM, e\in E_x \} \] 
over $SM$ with hermitian connection $\pi^* \nabla^E$, which is where geodesic transport equations will be naturally written\footnote{The notational distinction between $(E,\nabla^E)$ and their pullbacks will be omitted as in \cite{Guillarmou2015}.}. Following the spherical harmonic decomposition on the tangent spheres, one may still decompose an element $u\in C^\infty(SM; E)$ into a sum $u = \sum_{k=0}^\infty u_k$. 

The geodesic vector field can be viewed as acting on sections of $E$ by $\Xm u := \nabla^E_X u$ for a section $u\in C^\infty(SM; E)$, and for $f\in C^\infty(SM; E)$, this incarnation of the X-ray transform is given by $I_\nabla f := u|_{\partial_+ SM}$, where $u$ solves the transport problem 
\begin{align}
    \Xm u = - f \qquad (SM), \qquad u|_{\partial_- SM} = 0.
    \label{eq:trans2}
\end{align}
Note that in a local trivialization, the connection can be represented as a $r\times r$ matrix of one-forms $A$, and then $\Xm$ reads as $\Xm = X + A$, where $X$ acts componentwise.

One may also add a {\bf Higgs field} $\Phi$, that is to say, a smooth section of $End(E)$ such that at every $x\in M$, $\Phi_x$ is a linear operator $\Phi_x\colon E_x\to E_x$. $\Phi$ is called a {\bf skew-hermitian Higgs field} if the endomorphisms $\Phi_x$ on each fiber are skew-hermitian. The Higgs field is the ``matrix'' generalization of a position-dependent attenuation coefficient, and given $f\in C^\infty(SM; E)$, we can then define the attenuated transform $\I_{\nabla,\Phi} f = u|_{\partial_+ SM}$, where $u\in C^\infty(SM; E)$ solves the transport problem
\begin{align}
    (\Xm + \Phi) u = - f \qquad (SM), \qquad u|_{\partial_- SM} = 0.
    \label{eq:trans3}
\end{align}

\subsection{Trapped geodesics and the hyperbolicity condition}\label{sec:trapping}

So far, all metrics considered assumed that all geodesics exit the domain $M$ in finite time. If this is no longer the case, we say that the manifold is {\em trapping}, and define the incoming $(-)$ and outgoing $(+)$ tails 
\begin{align}
    \Gamma_\pm := \{(x,v)\in SM, \tau (x, \mp v) = +\infty \}, 
    \label{eq:tails}
\end{align}
as well as the {\em trapped set} $K = \Gamma_- \cap \Gamma_+$, invariant by the flow, and consisting of those points which are trapped both forward and backward in time (in general, $(x,v)\in \Gamma_-$ is trapped forward and $(x,v)\in\Gamma_+$ is trapped backward). Geodesics cast from $\partial_+ SM \cap \Gamma_-$ never exit the domain and as such cannot be detected. Moreover, the data would blow up at such geodesics. 

Without specific assumption on $K$, a trapped set can easily generate an infinite-dimensional kernel for an X-ray transform, as the following example suggests: glue a hemisphere on top of a Euclidean cylinder to make it simply connected. Any function supported on the cylinder, circularly symmetric, integrating to zero along the longitudinal direction is in the kernel of the X-ray transform.

A dynamical condition which allows to produce positive answers on manifolds with non-trivial topology \cite{Guillarmou2014,Guillarmou2016,Guillarmou2017,Guillarmou2015}, is to assume that the trapped set be {\bf hyperbolic} for the geodesic flow. Namely, one may define the {\bf stable bundle} $E_- \subset T_{\Gamma_-} SM$ and {\bf unstable bundle} $E_+\subset T_{\Gamma_+} SM$ such that 
\begin{align*}
    \forall (x,v)\in \Gamma_-, &\qquad \forall t>0, \qquad \forall w\in E_- (x,v),\qquad \|d\varphi_t (x,v) w\| \le C e^{-\gamma t} \|w\|, \\
    \forall (x,v)\in \Gamma_+, &\qquad \forall t<0, \qquad \forall w\in E_+ (x,v),\qquad \|d\varphi_t (x,v) w\| \le C e^{\gamma t} \|w\|,
\end{align*} 
with $C,\gamma$ some uniform positive constants. Upon defining the same bundles over $K$ by restriction, $E_s := E_-|_{T_K SM}$ and $E_u := E_+|_{T_K SM}$, we say that the set $K$ is hyperbolic if and only if 
\begin{align}
    \forall (x,v)\in K, \qquad T_{(x,v)} SM = \Rm X(x,v) \oplus E_s(x,v) \oplus E_u (x,v). 
    \label{eq:hyperbolic}
\end{align}
The assumption of hyperbolic trapping has the following advantages: 

$(i)$ The Liouville volume of $\Gamma_+ \cup \Gamma_-$ is zero, and so is the measure of $\Gamma_+ \cup \Gamma_-$ on the boundary $\partial SM$. In particular, this gives hope to make the X-ray transform valued in some $L^p (\partial_+ SM)$-spaces. 

$(ii)$ Solving transport equations of the form $Xu = -f$ on $SM$ may develop singularities even when $f$ is smooth, however there is good control over the created singularities. Namely, upon defining the dual bundles $E_\pm^* \subset T^*_{\Gamma_\pm} SM$ by
\begin{align*}
    E_-^* (E_- \oplus \Rm X) = 0, \qquad E_+^* (E_+\oplus \Rm X) = 0, 
\end{align*}
then for each $f\in C^0(SM)$, it is established in \cite[Section 4.2]{Guillarmou2014a} that the boundary value problem \eqref{eq:transport} has a unique solution in $L^1(SM) \cap C^0(SM\backslash \Gamma_-)$ given by $u = R_+ f$ with 
\begin{align*}
    R_+ f (x,v) := \int_0^{\tau(x,v)} f(\varphi_t(x,v))\ dt, 
\end{align*}
and that, if $f\in C_c^\infty(SM)$, then $WF(R_+ f) \subset E_-^*$. Then in the presence of trapping, the X-ray transform $\I$ may be defined as
\begin{align*}
    \I f:= (R_+ f)|_{\partial_+ SM \backslash \Gamma_-}, \qquad \I\colon C^0(SM) \to C^0(\partial_+ SM\backslash \Gamma_-)    
\end{align*}
and extends as a bounded operator 
\begin{align*}
    \I \colon L^p(SM) \to L^2(\partial_+ SM), \qquad \forall p> 2, 
\end{align*}
see \cite[Lemma 3.4, Prop. 2.4]{Guillarmou2014a}. Similary, the transform over bundles $\I_{\nabla,\Phi}$ also makes sense outside $\Gamma_-$, and the results established in \cite{Guillarmou2015} are described in that setting.

\np
\section{Pestov identities} \label{sec:Pestov}

\subsection{Commutators of derivatives on the sphere bundle}
\label{sec:comDerSM}

Recall the definition of the natural derivatives $X$, $\hgrad$, $\vgrad$, $\hdiv$ and $\vdiv$ on the sphere bundle $SM$ as introduced in Section~\ref{sec:v/h}. 

The geodesic vector field $X$ acts as a differential operator $X\colon C^\infty(SM)\to C^\infty(SM)$ by
\begin{equation}
    Xu(x,v) = \partial_tu(\varphi_t(x,v))|_{t=0},
\end{equation}
where $\varphi_t$ is the geodesic flow. The same definition can be used to define also the operator $X\colon\Z\to\Z$, when one uses the covariant derivative along the flow. This gives rise to two incarnations of $X$ given by 
\begin{align*}
    X&\colon C^\infty(SM)\to C^\infty(SM), \qquad X\colon \Z\to \Z,    
\end{align*}
and we will not distinguish between the two in notation. In addition, we define the curvature operator $R\colon\Z\to\Z$ by $R(x,v)w=R_x(w,v)v$, where the $R_x$ on the right-hand side is the Riemann curvature tensor at $x\in M$.

The starting point of deriving Pestov identities is the following commutator formulas. 

\begin{lemma}[{\cite[Lemma 2.1]{Paternain2015}}]
\label{lma:hd-commutator}
The following commutator formulas hold on $C^\infty(SM)$ and $\Z$:
\begin{equation}
\begin{split}
[X,\vgrad]&=-\hgrad, \qquad [X,\vdiv]=-\hdiv, \\
[X,\hgrad]&=R\vgrad, \qquad [X,\hdiv]=-\vdiv R, \qquad \text{and} \\
\hdiv\vgrad-\vdiv\hgrad&=(n-1)X,
\end{split}
\end{equation}
\end{lemma}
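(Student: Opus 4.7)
The plan is to derive all five identities from the classical Ehresmann commutator relations on $T(TM)$, together with a duality argument for the divergence-side identities. Fix a local orthonormal frame $\{e_\alpha\}_{\alpha=1}^n$ on $M$, with horizontal and vertical lifts $H_\alpha$ and $V_\alpha$ on $TM$. These satisfy the structural identities
\begin{align*}
[H_\alpha, V_\beta] &= (\nabla_{e_\alpha} e_\beta)^V, \\
[H_\alpha, H_\beta](x,v) &= [e_\alpha, e_\beta]^H - \bigl(R_x(e_\alpha, e_\beta) v\bigr)^V, \\
[V_\alpha, V_\beta] &= 0,
\end{align*}
which follow directly from the coordinate expressions of the lifts via the Christoffel symbols. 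In this frame one has $X = v^\alpha H_\alpha$, while for $u \in C^\infty(SM)$,
\[
\vgrad u = P_v \sum_\alpha (V_\alpha u)\, e_\alpha, \qquad \hgrad u = P_v \sum_\alpha (H_\alpha u)\, e_\alpha,
\]
where $P_v$ is orthogonal projection onto $\{v\}^\perp \subset T_xM$, and sections of $N$ are identified with vectors in $T_xM$ orthogonal to $v$.

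The identity $[X,\vgrad] = -\hgrad$ is then a direct frame calculation: expanding $[v^\alpha H_\alpha, V_\beta]$ and using $V_\beta v^\alpha = \delta_\beta^\alpha$, $H_\alpha v^\beta = 0$, the Ehresmann relation for $[H_\alpha, V_\beta]$, and the projection $P_v$, the extra terms recombine into precisely $-\hgrad u$. Similarly, $[X, \hgrad] = R\vgrad$ follows from the curvature contribution $-(R_x(e_\alpha,e_\beta)v)^V$ in $[H_\alpha, H_\beta]$, which after contraction with $v^\alpha$ yields $R\vgrad u$ in view of the definition $R(x,v)w = R_x(w,v)v$. The divergence-side identities then come for free via duality: the Liouville measure on $SM$ is preserved by the geodesic flow, so $X$ is formally skew on both $C^\infty(SM)$ and $\Z$; combined with the adjoint relations $\vgrad^* = -\vdiv$ and $\hgrad^* = -\hdiv$, and the fiberwise self-adjointness of $R$ (which follows from the pair symmetry $R(X,Y,Z,W) = R(Z,W,X,Y)$), one computes $([X,A])^* = -[X,A^*]$ for $A \in \{\vgrad,\hgrad\}$, and taking formal adjoints of the two gradient-side identities produces $[X,\vdiv] = -\hdiv$ and $[X,\hdiv] = -\vdiv R$.

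For the final identity $\hdiv\vgrad - \vdiv\hgrad = (n-1)X$, I would again work in the frame, expanding both compositions and subtracting. The ``interior'' differential terms involving $V_\alpha H_\beta$ and $H_\alpha V_\beta$ pair up via $[V_\alpha, H_\alpha] = -(\nabla_{e_\alpha}e_\alpha)^V$ and cancel against the Christoffel-symbol contributions to $\vdiv$ and $\hdiv$; the surviving contribution arises from the $v$-dependence of the projection $P_v$, equivalently from the constraint $|v|=1$ that makes $\vgrad$ take values tangential to the fiber. A short trace computation shows that this residual contributes exactly $(n-1) v^\alpha H_\alpha u = (n-1) Xu$, the factor $n-1$ being the rank of $P_v$, i.e.\ the dimension of the fibers of $N$. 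The main obstacle throughout is the delicate bookkeeping of the projection $P_v$ together with the frame-derivative and Ehresmann correction terms, which only after careful contraction conspire to produce the clean invariant statements; an alternative coordinate-free route uses Sharafutdinov's semibasic tensor formalism, which bypasses the frame at the cost of heavier abstract machinery.
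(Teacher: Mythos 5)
The paper itself does not prove this lemma (it only cites \cite[Lemma 2.1]{Paternain2015}), and your overall route --- Ehresmann commutation relations for the horizontal/vertical lifts, a frame computation for the two gradient-side identities, duality for the divergence-side ones, and a trace computation for the final identity --- is essentially the standard argument from that reference. The structural identities you quote are correct, the curvature bookkeeping for $[X,\hgrad]=R\vgrad$ is right (the contraction $-v^\alpha\bigl(R_x(e_\alpha,e_\beta)v\bigr)^V=\bigl(R_x(e_\beta,v)v\bigr)^V$ assembles to $R\vgrad$ using the fiberwise self-adjointness of $R(x,v)$), and the factor $n-1$ in the last identity is indeed the rank of $P_v$. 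The deferred bookkeeping is acceptable at sketch level, though note that $H_\alpha v^\beta=0$ only holds in a frame synchronized at the base point.

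There is, however, a genuine gap in the duality step for the fourth identity. Since $X^*=-X$ formally on both $C^\infty(SM)$ and $\Z$, the correct abstract identity is $([X,A])^*=[X,A^*]$, not $-[X,A^*]$. With $\vgrad^*=-\vdiv$, $\hgrad^*=-\hdiv$, $R^*=R$, taking adjoints of $[X,\vgrad]=-\hgrad$ gives $-[X,\vdiv]=\hdiv$, i.e.\ $[X,\vdiv]=-\hdiv$ as stated; but taking adjoints of $[X,\hgrad]=R\vgrad$ gives $-[X,\hdiv]=(R\vgrad)^*=-\vdiv R$, i.e.\ $[X,\hdiv]=+\vdiv R$, which is opposite in sign to the statement you are proving. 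You can test this in two dimensions with the paper's own identifications ($\vgrad u=(Vu)v^\perp$, $\hgrad u=-(X_\perp u)v^\perp$, hence $\vdiv(zv^\perp)=Vz$ and $\hdiv(zv^\perp)=-X_\perp z$): the structure equation $[X,X_\perp]=-\kappa V$ yields $[X,\hdiv](zv^\perp)=-[X,X_\perp]z=\kappa Vz=\vdiv\bigl(R(zv^\perp)\bigr)$, again with the plus sign. So your proposal asserts that duality ``produces $[X,\hdiv]=-\vdiv R$'' when the mechanism you describe actually produces $[X,\hdiv]=+\vdiv R$; you must either locate a compensating sign (there is none if $\hdiv$ really is $-\hgrad^*$) or flag explicitly that the sign of this one identity, as printed, is not what your method delivers. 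As written, the proof silently passes over a contradiction between its own computation and its stated conclusion.
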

To emphasize the commutator nature of the third formula in Lemma~\ref{lma:hd-commutator}, one can write it as
\begin{equation}
\overset{[\tt{h}}{\mbox{\rm div}}
\overset{\!\!,}{\vphantom{\nabla}\,}
\overset{\tt{v}]}{\nabla}
=
(n-1)X.
\end{equation}
We place the commutator symbols around the labels `h' and `v' (they are commuted), not around `$\rm div$' and `$\nabla$' (they remain in the same order).

In two dimensions the horizontal and vertical gradients can be considered as vector fields (globally if the underlying manifold is orientable).
The corresponding commutator formulas were given in equation~\eqref{eq:structure2D}.

In addition to commuting operators, we need to integrate by parts.
Let us denote the inner product of $u,v\in L^2(SM)$ by $\iip{u}{v}$ and similarly for $L^2$ sections of $N$.
If $u,v\in C^\infty(SM)$ and $Z,W\in\Z$, we have
\begin{equation}
\label{eq:ibp}
\begin{split}
\iip{\vgrad u}{Z} &= -\iip{u}{\vdiv Z},\\
\iip{X u}{w} &= -\iip{u}{X w}+\int_{\partial(SM)}uw\ip{v}{\nu},\text{ and}\\
\iip{X Z}{W} &= -\iip{Z}{X W}+\int_{\partial(SM)}\ip{Z}{W}\ip{v}{\nu}.
\end{split}
\end{equation}
We will not integrate by parts with horizontal derivatives, so these formulas will suffice.
For more details on these operators, we refer to~\cite{Paternain2015}.

\subsection{Simple Riemannian manifolds}\label{sec:simple}

A smooth and compact Riemannian manifold with boundary is called simple if the manifold is simply connected, the boundary is strictly convex (the second fundamental form is positive definite), and there are no conjugate points.

\begin{lemma}
\label{lma:simple-if}
If $M$ is simple, then any vector field $W\in\Z$ satisfies
\begin{equation}
\aabs{XW}^2-\iip{RW}{W}\geq0
\end{equation}
and equality holds if and only if $W=0$.
\end{lemma}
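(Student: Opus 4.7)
The strategy is a matrix Riccati argument. Suppose we can produce a smooth symmetric endomorphism-valued field $U \in C^\infty(SM; \operatorname{End}(N))$ satisfying the Riccati equation
\begin{equation}
XU + U^2 + R = 0 \quad \text{on } SM.
\end{equation}
Expanding $\abs{XW - UW}^2$ and computing $X\ip{UW}{W}$ via the Leibniz rule, both using the symmetry of $U$, and invoking the Riccati equation give the pointwise identity
\begin{equation}
\abs{XW}^2 - \ip{RW}{W} = \abs{XW - UW}^2 + X\ip{UW}{W}.
\end{equation}
Integrating over $SM$ and using~\eqref{eq:ibp}, the $X\ip{UW}{W}$-term becomes a boundary integral $\int_{\partial SM} \ip{UW}{W}\ip{v}{\nu}$, which vanishes under the boundary condition $W|_{\partial SM}=0$ (implicit in the Pestov-identity setting of this lemma). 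One is then left with $\aabs{XW}^2 - \iip{RW}{W} = \aabs{XW - UW}^2 \geq 0$. In the equality case, $XW = UW$ on $SM$, a linear first-order ODE along each geodesic; combined with $W|_{\partial_- SM}=0$, uniqueness forces $W\equiv 0$.

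The substantive step is the construction of $U$, and this is where simplicity enters. Embed $(M,g)$ into a slightly larger simple manifold $(M_1,g_1)$ with $M$ contained in the interior of $M_1$. For each $(x,v)\in SM$, extend $\gamma_{x,v}$ backward to its exit time $-s(x,v)<0$ from $M_1$, and let $J(t)$ be the matrix Jacobi field along $\gamma_{x,v}$, with values in endomorphisms of $\{\dot\gamma_{x,v}(t)\}^\perp$, determined by the initial data $J(-s)=0$ and $\dot J(-s)=I$. The absence of conjugate points in $M_1$ guarantees that $J(t)$ remains invertible on $(-s, \tau_1(x,v)]$, in particular on $[0, \tau(x,v)]$. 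Set $U(x,v) := \dot J(0)\,J(0)^{-1}$. Differentiating $U = \dot J J^{-1}$ along the flow and using $\ddot J + RJ=0$ yields $XU + U^2 + R = 0$, while the conservation of the Wronskian $J^\top \dot J - \dot J^\top J$ (which vanishes at $t=-s$ by choice of initial data) gives symmetry of $U$ at $t=0$. Smooth dependence of $J$ on the geodesic's parameters transfers to $U$.

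The main obstacle is ensuring that $U$ is globally smooth on $SM$: a naive Riccati solution whose initial time lies on $\partial M$ would generically blow up at the first conjugate time along each geodesic, which is why the extension to $M_1$ is essential --- it places the singular time $t=-s$ safely outside $M$. Once $U$ is in hand, the rest is verification of the pointwise algebraic identity, an integration by parts via~\eqref{eq:ibp}, and a short ODE uniqueness argument.
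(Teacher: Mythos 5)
Your argument is correct, but it takes a different route from the one sketched in the paper. The paper's proof applies the Santal\'o formula to rewrite $\aabs{XW}^2-\iip{RW}{W}$ as an integral over $\partial_+SM$ of the index form $\int_0^\tau\bigl(\abs{\dot W}^2-\ip{RW}{W}\bigr)dt$ along each geodesic, and then invokes the classical index lemma: for fields vanishing at the endpoints the index form is positive definite precisely because there are no conjugate points. You instead construct a global symmetric solution $U$ of the Riccati equation $XU+U^2+R=0$ on $SM$ (via matrix Jacobi fields based at the backward exit time from a slightly larger simple extension $M_1$, with the Wronskian giving symmetry and the absence of conjugate points giving invertibility of $J$ on $[0,\tau]$), complete the square pointwise, and integrate; this is in effect a self-contained re-derivation of the index lemma that never passes through Santal\'o or the one-dimensional reduction. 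I checked the pointwise identity $\abs{XW}^2-\ip{RW}{W}=\abs{XW-UW}^2+X\ip{UW}{W}$ and it holds given symmetry of $U$ and the Riccati equation, and your equality-case ODE argument is sound. What each approach buys: the paper's version is shorter if one takes the index lemma as known, while yours is fully explicit and is exactly the mechanism that generalizes to the $\beta$-Jacobi fields and $\alpha$-control of Section~\ref{sec:terminator} (indeed it is how \cite{Paternain2015} proves the more general Lemma~\ref{lma:terminator-control}). You were also right to flag that the statement implicitly requires $W|_{\partial SM}=0$: without a boundary condition the claim is false (take $R=0$ and $W$ parallel along the flow), and both proofs use the vanishing at the boundary --- yours to kill the boundary term from $\int_{SM}X\ip{UW}{W}$, the paper's because the index form is only positive definite on fields vanishing at the endpoints.
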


To prove the lemma, convert the integral over $SM$ to integrals over individual geodesics using the Santal\'o formula.
The resulting integral along a geodesic is precisely the index form, which is positive definite due to the lack of conjugate points.

\begin{lemma}
\label{lma:pestov}
If $u\in C^\infty(SM)$ vanishes at the boundary, then
\begin{equation}
\aabs{\vgrad Xu}^2
=
\aabs{X\vgrad u}^2-\iip{R\vgrad u}{\vgrad u}+(n-1)\aabs{Xu}^2.
\end{equation}
This is known as the Pestov identity.
\end{lemma}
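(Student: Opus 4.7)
\medskip

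\noindent\textbf{Proof plan for Lemma~\ref{lma:pestov}.}

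The strategy is to exploit the first commutator formula from Lemma~\ref{lma:hd-commutator} to split $\vgrad X u$ into $X\vgrad u$ plus a lower-order correction, and then recover the remaining terms on the right-hand side by a second integration by parts that brings in the curvature commutator $[X,\hgrad]=R\vgrad$ and the ``balance'' commutator $\hdiv\vgrad-\vdiv\hgrad=(n-1)X$. The assumption $u|_{\partial SM}=0$ will be used to discard every boundary term generated by the integration-by-parts formulas~\eqref{eq:ibp}.

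Concretely, the first step is to write, using $[X,\vgrad]=-\hgrad$,
\[
\vgrad X u = X\vgrad u + \hgrad u,
\]
and expand the left-hand side of the identity:
\[
\aabs{\vgrad Xu}^2 = \aabs{X\vgrad u}^2 + 2\ip{X\vgrad u}{\hgrad u} + \aabs{\hgrad u}^2.
\]
After this reduction, the lemma is equivalent to the scalar identity
\[
2\ip{X\vgrad u}{\hgrad u} + \aabs{\hgrad u}^2 = -\ip{R\vgrad u}{\vgrad u} + (n-1)\aabs{Xu}^2.
\]

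The second step is to rewrite the cross term. Integrating by parts in $X$ on $\Z$ (and dropping the boundary contribution, which vanishes because $u=0$ forces $\vgrad u, \hgrad u$ to vanish on $\partial SM$) gives
\[
\ip{X\vgrad u}{\hgrad u} = -\ip{\vgrad u}{X\hgrad u} = -\ip{\vgrad u}{\hgrad Xu} - \ip{\vgrad u}{R\vgrad u},
\]
where the commutator $[X,\hgrad]=R\vgrad$ was used in the second equality. Integrating by parts vertically in the first term and then applying $\hdiv\vgrad=\vdiv\hgrad+(n-1)X$ yields
\[
-\ip{\vgrad u}{\hgrad Xu} = \ip{\vdiv\hgrad u}{Xu} + (n-1)\aabs{Xu}^2,
\]
and one further integration by parts (vertical) together with the expansion $\vgrad Xu = X\vgrad u+\hgrad u$ from Step~1 converts $\ip{\vdiv\hgrad u}{Xu}$ into $-\ip{X\vgrad u}{\hgrad u}-\aabs{\hgrad u}^2$.

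Combining these manipulations produces
\[
\ip{X\vgrad u}{\hgrad u} = -\ip{X\vgrad u}{\hgrad u} - \aabs{\hgrad u}^2 + (n-1)\aabs{Xu}^2 - \ip{R\vgrad u}{\vgrad u},
\]
which is exactly the scalar identity identified in Step~1, finishing the proof. The only real obstacle is the careful bookkeeping of the three integrations by parts and checking that each boundary term truly vanishes: this requires observing that $u|_{\partial SM}=0$ implies the \emph{tangential} derivatives of $u$ vanish on $\partial SM$, which is enough since $\vgrad$ and $\hgrad$ are built from derivatives tangent to $\partial(SM)$ at interior points of $\partial M$, while the contributions from $\partial SM\cap\pi^{-1}(\partial M)$ are killed by the factor of $u$ or $\vgrad u$ (vanishing on fibers where $u\equiv 0$). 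No simplicity or curvature sign assumption is needed for this identity — simplicity only enters through Lemma~\ref{lma:simple-if} when one subsequently wants to conclude positivity.
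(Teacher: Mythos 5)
Your argument is correct and is essentially the computation the paper sketches (and that \cite[Proposition 2.2]{Paternain2015} carries out in full): expand via the commutators of Lemma~\ref{lma:hd-commutator}, integrate by parts, and observe that every boundary term carries a factor of $u$ or of $\vgrad u$, both of which vanish on $\partial(SM)$ because $u\equiv 0$ on entire fibers over $\partial M$. The one point worth flagging is that your passage from $-\iip{\vgrad u}{\hgrad Xu}$ to $\iip{\vdiv\hgrad u}{Xu}+(n-1)\aabs{Xu}^2$ implicitly uses a \emph{horizontal} integration by parts (to trade $\iip{u}{\hdiv\vgrad Xu}$ for $-\iip{\hgrad u}{\vgrad Xu}$), which the paper's setup deliberately avoids; this is harmless here because the associated boundary contribution is again multiplied by $u|_{\partial(SM)}=0$, but your stated justification --- that $\hgrad$ differentiates tangentially to $\partial(SM)$ --- is not the right reason, since horizontal directions orthogonal to $v$ need not be tangent to the boundary.
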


To prove the lemma, convert $\aabs{\vgrad Xu}^2-\aabs{X\vgrad u}^2$ into inner products in $L^2(SM)$, integrate by parts, use commutator formulas to simplify the resulting operator, and simplify the result.
The same result for closed manifolds with a full proof can be found in~\cite[Proposition 2.2]{Paternain2015}.

\subsubsection{X-ray tomography of scalars and one-forms}

Lemmas \ref{lma:simple-if}--\ref{lma:pestov} lead to an elegant proof of one of the most basic injectivity results on manifolds:

\begin{theorem}[{\cite{Mukhometov1977}}]
    \label{thm:simple,m=0}
    If $M$ is a compact and simple manifold with boundary, then the geodesic X-ray transform on $M$ is injective on $C^\infty(M)$.
\end{theorem}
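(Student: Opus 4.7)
The plan is to recast the hypothesis $If=0$ as a boundary value problem for a transport equation on $SM$, apply the Pestov identity of Lemma~\ref{lma:pestov} to its solution, and exploit the fact that $f$ lifts to a function on $SM$ that is constant along each tangent fiber.

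First, I would define $u\colon SM\to\Rm$ as the unique solution to
\begin{equation*}
Xu=-f \quad \text{on } SM, \qquad u|_{\partial_- SM}=0,
\end{equation*}
where $f$ is regarded as a function on $SM$ via pullback $f(x,v)=f(x)$, equivalently $f=\ell_0 f\in\Omega_0$. Since $M$ is simple, $\tau$ is smooth on $SM$ (strict convexity of $\partial M$ and absence of conjugate points guarantee smoothness of the exit time up to the boundary), and hence $u\in C^\infty(SM)$. The assumption $If=0$ is precisely $u|_{\partial_+ SM}=0$, so together $u$ vanishes on all of $\partial SM$.

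Next, apply the Pestov identity of Lemma~\ref{lma:pestov} to $u$:
\begin{equation*}
\aabs{\vgrad Xu}^2
=
\aabs{X\vgrad u}^2-\iip{R\vgrad u}{\vgrad u}+(n-1)\aabs{Xu}^2.
\end{equation*}
The crucial observation is that $Xu=-f$ depends only on the base point $x$, so its fiberwise gradient $\vgrad Xu$ vanishes identically. Hence the left-hand side is zero. By Lemma~\ref{lma:simple-if} applied to $W=\vgrad u\in\Z$, the first two terms on the right are together nonnegative. Since $n\ge 2$, the coefficient $n-1$ is strictly positive, and we must have $\aabs{Xu}^2=0$, i.e.\ $f=-Xu=0$.

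The main substantive inputs are the Pestov identity and the index-form positivity in Lemma~\ref{lma:simple-if}; once these are in hand, the argument is essentially a one-line exploitation of the fact that the vertical gradient annihilates pullbacks of base functions. The only subtlety is ensuring $u\in C^\infty(SM)$ with $u|_{\partial SM}=0$ so that Lemma~\ref{lma:pestov} applies directly, and for this simplicity is exactly what is needed: without strict convexity $\tau$ fails to be smooth at glancing directions, and without the absence of conjugate points Lemma~\ref{lma:simple-if} fails. This explains why the argument, while formally a one-line computation, really encodes the full strength of the simplicity hypothesis.
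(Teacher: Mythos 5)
Your argument is the paper's argument: define $u$ by integrating $f$ along geodesics, observe $\vgrad Xu=0$ since $f$ is a pullback from $M$, and combine the Pestov identity of Lemma~\ref{lma:pestov} with the index-form positivity of Lemma~\ref{lma:simple-if} to force $\aabs{Xu}=0$. The structure, the key lemmas, and the final deduction all match.

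There is, however, one step whose justification as written is false. You claim that strict convexity of $\partial M$ together with the absence of conjugate points makes $\tau$ smooth on all of $SM$, and deduce $u\in C^\infty(SM)$ from this. In fact $\tau$ is \emph{not} smooth at glancing directions (points $(x,v)$ with $x\in\partial M$ and $v$ tangent to $\partial M$) even on simple manifolds: already on the Euclidean unit disk one has $\tau(x,v)=-\ip{x}{v}+\sqrt{1-\abs{x}^2+\ip{x}{v}^2}$, and the square root degenerates exactly at glancing, so $\tau$ has square-root-type behavior there rather than smoothness. Strict convexity gives continuity of $\tau$ and smoothness away from the glancing set, which is enough for $u$ to be smooth in the interior of $SM$, but not for the smoothness up to $\partial SM$ that the integration by parts behind Lemma~\ref{lma:pestov} requires. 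The paper closes this gap differently: by boundary determination (testing $If=0$ against very short geodesics almost tangent to $\partial M$) one first shows that $f$ and all of its normal derivatives vanish on $\partial M$, and only then does $u$ extend smoothly to the closed sphere bundle. With that replacement for your regularity claim, the rest of your argument goes through exactly as in the paper.
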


We refer to~\cite[Section 4.9]{Sharafutdinov1994} for a discussion of the history of this method. The idea of the proof is to recast the injectivity as unique solvability of the PDE~\eqref{eq:transport} or
\begin{equation}
    \begin{cases}
	\vgrad X u&=0\quad\text{in }SM\\
	u&=0\quad\text{in }\partial(SM)
    \end{cases}
\end{equation}
and using the Pestov identity to show that the only solution is indeed $u=0$.

\begin{proof}[Proof of theorem~\ref{thm:simple,m=0}]
    Let $f\in C^\infty(SM)$ be a function with $If=0$.
    Define a function $u\colon SM\to\Rm$ by
    \begin{equation}
	u(x,v)
	=
	\int_0^{\tau(x,v)}f(\gamma_{x,v}(t))dt,
    \end{equation}
    where $\gamma_{x,v}\colon[0,\tau(x,v)]\to M$ is the maximal unit speed geodesic starting at $x$ in the direction $v$.
    Simplicity ensures that no geodesics are trapped.
    Since the boundary is strictly convex, all geodesics exit transversally.
    It is easy to check that therefore $u$ is smooth in the interior of~$SM$.
    
    Because $If=0$, the function $u$ vanishes at the boundary, since $u|_{\partial_-SM}=0$ and $u|_{\partial_+SM}=If$.
    It is also smooth up to the boundary.
    One way to see this is to use boundary determination:
    studying very short geodesics almost tangent to the boundary shows that $f$ and its normal derivatives of all orders must vanish at $\partial M$.
    (A similar argument works for broken rays as well, provided that certain weighted ray transforms on the boundary are injective~\cite{Ilmavirta2014}. The same method can be applied in the present case without the need for using transforms on the boundary.)
    
    The function $u$ satisfies the transport equation $Xu=-f$, where we have identified $C^\infty(M)\ni f=\pi^*f\in C^\infty(SM)$.
    The function $f$ is independent of direction, so $\vgrad Xu=0$.
    Using lemma~\ref{lma:pestov} with $\vgrad Xu=0$ for $u$ and lemma~\ref{lma:simple-if} for $\vgrad u$ gives $\aabs{Xu}=0$, implying $f=0$.
\end{proof}

The same method can also be applied to one-forms:

\begin{theorem}[{\cite{Anikonov1997}}]\label{thm:simple,m=1}
    If $M$ is a compact and simple manifold with boundary, then the geodesic X-ray transform on $M$ is solenoidally injective on smooth one-forms.
    That is, if $f$ is a smooth one-form that integrates to zero over all maximal geodesics, there is $h\in C^\infty(M)$ which vanishes at the boundary and satisfies $f=dh$.
\end{theorem}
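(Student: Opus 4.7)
The plan is to adapt the scalar proof of Theorem \ref{thm:simple,m=0} by exploiting the odd parity of a one-form in the fiber variable. Given a smooth one-form $f$ with $If = 0$, identify it with $\ell_1 f \in \Omega_1$ and define
\begin{equation*}
u(x,v) = \int_0^{\tau(x,v)}(\ell_1 f)(\varphi_t(x,v))\ dt.
\end{equation*}
Then $u$ satisfies $Xu = -\ell_1 f$ on $SM$ and vanishes on all of $\partial SM$ (from $u|_{\partial_- SM} = 0$ by construction and $u|_{\partial_+ SM} = If = 0$ by hypothesis). Simplicity ensures that $u$ is smooth in the interior of $SM$, and an extension of the boundary-determination argument (studying very short, nearly-tangent geodesics) ensures smoothness up to $\partial SM$ by forcing $f$ to vanish to infinite order on $\partial M$.

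The key new ingredient is to split $u = u^+ + u^-$ into its even and odd parts under the fiber involution $(x,v)\mapsto(x,-v)$. Since $X$ anti-commutes with this involution while $\ell_1 f$ is odd in $v$, one obtains $Xu^+ = -\ell_1 f$ and $Xu^- = 0$, with $u^\pm|_{\partial SM} = 0$. Applying Lemma \ref{lma:pestov} to $u^+$ yields
\begin{equation*}
\aabs{\vgrad Xu^+}^2 = \aabs{X\vgrad u^+}^2 - \iip{R\vgrad u^+}{\vgrad u^+} + (n-1)\aabs{Xu^+}^2.
\end{equation*}

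The crucial observation is that $Xu^+ = -\ell_1 f$ lies in the first spherical-harmonic eigenspace $\Omega_1$, on which the eigenvalue of $-\vdiv\vgrad$ equals $n-1$. Fiberwise integration by parts therefore yields the automatic identity $\aabs{\vgrad Xu^+}^2 = (n-1)\aabs{Xu^+}^2$, and the Pestov identity collapses to
\begin{equation*}
\aabs{X\vgrad u^+}^2 - \iip{R\vgrad u^+}{\vgrad u^+} = 0.
\end{equation*}
By Lemma \ref{lma:simple-if}, this forces $\vgrad u^+ \equiv 0$, so $u^+$ is constant along each fiber, i.e.\ $u^+(x,v) = h(x)$ for some $h \in C^\infty(M)$ with $h|_{\partial M} = 0$. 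Unpacking $Xu^+ = -\ell_1 f$ gives $(dh)_x(v) = -f_x(v)$ for every $(x,v) \in SM$, so $f = -dh$; replacing $h$ by $-h$ gives the desired primitive.

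The step I expect to require the most care is the regularity of $u$ up to $\partial SM$: the parity decomposition only works with a bona fide $C^\infty(SM)$ function $u^+$ vanishing on $\partial SM$, and this in turn requires extending the scalar boundary-determination device to one-forms. Once that is in hand, the algebraic core of the argument (parity split, the spectral identity on $\Omega_1$ making the Pestov inequality tight, and the index-form positivity from Lemma \ref{lma:simple-if}) is essentially automatic and mirrors the scalar proof one-to-one.
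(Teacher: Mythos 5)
Your argument is correct and follows essentially the same route as the paper: define the integral function $u$, observe that $Xu=-\ell_1 f\in\Omega_1$ so that $\aabs{\vgrad Xu}^2=(n-1)\aabs{Xu}^2$ cancels the last term of the Pestov identity of Lemma~\ref{lma:pestov}, and conclude $\vgrad u=0$ from Lemma~\ref{lma:simple-if}, whence $u=-\pi^*h$ and $f=dh$. Two small remarks: the even/odd splitting is harmless but superfluous, since the cancellation already occurs for the full $u$ (the paper applies Pestov to $u$ directly); and in the boundary-regularity step, short tangent geodesics only force the one-form $f$ to agree with a potential $dh$ to infinite order at $\partial M$ (its tangential part vanishes, not $f$ itself), which still suffices for smoothness of $u$ up to $\partial SM$.
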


\begin{proof}[Proof sketch]
    The proof is similar to the scalar case presented above, and starts by defining $u\in C^\infty(SM)$ and observing that it vanishes at the boundary.
    The left-hand side of the Pestov identity no longer vanishes, but it cancels one term on the right precisely, because $\aabs{\vgrad f}^2=(n-1)\aabs{f}$, where we have again identified $f$ as a function on $SM$.
    This leads to $\aabs{X\vgrad u}^2-\iip{R\vgrad u}{\vgrad u}=0$, which by lemma~\ref{lma:simple-if} implies that $\vgrad u=0$.
    Therefore there is a function $h\in C^\infty(M)$ so that $u=-\pi^*h$.
    The transport equation $Xu=-f$ is then equivalent with $dh=f$, so $h$ is the desired function.
\end{proof}

\subsubsection{Tensor tomography}

If $f$ is a tensor field of order $0$, the left-hand side of the Pestov identity of lemma~\ref{lma:pestov} vanishes.
If $f$ is of order $1$, the term precisely cancels the $\aabs{Xu}^2$ term. 
If the order is $m\geq2$, the Pestov identity no longer has this convenient positive definiteness.
However, using the Pestov identity not for the whole $u$ but for individual terms $u_k$ in its spherical harmonic decomposition has turned out to be useful.

In two dimensions the X-ray transform is solenoidally injective on simple manifolds for tensor fields of any order:

\begin{theorem}[{\cite{Paternain2011a}}]
    The geodesic X-ray transform is solenoidally injective on the space of smooth tensor fields of any order $m\geq0$ on a simple Riemannian surface.
\end{theorem}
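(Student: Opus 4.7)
The plan is to lift the problem to a transport equation on $SM$ and show that its primitive has fiberwise Fourier content bounded by degree $m-1$. Given a smooth symmetric $m$-tensor $f$ on the simple surface $M$ with $If=0$, identify $\ell_m f \in C^\infty(SM)$, whose fiberwise harmonic expansion has modes of degree $|k|\le m$ and of parity $\equiv m\pmod 2$. Set
\[
u(x,v) := \int_0^{\tau(x,v)} \ell_m f(\varphi_t(x,v))\, dt,
\]
so that $Xu=-\ell_m f$ and $u|_{\partial SM}=0$; the boundary determination step used in the proof of Theorem~\ref{thm:simple,m=0} gives $u\in C^\infty(SM)$. Writing $u=\sum_k u_k$ with $u_k\in\Omega_k$, the tridiagonal system \eqref{eq:tridiagonal}, $X_+u_{k-1}+X_-u_{k+1}=-f_k$, governs the modes. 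If I can show $u_k=0$ for all $|k|\ge m$, then $u$ has finite harmonic content of degree $\le m-1$ (and the correct parity, since $X$ swaps parities), which by Section~\ref{sec:tf-sh} means $u=\ell_{m-1}h$ for a smooth $(m-1)$-tensor $h$; the vanishing of $u$ on $\partial SM$ forces $h|_{\partial M}=0$ by polarization, and the identity $X\ell_{m-1}h=\ell_m\sigma\nabla h$ combined with $Xu=-\ell_m f$ yields $f=-\sigma\nabla h$, showing $f$ is potential.

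The key tool for truncating the Fourier content of $u$ on a simple surface is the existence of \emph{smooth fiberwise holomorphic integrating factors}: for any prescribed smooth fiberwise holomorphic datum on $\partial_+SM$ one produces $w\in C^\infty(SM)$ with $Xw=0$ that is fiberwise holomorphic and has the given boundary trace, together with an analogous statement for antiholomorphic factors. The construction of $w$ ultimately rests on solenoidal injectivity at lower tensor orders (Theorems~\ref{thm:simple,m=0} and~\ref{thm:simple,m=1}) together with the positivity of the index form guaranteed by simplicity (Lemma~\ref{lma:simple-if}). Once $w$ is available, the twisted transport equation $X(e^{w}u)=-e^{w}\ell_m f$ and its antiholomorphic analogue can be combined with the tridiagonal relations \eqref{eq:tridiagonal} to induct downward on the fiberwise Fourier degree and peel off, one by one, the modes $u_k$ with $|k|\ge m$.

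The hardest part is the construction of these holomorphic integrating factors with the regularity and prescribed boundary data needed to close the induction; this step has no analogue in the scalar or one-form cases, where the unweighted Pestov identity of Lemma~\ref{lma:pestov} already suffices, and it uses the two-dimensional complex structure together with the full strength of simplicity (no conjugate points and strict convexity) in an essential way. Once the integrating factors are in place, the cancellation of the high modes of $u$ is a finite algebraic procedure driven by \eqref{eq:tridiagonal} and the mapping properties of $X_\pm$, completing the proof for tensor fields of arbitrary order.
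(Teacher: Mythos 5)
Your skeleton --- pass to the primitive $u$ with $Xu=-\ell_m f$ and $u|_{\partial SM}=0$, get smoothness by boundary determination, and reduce the theorem to showing $u_k=0$ for $|k|\ge m$ so that $u=\ell_{m-1}h$ and $f=-\sigma\nabla h$ --- is exactly the route the paper sketches for \cite{Paternain2011a}. The gap is in the tool you propose for killing the high modes. What you call a holomorphic integrating factor, a fiberwise holomorphic $w\in C^\infty(SM)$ with $Xw=0$ and prescribed boundary trace, is a first integral, not an integrating factor, and it cannot drive the induction: if $Xw=0$ then $X(e^{w}u)=e^{w}Xu$ identically, so your ``twisted'' equation is just the original transport equation multiplied by an invariant factor, and multiplication by the fiberwise holomorphic function $e^{w}$ only redistributes Fourier modes upward --- it gives no handle on the modes $u_k$ with $|k|\ge m$. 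The integrating factors actually used in \cite{Paternain2011a} are fiberwise holomorphic (resp.\ antiholomorphic) solutions of the \emph{inhomogeneous} equation $Xw=-a$ for a prescribed scalar one-form $a$. They arise as follows: writing $u$ against powers of a nonvanishing section $\phi\in\Omega_1$, the shifted function $\phi^{\mp(m-1)}u$ solves an \emph{attenuated} transport equation with scalar connection $a=\pm(m-1)\phi^{-1}X\phi$ and with right-hand side of one-sided Fourier support; the integrating factor conjugates $X+a$ back to $X$ while preserving (anti)holomorphy, and one then invokes the key intermediate proposition that on a simple surface $u|_{\partial SM}=0$ together with $Xu$ fiberwise (anti)holomorphic forces $u$ itself to be (anti)holomorphic with $u_0=0$. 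That proposition, applied to these shifted versions of $u$ and combined with \eqref{eq:tridiagonal}, is the actual peeling mechanism; it is absent from your write-up.

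Second, the provenance you assign to the integrating factors is not right. Their construction does not follow from Theorems~\ref{thm:simple,m=0}--\ref{thm:simple,m=1} plus Lemma~\ref{lma:simple-if}: injectivity of $I_0$ and $I_\perp$ does not by itself produce smooth solutions of $Xw=-a$ with one-sided fiberwise Fourier support. The essential input is the \emph{surjectivity of the adjoints} $I_0^*$ and $I_\perp^*$ from $C_\alpha^\infty(\partial_+ SM)$ onto $C^\infty(M)$, proved by a microlocal argument (ellipticity of the normal operator on a slightly larger simple surface and inversion on the closed double); see Section~\ref{sec:invariant}. If you prefer to avoid integrating factors altogether, the paper's alternative route via the iterated Beurling transform (Section~\ref{sec:Beurling}), which bounds $\|X_+u_k\|$ by the norm of much higher modes and lets those tend to zero, is closer in spirit to the mode-by-mode induction you describe, but it runs on uniform control of the Beurling constants rather than on the first integrals you invoke.
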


One can also dispense with simplicity, if certain properties are assumed of the X-ray transform at ranks zero and one:

\begin{theorem}[{\cite{Paternain2011a}}]
    Let $M$ be a compact non-trapping surface with a strictly convex and smooth boundary, so that the X-ray transform is solenoidally injective for tensor fields of orders zero and one, and that the adjoint of the X-ray transform on scalars is surjective.
    Then the geodesic X-ray transform is solenoidally injective on the space of smooth tensor fields of any order $m\geq0$.
\end{theorem}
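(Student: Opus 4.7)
The plan is to induct on the tensor order $m$, with base cases $m=0$ and $m=1$ supplied directly by the hypotheses, and an inductive step that reduces a vanishing X-ray transform at order $m\geq 2$ to one at order $m-2$. Given $f\in C^\infty(S^m T^*M)$ with $If=0$, I would identify it with $\ell_m f\in C^\infty(SM)$, whose angular Fourier content lives only in degrees $\{-m,-m+2,\dots,m\}$. Non-trapping and strict convexity of $\partial M$ yield a smooth first integral $u\in C^\infty(SM)$ that vanishes on $\partial(SM)$ (since $u|_{\partial_-SM}=0$ by definition while $u|_{\partial_+SM}=If=0$) and satisfies $Xu=-\ell_m f$; after Fourier decomposition $u=\sum_k u_k$, this is equivalent to the tridiagonal system~\eqref{eq:tridiagonal} with forcing supported in $|k|\leq m$.

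The engine of the reduction is a pair of fiberwise \emph{holomorphic} and \emph{antiholomorphic integrating factors}: smooth $w,\bar w\in C^\infty(SM)$ satisfying $Xw=X\bar w=0$, with $w_k=0$ for $k<0$, $\bar w_k=0$ for $k>0$, and prescribed nonzero $w_0,\bar w_0$. On simple surfaces these arise from direct curvature arguments, but here I would construct them by solving the mode-by-mode equations $X_+w_{k-1}+X_-w_{k+1}=0$ iteratively via a Beurling-type series (cf.~Section~\ref{sec:Beurling}). The smoothness and convergence of this iteration would be forced by the surjectivity of $I_0^*$, which is exactly what is needed to invert the relevant pieces of $X_-$ between Fourier subspaces while staying inside $C^\infty(SM)$ and respecting boundary behavior.

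With $w$ and $\bar w$ in hand, I would form $\widetilde u := wu$, which still vanishes on $\partial(SM)$ and satisfies $X\widetilde u = -w\ell_m f$. Because $w$ is holomorphic and $\ell_m f$ has Fourier support in $[-m,m]$, matching Fourier modes of this equation on the extreme negative side would express $u_m$ as the top Fourier mode of the first integral of an auxiliary smooth tensor of order $m-1$ whose X-ray transform vanishes on $\G$. The inductive hypothesis, combined with solenoidal injectivity at lower orders, would then produce a smooth $(m-1)$-tensor potential $h$ vanishing on $\partial M$ that accounts for this top mode, while a symmetric treatment using $\bar w$ would handle $u_{-m}$. Subtracting $\sigma\nabla h$ from $f$ would then yield a tensor of order at most $m-2$ with vanishing X-ray transform, completing the inductive step modulo $\relTT$.

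The hard part is constructing the holomorphic and antiholomorphic integrating factors: turning the abstract surjectivity of $I_0^*$ into smooth convergence of the Beurling iteration and into concrete $w,\bar w\in C^\infty(SM)$ with the required Fourier-support and boundary-value properties. Once these are available, the reduction from $m$ to $m-2$ becomes formal Fourier-mode bookkeeping inside the transport equation, entirely parallel to the simple-surface proof.
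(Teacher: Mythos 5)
The paper does not actually reproduce a proof of this theorem; it only records the strategy of \cite{Paternain2011a}: pass to the first integral $u$ of $f$ and use (anti)holomorphic integrating factors --- built from the surjectivity of the adjoint together with the fiberwise Hilbert transform and the commutator identity $[H,X]u=(X_\perp u)_0+X_\perp u_0$ --- to show that suitable degree-shifted versions of $u$ are (anti)holomorphic, hence that $u$ has degree $m-1$. Your proposal is in the same spirit, but it has a genuine gap at exactly the step you yourself flag as the hard part: producing \emph{smooth} $w,\bar w$ with $Xw=X\bar w=0$, one-sided Fourier support, and prescribed zeroth mode. The Beurling iteration of Section~\ref{sec:Beurling} is the natural formal candidate (for $k_0=0$ it automatically yields a holomorphic invariant extension), but its convergence in Theorem~\ref{thm:Beurling} is proved under a non-positive curvature hypothesis that you do not have, and even then it produces only a distribution in $L^2_xH_v^{-1/2-\varepsilon}(SM)$, not an element of $C^\infty(SM)$. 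Surjectivity of $I_0^*$ does give a smooth invariant function with prescribed zeroth mode, namely $\varphi_\psi$ for suitable $\varphi\in C^\infty_\alpha(\partial_+SM)$, but that function has Fourier content of both signs; truncating it to its holomorphic part destroys invariance (by the commutator formula, $X$ applied to the holomorphic projection of an invariant function produces a degree-$\leq1$ error), and repairing this is precisely the nontrivial content of the holomorphic-integrating-factor machinery in \cite{Paternain2011a}. The claim that surjectivity of $I_0^*$ ``forces'' smooth convergence of the Beurling series is a non sequitur, so the central object of your argument is not actually constructed.

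The downstream bookkeeping also does not close as written. Setting $\widetilde u=wu$ with $w$ holomorphic and invariant gives $X\widetilde u=-w\,\ell_mf$, and since $w$ has only nonnegative modes while $w\,\ell_mf$ has modes $\geq-m$, the relations $X_+(\widetilde u)_{k-1}+X_-(\widetilde u)_{k+1}=0$ for $k<-m$ are exactly the homogeneous relations $u$ already satisfied; it is not clear how this ``expresses $u_m$ as the top Fourier mode of the first integral of an auxiliary smooth tensor of order $m-1$ whose X-ray transform vanishes,'' and no candidate for that auxiliary tensor or for the potential $h$ is exhibited. (Two smaller points: smoothness of $u$ up to $\partial SM$ does not follow from non-trapping and convexity alone but uses $If=0$ plus boundary determination; and the induction $m\to m-2$ with base cases $m=0,1$ is a reasonable skeleton but differs from the cited argument, which shows directly that $u$ has degree $m-1$.) To repair the proof you would need to replace the Beurling construction by the actual mechanism: use surjectivity of $I_0^*$ to build holomorphic integrating factors via the Hilbert transform, apply them to the shifted transport equations satisfied by $u\phi^{\mp(m-1)}$ (which carry scalar connections $\pm(m-1)\phi^{-1}d\phi$), and invoke $s$-injectivity at orders $0$ and $1$ to kill the residual low modes.
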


In higher dimensions it is not known whether solenoidal injectivity is always true on simple manifolds. However, with a stronger version of not having conjugate points, we can still formulate the result. Namely, given $\alpha\ge 0$, we say that the manifold $(M,g)$ is {\bf$\alpha$-controlled} if every $W\in\Z$ with zero boundary values satisfies
\begin{equation}
    \aabs{XW}^2-\iip{RW}{W} \geq \alpha\aabs{XW}^2.
\end{equation}
In this context, one may show that a simple manifold with strictly convex boundary is $0$ controlled. Then the theorem below gives a positive answer to TT($m$) under a stronger condition. 
\begin{theorem}[{\cite[Theorem 11.8]{Paternain2015}}]
    The geodesic X-ray transform is solenoidally injective on the space of smooth tensor fields of order $m\geq1$ on a simple $n$-dimensional Riemannian manifold which is $\alpha$-controlled for
    \begin{equation}
	\alpha\geq\frac{(m-1)(m+n-2)}{m(m+n-1)}.
    \end{equation}
\end{theorem}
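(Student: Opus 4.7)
The plan is to mimic the scalar and one-form proofs (Theorems~\ref{thm:simple,m=0}--\ref{thm:simple,m=1}) via the Pestov identity, only now the non-vanishing of the corresponding Pestov right-hand side must be countered by the $\alpha$-controlled gain together with the finite harmonic content of $f$. First, by Sharafutdinov's decomposition of any $L^2$ tensor into a solenoidal part plus a potential part vanishing on $\partial M$, we may assume $f$ is solenoidal. Set $u(x,v)=\int_0^{\tau(x,v)}(\ell_mf)(\varphi_t(x,v))\,dt$; as in the proof of Theorem~\ref{thm:simple,m=0}, simplicity together with the boundary determination argument ensure that $u\in C^\infty(SM)$, $u|_{\partial SM}=0$, and $Xu=-\ell_mf$.

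Next, I would apply the Pestov identity (Lemma~\ref{lma:pestov}) to $u$; since $u|_{\partial SM}=0$, the section $W=\vgrad u$ also vanishes on $\partial SM$, so Lemma~\ref{lma:simple-if} in its $\alpha$-controlled form bounds the right-hand side from below as
\[
\|\vgrad Xu\|^2 \geq \alpha\|X\vgrad u\|^2+(n-1)\|Xu\|^2.
\]
Using $[X,\vgrad]=-\hgrad$ from Lemma~\ref{lma:hd-commutator} I would rewrite $X\vgrad u=\vgrad Xu+\hgrad u$, expand the square $\|\vgrad Xu+\hgrad u\|^2$, and integrate the cross term $\langle\vgrad Xu,\hgrad u\rangle$ by parts, successively invoking the remaining commutators $\hdiv\vgrad-\vdiv\hgrad=(n-1)X$ and $[X,\hgrad]=R\vgrad$ to convert horizontal pieces into expressions in $Xu$, $\vgrad u$, and $R\vgrad u$. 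A second use of the $\alpha$-controlled bound absorbs the curvature term produced in this manipulation. The net outcome is an inequality whose only remaining fiber-dependent quantities are $\|\vgrad(\ell_mf)\|^2$ and $\|\ell_mf\|^2$.

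The finite harmonic content of $\ell_mf$---supported in $\bigoplus_{k\le m,\,k\equiv m\,(2)}\Omega_k$, on each of which the vertical Laplacian has eigenvalue $\lambda_k=k(k+n-2)\le\lambda_m$---together with the solenoidal condition $X_-(\ell_mf)=0$ (which purges the leading contribution from the tridiagonal system~\eqref{eq:tridiagonal}), pins the ratio $\|\vgrad(\ell_mf)\|^2/\|\ell_mf\|^2$ from above by $\lambda_m=m(m+n-2)$. Plugging this back into the previous inequality collapses the whole estimate to $c(\alpha,m,n)\|\ell_mf\|^2\le 0$, and the coefficient $c(\alpha,m,n)$ is nonnegative precisely when $\alpha\ge\frac{(m-1)(m+n-2)}{m(m+n-1)}$; this forces $\ell_mf=0$ and hence $f=0$.

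The hard part is the precise bookkeeping in the second paragraph: choosing the exact linear combination of the Pestov identity, the commutator identities, and the $\alpha$-controlled bound that yields the sharp threshold $\frac{(m-1)(m+n-2)}{m(m+n-1)}$ rather than the naive one $\frac{\lambda_m-(n-1)}{\lambda_m}$. This requires projecting onto each harmonic subspace $\Omega_k$ and exploiting the solenoidal relation $X_-(\ell_mf)=0$ in a weighted way; all other steps are routine applications of the two lemmas already proved in this section.
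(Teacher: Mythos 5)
Your setup (reduction to a solenoidal representative, construction of $u$, smoothness, $u|_{\partial SM}=0$, $Xu=-\ell_mf$) is fine, but the central step --- collapsing the Pestov identity for the \emph{whole} $u$ into a single inequality $c(\alpha,m,n)\aabs{\ell_mf}^2\le0$ --- does not close for $m\ge2$, and the paper says so explicitly just before the theorem: for order $m\ge2$ ``the Pestov identity no longer has this convenient positive definiteness,'' and one must use it ``not for the whole $u$ but for individual terms $u_k$.'' Concretely, Lemma~\ref{lma:pestov} together with $\alpha$-control applied to $W=\vgrad u$ gives
\[
\aabs{\vgrad \ell_mf}^2\;\ge\;\alpha\aabs{X\vgrad u}^2+(n-1)\aabs{\ell_mf}^2,
\]
and since $\aabs{\vgrad\ell_mf}^2\le\lambda_m\aabs{\ell_mf}^2$ with $\lambda_m=m(m+n-2)\ge n-1$, this yields no contradiction unless you can bound $\alpha\aabs{X\vgrad u}^2$ from \emph{below} by $(\lambda_m-(n-1))\aabs{\ell_mf}^2$. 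Your upper bound on $\aabs{\vgrad(\ell_mf)}^2/\aabs{\ell_mf}^2$ therefore points in the wrong direction, and no lower bound on $\aabs{X\vgrad u}^2$ in terms of $f$ is available from the global identity: that quantity sees all the (a priori infinitely many) harmonic components of $u$, which are tied to the finitely many components of $f$ only through the tridiagonal system~\eqref{eq:tridiagonal}. The ``bookkeeping'' you defer to your last paragraph is thus not a refinement of your scheme but a different proof. (Two smaller inaccuracies: $[X,\vgrad]=-\hgrad$ gives $X\vgrad u=\vgrad Xu-\hgrad u$, not $+\hgrad u$; and for $m\ge2$ the condition $\delta f=0$ is not equivalent to $X_-(\ell_mf)=0$ unless $f$ is trace-free, since $\ell_mf$ has components in every $\Omega_k$ with $k\le m$ and $k\equiv m\ (\mathrm{mod}\ 2)$.)

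The argument the paper intends (outlined immediately after the theorem and in Section~\ref{sec:Beurling}, carried out in \cite[Theorem 11.8]{Paternain2015}) applies the Pestov identity to each component $u_k$ separately to produce Beurling-type estimates $\aabs{X_-u_k}\le C(k,n,\alpha)\aabs{X_+u_k}$; the hypothesis $\alpha\ge\frac{(m-1)(m+n-2)}{m(m+n-1)}$ is precisely what forces $C(k,n,\alpha)\le1$ for all $k\ge m$, so that the iterated constants stay bounded. Since $f_k=0$ for $k>m$, the system~\eqref{eq:tridiagonal} gives $X_+u_k=-X_-u_{k+2}$ for $k\ge m$, and iterating yields $\aabs{X_+u_k}\le\aabs{X_+u_{k+2N}}\to0$, hence $X_+u_k=0$ for $k\ge m$. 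Each such $u_k$ then corresponds to a trace-free conformal Killing tensor vanishing at the boundary, which vanishes on a simple manifold by \cite{Dairbekov2011}. Thus $u$ has degree at most $m-1$ and $f=-Xu$ is potential: the conclusion is reached by showing that $u$ has finite degree, not by forcing $\ell_mf=0$ through a single coercivity constant.
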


Earlier results were given by Sharafudtinov in \cite[Theorem 4.3.3]{Sharafutdinov1994}. There, solenoidal injectivity holds over $m$-tensors under the curvature bound condition 
\begin{align*}
    k^+(M,g) := \sup_{(x,v)\in \partial_+ SM} \int_0^{\tau(x,v)} t K^+ (\varphi_t(x,v))\ dt <\frac{1}{m+1}, \qquad K^+ (x,v) := \max (0,K(x,v)),
\end{align*}
where $K(x,v)$ is the supremum of sectional curvatures over all two-planes of $T_x M$ containing~$v$. This bound was further improved by the same author from an $1/(m+1)$ bound to an $(m+2n-1)/m(m+n)$ one in his lecture notes \cite{Sharafutdinov1999a}. Such conditions on $k^+(M,g)$ allow to relate the criterion of absence of $\beta$-conjugate points with the geometry (curvature), via sufficient but not necessary conditions. 

There are a number of different ways to use the Pestov identity to obtain tensor tomography results.
The basic idea is to show that the integral function $u$ (solution to the transport equation $Xu=-f$) is a tensor field of order $m-1$ (has spherical harmonic content only up to degree $m-1$) if $f$ has order $m$ and $I^mf=0$.
In a certain sense, it is trivial that there is a potential, but the non-trivial part is to show that it is a tensor field.

In two dimensions one can conveniently use (anti)holomorphic integrating factors and reduce the problem to showing that certain shifted versions of $u$ are (anti)holomorphic~\cite{Paternain2011a}.
Alternatively, one can use the iterated Beurling transform to bound high order harmonic content of $u$ as outlined in section~\ref{sec:Beurling}.
Careful analysis shows that the products of constants of continuity constants for the Beurling transforms are uniformly bounded~\cite{Paternain2015,Lehtonen2017}.
Simpler estimates can be used to derive weaker bounds; as long as the products grow sufficiently slowly, one can still conclude that $u$ has finite degree~\cite{Ilmavirta2018}.
The Pestov identity may also be combined with other integral identities.

Let us briefly outline the idea with uniformly bounded constants.
Iterating the estimate for Beurling transforms, one can conclude that $\aabs{X_+u_k}\lesssim\aabs{X_+u_{k+2N}}$ when $k\geq m$ and $N\in\Nm$.
By regularity considerations $\aabs{X_+u_{k+2N}}_{L^2}\to0$ as $N\to\infty$, so in fact $X_+u_k=0$.
If $u_k$ satisfies $X_+u_k=0$, then it corresponds to a trace-free conformal Killing tensor field of order $k$ vanishing at the boundary.
Such tensor fields do not exist in simple geometry~\cite{Dairbekov2011}.
For more details on tensor tomography, see~\cite{Paternain2013,Paternain2015}.

\subsection{$\beta$-conjugate points, the terminator value $\Ter$ and $\alpha$-control}\label{sec:terminator}

The notion of $\alpha$-control just introduced above provides a continuous parameter which allows to encode previous geometric criteria (e.g. simplicity, conditions on curvature, \ldots) as threshold conditions on $\alpha$ (or its related so-called terminator value $\Ter$, as explained below), thereby allowing to refine previous statements. Before mentioning more results, we briefly visit the concepts of $\alpha$-control and terminator value now. 

Let $J$ be a vector field along a geodesic $\gamma$. We say that $J$ is a {\bf$\beta$-Jacobi field} if it satisfies the $\beta$-Jacobi equation
\begin{equation}
    D_t^2J(t) + \beta R(J(t),\dot\gamma(t))\dot\gamma = 0,
\end{equation}
where $D_t$ is the covariant derivative and $R$ the Riemann curvature tensor. 
The constant $\beta$ describes how sensitive these generalized Jacobi fields are to curvature.
These generalizations were introduced to X-ray tomography in~\cite{Pestov2003,Dairbekov2006} and are also extensively used on closed manifolds with Anosov geodesic flow \cite{Paternain2014,Paternain2015}. For $\beta=1$ we obtain the usual Jacobi fields.

We say that two points on $\gamma$ are {\bf$\beta$-conjugate} if there is a non-trivial $\beta$-Jacobi field vanishing at these two points. We then say that  $M$ is free of $\beta$-conjugate points if no two points are $\beta$-conjugate along any geodesic. As one may show that if $M$ is free of $\beta$-conjugate points, it is also free of $\beta'$-conjugate points for any $\beta' \in [0,\beta)$, this justifies the definition of {\bf terminator value} for the manifold $M$, given by
\begin{equation} 
    \Ter := \sup\{\beta\geq0;\ M \text{ is free of $\beta$-conjugate points}\} \in [0,\infty].
\end{equation}

As seen below, some classical geometric conditions can be reformulated as threshold conditions on $\Ter$. Namely: 
\begin{itemize}
\item If the manifold is compact and non-trapping, then $\Ter>0$.
\item There are no conjugate points if and only if $\Ter\geq1$. 
\item The manifold has non-positive curvature if and only if $\Ter=\infty$.
\end{itemize}

The main use of the terminator value is when relating it to $\alpha$-control on manifolds, as stated at the end of the previous section. Recall that the manifold $(M,g)$ is {\bf$\alpha$-controlled} if 
\begin{equation}
    \aabs{XW}^2-\iip{RW}{W} \geq \alpha\aabs{XW}^2, \qquad W\in \Z, \qquad W|_{\partial M} = 0.
\end{equation}
Then the terminator value is related to controllability as follows:

\begin{lemma}[{\cite[Proposition 7.1 and Remark 11.3]{Paternain2015}}]\label{lma:terminator-control}
    If a compact manifold (closed or with boundary) satisfies $\Ter\geq\beta$, then the manifold is $(\beta-1)/\beta$-controlled.
\end{lemma}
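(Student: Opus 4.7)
The plan is to reduce the global $\alpha$-control inequality to a pointwise statement along each geodesic via Santal\'o's formula, and then invoke classical $\beta$-index form positivity under absence of $\beta$-conjugate points. Assuming $\beta > 0$, the $(\beta-1)/\beta$-control inequality is algebraically equivalent to
\[
    \aabs{X W}^2 \geq \beta \iip{R W}{W}, \qquad W \in \Z, \quad W|_{\partial(SM)} = 0.
\]
Santal\'o's formula then rewrites this as the integral inequality
\[
    \int_{\partial_+ SM} \mathcal{I}_\beta^\gamma(J_W) \, d\mu(x,v) \geq 0,
\]
where $\gamma = \gamma_{x,v}$, $J_W(t) := W(\varphi_t(x,v))$ is a vector field along $\gamma$ perpendicular to $\dot\gamma$ (by definition of $\Z$) and vanishing at $t = 0, \tau(x,v)$, and
\[
    \mathcal{I}_\beta^\gamma(J) := \int_0^{\tau(x,v)} \left( |D_t J|^2 - \beta \ip{R(J,\dot\gamma)\dot\gamma}{J} \right) dt
\]
is the $\beta$-weighted index form along $\gamma$. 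It thus suffices to establish $\mathcal{I}_\beta^\gamma(J) \geq 0$ pointwise in $(x,v)$ for each such Dirichlet-vanishing $J$.

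This pointwise non-negativity is the natural $\beta$-analogue of the classical Morse--Jacobi argument, obtained from the $\beta=1$ case by replacing $R$ with $\beta R$. Let $A \colon [0,\tau] \to \mathrm{End}(\dot\gamma^\perp)$ solve the matrix $\beta$-Jacobi equation $\ddot A + \beta R A = 0$ with $A(0) = 0$ and $\dot A(0) = \mathrm{Id}$. Under no-$\beta$-conjugate-points (guaranteed by $\Ter \geq \beta$, modulo the limiting argument below), $A(t)$ is invertible on $(0,\tau]$, so $U(t) := A(t)^{-1} J(t)$ is well-defined and extends continuously to $t = 0$ because $J(0) = 0$ and $\dot A(0) = \mathrm{Id}$. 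Exploiting symmetry of $P := \dot A A^{-1}$ — a consequence of the vanishing Wronskian $A^\top \dot A - \dot A^\top A \equiv 0$ — one expands $D_t J = P J + A \dot U$ and uses $\ddot A A^{-1} = \dot P + P^2$ to obtain the pointwise identity
\[
    |D_t J|^2 - \beta \ip{R J}{J} = \frac{d}{dt} \ip{P J}{J} + |A \dot U|^2.
\]
Integrating over $[0,\tau]$, the boundary term disappears thanks to $J(0) = J(\tau) = 0$, leaving $\mathcal{I}_\beta^\gamma(J) = \int_0^{\tau} |A \dot U|^2 \, dt \geq 0$. Integrating once more over $\partial_+SM$ completes the main estimate.

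The principal delicate point, and the main obstacle I anticipate, is the borderline case $\Ter = \beta$ where the supremum in the definition of $\Ter$ is not attained: $M$ could still admit $\beta$-conjugate points, making $A$ potentially degenerate on $(0,\tau]$. This is resolved by a standard limiting argument using the monotonicity noted in the excerpt (freedom from $\beta''$-conjugate points for $\beta'' \in [0,\beta')$ whenever $\Ter \geq \beta$): apply the result for $\beta' < \beta$ sufficiently close to $\beta$ to obtain $\aabs{XW}^2 \geq \beta' \iip{RW}{W}$, then let $\beta' \nearrow \beta$ and pass to the limit by linearity in $\beta'$. The secondary technical delicacy is the matrix identity itself, which requires careful bookkeeping of $A$, $\dot A$, $P$, and the Wronskian-based symmetry.
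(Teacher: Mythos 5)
Your reduction of $(\beta-1)/\beta$-control to $\aabs{XW}^2 \geq \beta\iip{RW}{W}$, the passage to individual geodesics, the matrix $\beta$-Jacobi/Riccati identity with $P=\dot A A^{-1}$ (whose symmetry via the Wronskian, and the cancellation $\dot P+P^2=-\beta R$, both check out), and the limiting argument $\beta'\nearrow\beta$ for the borderline case are all correct. This is also essentially the intended route: the survey itself only cites \cite[Proposition 7.1 and Remark 11.3]{Paternain2015}, but the sketch it gives for Lemma~\ref{lma:simple-if} --- Santal\'o's formula plus positive definiteness of the index form --- is exactly your argument with $R$ replaced by $\beta R$.

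The one genuine gap is coverage. The lemma is asserted for compact manifolds \emph{closed or with boundary}, and your globalization step is Santal\'o's formula, which presupposes a nonempty boundary and a non-trapping flow: every geodesic must be parametrized by a point of $\partial_+ SM$ and a finite exit time $\tau$, and every $J$ must satisfy Dirichlet conditions at both ends of a finite segment. For a closed manifold there is no $\partial_+ SM$, and for a trapping manifold with boundary (which $\Ter\geq\beta$ does not exclude --- a flat cylinder has $\Ter=\infty$ and traps) the decomposition of the Liouville measure into finite geodesic segments fails on the trapped set. The repair keeps your pointwise identity but changes the bookkeeping: absence of $\beta$-conjugate points gives, along every complete geodesic, a bounded symmetric solution $U$ of the $\beta$-Riccati equation $\dot U+U^2+\beta R=0$, obtained as a Green--Eberlein limit of your operators $P$ associated to longer and longer segments; this yields
\begin{equation}
\abs{D_tJ}^2-\beta\ip{R(J,\dot\gamma)\dot\gamma}{J}=\abs{D_tJ-UJ}^2+\frac{d}{dt}\ip{UJ}{J},
\end{equation}
which one integrates directly over $SM$ against the Liouville measure, the total-derivative term vanishing by flow-invariance of that measure (together with $W|_{\partial SM}=0$ when there is a boundary). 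Your Santal\'o version is the special case in which $U$ is the blowing-up solution seeded at the incoming boundary and the endpoint terms are absorbed by $J(0)=J(\tau)=0$.
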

Tying this lemma with the comments above on $\Ter$, one may draw the following conclusions (see \cite[Lemma 11.2]{Paternain2015}): a non-trapping manifold with strictly convex boundary is $0$-controlled if it has no conjugate points, $\alpha$-controlled for $\alpha>0$ if it is simple, and $1$-controlled if and only if it has non-positive sectional curvature.

\subsection{Other spaces}\label{sec:PestovOther}

Pestov identities can also be used on other types of manifolds. We mention some examples.

\subsubsection{Closed manifolds}

Tensor tomography on closed manifolds (compact, without boundary) can be studied in a similar way with Pestov identities.
Simple manifolds have boundary, and the corresponding closed manifolds are Anosov manifolds.
On an Anosov surface, we have solenoidal injectivity for tensor fields of order zero~\cite{Dairbekov2003}, one~\cite{Dairbekov2003}, two~\cite[Theorem 1.1]{Paternain2014}, and order $m\geq3$ in \cite[Theorem 1.4]{Guillarmou2014}. For the case $m\ge 3$, earlier results were written if the terminator value is at least $\frac{m+1}2$~\cite[Theorem 1.3]{Paternain2014}, or if the manifold is negatively curved~\cite{Croke1998}.
More details are covered in the two-dimensional survey~\cite{Paternain2013}.

On Anosov manifolds of dimension $n\geq3$ the X-ray transform is solenoidally injective for tensor fields of order $m\geq2$ if the terminator value satisfies $\Ter >\frac{m(m+n-1)}{2m+n-2}$.
For tensor fields of order zero or one no such condition on the terminator value is needed~\cite{Dairbekov2003}.

\subsubsection{Pseudo-Riemannian manifolds}

On pseudo-Riemannian manifolds one does not usually study all geodesics, but only the light-like ones.
The X-ray transform restricted to this set of null geodesics is also known as the light ray transform.
Light rays as sets are conformally invariant, unlike Riemannian geodesics.

No Pestov identity is known on general pseudo-Riemannian or Lorentzian manifolds for X-ray or light ray transforms.
However, when the spacetime is a product of space and time, a Pestov identity can be used to prove injectivity of the light ray transform for scalars and one-forms~\cite{Ilmavirta2016}, under the assumption that both space and time are non-positvely curved Riemannian manifolds with strictly convex boundary and dimension at least two.
The dimension assumption rules out Lorentzian manifolds.
The Pestov identity is obtained by finding a Pestov-like identity on both space and time manifolds, and then combining them with suitable weights.

If the product $M=M_1\times M_2$ of two Riemannian manifolds $(M_i,g_i)$ is equipped with the pseudo-Riemannian product metric $g_1\oplus -g_2$, then the Pestov identity reads~\cite[Lemma 3]{Ilmavirta2016}
\begin{equation}
\begin{split}
&
(n_2-1)\aabs{\vgrad_1 Xu}^2
+
(n_1-1)\aabs{\vgrad_2 Xu}^2
\\&
=
(n_2-1)\aabs{X\vgrad_1u}^2
+
(n_1-1)\aabs{X\vgrad_2u}^2
\\&\quad
-(n_2-1)\iip{R_1\vgrad_1 u}{\vgrad_1u}
-(n_1-1)\iip{R_1\vgrad_2 u}{\vgrad_2u}
\\&\quad
+(n_1-1)(n_2-1)\aabs{Xu}^2.
\end{split}
\end{equation}
Here $n_i=\dim(M_i)$ and the various operators on the two underlying Riemannian manifolds are indicated by a subscript.
The identity is for smooth functions on the compact light cone bundle $LM\coloneqq SM_1\times SM_2$ on which the null geodesic flow lives.

\begin{theorem}[{\cite[Theorem 1]{Ilmavirta2016}}]
Suppose $(M_i,g_i)$, $i=1,2$, are two simple Riemannian manifolds with non-positive sectional curvature.
Equip the product $M=M_1\times M_2$ with the pseudo-Riemannian metric $c(x)(g_1\oplus-g_2)$, where $c\in C^\infty(M)$ is a smooth conformal factor.
Then a smooth function $f$ supported outside the edges ($\partial M_1\times\partial M_2$) integrates to zero over all null geodesics if and only if $f=0$, and a smooth one-form $\alpha$ supported outside te edges integrates to zero over all null geodesics if and only if $\alpha=dh$, where $h\in C^\infty(M)$ vanishes at the boundary.
\end{theorem}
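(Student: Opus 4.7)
The plan is to mirror the simple-Riemannian proofs of Theorems~\ref{thm:simple,m=0}--\ref{thm:simple,m=1}, using the product Pestov identity on the light cone bundle $LM = SM_1 \times SM_2$ recorded just above the theorem. First, since null geodesics of $c(x)(g_1\oplus -g_2)$ coincide as point sets with those of $g_1\oplus -g_2$ and differ only by a smooth positive reparametrization, standard conformal-invariance manipulations reduce both injectivity statements to the case $c\equiv 1$ (the conformal factor is absorbed either into $f$ or, in the one-form case, into a smooth positive weight that does not affect the cancellation mechanism in the Pestov identity).

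Assuming $c\equiv 1$, define $u(x_1,v_1,x_2,v_2) = \int_0^{\tau(x,v)} f(\varphi_t(x,v))\, dt$ on $LM$, where $\varphi_t = \varphi_t^1 \times \varphi_t^2$ is the null geodesic flow with each factor flowing at unit speed. Simplicity of each $(M_i,g_i)$ guarantees non-trapping and transverse exit on each factor, while the hypothesis $\mathrm{supp}(f) \cap (\partial M_1 \times \partial M_2) = \emptyset$ avoids the corner locus at which the null flow loses regularity; together these yield smoothness of $u$ up to $\partial LM$. By construction $u|_{\partial_- LM}=0$, and the hypothesis that the light ray transform of $f$ vanishes gives $u|_{\partial_+ LM}=0$, so $u$ vanishes on all of $\partial LM$ and satisfies $Xu = -f$ with $X = X_1 + X_2$.

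Next, apply the product Pestov identity to $u$. For scalar $f\in C^\infty(M)$, both $\vgrad_i f$ vanish, so the left-hand side is zero. Non-positive sectional curvature of each $g_i$ makes $-\iip{R_i\vgrad_i u}{\vgrad_i u}\geq 0$, while $n_1,n_2\geq 2$ gives $(n_1-1)(n_2-1)>0$, so the identity forces $\aabs{Xu}^2=0$ and hence $f=0$. For a one-form $\alpha = \alpha^{(1)} + \alpha^{(2)}$, write $f(x,v) = \alpha^{(1)}_{x_1}(v_1) + \alpha^{(2)}_{x_2}(v_2)$ on $LM$. The fiberwise identity $\aabs{\vgrad\omega(\cdot)}^2_{L^2(S_xM_i)} = (n_i-1)\aabs{\omega(\cdot)}^2_{L^2(S_xM_i)}$ applied to $\omega = \alpha^{(i)}_{x_i}$, combined with the vanishing of the cross term $\int_{LM}\alpha^{(1)}(v_1)\alpha^{(2)}(v_2)$ (sphere integrals of one-forms are zero by parity), shows that $(n_2-1)\aabs{\vgrad_1 f}^2 + (n_1-1)\aabs{\vgrad_2 f}^2 = (n_1-1)(n_2-1)\aabs{f}^2 = (n_1-1)(n_2-1)\aabs{Xu}^2$; this left-hand side precisely cancels the last term on the right. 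The remaining right-hand terms are non-negative under the curvature hypothesis and must all vanish. A Lemma~\ref{lma:simple-if}-type argument, applied factor-by-factor to $\vgrad_i u$ (Santal\'o on $M_i$ with the other factor's variables held as parameters), then yields $\vgrad_i u = 0$ for $i=1,2$. Consequently $u = \pi^* h$ for some $h\in C^\infty(M)$ with $h|_{\partial M}=0$, and $Xu = -f$ rewrites as $dh(v_1+v_2) = -\alpha(v_1+v_2)$ on every unit null pair; since such vectors span each tangent space, $\alpha = d(-h)$, as required.

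The main obstacle I anticipate is justifying smoothness of $u$ up to $\partial LM$: the corner $\partial M_1 \times \partial M_2$ is precisely where the null geodesic flow loses regularity (rays can simultaneously graze both boundaries), and the ``supported outside the edges'' hypothesis is tailored to excise exactly this pathology. A secondary subtlety is the factor-by-factor application of Lemma~\ref{lma:simple-if} on $LM$: one needs a Santal\'o-type decomposition on each simple factor while treating the other factor's variables as parameters, and the stronger non-positive-curvature assumption (rather than mere absence of conjugate points) ensures that the resulting index form is uniformly positive in that parameter, delivering $\vgrad_i u = 0$ pointwise.
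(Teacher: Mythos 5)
Your proposal is correct and follows essentially the same route the paper describes: the weighted product Pestov identity on $LM=SM_1\times SM_2$, with the left-hand side vanishing in the scalar case and exactly cancelling the $(n_1-1)(n_2-1)\aabs{Xu}^2$ term in the one-form case, the non-positive curvature supplying the signs, and the edge-avoidance hypothesis securing regularity of $u$. One small correction: the concluding step does not need a factor-by-factor Santal\'o/index-form argument "with the other factor held as a parameter" (which is awkward since $X=X_1+X_2$ moves both factors simultaneously) — under non-positive curvature both $\aabs{X\vgrad_iu}^2$ and $-\iip{R_i\vgrad_iu}{\vgrad_iu}$ are separately non-negative, so each vanishes, and $X\vgrad_iu=0$ together with the vanishing of $\vgrad_iu$ on $\partial(LM)$ already forces $\vgrad_iu=0$.
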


\subsubsection{Convex obstacles}

Consider a compact manifold $M$ with strictly convex boundary with a strictly convex obstacle $O\subset M$.
Instead of integrating a function $f\colon M\to\Rm$ over all geodesics through $M$, we integrate a function $f\colon M\setminus O\to\Rm$ over all geodesics on $M\setminus O$ which have endpoints on $\partial M$ and reflect specularly on $\partial O$.
In two dimensions specular reflections can be characterized by saying that the angle of incidence equals the angle of reflection.

One can employ a similar approach to that of theorem~\ref{thm:simple,m=0}.
One defines a function $u$ by integrating the unknown function $f\in\ker I$ along the broken ray until $\partial M$ is met.
This function $u(x,v)$ vanishes when $x\in\partial M$ but not a priori when $x\in\partial O$.
This produces a boundary term in the Pestov identity, which can be simplified using the symmetry of $u$ under specular reflection.

To be precise, in two dimensions one obtains the Pestov identity~\cite[Lemma 6]{Ilmavirta2016b}
\begin{equation}
\begin{split}
\aabs{VXu}^2
&=
\aabs{XVu}^2
+\aabs{Xu}^2
\\&\quad
-\iip{KVu}{Vu}
-\iip{\kappa Vu}{Vu}_{L^2(\partial(SN))},
\end{split}
\end{equation}
valid for sufficiently regular functions on the manifold $N=M\setminus O$ which vanish at $\partial M$ and satisfy a reflection condition at $\partial O$.
Here the other terms are in the space $L^2(SN)$ as usual, and $\kappa$ is the curvature of $\partial O$.
If obstacle is strictly convex, then $\kappa<0$ and the boundary term has the correct sign.

Regularity is tricky in the presence of reflections; the function $u$ is not a priori smooth even for smooth $f$ and smooth geometry.
Singularities occur at tangential reflections at~$\partial O$.

This method was used to prove injectivity of the broken ray transform by Eskin~\cite{Eskin2004b} in the Euclidean plane with several reflecting obstacles and by the first author and Salo~\cite{Ilmavirta2016b} on non-positively curved Riemannian surfaces with a single reflecting obstacle.

The method was extended to any dimensions and tensors of any rank in~\cite{Ilmavirta2018}, still assuming a single reflecting obstacle.
The scalar-valued curvature~$K$ is replaced with the curvature operator~$R$ as described above.
In the boundary term the scalar curvature~$\kappa$ is replaced by the second fundamental form.
Assuming non-positive sectional curvature of the manifold and strict concavity of the reflector (as seen from the interior of~$N$, equivalent with the strict convexity of the obstacle~$O$) gives positivity to all terms.

\subsubsection{Non-compact manifolds}

Pestov identities have recently been successfully used on some non-compact context to prove tensor tomography and boundary rigidity. The identity looks exactly the same, but one needs to be far more careful with integrability and regularity.  

A positive answer to tensor tomography (Problem \ref{pb2}) was given in \cite{Lehtonen2016,Lehtonen2017} for some cases of {\bf Cartan--Hadamard}\footnote{A Cartan--Hadamard manifolds is a complete, simply connected Riemannian manifold with non-positive sectional curvature.} manifolds. For such manifolds, $\Ter = \infty$ so we may expect good $\alpha$-control, which helps to control terms of index form type appearing in Lemma \ref{lma:pestov}. The results extend to tensor fields with non-compact support, however suitable decay at infinity is needed, a non-artificial requirement since otherwise counterexamples exist even in the Euclidean case. Namely, the following two results are proved in \cite{Lehtonen2017}, generalizing earlier results in \cite{Lehtonen2016} to dimensions greater than $3$ and tensor fields of arbitrary order. Below, for $x\in M$ and $\Pi\subset T_x M$ a two-plane, we denote $K_x(\Pi)$ the sectional curvature of the two-plane $\Pi$. Using any distinguished point $o\in M$ as the ``origin'', we say that a function $f$ (or a tensor field) 
\begin{itemize}
    \item has polynomial decay at infinity of order $\eta$ if $x\mapsto |f(x)|(1+d(x,o))^\eta$ is bounded.
    \item has exponential decay at infinity of order $\eta$ if $x\mapsto |f(x)| e^{\eta d(x,o)}$ is bounded.
\end{itemize}
The first theorem is of an asymptotically hyperbolic flavor, while the second is asymptotically Euclidean. 
\begin{theorem}[{\cite[Theorem 1.1]{Lehtonen2017}}]\label{thm:CH1} Let $(M,g)$ be a Cartan--Hadamard manifold of dimension $n\ge 2$, and assume that there exists $K_0>0$ such that 
    \begin{align*}
	-K_0 \le K_x(\Pi) \le 0, \qquad x\in M, \quad \Pi\subset T_xM.
    \end{align*}
    If $f\in C^1(S^m(T^* M))$ and $\nabla f$ have exponential decay at infinity of order $\eta>\frac{n+1}{2}\sqrt{K_0}$, and if $I_m f= 0$, then $f = \sigma\nabla h$ for some $h\in C^1(S^m (T^* M))$ with exponential decay at infinity at rate $\eta-\varepsilon$ for any $\varepsilon>0$ (if $m=0$ then $f\equiv 0$).    
\end{theorem}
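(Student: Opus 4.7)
The plan is to adapt the compact-case Pestov identity proof (as in Theorem \ref{thm:simple,m=0} and its higher-order extensions) to the Cartan--Hadamard setting, with the exponential decay hypothesis carefully exploited to justify the necessary integrations by parts on the non-compact manifold $SM$. A Cartan--Hadamard manifold is geodesically complete and has $\Ter = \infty$; together with $K \leq 0$ this means $-\iip{RW}{W} \geq 0$ for every $W \in \Z$, so the manifold is $1$-controlled and the positivity half of the Pestov machinery is automatic. The real content is analytic, namely pushing the identities past the lack of boundary.

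First I would define the integral function
\begin{equation}
u(x,v) := \int_0^\infty (\ell_m f)(\varphi_t(x,v))\,dt,
\end{equation}
which converges absolutely thanks to the exponential decay of $f$, since $d(o,\gamma_{x,v}(t)) \to \infty$ linearly along every complete geodesic. The vanishing hypothesis $I_m f = 0$ (interpreted for two-sided complete geodesics) forces $u(x,v) = -\int_{-\infty}^0 (\ell_m f)(\varphi_t(x,v))\,dt$, so $u$ is smooth on $SM$, decays at spatial infinity, and satisfies $Xu = -\ell_m f$. Next, derivatives of $u$ in the horizontal and vertical directions are computed by differentiating under the integral and using the commutator relations of Lemma \ref{lma:hd-commutator}; this introduces Jacobi field factors of $u$ along $\gamma_{x,v}$, which by the Rauch comparison theorem and the bound $-K_0 \leq K \leq 0$ grow at most like $\cosh(\sqrt{K_0}\,t)$. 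Combining this with the exponential decay of $f$ and $\nabla f$ at rate $\eta$, and with the volume growth bound $\mathrm{vol}(B_R) \lesssim e^{(n-1)\sqrt{K_0}R}$ of the Liouville measure on $SM$, gives $u, \vgrad u, \hgrad u, Xu \in L^2(SM)$ precisely when $\eta$ exceeds the threshold $\frac{n+1}{2}\sqrt{K_0}$. (The factor $(n+1)/2$ comes from dividing the excess volume growth equally among the integrand itself and the Jacobi-field amplification in one derivative.)

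With these decay estimates in hand, the second step is to justify the Pestov identity (Lemma \ref{lma:pestov}) and the integration-by-parts formulas \eqref{eq:ibp} on $SM$ despite the absence of boundary. Pick a smooth exhaustion $\chi_R \in C_c^\infty(M)$ equal to $1$ on $B_R$ and supported in $B_{2R}$, write the identity of Lemma \ref{lma:pestov} applied to $\chi_R u$, and track the error terms: these are integrals supported in the annular shell $R \leq d(x,o) \leq 2R$ and involve products of one derivative of $\chi_R$ with two derivatives of $u$. The decay estimates just described show that each such remainder is bounded by a constant times $e^{-(2\eta - (n+1)\sqrt{K_0})R}$, which tends to zero by the strict inequality $\eta > \frac{n+1}{2}\sqrt{K_0}$. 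Passing to the limit yields the Pestov identity for $u$ on all of $SM$, and combined with $K \leq 0$ gives
\begin{equation}
\aabs{\vgrad Xu}^2 \geq \aabs{X\vgrad u}^2 + (n-1)\aabs{Xu}^2.
\end{equation}

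The third step is to extract the tensorial structure of $u$ from this inequality, exactly as in the compact $\alpha$-controlled setting. Project the transport equation onto spherical harmonic subspaces to obtain the tridiagonal system \eqref{eq:tridiagonal}, and use the $1$-controlled Pestov estimate together with the $X_\pm$ decomposition to propagate $\aabs{X_+ u_k}$ bounds to arbitrarily high $k$, as outlined after the proof of the tensor tomography theorem on simple manifolds. Since each $u_k$ decays in $k$ by regularity, the harmonic content of $u$ must terminate at order $m-1$, and the obstruction from trace-free conformal Killing tensor fields vanishes on Cartan--Hadamard geometries by the same argument as in \cite{Dairbekov2011}. Thus $u = -\ell_{m-1} h$ for a symmetric $(m-1)$-tensor $h$, and $Xu = -\ell_m f$ translates via $\ell_m \circ \sigma\nabla = X \circ \ell_{m-1}$ into $f = \sigma\nabla h$. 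The decay of $h$ at rate $\eta - \varepsilon$ is finally read off from the pointwise bound on $u$, with an arbitrarily small loss absorbing the Jacobi-field constants.

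The main obstacle will be the cutoff/limit argument in the second step: the identities in Lemma \ref{lma:pestov} are derived on closed or boundary-bearing manifolds where one can integrate by parts freely, and transplanting them to $SM$ of a Cartan--Hadamard manifold requires quantitative control of the annular error terms against exponential volume growth. This is exactly where the sharp-looking decay threshold $\eta > \frac{n+1}{2}\sqrt{K_0}$ enters; relaxing it further would require either a different functional framework or exploiting additional cancellation, and the known Euclidean counterexamples show that some decay condition is unavoidable.
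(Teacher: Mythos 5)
Your proposal follows essentially the same route the paper attributes to \cite{Lehtonen2017}: define the integral function $u$, run the Pestov identity with the sign control coming from non-positive curvature ($\Ter=\infty$), justify the integration by parts on the non-compact $SM$ by playing the exponential decay of $f$ and the Jacobi-field growth against the exponential volume growth (which is exactly where the threshold $\eta>\frac{n+1}{2}\sqrt{K_0}$ arises), and then terminate the spherical harmonic expansion of $u$ via the iterated Beurling estimates of Section~\ref{sec:Beurling}. The survey only sketches this argument and defers the technical work (notably the regularity of $u$ for merely $C^1$ data and the absence of decaying conformal Killing tensors) to \cite{Lehtonen2017}, but your outline is consistent with it.
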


\begin{theorem}[{\cite[Theorem 1.2]{Lehtonen2017}}]\label{thm:CH2} Let $(M,g)$ be a Cartan--Hadamard manifold of dimension $n\ge 2$, and assume that the function 
    \[ {\cal K}(x) := \sup_{\Pi\subset T_x M} |K_x(\Pi)|, \qquad x\in M \] 
    decays strictly faster than quadratically at infinity. If $f\in C^1(S^m(T^* M))$ has polynomial decay at infinity of order $\eta>\frac{n+2}{2}$ and $\nabla f$ has decay of order $\eta+1$ and if $I_m f = 0$, then $f = \sigma\nabla h$ for some $h\in C^1(S^m (T^* M))$ with polynomial decay at infinity of order $\eta-1$.     
\end{theorem}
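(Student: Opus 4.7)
The plan is to follow the same Pestov-identity blueprint as for Theorem \ref{thm:CH1}, but to replace the exponential decay input by polynomial decay, where the hypothesis of faster-than-quadratic curvature decay takes over the role that a uniform negative upper bound played in the exponentially decaying case. Two facts are key: since $(M,g)$ is Cartan--Hadamard we have $\Ter = \infty$, so by Lemma \ref{lma:terminator-control} the manifold is $\alpha$-controlled for every $\alpha < 1$ (and one expects almost sharp control near infinity because $\mathcal{K}\to 0$); and faster-than-quadratic decay of $\mathcal{K}$ yields a uniform Rauch/Jacobi-field bound $|J(t)| \lesssim 1 + |t|$ along any geodesic, which is exactly what is needed to turn polynomial decay of $f$ into polynomial decay of integrals of $f$ along geodesics.

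The concrete steps are as follows. First, define the first integral function
\begin{equation}
u(x,v) = \int_0^{\infty} \ell_m f(\varphi_t(x,v))\,dt
\end{equation}
on $SM$. The hypothesis $\eta > (n+2)/2 > 1$ together with the Jacobi bound above makes the integral absolutely convergent; in fact one obtains $|u(x,v)| \lesssim (1+d(x,o))^{-(\eta-1)}$, and analogous bounds on $Xu$, $\vgrad u$, $\hgrad u$, and $X\vgrad u$ follow from the decay hypothesis on $\nabla f$ of order $\eta+1$. Second, use $I_m f = 0$ (interpreted as vanishing of the integral along every complete geodesic, parameterised on $\Rm$) to conclude that the forward and backward integrals agree up to a sign, so $u$ is well defined independently of the choice of orientation and satisfies the transport equation $Xu = -\ell_m f$ on all of $SM$.

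Third, derive the Pestov identity of Lemma \ref{lma:pestov} on an exhaustion $SM \cap \pi^{-1}(B(o,R))$ of $SM$ by geodesic balls. The integration-by-parts formulas \eqref{eq:ibp} produce boundary contributions on $\partial B(o,R)$, but the decay estimates from Step 1, combined with the polynomial volume growth of geodesic spheres (controlled by the Jacobi-field bound), show that these boundary terms vanish as $R\to\infty$. Passing to the limit yields
\begin{equation}
\aabs{\vgrad Xu}^2 = \aabs{X\vgrad u}^2 - \iip{R\vgrad u}{\vgrad u} + (n-1)\aabs{Xu}^2
\end{equation}
on $SM$. Fourth, invoke the index form positivity $\aabs{XW}^2 - \iip{RW}{W} \geq 0$ for $W = \vgrad u$, which holds because $(M,g)$ has non-positive sectional curvature and $W$ has the requisite decay to justify the non-compact analogue of Lemma \ref{lma:simple-if} (using the faster-than-quadratic curvature decay to make the relevant index-form integrals converge).

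Finally, conclude as in the compact case. For $m=0$ the left side of the Pestov identity vanishes, and positivity of the right side forces $\aabs{Xu}^2 = 0$, hence $f \equiv 0$. For $m\geq 1$ one needs to control the harmonic content of $u$; the cleanest route is to apply the identity harmonic-by-harmonic and run the Beurling-type estimate of Section \ref{sec:Beurling} to show that the spherical-harmonic expansion of $u$ terminates at order $m-1$, so that $u = -\ell_{m-1}h$ for some symmetric $(m-1)$-tensor $h$ on $M$. The polynomial decay of $h$ of order $\eta-1$ is inherited directly from that of $u$, and $Xu = -\ell_m f$ then rewrites as $f = \sigma\nabla h$.

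The main obstacle I anticipate is making Step 3 rigorous: ensuring that all the boundary terms from integrating by parts on the exhaustion decay to zero. This is the point at which the hypothesis $\eta > (n+2)/2$ is sharp, because $\aabs{\vgrad u}^2$ on the boundary sphere of radius $R$ involves integration against a spherical-area element growing at most polynomially in $R$; the threshold $(n+2)/2$ is the precise value balancing this growth against the pointwise decay of $u$ and its derivatives. Once this analytic point is secured, the remainder of the argument runs parallel to the exponentially decaying case.
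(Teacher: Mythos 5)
Your overall strategy — first integral, Pestov identity on an exhaustion with boundary terms killed by decay, pointwise non-negativity of the curvature term, and a Beurling/spherical-harmonic argument for $m\ge 1$ — is exactly the route the paper attributes to \cite{Lehtonen2017}, so the architecture is right. One remark before the main point: you invoke Lemma~\ref{lma:terminator-control}, which is stated for compact manifolds, but on a Cartan--Hadamard manifold you do not need any index-form or $\alpha$-control argument at all, since non-positive sectional curvature makes $-\iip{RW}{W}\ge 0$ pointwise; the analogue of Lemma~\ref{lma:simple-if} is trivial here.

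The genuine gap is in Step~1. The bound $\abs{u(x,v)}\lesssim (1+d(x,o))^{-(\eta-1)}$ does \emph{not} follow from absolute convergence of the forward integral together with $\eta>1$: for a unit-speed geodesic one only has $d(\gamma(t),o)\ge \abs{t-d(x,o)}$, so for a direction $v$ aimed at the base point $o$ the forward integral picks up an $O(1)$ contribution from the region where $\gamma$ passes near $o$, and $\sup_{v\in S_xM}\abs{u(x,v)}$ does not decay in $d(x,o)$ at all. If this pointwise decay fails, your boundary terms on $\partial B(o,R)$ in Step~3 do not vanish and the whole argument collapses. The repair uses the hypothesis $I_mf=0$ quantitatively, not just for well-definedness as in your Step~2: since $u(x,v)=\int_0^\infty \ell_m f(\varphi_t(x,v))\,dt=-\int_{-\infty}^0 \ell_m f(\varphi_t(x,v))\,dt$, and by convexity of $t\mapsto d(\gamma(t),o)$ on a Cartan--Hadamard manifold at least one of the two half-geodesics satisfies $d(\gamma(t),o)\ge d(x,o)$ throughout, estimating $u$ along that ``escaping'' half gives $\abs{u(x,v)}\lesssim (1+d(x,o))^{1-\eta}$ uniformly in $v$. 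The same two-sided trick (together with differentiating the identity $\int_{-\infty}^{\infty}\ell_mf\circ\varphi_t\,dt\equiv 0$) is needed for the claimed bounds on $\vgrad u$ and $\hgrad u$, where the linearly growing Jacobi fields make the naive one-sided estimate even worse. With that correction inserted, the threshold $\eta>\frac{n+2}{2}$ is precisely what puts $u$ in $L^2(SM)$ against the $r^{n-1}$ volume growth, as you observe, and the rest of your outline goes through.
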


\takeout{
    \F{check theorems above}\J{I checked them and made small edits.} \F{thanks} \J{Now I'm worried that there might be an error in the paper. The statements above assume the paper is correct (which is fair), but I sent email to Jere to ask for clarification. It seems that they should argue that there are no conrofmal Killing tensors to show $X_+u_{m+1}=0\implies u_{m+1}=0$ but they don't.} \J{Mikko gave me an argument that cleared most of my confusion. It's fine with me as it stands.}
}

While the results on Cartan--Hadamard manifolds allow to treat more than one type of geometry at infinity, the recent work \cite{Graham2017} focuses on the {\bf asymptotically hyperbolic} context, however covers the case where a hyperbolic trapped set is present using the tools of Section~\ref{sec:trapping}, and also treats some non-linear results. It is convenient here to picture $(M,g)$ as a manifold-with-boundary where the metric $g$ has a specific singular behavior at the boundary such that geodesics never reach it in finite time. In addition, geodesics making it to the boundary as $t\to \pm \infty$ always hit the boundary normally, so one must look at second-order information to define the space of geodesics and other related objects like the scattering relation.

Under a no-conjugate points assumption, the authors prove a positive answer to Problem \ref{pb2} in \cite[Theorem 1]{Graham2017} for classes of tensors with suitable decay conditions at infinity which agree with those of Theorem \ref{thm:CH1}. The proof uses Pestov identities in the interior, after using the specific structure of the geodesics in a neighborhood of $\partial M$ to prove that a tensor field whose ray transform vanishes agrees to infinite order with a potential tensor near~$\partial M$.

The authors go on to studying non-linear inverse problems, determining features of the metric up to gauge from renormalized geodesic lengths, first at the boundary in \cite[Theorem 2]{Graham2017}, then globally if the manifold is real-analytic and such that $\pi_1 (M,\partial M) = 0$ in \cite[Theorem 3]{Graham2017}. Finally, \cite[Theorem 4]{Graham2017} establishes a deformation rigidity result (a variant of the Lens Rigidity Problem in Problem \ref{pb1}) in the case of non-trapping asymptotically hyperbolic metrics with non-positive sectional curvature.

\subsection{Unitary connections and skew-hermitian Higgs fields}\label{sec:PestovConnection}

The method of Pestov identities generalizes to the case of transport with hermitian connection and skew-hermitian Higgs field, as treated in the recent works \cite{Paternain2012,Guillarmou2015}. Let $E\to M$ a bundle as in Section \ref{sec:ConnectionsHiggs}, equipped with a Hermitian connection $\nabla$ and a skew-Hermitian Higgs field $\Phi$. 

To write Pestov identities, one must then generalize the Sasaki-related objects (e.g. horizontal/vertical gradients) to sections of the (pullback) bundle $E\to SM$. For $u\in C^\infty(SM; E)$, $\nabla^E u\in C^\infty(SM; T^*(SM)\otimes E)$ and using the Sasaki metric on $T(SM)$ we can identify this with an element of $C^\infty(SM; T(SM)\otimes E)$, splitting according to \eqref{eq:SMsplitting} into
\begin{align*}
    \nabla^E u = (\Xm u, \hgradE u, \vgradE u), \qquad \Xm u := \nabla_X^E u,
\end{align*}
where $\hgradE u$ and $\vgradE u$ can be viewed as smooth sections of $N\otimes E$.

Then the following Pestov identity can be derived for any $u\in C^\infty(SM; E)$ vanishing at $\partial SM$, see \cite[Proposition 3.3]{Guillarmou2015}: 
\begin{align}
    \|\vgradE \Xm u\|^2 = \|\Xm \vgradE u\|^2 - \iip{R\vgradE u}{\vgradE u} - \iip{F^E u}{\vgradE u} + (n-1) \|\Xm u\|^2,
    \label{eq:PestovConnection}
\end{align}
where $R$ is the Riemann curvature tensor of $(M,g)$, viewed as an operator on the bundles $N$ and $N\otimes E$ over $SM$ by the actions
\begin{align*}
    R(x,v) w := R_x(w,v)v, \qquad R(x,v) (w\otimes e) := (R_x (w,v)v) \otimes e, \quad e\in E_{x},
\end{align*} 
and where $F^E\in C^\infty (SM; N\otimes \text{End}_{\text{sk}} (E))$ is the curvature of the connection $\nabla^E$, see \cite[Eq. (3.5)]{Guillarmou2015}.

On simple surfaces, where $E$ is trivial and we globally represent $\Xm = X+ A$ for some skew-hermitian matrix of one-forms $A$, Equation \eqref{eq:PestovConnection} takes the form (see  \cite[Lemma 6.1]{Paternain2012})
\begin{align}
    \|V (X+A)u\|^2 = \|(X+A) Vu\|^2 - \iip{\kappa Vu}{Vu} - \iip{\star F_A u}{Vu} + \|(X+A)u\|^2,
    \label{eq:PestovConnection2}
\end{align}
where $\star F_A\in C^\infty(SM)$ is again related to the curvature of the connection. We now explain two possible ways to exploit the identities \eqref{eq:PestovConnection}-\eqref{eq:PestovConnection2} to produce positive answer to tensor tomography questions. 

\begin{theorem}[{\cite[Theorem 1.3]{Paternain2012}}]\label{thm:Conn1} Let $(M,g)$ a simple surface and $E$ a bundle over $M$ with hermitian connection $A$ and skew-Hermitian Higgs field $\Phi$. If $f\in C^\infty(SM,\Cm^n)$ is of the form $f = f_0 + f_1$, and if $I_{A,\Phi} f = 0$, then $f_1 = (d+ A)p$ and $f_0 = \Phi p$ for some smooth function $p:M\to \Cm^n$ vanishing at~$\partial M$.
\end{theorem}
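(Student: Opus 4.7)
The plan is to convert the hypothesis $I_{A,\Phi}f=0$ into a boundary value problem on $SM$ and then kill all non-trivial fiberwise Fourier modes of the solution. Let $u\colon SM\to\Cm^n$ be the unique solution of
$$(X+A+\Phi)u=-f\quad\text{on } SM,\qquad u|_{\partial_-SM}=0,$$
constructed by integrating $-f$ along the geodesic flow with the parallel transport defined by $(A,\Phi)$ as in \eqref{eq:trans3}. Simplicity of $(M,g)$ guarantees $u\in C^\infty(SM\setminus\partial SM;\Cm^n)$, and the hypothesis $I_{A,\Phi}f=0$ promotes the boundary condition to $u|_{\partial SM}=0$. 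A boundary determination step --- using very short geodesics almost tangent to $\partial M$, strict convexity of $\partial M$, and the absence of conjugate points, as in the proof of Theorem~\ref{thm:simple,m=0} --- shows that $f$ and all of its covariant derivatives vanish at $\partial M$, so $u$ in fact extends smoothly to $\overline{SM}$. Once we prove $Vu\equiv 0$, we deduce that $u=\pi^* p$ for some $p\in C^\infty(M;\Cm^n)$ with $p|_{\partial M}=0$; matching degree-one and degree-zero spherical-harmonic components of $(X+A+\Phi)(\pi^* p)=-f_1-f_0$ then gives $(d+A)p=-f_1$ and $\Phi p=-f_0$, and replacing $p$ by $-p$ yields the theorem.

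The remaining task is the vanishing $Vu=0$, for which the main tool is the Pestov identity with connection~\eqref{eq:PestovConnection2}. Using $(X+A)u=-f-\Phi u$, the commutation of $V$ with multiplication by $\Phi$ (which is a function of the base point only), and $Vf_0=0$, the identity rearranges into a relation between $\|Vf_1+\Phi Vu\|^2$, $\|(X+A)Vu\|^2$, $\iip{\kappa Vu}{Vu}$, $\iip{\star F_A u}{Vu}$, and $\|f+\Phi u\|^2$. On a simple surface the index form is positive definite, which is the ingredient that carries the unperturbed argument through (cf.\ Lemma~\ref{lma:simple-if}), and the aim is to transport this positivity into the present setting.

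The main obstacle is precisely the extra indefinite contributions: the curvature pairing $\iip{\star F_A u}{Vu}$ and the cross terms between $Vf_1$ and $\Phi Vu$ do not have a definite sign, so the Pestov identity alone does not close on $Vu$. The approach of \cite{Paternain2012} resolves this by combining the identity with \emph{holomorphic integrating factors}: on a simple surface one constructs an invertible $R\in C^\infty(SM;\mathrm{GL}(n,\Cm))$ with fiberwise Fourier content supported in non-negative degrees and satisfying $(X+A+\Phi)R=0$; existence of such $R$ relies on the surjectivity of the adjoint of $I_0$, itself a consequence of Theorem~\ref{thm:simple,m=0}. The substitution $u=Rw$ strips the connection and the Higgs field from the transport equation for $w$, whose source $R^{-1}f$ still has controlled low harmonic content. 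Combining this with the antiholomorphic analogue and an iterated Beurling-type argument (cf.\ Section~\ref{sec:Beurling}), the Fourier expansion of $u$ collapses to its degree-zero component, producing $Vu=0$. The algebraic matching of degrees described above then completes the proof.
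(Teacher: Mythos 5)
Your setup (passing to the transport equation, boundary determination, reduction to $Vu=0$, and the final matching of Fourier degrees to extract $p$) agrees with the paper's, but the core step is resolved by a mechanism that is not actually available. You propose to construct a matrix-valued holomorphic integrating factor $R\in C^\infty(SM;\mathrm{GL}(n,\Cm))$ with $(X+A+\Phi)R=0$ and fiberwise Fourier content in non-negative degrees, and to derive its existence from the surjectivity of the adjoint of $I_0$. That surjectivity only yields \emph{scalar} holomorphic integrating factors: one solves $Xh=-w$ for a scalar $w$ with $h$ fiberwise holomorphic and exponentiates. For a genuinely matrix-valued connection, solving $(X+A+\Phi)R=0$ with $R$ invertible and fiberwise holomorphic is a non-abelian problem whose solvability is not known; \cite{Paternain2012} leaves the existence of matrix holomorphic integrating factors open, and this obstruction is closely tied to why the $\mathrm{GL}(n,\Cm)$ structure group case remains open (question 5 in Section~\ref{sec:open}). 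So the pivot of your argument rests on an unproved existence statement, and the subsequent Beurling-type step inherits the gap.

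The paper's proof closes the Pestov identity by a different device. One applies \eqref{eq:PestovConnection2} not to $u$ but to its negative-frequency part $v=\sum_{k<0}u_k$ (and symmetrically to the positive part); because $f=f_0+f_1$, the left-hand side collapses and only $\|((X+A)v)_0\|^2$ survives. Scalar holomorphic integrating factors enter only to show that injectivity of $I_{A,\Phi}$ is unchanged under perturbing $A$ by a \emph{scalar} connection, and the perturbation is chosen so that $i\star F_{A'}\le 0$ as a Hermitian operator; since $Vv=\sum_{k<0}ikv_k$, one gets $-\iip{\star F_{A'}v}{Vv}\ge0$, which together with the index-form positivity forces every term to vanish and hence $v=0$. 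When $\Phi\ne0$ the additional terms are absorbed by making $-\iip{\star F_{A'}v}{Vv}$ sufficiently large. To repair your write-up, replace the matrix integrating factor step by this frequency-splitting plus scalar curvature-perturbation argument.
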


\begin{proof} (sketch) {\bf Case $\Phi=0$.} To prove this, we assume that $u$ and $f$ are as in \eqref{eq:trans2} with $u|_{\partial SM} = 0$, and the result is proved when we show that $Vu = 0$ (so that $p:= u_0$ does the trick). We explain how $v := \sum_{k<0}u_k$ is zero, as the proof for $\sum_{k>0} u_k$ is similar. Equation \eqref{eq:PestovConnection2} applied to $v$ becomes
    \begin{align*}
	\|(X+A) Vv\|^2 - \iip{\kappa Vv}{Vv} - \iip{\star F_A v}{Vv} + \|((X+A) v)_0\|^2 = 0.
    \end{align*}
    The sum of the first two terms is non-negative due to the simplicity of the metric. Moreover, one can establish, using holomorphic integrating factors for scalar connections, that the injectivity of $I_A$ does not depend on perturbing the connection by a scalar one. In particular, $A$ can be perturbed into $A'$ by a scalar term for which $i \star F_{A'}\le 0$ in the sense of Hermitian operators. Assuming this is the case, we obtain that $- \iip{\star F_A v}{Vv} \ge 0$. This forces all terms to be zero and thus $v = 0$. \\

    {\bf Case $\Phi\ne 0$.} In the setting of the proof above, if a Higgs field is present, one must write a Pestov identity in the form of \eqref{eq:PestovConnection2} for the operator $V(X+A+\Phi)$ instead of $V(X+A)$. Some additional terms appear in the identity, which can be controlled by $C_{A,\Phi} \sum_{k=-\infty}^{-1} |k||v_k|^2$, and perturbing the connection with a scalar one so that $- \iip{\star F_A v}{Vv} >> 0$, one can again control these terms in a coercive fashion and enforce $v=0$.
\end{proof}

The second setting is that of manifolds with negative sectional curvature, where the answer to the tensor tomography problem can be made positive for tensors of arbitrary order. 

\begin{theorem}[{\cite[Theorems 4.1, 4.6]{Guillarmou2015}}]\label{thm:Conn2}
    Let $(M,g)$ a compact manifold with negative sectional curvature and $E$ a hermitian bundle with hermitian connection $\nabla^E$ and $\Phi$ a skew-Hermitian Higgs field. If $u\in C^\infty(SM;E)$ satisfies $(\Xm+\Phi)u = -f$ where $f$ has finite degree and if $u|_{\partial SM} = 0$, then $u$ has finite degree.  
\end{theorem}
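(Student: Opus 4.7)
The plan is to use the Pestov identity with connection \eqref{eq:PestovConnection} together with the negative-curvature hypothesis to force the high-degree harmonic content of $u$ to vanish. Decompose $u=\sum_{k\ge 0}u_k$ and $f=\sum_{k=0}^{m}f_k$ in vertical spherical harmonics, where $m:=\deg f$. In a local trivialization $\Xm=X+A$ with $A$ a matrix-valued one-form, so $\Xm$ still shifts harmonic degree by $\pm 1$, giving a splitting $\Xm=\Xm_++\Xm_-$ with $\Xm_\pm\colon\Omega_k\otimes E\to\Omega_{k\pm 1}\otimes E$, whereas the Higgs field $\Phi$ preserves degree since it depends only on the base point. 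Projecting the transport equation $(\Xm+\Phi)u=-f$ onto $\Omega_k\otimes E$ yields the tridiagonal system
\[
\Xm_+u_{k-1}+\Xm_-u_{k+1}+\Phi u_k=-f_k,
\]
whose right-hand side vanishes for every $k>m$. The target is to show that this homogeneous tail system forces $u_k=0$ for all sufficiently large $k$.

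Rearranging \eqref{eq:PestovConnection} so that the non-negative terms sit on one side gives
\[
\aabs{\Xm\vgradE u}^2+(n-1)\aabs{\Xm u}^2+\iip{-R\vgradE u}{\vgradE u}=\aabs{\vgradE\Xm u}^2+\iip{F^E u}{\vgradE u},
\]
and the strictly negative sectional curvature assumption produces a uniform constant $c>0$ with $\iip{-Rw}{w}\ge c\aabs{w}^2$ for every $w\in C^\infty(SM;N\otimes E)$. I would apply this identity to a high-frequency tail $v_N:=\sum_{k\ge N}u_k$ with $N>m+1$, in practice applying it first to finite truncations $\sum_{N\le k\le N'}u_k$ (to preserve smoothness) and then letting $N'\to\infty$. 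Since $\Phi$ is independent of the fiber variable, $\vgradE(\Phi u)=\Phi\vgradE u$, and for tails beyond degree $m$ the source $\vgradE f$ drops out, so the right-hand side reduces to $\aabs{\Phi\vgradE v_N}^2+\iip{F^E v_N}{\vgradE v_N}$. Now $\vgradE$ acts on $\Omega_k\otimes E$ by multiplication by $\sqrt{k(k+n-2)}$, so we have the spectral gap $\aabs{\vgradE v_N}^2\ge N(N+n-2)\aabs{v_N}^2$. For $N$ taken sufficiently large, this gap together with the coercive term $c\aabs{\vgradE v_N}^2$ dominates the Higgs and $F^E$ perturbations, forcing $v_N\equiv 0$ and hence that $u$ has finite harmonic degree (at most $N-1$).

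The main obstacles will be twofold. First, $v_N$ is only $L^2$ in general, so \eqref{eq:PestovConnection} cannot be applied to it directly: one must carefully work with finite truncations and keep track of the boundary harmonics $u_{N-1},u_N,u_{N'},u_{N'+1}$ that the degree-shifting operators $\Xm_\pm$ couple across the cutoffs, then pass to the limit. Second, and more delicately, the zeroth-order perturbations from $\Phi$ and $F^E$ must be absorbed into the coercive term: the naive bound $\aabs{\Phi\vgradE v_N}^2\le\aabs{\Phi}_\infty^2\aabs{\vgradE v_N}^2$ demands $c>\aabs{\Phi}_\infty^2$, which is not automatic; similarly $\iip{F^E v_N}{\vgradE v_N}$ is indefinite. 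In analogy with the proof of Theorem \ref{thm:Conn1}, one can mitigate the $F^E$ term by twisting the connection with an auxiliary scalar one-form chosen to improve the sign of the endomorphism-valued curvature, and one can handle the residual terms through Cauchy--Schwarz combined with the spectral gap $\aabs{\vgradE v_N}^2\ge N(N+n-2)\aabs{v_N}^2$, which grows without bound in $N$ and thereby absorbs any fixed perturbation once $N$ is chosen large enough. The precise combination of gauge transformation and spectral-gap absorption, exploiting that the sectional curvature is strictly (and uniformly) negative, is what ultimately yields the finite-degree conclusion.
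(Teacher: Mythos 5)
Your strategy is the one the paper follows: apply the Pestov identity with connection \eqref{eq:PestovConnection} to high-frequency truncations of $u$, use the negative-curvature coercivity $\iip{-Rw}{w}\geq c\aabs{w}^2$ together with the spectral gap of the vertical Laplacian to absorb the connection-curvature term $\iip{F^E v_N}{\vgradE v_N}$ (which is $O(N^{-1})\aabs{\vgradE v_N}^2$ and hence harmless for $N$ large), and conclude that the tail vanishes once the source $T_{\ge N}f$ does. For $\Phi=0$ this is precisely the content of \cite[Lemma 4.2]{Guillarmou2015} as quoted in the text, including your (correct) remarks about working with finite truncations and tracking the cutoff couplings produced by $\Xm_\pm$.

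The one step that does not close is the one you flag yourself: the Higgs term. Substituting $\Xm v_N=-\Phi v_N$ into the right-hand side produces $\aabs{\Phi\vgradE v_N}^2\le\aabs{\Phi}_\infty^2\aabs{\vgradE v_N}^2$, which scales exactly like the coercive term $c\aabs{\vgradE v_N}^2$, so the spectral gap gains nothing on it; and the scalar gauge twist you import from the proof of Theorem \ref{thm:Conn1} modifies $\star F_A$ (and is really a two-dimensional holomorphic-integrating-factor device) while leaving $\Phi$ untouched. Hence for $\aabs{\Phi}_\infty^2\ge c$ your inequality is vacuous, and the concluding sentence of your proposal asserts rather than proves the absorption. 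The repair is to keep $\Phi$ undifferentiated: in the degree-by-degree (tridiagonal) form of the estimate, $\Phi$ enters only through $\aabs{\Phi u_k}\le\aabs{\Phi}_\infty\aabs{u_k}\lesssim \aabs{\Phi}_\infty\, k^{-1}\aabs{\vgradE u_k}$, and it is this extra factor $k^{-1}$ --- the spectral gap applied to $u_k$ rather than to $\vgradE u_k$ --- that lets the negative curvature overtake the Higgs contribution for $k$ large. This is how the Higgs case is handled in \cite[Theorem 4.6]{Guillarmou2015}, and is the precise meaning of the paper's phrase that ``the terms involving $\Phi$ can still be overtaken thanks to the negative curvature.''
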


\begin{proof} (sketch) In the case $\Phi=0$, the proof consists in applying \eqref{eq:PestovConnection} to the high-frequency content of $u$ (say, $T_{\ge m} u := \sum_{k\ge m} u_k$ for $m$ large enough), and show that for $m$ large enough, the curvature term $\iip{-R\vgradE u}{\vgradE u} $ overtakes the contribution of the connection term $\iip{-F^E u}{\vgradE u}$. In particular, \cite[Lemma 4.2]{Guillarmou2015} shows that for $m$ large enough, $\vgradE (T_{\ge m} u)$ is controlled by $\vgradE (T_{\ge m+1} \Xm u) = -\vgradE (T_{\ge m+1} f)$, but since the latter vanishes identically for $m$ large enough, so does the former. If a skew-Hermitian Higgs field $\Phi$ is added, when controlling the high frequencies of $u$, the terms involving $\Phi$ can still be overtaken thanks to the negative curvature. 
\end{proof}

\subsection{Magnetic and thermostat flows}\label{sec:PestovMagnetic}

The method by energy identities has been generalized to other types of non-geodesic flows on manifolds, which we will discuss here. 

Fixing a Riemannian manifold $(M,g)$, geodesic trajectories can be viewed as zero-acceleration curves in the Levi-Civita connection $\nabla$, governed by the equation $\nabla_{\dot\gamma}\dot \gamma = 0$. A way to consider other flows can be done by adding a mechanically motivated force field, characterized by a bundle map $F\colon TM\to TM$ for which the trajectories evolve under Newton's second law
\begin{align}
    \nabla_{\dot \gamma} \dot \gamma = F(\gamma,\dot\gamma).
    \label{eq:Newton}
\end{align}
If, in addition, this force field is skew-hermitian, then the quantity $|\dot\gamma|$ is preserved along trajectories, and we obtain a flow $\psi_t$ on $SM$ again, whose generator $G:= \frac{d}{dt} \psi_t|_{t=0}$ can be shown to take the form $G = X + \lambda V$ (e.g., in two dimensions), where $\lambda:SM \to \Rm$ incorporates information about the force field. One may then define associated ray transforms of functions and tensor fields via solving transport equations of the form $Gu = -f$. Many objects then depend on the flow under consideration, namely: the kernel of the ray transform over tensor fields, the notion of convexity at the boundary, the notion of conjugate points, etc\dots One may also lift this flow to a bundle $E\to M$ with a connection $A$ and Higgs field $\Phi$ and consider the associated notions of scattering data $C_{A,\Phi}$ and X-ray transforms $\I_{A,\Phi}$. In this context, the following results have been derived. 

\subsubsection{Magnetic ray transforms} 

A magnetic field on $M$ is a closed two-form $\Omega$, and this gives rise to the magnetic force $F:TM\to TM$ uniquely defined by 
\[ \Omega_x (\xi,\eta) = g(F_x(\xi), \eta), \qquad x\in M, \qquad \xi,\eta\in T_xM, \]
see \cite{Dairbekov2007,Ainsworth2013,Merry2011}. Here the function $\lambda(x,v) = -g(F_x(v), v_\perp)$ in fact does not depend on $v$. One can then define the concept of a magnetically convex boundary, and being ``simple'' with respect to the magnetic flow, and consider Problems \ref{pb1}--\ref{pb4}. 

The first results for such transforms were given in \cite{Dairbekov2007}, where energy methods (formulated there in the language of semi-basic tensor fields) are used in \cite[Section 5]{Dairbekov2007} to prove injectivity of the magnetic ray transform over functions, one-forms \cite[Theorem 5.3]{Dairbekov2007} and two-tensors \cite[Theorem 5.4]{Dairbekov2007} under certain curvature conditions. Further results are established (generic injectivity, magnetic boundary rigidity) using methods discussed later in this article.

On simple magnetic surfaces, positive answers to Problems \ref{pb3} (tensor tomography problem) and \ref{pb4} (determination up to gauge of $(A,\Phi)$ from their scattering data) are obtained in \cite[Theorems 1.2, 1.4, 1.5]{Ainsworth2013} in the presence of a unitary connection and a skew-Hermitian Higgs field, as in Section \ref{sec:PestovConnection}. The schemes of proof of \cite[Theorems 1.2, 1.4]{Ainsworth2013} follow \cite{Paternain2012}, where the new key step is to derive Pestov identities for the operator $V(X+\lambda V+A+\Phi)$.

\subsubsection{Thermostat ray transforms on surfaces} 

Another example of external field is given by a Gaussian thermostat, characterized by a smooth vector field $E$ on $M$, see \cite{Assylbekov2014,Merry2011}. The force field in this case is given by 
\begin{align*}
F(\gamma,\dot\gamma) = E(\gamma) - \frac{g_\gamma(E(\gamma), \dot \gamma)}{|\dot\gamma|^2} \dot \gamma = \frac{g_\gamma(E(\gamma), \dot \gamma_\perp)}{|\dot\gamma|^2} \dot \gamma_\perp
\end{align*}
and the function $\lambda(x,v) = -g(F_x(v), v_\perp)$ is now linear in $v$. Upon defining an associated thermostat ray transform over tensor fields, and defining a notion of terminator value $\Ter$ with respect to a thermostat-Jacobi equation, it is proved in \cite[Theorem 1.5]{Assylbekov2014} that the thermostat ray transform is injective (up to natural obstruction) over $m$-tensors if $\Ter \ge \frac{m+1}{2}$. In particular, for this notion of terminator value, we again have $\Ter = \infty$ if the thermostat curvature is non-positive, in which case the previous result holds for any tensor order $m$, see \cite[Corollary 1.6]{Assylbekov2014}. The proofs are based on deriving Pestov identities for the operator $V(X+\lambda V)$. Associated results in the case of closed surfaces without boundary are given there as well, see \cite[Theorem 1.2, Corollaries 1.3, 1.4]{Assylbekov2014}.

It is worth pointing out that in the geodesic case, the condition on $\Ter$ is not necessary (namely $\Ter>1$ implies the result for any $m$) because one can use holomorphic integrating factors for scalar connections to move from any harmonic level to any other. It may be of interest to seek a similar construction here.

\np
\section{Inversion formulas and another route to injectivity in two dimensions}\label{sec:reconstruction}

In this section, we present constructive inversion approaches for various integrands and geometric contexts. 

\subsection{Pestov--Uhlmann inversion formulas on simple surfaces}

Recall the scattering relation $\SS\colon\partial SM\to \partial SM$ as follows: if $(x,v)\in \partial_+ SM$, $\SS(x,v) := \varphi_{\tau(x,v)}(x,v)\in \partial_- SM$; if $(x,v)\in \partial_- SM$, $\SS(x,v) := \varphi_{-\tau(x,-v)}(x,v)\in \partial_+ SM$. Recall the following definitions of $A_\pm\colon L^2_\mu (\partial_+ SM) \to L^2_{|\mu|}(\partial SM)$  and their adjoints: 
\begin{align*}
    A_\pm w (x,v) = \left\{
    \begin{array}{cc}
	w(x,v) & (x,v)\in \partial_+ SM, \\
	\pm w\circ\SS (x,v) & (x,v)\in \partial_- SM,
    \end{array}
    \right. \qquad A_\pm^* u := (u \pm u\circ\SS)|_{\partial_+ SM}.
\end{align*}
Recall also the definition of the {\em fiberwise Hilbert transform} $H\colon L^2(SM) \to L^2(SM)$, defined on the fiberwise harmonic decomposition by 
\begin{align*}
    H u_k = -i\sgn{k} u_k, \quad u_k\in \Omega_k \qquad \text{with the convention } \sgn{0} = 0.
\end{align*}
Introducing this transform, Pestov and Uhlmann obtained the following formulas in \cite{Pestov2004} (written with slight updates as in \cite[Proposition 2.2]{Monard2015}), inverting the ray transform over functions and solenoidal vector fields:
\begin{align}
    \begin{split}
	f + W^2 f &= \frac{1}{8\pi} I_\perp^* (A_+^* H A_-) I_0 f, \qquad f\in L^2(M), \\
	h + W_\perp^2 h &= \frac{-1}{8\pi} I_0^* (A_+^* H A_-) I_\perp h, \qquad h\in H^1_0(M),
    \end{split}    
    \label{eq:reconsfh}
\end{align}
where the operators $W,W_\perp$ are $L^2(M)\to L^2(M)$-adjoints, and compact smoothing. Such equations take the form of {\em filtered-backprojection} formulas, where $A_+^* H A_-\colon L^2(\partial_+ SM)\to L^2(\partial_+ SM)$ is a continuous `filter' and the adjoints $I_0^*$ or $I_\perp^*$ are viewed as 'backprojection' operators, see Figs. \ref{fig:f_geodesics}--\ref{fig:FBP} for an example.

\begin{figure}[htpb]
    \centering
    \includegraphics[trim=40 50 40 10, clip, width = 0.24\textwidth]{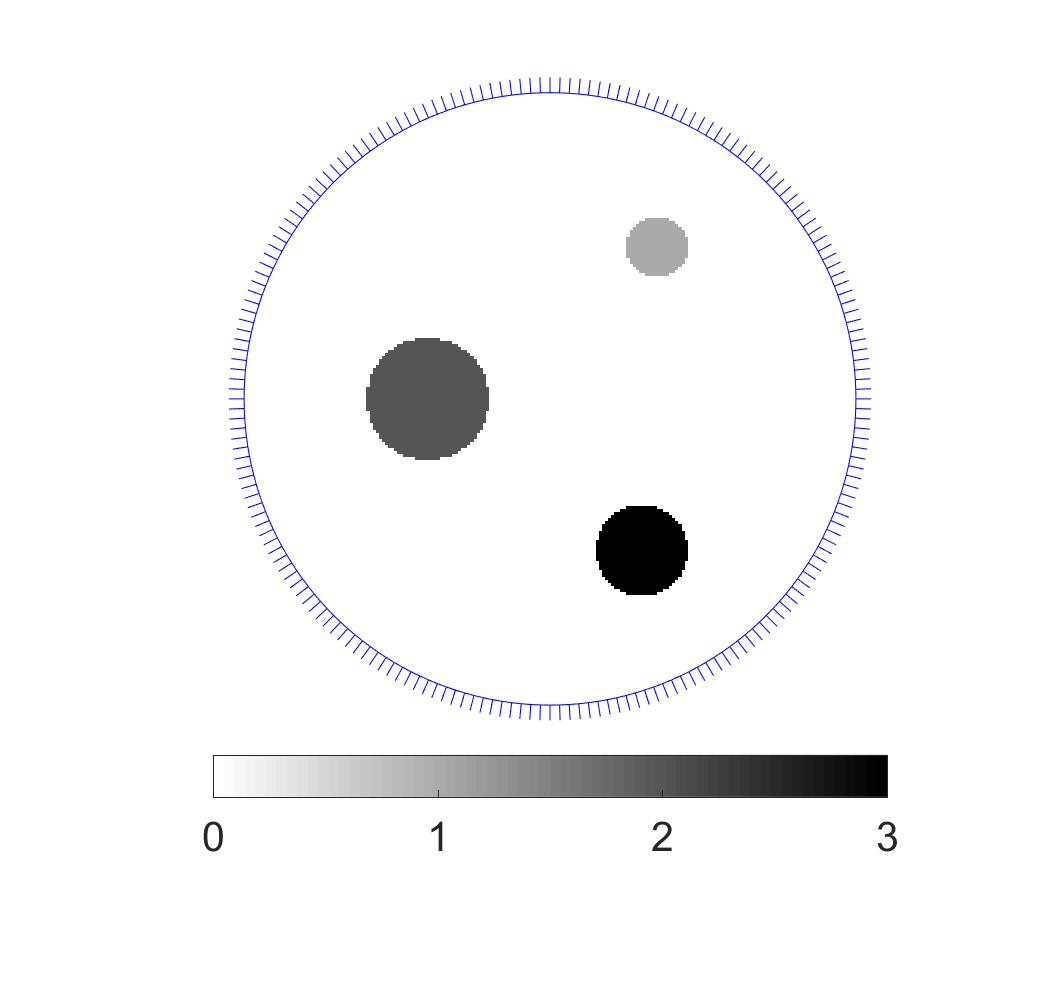}
    \includegraphics[trim=30 10 120 10, clip, width = 0.22\textwidth]{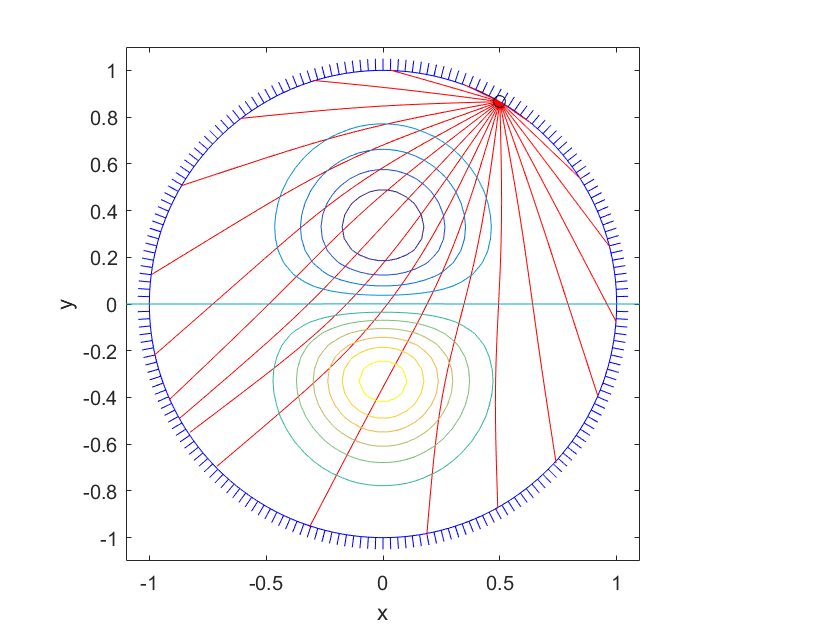}
    \includegraphics[trim=30 10 120 10, clip, width = 0.22\textwidth]{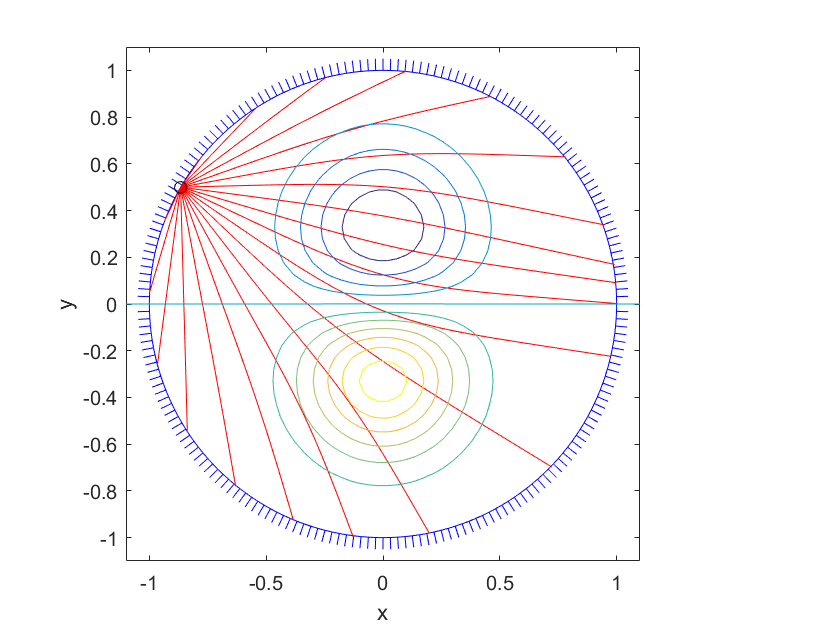}
    \includegraphics[trim=30 10 10 10, clip, width = 0.28\textwidth]{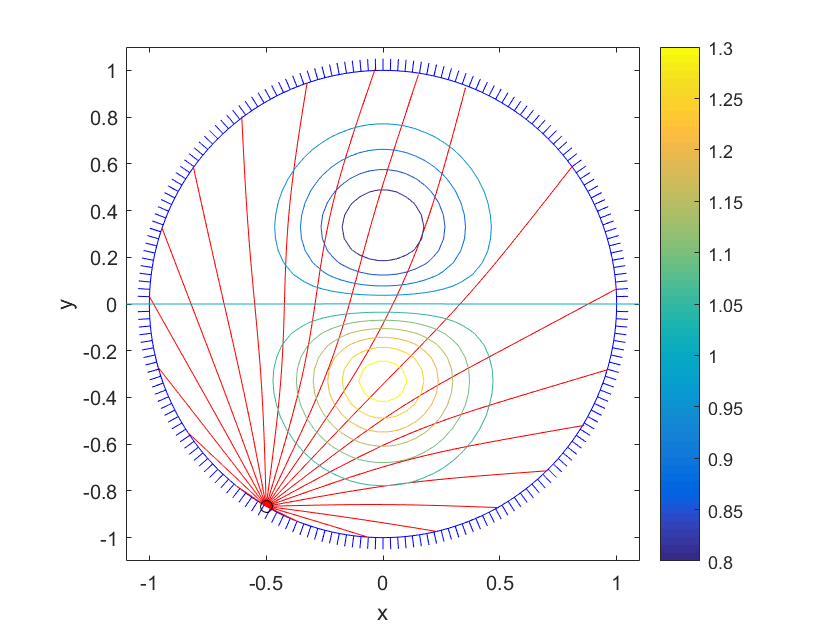}
    \caption{Example of a function $f$ defined on the unit disk (left). The domain is endowed with a scalar metric $g = c^{-2} Id$ where the ``sound speed'' $c$ is contour-plotted, and some geodesics are superimposed on three pictures on the right.}
    \label{fig:f_geodesics}
\end{figure}

\begin{figure}[htpb]
    \centering
    \includegraphics[trim=60 30 60 0, clip, width = 0.35\textwidth]{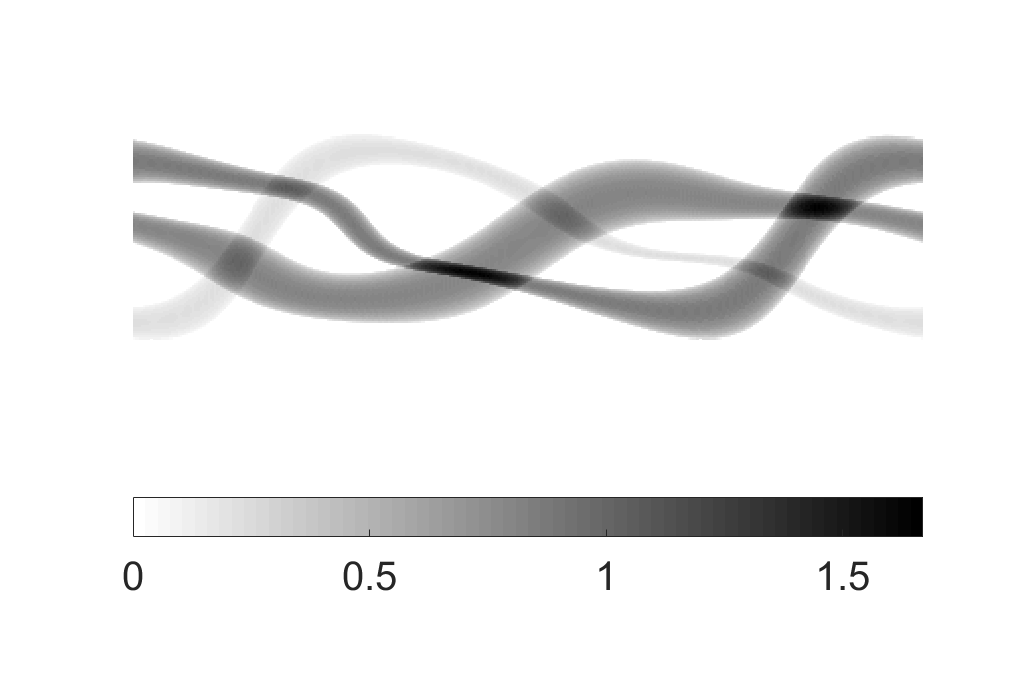}
    \includegraphics[trim=60 30 60 0, clip, width = 0.35\textwidth]{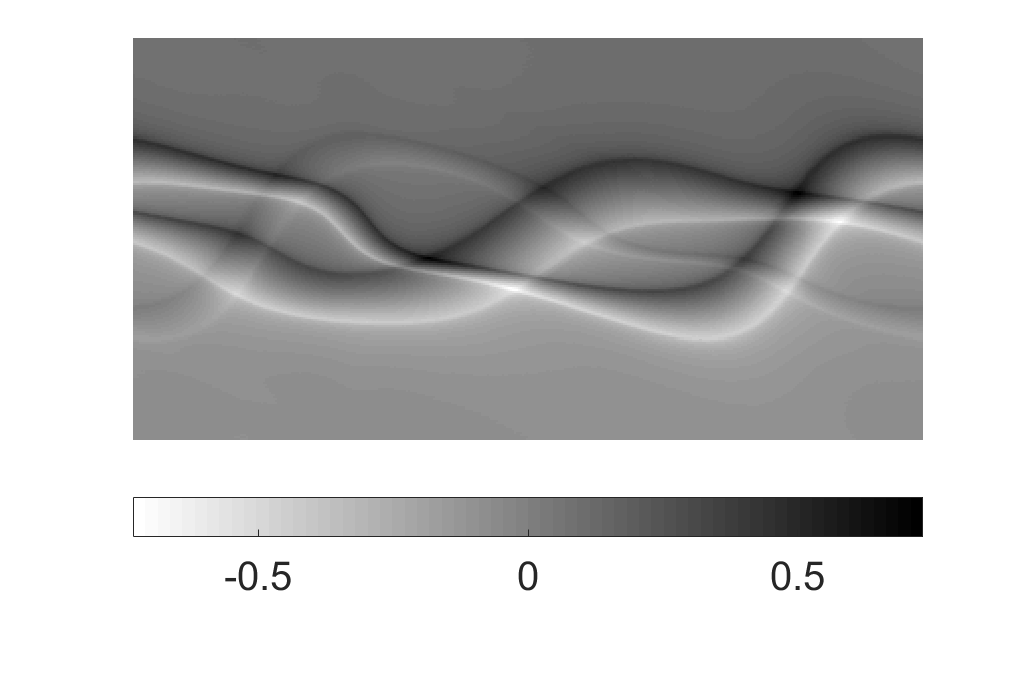}    
    \includegraphics[trim=30 50 30 0, clip, width = 0.28\textwidth]{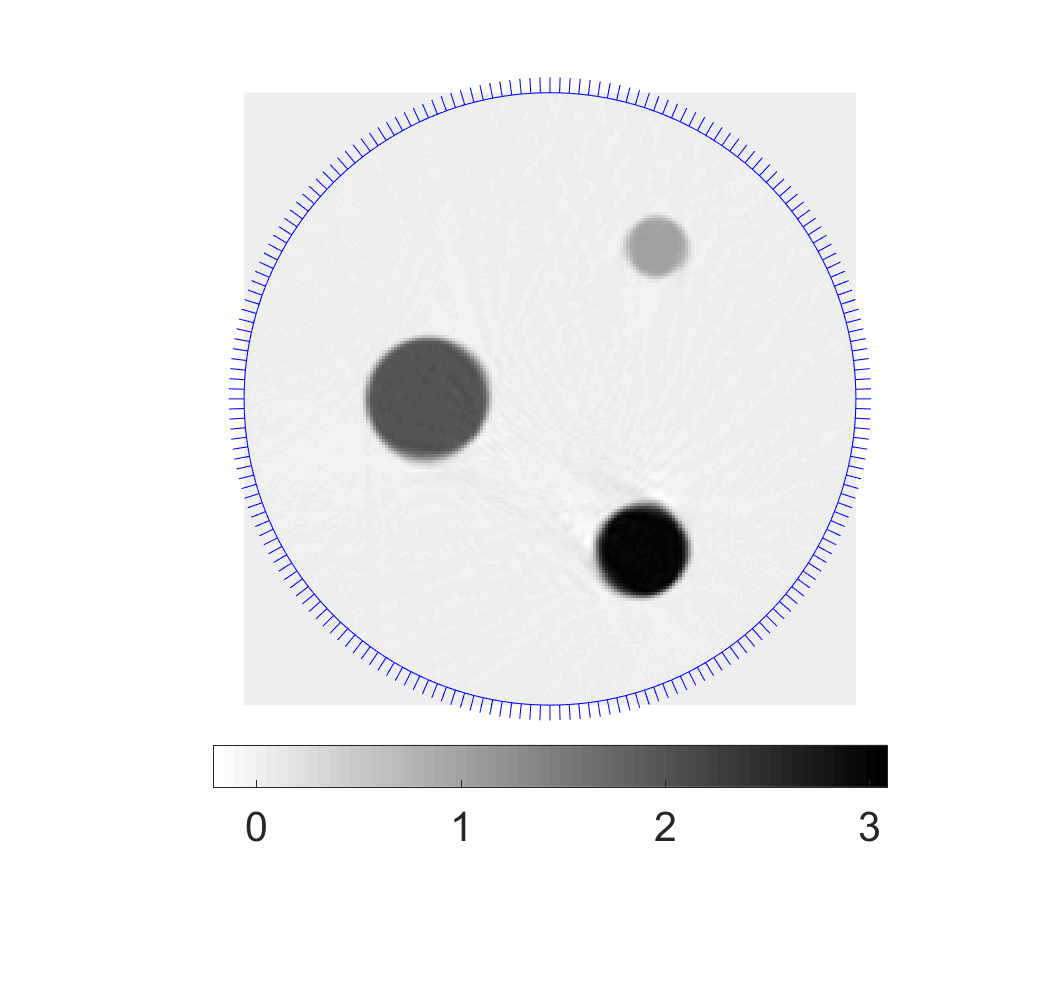}
    \caption{An example of approximate inversion. Left: $I_0 f$; Middle: $A_+^* H A_- I_0 f$ (the ``filtering'' step); Right: $-(1/8\pi) I_\perp^* A_+^* H A_- I_0 f$ (the ``backprojection'' step). On the left two pictures, the horizontal axis describes the boundary point from where geodesics are cast; the vertical axis describes the shooting direction. See \cite{Monard2013} for details of the implementation.}
    \label{fig:FBP}
\end{figure}

\begin{remark} Formulas \eqref{eq:reconsfh} hint at us that the classical {\em filtered-backprojection} formula inverting the 2D Radon transform $f = \frac{1}{4\pi} R^* H \partial_s R f$, contains two formulas for the price of one: indeed, $\partial_s Rf$ can also be viewed as $R_\perp f := R(X_\perp f)$, where $X_\perp f = \sin\theta \partial_x f - \sin\theta \partial_y f$ is the restriction to $SM$ of the solenoidal one-form $- \partial_y f dx + \partial_x f dy$, generated by the solenoidal potential $f$. In that case, the solenoidal potential $X_\perp f$ can be reconstructed by first reconstructing $f$ via $f = \frac{1}{4\pi} R^* H R_\perp f$.     
\end{remark}

The proof of \eqref{eq:reconsfh} can be found in \cite{Pestov2004} and in the recent form in \cite{Monard2015}, and is based on the interaction of the fiberwise Hilbert transform with the transport equation \eqref{eq:transport}, in particular relying heavily on the commutator formula below, first derived in~\cite{Pestov2005} 
\begin{align}
    [H,X] u = (X_\perp u)_0 + X_\perp u_0, \qquad u\in C^\infty(SM).
    \label{eq:commutator}
\end{align}
This is a commutator formula involving a distinguished non-local zeroth order $\Psi$DO, and is used in a similar fashion to the local commutator formulas of lemma~\ref{lma:hd-commutator}.
No higher dimensional analogue for this non-local formula is known.

\subsubsection{Analysis of $W$ and $W_\perp$}

Equations \eqref{eq:reconsfh} in fact make sense on any non-trapping surface. On the other hand, one must further assume that the surface is simple to establish that $W,W_\perp$ are compact. In general, as is the case with other operators emanating from this context, the operators $W,W_\perp$ are integral operators with Schwartz kernels naturally written in exponential coordinates. Namely, such operators take the form 
\begin{align*}
    Kf(x) = \frac{1}{2\pi} \int_{S_x} \int_0^{\tau(x,v)} k(x,v,t) f(\gamma_{x,v}(t))\ dt\ dS(v),
\end{align*}
where $k(x,v,t)$ is the Schwartz kernel of $K$ in exponential coordinates. In the absence of conjugate points, the mapping $(v,t) \mapsto \gamma_{x,v}(t)$ is a global diffeomorphism onto $M$ with jacobian $dM_x = b(x,v,t)\ dt\ dS(v)$ and inverse denoted $M\ni x' \mapsto (v_x(x'), t = d_g(x,x'))$, so that the actual Schwartz kernel of $K$ is, up to a constant, ${\cal K}(x,x') = \frac{k(x,v_x(x'), d_g(x,x'))}{b(x,v_x(x'), d_g(x,x'))}$. A sufficient condition for $K$ to be compact is if ${\cal K}\in L^2(M\times M)$.

In the setting of equations \eqref{eq:reconsfh}, one may show (see e.g. \cite{Pestov2004}) that $W,W_\perp$ have respective kernels
\begin{align*}
    w(x,v,t) = V \left( \frac{a}{b} \right) (x,v,t), \qquad w_\perp(x,v,t) = V \left( \frac{1}{b} \right) (x,v,t),
\end{align*}
with $(a,b)$ solving \eqref{eq:scalarJacobi}. Then by theory of parameter-dependent ordinary differential equations, it is easy to show that $w$ and $w_\perp$ vanish of order $1$ at $t=0$, so that $\frac{w}{b}$ and $\frac{w_\perp}{b}$ are both bounded and continuous on $\D$. If further, $M$ is simple, one may express $W$ and $W_\perp$ in terms of well-defined Schwartz kernels bounded and continuous on $M\times M$, making both operators $L^2(M)\to L^2(M)$ compact (in fact, one may show that they are $C^\infty$ smoothing, see \cite{Pestov2004,Guillarmou2017}).

To obtain injectivity of the equations \eqref{eq:reconsfh}, if $\kappa$ is constant, then $w,w_\perp \equiv 0$, and if $\sup_{M} |d\kappa|$ is small enough, then $W,W_\perp$ are contractions (see \cite{Krishnan2010}), and recovery of $f$ or $h$ from \eqref{eq:reconsfh} can be done via Neumann series. In such cases, this in fact gives another mechanism to prove injectivity of $I_0$ and $I_\perp$ than energy identities, though for now, it is open as to whether $Id + W^2$ and $Id + W_\perp^2$ are injective for all simple surfaces. A quantitative bound estimating the norm of $W$ in a neighborhood of constant curvature, simple surfaces was given in \cite[Appendix A]{Monard2016d}: if $\tau_\infty$, $\Vol{M}$ denote diameter and volume of $M$, and if $(M,g)$ is simple with constants $(C_1,C_2)$ in the sense that 
\begin{align}
    C_1 t \le |b(x,v,t)| \le C_2 t, \qquad (x,v,t)\in \D,
    \label{eq:simpleConstants}
\end{align}
then one may obtain the estimate
\begin{align}
    \|W\|_{L^2\to L^2} \le \frac{C_2^3 \tau_\infty^2}{24 C_1^{5/2}} \left( \frac{\Vol{M}}{2\pi} \right)^{\frac{1}{2}} \|d\kappa\|_\infty.
    \label{eq:estW}
\end{align}

\subsection{Transforms over $k$-differentials on simples surfaces}
Following the template outlined above, the second author generalized in \cite{Monard2013a} the inversion of $I_0$ and $I_\perp$ to inversion formulas for the recovery of sections of $\Omega_k$ and their horizontal derivatives, with $k\in \Zm$ fixed for this paragraph. Namely, for $u \in \Omega_k$ (locally of the form $f(x,y) e^{ik\theta}$ in isothermal coordinates), one may define
\begin{align*}
    I_k u := \I [u], \qquad I_{k,\perp} u := \I [X_\perp u],
\end{align*}
and upon introducing a {\em shifted Hilbert transform} $H_{(k)}$ by the formula 
\begin{align*}
    H_{(k)} u_\ell := -i \sgn{\ell-k} u_\ell, \qquad u_\ell \in \Omega_\ell,
\end{align*}
one may prove a commutator relation $[H_{(k)},X] u = (X_\perp u)_k + X_\perp u_k$ which allows Fredholm equations of the form \eqref{eq:reconsfh} pseudo-inverting $I_k$ and $I_{k,\perp}$, modulo compact error operators $W_k$ and $W_{k,\perp} = W_k^*$. The kernel of $W_k$ in exponential coordinates is given by
\begin{align*}
    w_k (x,\theta,t) = \left( -\partial_\theta \left( \frac{a}{b} \right) + ik \frac{a-1}{b} \right) e^{ik(\alpha_{x,\theta}(t)-\theta)},
\end{align*}
where $\alpha_{x,\theta}(t)$ is the angle of $\dot \gamma_{x,\theta}(t)$ with $\partial_x$ in isothermal coordinates. Further study of these kernels gives exact reconstructions via Neumann series in \cite[Corollary 5.9]{Monard2013a} under certain assumptions on the curvature.

\subsection{Transforms with non-unitary connections on simple surfaces}
A case encompassing both previous ones is to consider transforms with connections. Namely, given $M\times \Cm^n\to M$ the trivial\footnote{All vector bundles are trivial when $M$ is simply connected.} vector bundle of rank $n$ and a connection $A$ given by a $n\times n$ matrix of one-forms on $M$, and $f\in L^2(SM,\Cm^n)$, one may now solve the equivalent of a coupled system of transport equations for $u\colon SM\to \Cm^n$ by
\begin{align*}
    Xu + A_x(v) u = -f \qquad (SM), \qquad u|_{\partial_- SM} =0,
\end{align*}
and define $\I_A f := u|_{\partial_+ SM}$, the ray transform of $f$ with connection $A$. Note in what follows that we will abuse notation $A(x,v) \equiv A_x(v)\in \Cm^{n\times n}$. Naturally, $\I_A$ is continuous in the $L^2(SM,\Cm^n) \to L^2_{\mu} (\partial_+ SM, \Cm^n)$ setting, though a dimension count shows that one may not recover all of $f$ from $\I_A f$, and must therefore restrict to certain classes of $f$. In particular one may define
\begin{align*}
    I_{A,0} f &:= \I_A [f\circ \pi], \quad f\in L^2(M,\Cm^n), \\
    I_{A,\perp} h &:= \I_A [(X_\perp - A_V) (h\circ \pi)], \quad h\in H^1_0(M,\Cm^n),
\end{align*}
the restrictions of $\I_A$ to functions and certain one-forms (of the form $\star d h - A_V h$). In the same way that curvature of $(M,g)$ has an impact on X-ray transforms, the curvature of the connection $F_A := dA + A\wedge A$ (i.e., a matrix of $2$-forms with components $(F_A)_{ij} = dA_{ij} + \sum_{k=1}^n A_{ik}\wedge A_{kj}$) will have an impact on X-ray transforms via the function $\star F_A\colon M\to \Cm^{n\times n}$. 

Such transforms generalize the case of symmetric differentials because if $\phi$ denotes a non-vanishing section of $\Omega_1$ (in isothermal coordinates, take $e^{i\theta}$), then every element of $\Omega_k$ can be uniquely written as $f \phi^k$ for some function $f:M\to \Cm$, and the transport equation $Xu = -f \phi^k$ (with $u|_{\partial_{-SM} = 0}$), upon setting $v = \phi^{-k}u$, is equivalent to the transport equation $Xv + k (\phi^{-1} X\phi) v = -f$ (with $v|_{\partial_{-SM}}=0$) on the trivial bundle $SM\times \Cm$ with connection $k \phi^{-1} d\phi$. In particular, we have
\begin{align*}
    I_{k,0} [f\phi^k] = u|_{\partial_+ SM} = \phi^k|_{\partial_+ SM} v|_{\partial_+ SM} = \phi^k|_{\partial_+ SM} I_{k\phi^{-1} d\phi, 0} f,
\end{align*}
and the two transforms are strictly equivalent for injectivity and inversion purposes.

Upon introducing the fiberwise Hilbert transform, acting this time on each component of $\Cm^n$, one may derive the commutator formula (see \cite{Paternain2012})
\begin{align*}
    [H,X+A]u = (X_\perp - A_V)u_0 + ((X_\perp - A_V)u)_0, \qquad u\in C^\infty(SM,\Cm^n),
\end{align*}
and derive the inversion formulas (see \cite[Theorem 1]{Monard2016d}):
\begin{align}
    \begin{split}
	f + W_A^2 f &= \frac{1}{8\pi} I_{-A^*,\perp}^* B_{A,+} H Q_{A,-} I_{A,0} f, \qquad f\in C^\infty(M,\Cm^n), \\
	h + W_{A,\perp}^2 h &= \frac{-1}{8\pi} I_{-A^*,0} B_{A,+} H Q_{A,-} I_{A,\perp} h, \qquad h\in C_0^\infty(M,\Cm^n),	
    \end{split}
    \label{eq:FredIA}    
\end{align}
extendible by density to $f\in L^2(M,\Cm^n)$ and $h\in H^1_0(M,\Cm^n)$, and where the error operators admit respective kernels 
\begin{align*}
    w_A(x,v,t) &= \left(X_\perp - A_V + \frac{a}{b} V + V\left( \frac{a}{b} \right)\right) E_A^{-1}(x,v,t), \\
    w_{A,\perp}(x,v,t) &= E_A^{-1} (x,v,t) \left( V \left( \frac{1}{b} \right) - A_V (\varphi_t) \right) + \frac{1}{b} V(E_A^{-1}(x,v,t)),
\end{align*}
where $E_A(x,v,t)$ is the attenuation matrix solving the $(x,v)$-dependent ODE
\begin{align*}
    \frac{d}{dt} E_A(x,v,t) + A(\varphi_t(x,v)) E_A (x,v,t) = 0, \quad (x,v,t) \in \D, \quad E_A(0,x,v) = I_n.
\end{align*}
Note that $w_A,w_{A,\perp}\colon \D\to \Cm^{n\times n}$ are again such that $\frac{w_A}{b}$ and $\frac{w_{A,\perp}}{b}$, composed with the inverse of the exponential map $(v,t)\mapsto \gamma_{x,v}(t)$ are uniformly bounded on $M\times M$, in particular the kernels of $W_A,W_{A,\perp}$ belong to $L^2(M\times M)$ so that $W_A,W_{A,\perp}$ are compact.

By the Fredholm alternative, this implies that $\ker I_{A,0}\subset \ker (Id + W_A^2)$ and $\ker I_{A,\perp}\subset \ker (Id + W_{A,\perp}^2)$ are finite-dimensional. Varying connections with a complex parameter, one may combine this with Analytic Fredholm Theory to enlarge the known cases of injective ray transforms with connection. The steps go as follows: 

\begin{theorem}[{\cite[Theorem 3]{Monard2016d}}]\label{thm:AFT} For any analytic $C^1(M,(\Lambda^1)^{n\times n})$-valued family of connections $\lambda\mapsto A_\lambda$, the corresponding $L^2(M,\Cm^n)\to L^2(M,\Cm^n)$-valued families of operators $\lambda\mapsto W_{A_\lambda}$ and $\lambda\mapsto W_{A_\lambda,\perp}$ are analytic.    
\end{theorem}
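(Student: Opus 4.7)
The approach is to leverage the explicit integral-kernel representations of $W_A$ and $W_{A,\perp}$ stated just above the theorem, and to trace analytic dependence on $\lambda$ through the attenuation matrix $E_{A_\lambda}(x,v,t)$ and its vertical derivatives. Analyticity of the operators will follow from analyticity of their Schwartz kernels in a topology that continuously embeds into the space of bounded operators on $L^2(M,\Cm^n)$.

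The first step is to show that $\lambda \mapsto E_{A_\lambda}$ is analytic as a map into $C^0(\D, \Cm^{n\times n})$ (and more regular spaces as needed). The matrix $E_{A_\lambda}$ is the fundamental solution of a linear first-order ODE in $t$ whose coefficient $A_\lambda\circ\varphi_t$ depends analytically on $\lambda$, uniformly over $(x,v)\in SM$ and $t\in[0,\tau(x,v)]$, with $\tau$ uniformly bounded on the simple surface. A Peano--Baker expansion then converges uniformly on $\D$, each term being a multilinear expression in $A_\lambda$ integrated along $\varphi_t$, and thus analytic in $\lambda$ as a $C^0(\D,\Cm^{n\times n})$-valued map; a uniform limit of Banach-valued analytic functions is analytic, so $E_{A_\lambda}$ is analytic. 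The inverse $E_{A_\lambda}^{-1}$ satisfies the dual ODE obtained by differentiating $E_{A_\lambda}E_{A_\lambda}^{-1}=I_n$ and inherits the same analytic dependence.

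The next step is to control the vertical derivative $V E_{A_\lambda}^{-1}$, which enters the kernel formulas. Applying $V$ to the ODE for $E_{A_\lambda}^{-1}$ yields an inhomogeneous linear ODE for $V E_{A_\lambda}^{-1}$ whose forcing involves $V(A_\lambda\circ\varphi_t)$, analytic in $\lambda$ by the $C^1$-hypothesis; variation of parameters then gives analyticity of $V E_{A_\lambda}^{-1}$ uniformly on $\D$. Since $a,b,X_\perp,V(a/b),V(1/b)$ do not depend on $\lambda$, each of the kernels $w_{A_\lambda}$ and $w_{A_\lambda,\perp}$ is an algebraic combination of analytic-in-$\lambda$ ingredients and $A_{\lambda,V}$ itself (analytic in $\lambda$), hence analytic as a $C^0(\D,\Cm^{n\times n})$-valued map.

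Finally, one transfers analyticity from kernels to operators. Pulling back through the exponential map $(v,t)\mapsto\gamma_{x,v}(t)$, which does not involve $\lambda$, yields Schwartz kernels on $M\times M$; the $1/b$ factors are cancelled by the first-order vanishing at $t=0$ of the Jacobi-type quantities appearing in $w_{A_\lambda}$ and $w_{A_\lambda,\perp}$, exactly as recalled for $W$ and $W_\perp$ in the subsection preceding \eqref{eq:FredIA}, and this cancellation is stable under perturbation of $A_\lambda$ since the relevant vanishing order is intrinsic. One obtains Schwartz kernels bounded uniformly on $M\times M$ and analytic in $\lambda$ as $L^\infty(M\times M)$-valued maps, hence as $L^2(M\times M)$-valued maps; since the corresponding Hilbert--Schmidt norm dominates the $L^2\to L^2$ operator norm, the families $\lambda\mapsto W_{A_\lambda}$ and $\lambda\mapsto W_{A_\lambda,\perp}$ are analytic. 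The main obstacle is the careful bookkeeping of the vertical derivatives and the uniform-on-$\D$ estimates needed to certify the $1/b$ cancellation under $\lambda$-perturbation; once these are secured via the ODE representation above, the analyticity transfer is standard.
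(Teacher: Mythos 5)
Your proposal is correct and follows essentially the same route as the cited proof: analyticity of $\lambda\mapsto E_{A_\lambda}$ (and of its vertical derivative) via the uniformly convergent Volterra/Peano--Baker expansion, hence analyticity of the kernels $w_{A_\lambda}$ and $w_{A_\lambda,\perp}$, with the $\lambda$-independent factor $b$ absorbing the $1/b$ singularity exactly as in the unattenuated case. The transfer from kernels to operators through uniform boundedness of the Schwartz kernels on $M\times M$ and the Hilbert--Schmidt domination of the $L^2\to L^2$ norm is also the intended mechanism, so no further comparison is needed.
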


For $A\in C^1(M,(\Lambda^1)^{n\times n})$ a fixed connection, applying this to a family of the form $\lambda \mapsto \lambda A$ with $A$ implies that if there is $\lambda_0\in \Cm$ such that $Id + W_{\lambda_0 A}^2$ is injective, this remains so for all $\lambda\in \Cm$ except possibly over a discrete set of $\lambda$, and implies the same conclusions for the ray transforms $I_{\lambda_0 A,0}$. An obvious choice is $\lambda =0$, which reconducts the question to a transform with no connection. In that case, we are left inquiring whether $Id + W^2$ is injective. 

\begin{theorem}[{\cite[Theorem 4]{Monard2016d}}]\label{thm:est} Let $(M,g)$ be a simple Riemannian surface with constants $C_1,C_2$ as in \eqref{eq:simpleConstants} and Gaussian curvature $\kappa(x)$. Given the $C^1$ connection $A$ with curvature $F_A$, let us denote $\alpha_A = \sup_{(x,v)\in SM} \{ \|(A+A^*)/2\|(x,v) \}$ and $\tau_\infty$ the diameter of $M$. There exist constants $C,C'$ depending on $(n,C_1,C_2,\tau_\infty,\alpha_A)$ such that 
    \begin{align}
	\|W_A\|_{L^2\to L^2}, \|W_{A,\perp}\|\le \left( \frac{\Vol{M}}{2\pi} \right)^{\frac{1}{2}} \sqrt{ C \|\star F_A\|_\infty^2 + C' \|d\kappa\|_\infty^2}. 
	\label{eq:estWA}
    \end{align}    
\end{theorem}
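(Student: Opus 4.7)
My plan is to reduce the operator norm estimates to $L^2(M\times M)$ estimates on the Schwartz kernels of $W_A$ and $W_{A,\perp}$, following the strategy used in \cite{Monard2016d} for the Gaussian-curvature-only estimate \eqref{eq:estW}. Using exponential coordinates around each $x\in M$, the Schwartz kernel of $W_A$ on $M\times M$ is
\[
{\cal K}_A(x,x') = \frac{w_A(x,v_x(x'),d_g(x,x'))}{b(x,v_x(x'),d_g(x,x'))},
\]
and similarly for $W_{A,\perp}$. By Young/Schur-type estimates it suffices to bound $\int_{M\times M}|{\cal K}_A(x,x')|^2\,dV(x)dV(x')$ by the right-hand side of \eqref{eq:estWA} squared.

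The first step is to compute the horizontal and vertical derivatives of $E_A^{-1}$ in a form that exhibits how the connection curvature $\star F_A$ appears. Differentiating the defining ODE $\partial_t E_A + A(\varphi_t)E_A=0$ in the $V$- and $X_\perp$-directions and using the commutators $[X,V]=X_\perp$ and $[X,X_\perp]=-\kappa V$ from \eqref{eq:structure2D}, one obtains coupled ODEs along the flow satisfied by $VE_A^{-1}$ and $X_\perp E_A^{-1}$, whose inhomogeneous terms involve $VA$, $X_\perp A$, the Jacobi scalars $a,b$, and---after a commutation computation---the curvature scalar $\star F_A = X_\perp A_V - V(A\circ X)+[A\circ X, A_V]$ evaluated along the flow. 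Solving these by variation of constants produces expressions of the form
\[
VE_A^{-1}(x,v,t)=\int_0^t E_A^{-1}(\varphi_s)\,\text{(terms in $A_V,\star F_A,a,b$)}\,E_A(\varphi_s)\,ds,
\]
and analogously for $X_\perp E_A^{-1}$, where all prefactors are bounded in terms of $\alpha_A$ (since $E_A,E_A^{-1}$ are bounded by $e^{\alpha_A\tau_\infty}$). The key point is that the $X_\perp$-derivative brings in precisely $\star F_A$ (and not raw first derivatives of $A$), thanks to the cancellation encoded in the structure equations.

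The second step is to plug these expressions into the stated kernels
\[
w_A=\Bigl(X_\perp-A_V+\tfrac{a}{b}V+V(\tfrac{a}{b})\Bigr)E_A^{-1},\qquad w_{A,\perp}=E_A^{-1}\Bigl(V(\tfrac{1}{b})-A_V\circ\varphi_t\Bigr)+\tfrac{1}{b}V E_A^{-1},
\]
and decompose each into (i) the terms proportional to $V(a/b)$ or $V(1/b)$, which reproduce the pure-geodesic kernels bounded in \eqref{eq:estW} by $C(C_1,C_2,\tau_\infty)\|d\kappa\|_\infty$, and (ii) remaining terms that factor through $\star F_A$ along the flow, bounded pointwise by $C''(n,C_1,C_2,\tau_\infty,\alpha_A)\,\|\star F_A\|_\infty$. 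One must verify that each of these contributions, after division by $b$ and conversion to coordinates on $M\times M$, yields a kernel in $L^2(M\times M)$; here the simplicity constants \eqref{eq:simpleConstants} ensure $b\gtrsim t$ so that all apparent singularities at $t=0$ are removable (the operators $V(a/b)$ and $V(1/b)$ vanish to first order at $t=0$, as noted earlier in the excerpt). Taking Sasaki-volume integrals and applying Santal\'o's formula converts integrals over $SM$ into integrals over $M$, producing the $\Vol{M}/(2\pi)$ factor.

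The main obstacle is the bookkeeping in step one: extracting $\star F_A$ cleanly from the combination $X_\perp(A E_A^{-1})$ requires recognizing that the raw terms $X_\perp A_V$ and $V(A\circ X)$ occur only in the gauge-invariant combination defining $F_A$, and that all other terms either cancel or belong to the $\|d\kappa\|_\infty$ piece. Once this algebraic reorganization is carried out, bounding the resulting integrals is routine and produces constants $C,C'$ depending only on $(n,C_1,C_2,\tau_\infty,\alpha_A)$, as claimed; the estimate for $W_{A,\perp}$ then follows either by the same argument or by duality, since its kernel has the analogous structure with the roles of $E_A^{-1}$ and the differential operators interchanged.
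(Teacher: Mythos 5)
Your proposal is correct and takes essentially the same route as the paper and the cited reference \cite{Monard2016d}: a Hilbert--Schmidt bound on the explicit kernels $w_A/b$ and $w_{A,\perp}/b$ in exponential coordinates, with $VE_A^{-1}$ and $X_\perp E_A^{-1}$ obtained by differentiating the transport ODE along the flow so that only the gauge-covariant quantity $\star F_A$ (together with the $\|d\kappa\|_\infty$ contributions already present in \eqref{eq:estW}) survives, all divided by $b\gtrsim C_1 t$ using the simplicity constants. The factor $(\mathrm{Vol}(M)/2\pi)^{1/2}$ arises exactly as you indicate, from the $\tfrac{1}{2\pi}$ normalization of the kernel representation combined with the Jacobian $b\,dt\,dS(v)$ of the exponential map.
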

This gives us a few settings where transforms with connections defined over any structure group are injective. 

\begin{theorem}[{\cite[Theorem 5]{Monard2016d}}]\label{thm:injIA}
    Let $(M,g)$ be a simple surface and $A$ a $C^1$ connection. Then the following conclusions hold: 
    \begin{itemize}
	\item[$(i)$] If $\kappa$ is constant and $A$ is flat, the operators $W_A$ and $W_{A,\perp}$ vanish identically and \eqref{eq:FredIA} implies that the transforms $I_{A,0}$, $I_{A,\perp}$, $I_{-A^*,0}$ and $I_{-A^*,\perp}$ are all injective, with explicit, one-shot inversion formulas.
	\item[$(ii)$] Injectivity still holds if $(n,C_1,C_2,\tau_\infty,\alpha_A,\|\star F_A\|_\infty, \|d\kappa\|_\infty, \Vol{M})$ are such that the right-hand side of \eqref{eq:estWA} is less than 1, with a Neumann series type inversion.
	\item[$(iii)$] If $(M,g)$ is such that the operator $Id + W^2$ in \eqref{eq:reconsfh} is injective, then for every $\lambda\in \Cm$ outside a discrete set, the transforms $I_{\lambda A,0}$, $I_{\lambda A,\perp}$, $I_{-\lambda A^*,0}$ and $I_{-\lambda A^*,\perp}$ are all injective.
    \end{itemize}
\end{theorem}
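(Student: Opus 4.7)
The plan is to derive each of the three conclusions from the machinery already built up, namely the Fredholm identities \eqref{eq:FredIA} together with Theorems~\ref{thm:AFT}--\ref{thm:est}. A uniform preliminary observation is that the quantities $\alpha_A$ and $\|\star F_A\|_\infty$ entering the bound \eqref{eq:estWA} are invariant under $A\mapsto -A^*$: indeed $F_{-A^*}=-F_A^*$, while $(A+A^*)/2$ and $(-A^*+(-A^*)^*)/2$ differ only by a sign. Thus any statement obtained from \eqref{eq:estWA} applied to $A$ transfers verbatim to $-A^*$.

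For \textbf{(i)}, the hypotheses $\kappa$ constant and $F_A\equiv 0$ make the right-hand side of \eqref{eq:estWA} vanish, forcing $\|W_A\|_{L^2\to L^2}=\|W_{A,\perp}\|_{L^2\to L^2}=0$. Then \eqref{eq:FredIA} collapses to explicit one-shot inversion formulas for $I_{A,0}$ and $I_{A,\perp}$, which immediately gives their injectivity. The same formulas applied with $A$ replaced by $-A^*$ (whose curvature $-F_A^*$ still vanishes) handle $I_{-A^*,0}$ and $I_{-A^*,\perp}$.

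For \textbf{(ii)}, the assumption that the right-hand side of \eqref{eq:estWA} is strictly less than $1$ makes $W_A^2$ and $W_{A,\perp}^2$ strict contractions on $L^2(M,\Cm^n)$, so $Id+W_A^2$ and $Id+W_{A,\perp}^2$ are invertible via Neumann series; \eqref{eq:FredIA} then yields Neumann-type inversion formulas and, a fortiori, injectivity. The invariance observation above disposes of the $-A^*$ transforms.

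For \textbf{(iii)}, I would apply Analytic Fredholm Theory to the holomorphic family $\lambda\mapsto \lambda A$, $\lambda\in\Cm$. Theorem~\ref{thm:AFT} ensures that $\lambda\mapsto W_{\lambda A}^2$ and $\lambda\mapsto W_{\lambda A,\perp}^2$ are analytic families of compact operators on $L^2(M,\Cm^n)$. At $\lambda=0$ the connection disappears and, by inspection of the kernel formulas for $w_A,w_{A,\perp}$ with $A=0$ and $E_0\equiv I_n$, the operators $W_0$ and $W_{0,\perp}$ act componentwise as the scalar operators $W$ and $W_\perp$ of~\eqref{eq:reconsfh}. The assumed injectivity of $Id+W^2$ on $L^2(M)$, coupled with the Fredholm alternative, then gives invertibility of $Id+W_0^2$ on $L^2(M,\Cm^n)$; Analytic Fredholm Theory upgrades this to meromorphy of $\lambda\mapsto(Id+W_{\lambda A}^2)^{-1}$ with poles in a discrete subset of~$\Cm$, and \eqref{eq:FredIA} converts invertibility at such $\lambda$ into injectivity of $I_{\lambda A,0}$. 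Repeating the argument with the three remaining analytic families $\lambda\mapsto W_{\lambda A,\perp}$, $\lambda\mapsto W_{-\lambda A^*}$, $\lambda\mapsto W_{-\lambda A^*,\perp}$ and taking the union of the four discrete exceptional sets (still discrete) completes the proof.

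The main obstacle I anticipate is the verification at $\lambda=0$ that $W_0$ genuinely reduces to the scalar $W$ acting componentwise, which requires unpacking the definitions of $w_A$ and $E_A$ carefully; everything else is a routine deployment of Neumann series and the Analytic Fredholm Theorem.
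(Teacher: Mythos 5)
Your argument is correct in substance and uses exactly the ingredients the paper assembles for this theorem: the Fredholm identities \eqref{eq:FredIA}, the norm bound \eqref{eq:estWA} of Theorem~\ref{thm:est} for parts $(i)$--$(ii)$, and Theorem~\ref{thm:AFT} plus Analytic Fredholm Theory for part $(iii)$. The one place where you diverge from the paper is in how the transforms $I_{-A^*,0}$ and $I_{-A^*,\perp}$ are brought under the same roof as $I_{A,0}$ and $I_{A,\perp}$: you rerun the estimate (resp.\ the analytic-family argument) for the connection $-A^*$ after checking that $\alpha_A$ and $\|\star F_A\|_\infty$ are invariant under $A\mapsto -A^*$, whereas the paper points to the adjoint identity $(W_A)^* = W_{-A^*,\perp}$ (and its companion), which transfers invertibility of $Id+W_A^2$ directly to $Id+W_{-A^*,\perp}^2=(Id+W_A^2)^*$ without re-deriving anything. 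Both routes work; yours costs the computation $F_{-A^*}=-F_A^*$ (which you do correctly), while the paper's symmetry buys the four statements from two.

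The only genuine, though small and fillable, gap is in part $(iii)$ for the two families $\lambda\mapsto W_{\lambda A,\perp}$ and $\lambda\mapsto W_{-\lambda A^*,\perp}$: at $\lambda=0$ these reduce to the scalar operator $W_\perp$ acting componentwise, so the base-point invertibility you need is that of $Id+W_\perp^2$, whereas the hypothesis of $(iii)$ only grants injectivity of $Id+W^2$. This is rescued by the fact, stated in the paper, that $W$ and $W_\perp$ are $L^2(M)\to L^2(M)$-adjoints, so that $Id+W_\perp^2=(Id+W^2)^*$ is invertible exactly when $Id+W^2$ is; you should say this explicitly rather than hide it under ``repeating the argument''. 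Your verification that $W_0$ reduces to $W$ acting componentwise (via $E_0\equiv I_n$ killing the $X_\perp$, $A_V$ and $V$ terms in the kernel $w_A$, leaving $V(a/b)\,I_n$) is correct, as is the observation that a finite union of discrete exceptional sets remains discrete.
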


The fact that the conclusions apply jointly to connections $A$ and $-A^*$ comes from exploring the symmetries in the inversion formulas \eqref{eq:FredIA} and establishing for instance that $(W_A)^* = W_{-A^*,\perp}$, see \cite[Lemma 13]{Monard2016d}.


\subsection{Surfaces with no conjugate points and hyperbolic trapping}
As mentioned in Section~\ref{sec:trapping}, some of the arguments can be adapted to situations when hyperbolic trapping is present, a special case of which is surfaces with negative curvature. In \cite{Guillarmou2017}, Guillarmou and the second author provided reconstruction formulas for functions and solenoidal vector fields from knowledge of their ray transform, in the case of surfaces with convex boundary, no conjugate points and hyperbolic trapping, see \cite[Theorem 1.1]{Guillarmou2017}. The derivation of the formulas is similar to obtaining \eqref{eq:reconsfh}, as it relies on the commutator formula \eqref{eq:commutator} which holds locally and independently of the presence of trapping. The added technicalities are then: the control of the regularity and wavefront sets when writing transport equations as explained in Section \ref{sec:trapping}; the presence of infinite-length geodesics in the kernel of the error operator, requiring further control for the sake of continuity estimates. In particular, one can write explicit estimates in the neighborhood of constant negative curvature metrics, making \eqref{eq:reconsfh} invertible via Neumann series. 

\begin{theorem}[{\cite[Theorem 1.2]{Guillarmou2017}}]\label{thm:controlW}
    Let $(M,g_0)$ be a manifold with strictly convex boundary and constant neative curvature $-\kappa_0$ and trapped set $K$, and let $\delta = \frac{1}{2} (\text{dim}_{\text{Haus}}(K)-1)\in [0,1)$\footnote{In constant negative curvature, the trapped set $K$ is a fractal set with $\text{dim}_{\text{Haus}}(K)\in [1,3)$ when $K\ne\emptyset$.}. Then for each $\lambda_1, \lambda_2\in (0,1)$ so that $1\ge \lambda_1\lambda_2>\max (\delta,\frac{1}{2})$, there is an explicit constant $A(\delta,\lambda_1,\lambda_2)>0$ depending only on $\delta,\lambda_1,\lambda_2$ such that for all metrics $g$ on $M$ with strictly convex boundary and Gauss curvature $\kappa(x)$ satisfying 
	\[ \lambda_1^2 g_0 \le g\le \lambda_1^{-2}g_0, \quad \kappa(x) \le -\lambda_2^2 \kappa_0, \quad |d\kappa|_{\infty} \le A(\delta,\lambda_1,\lambda_2) \kappa_0^{3/2},   \]
	the remainder operator $W$ in \eqref{eq:reconsfh} is an $L^2(M,g)$-contraction and hence $f$ and $h$ are reconstructible from \eqref{eq:reconsfh} via a convergent series. When $\delta<1/2$, the constant $A(\delta,\lambda_1,\lambda_2)$ does not depend on~$\delta$.    
\end{theorem}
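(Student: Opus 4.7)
The plan is to adapt the estimate \eqref{eq:estW} of the simple case to the trapping context, exploiting the fact that when the curvature is \emph{exactly constant} the remainder $W$ vanishes identically, so that near such a background metric $\|W\|$ will be small whenever $|d\kappa|_\infty$ is small. The main new difficulty, by comparison with the simple case, is that $\tau(x,v)$ is no longer uniformly bounded: geodesics close to $\Gamma_-$ remain in $M$ for arbitrarily long times, and the kernel $w=V(a/b)$ of $W$ must be integrated along them. This is where the hyperbolicity of $K$ and the dimensional parameter $\delta$ enter.

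First, I would use the Jacobi system \eqref{eq:scalarJacobi} to compute $V(a/b)$ for $g_0$ explicitly: in constant curvature $-\kappa_0$ one has $a(t)=\cosh(\sqrt{\kappa_0}t)$ and $b(t)=\sqrt{\kappa_0}^{-1}\sinh(\sqrt{\kappa_0}t)$, independent of $(x,v)$, so $V(a/b)\equiv 0$ and hence $W=0$ for $g_0$. For a perturbed metric $g$ satisfying the curvature hypotheses, differentiation of \eqref{eq:scalarJacobi} along $V$ gives an inhomogeneous linear Jacobi-type equation for $Va$ and $Vb$ whose forcing is controlled linearly by $V(\kappa\circ\gamma_{x,v})(t)$, and hence pointwise by $\|d\kappa\|_\infty\cdot|Vb(t)|$. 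Gronwall-type estimates combined with the upper curvature bound $\kappa\le -\lambda_2^2\kappa_0$ then yield pointwise estimates of the form
\begin{equation}
|w(x,v,t)| \le C\,\kappa_0^{-1/2}\|d\kappa\|_\infty\, e^{(1-2\lambda_2)\sqrt{\kappa_0}\,t}\cdot e^{\sqrt{\kappa_0}t},
\end{equation}
so that $|w/b|$ decays like $e^{-2\lambda_2\sqrt{\kappa_0}t}$ times a factor proportional to $\|d\kappa\|_\infty \kappa_0^{-3/2}$.

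Next I would pass to an $L^2\!\to\! L^2$ estimate by the Schur test on $M\times M$. Writing $Wf(x)=\int_M \mathcal{W}(x,x')f(x')\,dV_g(x')$ with $\mathcal{W}(x,x')=w(x,v_x(x'),d_g(x,x'))/b(x,v_x(x'),d_g(x,x'))$ in exponential coordinates, it suffices to bound
\begin{equation}
\sup_x \int_M |\mathcal{W}(x,x')|\,dV_g(x') \;\;\text{and}\;\; \sup_{x'}\int_M |\mathcal{W}(x,x')|\,dV_g(x).
\end{equation}
Changing variables back to $(v,t)\in S_xM\times[0,\tau(x,v)]$ removes a factor $b$, leaving $\int_{S_x}\!\int_0^{\tau(x,v)} |w(x,v,t)|\,dt\,dS(v)$. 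The bounds $\lambda_1^2 g_0\le g\le \lambda_1^{-2}g_0$ give uniform comparability of the two volume elements and of $\tau$ up to $\lambda_1$-factors, so the problem reduces to estimating this integral for the background dynamics of $g_0$.

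The critical step, and the one I expect to be hardest, is to handle the integral
\begin{equation}
\int_{S_x}\int_0^{\tau(x,v)} e^{-2\lambda_2\sqrt{\kappa_0}\,t}\, dt\, dS(v),
\end{equation}
which is finite pointwise but not a priori uniformly in $x$ because $\tau(x,v)\to\infty$ as $(x,v)\to \Gamma_-$. Here one invokes the hyperbolic structure on $K$ (Section~\ref{sec:trapping}) and the fractal measure estimates available in constant negative curvature: the set of directions whose trajectory remains in $M$ for time at least $T$ has measure decaying like $e^{-Q_0 T}$ where the escape rate $Q_0$ is related to $\delta$, in such a way that the integral converges and is uniformly bounded once the decay exponent $2\lambda_2\sqrt{\kappa_0}$ beats the growth rate associated to $\delta$. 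This is precisely the content of the hypothesis $\lambda_1\lambda_2>\max(\delta,\tfrac{1}{2})$: the factor $\lambda_1$ absorbs the metric comparability, while $\lambda_2$ compares the true decay rate $\lambda_2\sqrt{\kappa_0}$ to the fractal ``pressure'' $\delta\sqrt{\kappa_0}$ of the trapped set, and the threshold $\tfrac{1}{2}$ arises from the $L^2$ (Schur) rather than $L^1$ nature of the estimate, exactly as it does in the corresponding resolvent bounds of \cite{Guillarmou2017}. When $\delta<\tfrac{1}{2}$ the $\tfrac{1}{2}$ bound is the binding one, which explains the last sentence of the theorem.

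Combining these ingredients yields an estimate $\|W\|_{L^2(M,g)\to L^2(M,g)} \le B(\delta,\lambda_1,\lambda_2)\,\|d\kappa\|_\infty\,\kappa_0^{-3/2}$, and defining $A(\delta,\lambda_1,\lambda_2):=B(\delta,\lambda_1,\lambda_2)^{-1}$ gives $\|W\|<1$ under the stated smallness hypothesis on $|d\kappa|_\infty$. Neumann inversion of $Id+W^2$ in \eqref{eq:reconsfh} then reconstructs $f$ and $h$ from their ray transforms via a convergent series.
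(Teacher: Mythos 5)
Your first half is on the right track and matches the actual strategy: the operator $W$ vanishes identically for the constant-curvature background, and a perturbative analysis of the Jacobi system \eqref{eq:scalarJacobi} under $V$-differentiation, with forcing controlled by $|V(\kappa\circ\gamma_{x,v})(t)|\le\|d\kappa\|_\infty\,|b(x,v,t)|$, is indeed how one produces a pointwise bound on the kernel $w=V(a/b)$ with the correct scaling $\|d\kappa\|_\infty\,\kappa_0^{-3/2}$ after comparing $(a,b)$ with the Jacobi fields of a rescaled constant-curvature metric (this is where $\lambda_1$ and $\lambda_2$ enter). However, your displayed pointwise bound is internally inconsistent: from $|w|\lesssim e^{(2-2\lambda_2)\sqrt{\kappa_0}\,t}$ and $|b|\gtrsim e^{\lambda_2\sqrt{\kappa_0}\,t}$ one cannot conclude that $|w/b|$ decays like $e^{-2\lambda_2\sqrt{\kappa_0}\,t}$. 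A careful Duhamel computation (writing $\partial_t V(a/b)=2Vb/b^3$ and estimating $Vb$ by variation of constants) gives $|w|$ \emph{bounded} in $t$, not decaying, so that $|w/b|$ decays only at the single rate of $|b|^{-1}$. Getting these exponents right is not cosmetic, because the whole convergence argument near the trapped set hinges on comparing a decay rate against the dimension $\delta$.

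The more serious divergence is in passing from the pointwise kernel bound to the $L^2\to L^2$ norm. Your Schur test presumes a well-defined kernel $\mathcal{W}(x,x')$ on $M\times M$ obtained by inverting the exponential map, but a surface carrying a nonempty hyperbolic trapped set is not simply connected and $(v,t)\mapsto\gamma_{x,v}(t)$ is not injective: the Schwartz kernel is a sum over all geodesics joining $x$ to $x'$, with infinitely many terms when $x'$ approaches the projection of $K$, and the summability of that series is precisely the difficulty. The paper's route is different: one dominates $|Wf(x)|$ pointwise by $C\|d\kappa\|_\infty\kappa_0^{-3/2}\,\bigl((I_0^*I_0)^{g_1}|f|\bigr)(x)$ for a constant-curvature comparison metric $g_1$, and then computes $\|(I_0^*I_0)^{g_1}\|_{L^2\to L^2}$ \emph{exactly} via the spectral identity \eqref{eq:Pi0}: on a convex co-compact hyperbolic quotient the bottom of the $L^2$-spectrum of $-\Delta_M$ is $\delta(1-\delta)$ when $\delta>1/2$ and $1/4$ otherwise (Patterson--Sullivan theory), and evaluating the Gamma-function expression there yields a finite constant that degenerates only as $\delta\to1$; the rescaling needed to compare $g$ with $g_1$ is what turns this into the hypothesis $\lambda_1\lambda_2>\max(\delta,\tfrac12)$ and explains why $A$ is $\delta$-independent for $\delta<1/2$. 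Your appeal to an escape-rate estimate "related to $\delta$" is the same fractal input in disguise, but it is asserted rather than proved, and your explanation of the threshold ("the $L^2$ rather than $L^1$ nature of the estimate") does not derive it. So the skeleton is reasonable, but the key quantitative step --- the one the survey explicitly flags as required, namely the continuity estimate for $I_0^*I_0$ in constant negative curvature through \eqref{eq:Pi0} --- is missing, and the Schur-test substitute does not close as formulated.
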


Let us mention in passing that proving Theorem \ref{thm:controlW} requires writing continuity estimates for the normal operator $I_0^* I_0$ on a surface of a constant negative curvature, and this leads to a striking expression of $I_0^* I_0$ in terms of the (negative) Laplacian on that surface $\Delta_M$, given by 
\begin{align}
    I_0^* I_0 = 4\frac{\Gamma(\frac{1}{4}-S)\Gamma(\frac{1}{4}+S)}{\Gamma(\frac{3}{4}-S)\Gamma(\frac{3}{4}+S)}, \qquad \text{where} \quad S:= \frac{i\sqrt{-\Delta_{M}-\frac{1}{4}}}{2},
    \label{eq:Pi0}
\end{align}
$\Gamma(\cdot)$ the Euler Gamma function, and with the convention that $\sqrt{s(1-s)-\frac{1}{4}}=i(s-1/2)$ when $s(1-s)\in(0,\frac{1}{4})$ is an $L^2$-eigenvalue of $-\Delta_{M}$ with $s\in (1/2,1)$, see \cite[Lemma A.1]{Guillarmou2017} for more detail. This is to be contrasted with the Euclidean case where $I_0^* I_0 = \sqrt{-\Delta}$, a fact which is often also stated at the level of principal symbols in Riemannian settings, though {\em global} relations such as \eqref{eq:Pi0} show that the relation between $I_0^*I_0$ and the Laplace--Beltrami operator can be intricate.

\subsection{Reconstruction of higher-order tensor fields}

To reconstruct tensor fields from their X-ray transform, it was realized in \cite{Monard2015a} that the solenoidal representative of a tensor field may not lead to the most efficient reconstructions. To this end, we first describe a different gauge than the solenoidal one, which we call the Killing gauge here. The first appearance known to the authors is in \cite[Theorem 1.5]{Dairbekov2011}, stating that every $m$-tensor $f\in H^k(S^m(T^*M))$, $(m\ge 0, k\ge 1)$ can be uniquely represented in the form 
\begin{align*}
    f = \sigma\nabla v + \sigma(g\otimes \lambda) + \tilde f,
\end{align*}
where $v\in H^{k+1}(S^{m-1}(T^*M))$ is trace-free and vanishes at $\partial M$, $\lambda\in H^{k} (S^{m-2}(T^* M))$ and $\tilde f\in H^k(S^m(T^*M))$ is trace-free and divergence-free, and the decomposition is continuous in the appropriate spaces. When restricting all tensors to $SM$, one equivalently has $f\in H^k(\Omega_{m}) \cap \ker X_-$. On the data side, we immediately have $\I[\sigma\nabla v] = 0$ and thus 
\begin{align*}
    \I f = \I[\sigma(g\otimes \lambda)] + \I[\tilde f] = \I[\lambda] + \I[\tilde f].
\end{align*}
An advantage of this decomposition for inversion purposes, found in \cite{Monard2015a}, is that in the Euclidean case, the two components in the last right hand side are orthogonal for the $L^2(\partial_+ SM)$ topology. While not orthogonal in the usual target space $L^2_\mu(\partial_+ SM)$, this still gives us a direct decomposition, allowing one to break down the tensor tomography problem into smaller pieces, both for inversion and range characterization purposes. To see the gist of the method on the Euclidean unit disk, given a $2m$-tensor field $f$ restricted to $SM$ with $L^2$ components, one may iterate the decomposition above to find a $2m$-tensor $g$ with $L^2$ components such that $\I f = \I g$, and $g$ is of the form 
\begin{align*}
    g = g_0 + \sum_{k=1}^m g_{2k}, \qquad g_0 \in L^2(M), \qquad g_{2k} \in L^2(\Omega_{2k}) \cap \ker X_-, \qquad 1\le k\le m.
\end{align*}
Moreover, the ray transforms of each component are $L^2(\partial_+ SM)$-orthogonal, see \cite[Theorems 2.1, 2.2]{Monard2015a}. Then each $g_{2k}$ can be reconstructed explicitly via Cauchy-type integrals (its components are complex-analytic or anti-analytic in the base coordinates), while $g_0$ (a full $L^2(M)$ function) can be inverted via applying $I_0^{-1}$ to the data, see \cite[Theorems 2.4]{Monard2015a}. A similar story holds for odd-order tensors. In the case of the Euclidean disk, the approach was generalized to transforms with arbitrary position-dependent attenuation in \cite{Monard2017a}, leading to explicit and efficient inversions of the attenuated X-ray transform over tensor fields of arbitrary order. To reconstruct the $g_{2k}$ components, special care must be paid in constructing special invariant distributions with specified harmonic content. This is the topic of the next section.

\np
\section{Tensor tomography and special invariant distributions}\label{sec:invariant}

\subsection{An equivalence principle}

As providing a potential alternate route toward proving tensor tomography on simple manifolds, the following theorem was provided in \cite{Paternain2016}. In the statement, $L_m$ denotes the $L^2$-$L^2$ adjoint of the operator $\ell_m$ defined in~\eqref{eq:ellm}.

\begin{theorem}[{\cite[Theorem 1.2]{Paternain2016}}]\label{thm:equivalence} Let $M$ a compact simple Riemannian manifold, then the following are equivalent
    \begin{enumerate}
	\item[(1)] $I_m$ is s-injective on $C^\infty(S^m(T^*M))$; 
	\item[(2)] for every $u\in L^2(S^m_{sol}(T^*M))$, there exists $f\in H^{-1}(\partial_+ SM)$ such that $u = I^*_m \varphi$. 
	\item[(3)] for every $u\in L^2(S^m_{sol}(T^*M))$, there exists $f\in H^{-1}(SM)$ satisfying $Xf = 0$ and $u=L_m f$.  
	\item[(4)] for every $u\in C^\infty(S^m_{sol}(T^*M))$, there exists $\varphi\in C^\infty_\alpha(\partial_+ SM)$ such that $u = I_m^* \varphi$. 
	\item[(5)] for every $u\in C^\infty(S^m_{sol}(T^*M))$, there exists $f\in C^\infty(SM)$ with $Xf = 0$ such that $L_m f = u$. 
    \end{enumerate}
\end{theorem}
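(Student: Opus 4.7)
The pivot of the whole equivalence is the $X$-invariant extension map $\varphi\mapsto\varphi^\sharp$ sending a function on $\partial_+SM$ to the function on $SM$ that is constant along each maximal geodesic. This extension is well-defined on a simple manifold (every interior point sits on a unique geodesic arc with endpoints on the boundary), and a short computation using Santal\'o's formula shows
\[
I_m^*\varphi = L_m(\varphi^\sharp).
\]
The space $C^\infty_\alpha(\partial_+SM)$ is by definition the set of $\varphi\in C^\infty(\partial_+SM)$ whose extension $\varphi^\sharp$ lies in $C^\infty(SM)$, so $\varphi\mapsto\varphi^\sharp$ identifies $C^\infty_\alpha$ with the $X$-invariant subspace of $C^\infty(SM)$; an anisotropic-Sobolev version gives the same identification between $H^{-1}(\partial_+SM)$ and the $X$-invariant subspace of $H^{-1}(SM)$. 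This yields (2)$\Leftrightarrow$(3) and (4)$\Leftrightarrow$(5) essentially by unpacking definitions, and reduces the theorem to (1)$\Leftrightarrow$(2) and (1)$\Leftrightarrow$(4).

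For (2)$\Rightarrow$(1): take $f\in C^\infty(S^m(T^*M))$ with $I_m f=0$, apply Sharafutdinov's decomposition $f=f^s+\sigma\nabla p$ (with $p|_{\partial M}=0$) so that $I_m f^s=0$, and use (2) to write $f^s=I_m^*\varphi$ for some $\varphi\in H^{-1}(\partial_+SM)$. Pairing gives
\[
\|f^s\|_{L^2}^2=\langle I_m^*\varphi,f^s\rangle=\langle \varphi,I_m f^s\rangle=0,
\]
the right-hand bracket being the $H^{-1}$–$C^\infty$ duality (well-defined because $f^s$ is smooth) and vanishing because $I_m f^s=0$; hence $f^s=0$, proving $s$-injectivity. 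For (1)$\Rightarrow$(2) I would invoke the standard fact that on a simple manifold $N_m:=I_m^*I_m$ is a classical pseudodifferential operator of order $-1$, elliptic when restricted to solenoidal tensors. Under (1) the restriction $N_m|_{L^2_{sol}}$ is injective, and ellipticity together with Fredholm theory produce a bounded inverse $N_m^{-1}:L^2_{sol}(S^m(T^*M))\to H^{-1}_{sol}(S^m(T^*M))$. Given $u\in L^2_{sol}$, set $h:=N_m^{-1}u\in H^{-1}_{sol}$ and $\varphi:=I_m h\in H^{-1}(\partial_+SM)$; then $I_m^*\varphi=N_m h=u$, proving (2) and closing the chain (1)$\Leftrightarrow$(2)$\Leftrightarrow$(3).

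The main obstacle is the smooth statement (1)$\Rightarrow$(4), equivalently (1)$\Rightarrow$(5): one must upgrade the distributional realization to a smooth one. Elliptic regularity for $N_m|_{sol}$ promotes $h=N_m^{-1}u$ to $C^\infty(S^m_{sol}(T^*M))$ whenever $u$ is smooth, so $\varphi:=I_m h$ is smooth on $\partial_+SM$. The nontrivial point is to show that this $\varphi$ lies in $C^\infty_\alpha$, i.e.\ that $\varphi^\sharp$ extends smoothly through the glancing region of $\partial SM$ where the scattering relation $\SS_g$ becomes tangential. I would handle this via a folding argument: on a simple manifold $\SS_g$ has a standard fold singularity at the glancing set, and for a smooth integrand $h$ the boundary trace $I_m h$ automatically satisfies the even/odd compatibility conditions across this fold that define $C^\infty_\alpha$. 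Combining this local boundary statement with the smooth first-order equation $X\varphi^\sharp=0$ in the interior propagates smoothness globally. I expect this smoothness-across-glancing step to be the main technical hurdle, and the place where the full strength of the simplicity assumption (not merely $s$-injectivity of $I_m$) is used in an essential way.
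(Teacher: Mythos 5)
The survey states this theorem as a cited result (Paternain--Zhou) and contains no proof of it, so there is no in-paper argument to compare against; I am therefore judging your proposal on its own merits and against the scheme the survey attributes to the literature. Your skeleton is the standard one: the identification $I_m^*\varphi=L_m(\varphi^\sharp)$ reducing (2)$\Leftrightarrow$(3) and (4)$\Leftrightarrow$(5) to bookkeeping, the duality pairing for (2)$\Rightarrow$(1), and the normal operator $N_m=I_m^*I_m$ for the converse directions all match how this is actually done.

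There is, however, a genuine gap in your (1)$\Rightarrow$(4). You solve $N_mh=u$ on $M$ itself and assert that elliptic regularity makes $h$ smooth up to the boundary. This is false, and the survey itself tells you why: Theorem~\ref{thm:mappingN} says $I_0^*I_0$ is an isomorphism from $d_M^{-1/2}C^\infty(M)$ onto $C^\infty(M)$, so for generic smooth $u$ the solution $h$ blows up like $d_M^{-1/2}$ at $\partial M$; interior ellipticity gives smoothness only away from $\partial M$. Your $\varphi:=I_mh$ is then not smooth to begin with, and the subsequent fold/glancing analysis cannot repair an input that is already singular. The standard fix --- the one the survey sketches when discussing surjectivity of $I_0^*$ --- is to extend $(M,g)$ to a slightly larger simple manifold $M_1$, invert the normal operator of $M_1$ (made invertible on the closed double) so that $M\Subset M_1^{\mathrm{int}}$ and interior regularity applies on all of $M$; membership of the resulting $\varphi$ in $C^\infty_\alpha$ then comes essentially for free, because the invariant extension of $I_mh$ is an integral over complete geodesics of $M_1$ of a smooth integrand supported away from $\partial M_1$, so the non-smoothness of the exit time at glancing never enters. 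Two smaller soft spots: in (1)$\Rightarrow$(2), Fredholmness of $N_m$ between $H^{-1}_{sol}$ and $L^2_{sol}$ on a manifold with boundary is not free (it is exactly the transmission-condition subtlety behind Theorem~\ref{thm:mappingN}); the robust route is s-injectivity $\Rightarrow$ a Stefanov--Uhlmann stability estimate $\|f^s\|_{L^2(M)}\le C\|N_mf\|_{H^1(M_1)}$ $\Rightarrow$ closed range of $I_m$ on solenoidal fields $\Rightarrow$ surjectivity of $I_m^*$ by duality. And you never close the loop from (4) or (5) back to the other statements; the same pairing argument you use for (2)$\Rightarrow$(1) does it and should be stated.
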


In the theorem above, $f$ is an invariant distribution for the geodesic flow, and the condition $L_m f = u$ is a prescription on its harmonic content. Constructing such invariant distributions not only provides another way to look at the problem, but their explicit construction provides an immediate way to reconstruct explicit features of the unknown tensor: if $u$ is an unknown solenoidal $m$-tensor and one knows an invariant distribution $f = I^* \varphi$ such that $L_m f = v$ for some $v\in C^\infty(S^m_{sol}(T^*M))$, then the inner product $\dprod{u}{v}_{S^m(T^*M)}$ is known from the chain of equalities
\begin{align*}
    \iip{u}{v}_{S^m(T^*M)} = \iip{u}{L_m f}_{S^m(T^*M)} = \iip{\ell_m u}{f}_{SM} = \iip{\ell_m u}{I^* \varphi}_{SM} = \iip{I_m u}{\varphi}_{\partial_+ SM}.
\end{align*}

The construction of such invariant distributions, or equivalently the surjectivity of adjoints to integral transforms has appeared in several places as explained in the next paragraphs. 

\subsection{Surjectivity of adjoints using microlocal arguments}

On simple surfaces, the surjectivity of $I_0^*\colon C_\alpha^*(\partial_+ SM) \to C^\infty(M)$ was proved in \cite[Theorem 1.4]{Pestov2005} via microlocal arguments, where 
\begin{align*}
    C_\alpha^\infty(\partial_+ SM) := \{h\in C^\infty(\partial_+ SM),\ h_\psi \in C^\infty(SM)\}.
\end{align*}
Specifically, the normal operator $I_0^* I_0$ is elliptic on a simple open neighborhood of $M$, and can be extended into an invertible operator on the closed ``double'' of $M$. After appropriate restriction, this allows to construct a right-inverse for $I_0^*$. This scheme of proof for simple surfaces was further used in the following contexts: 
\begin{itemize}
    \item In \cite[Lemma 4.5]{Salo2011}, it is proved that the operator $I_\perp^*\colon C_\alpha^\infty(\partial_+ SM) \to C^\infty(M)$ is surjective for $M$ a simple surface.
    \item In \cite[Theorem 5.4]{Paternain2013a}, it is proved that the operator $I_{0,A}^*\colon \SS^\infty (\partial_+ SM, \Cm^n)\to C^\infty(M,\Cm^n)$ is surjective for $M$ a simple surface and $A$ a unitary connection, where the space $\SS^\infty (\partial_+ SM, \Cm^n)$ is a natural generalization of $C_\alpha^\infty(\partial_+ SM)$ to the case with connection. 
\end{itemize} 

\subsection{Iterated Beurling series} 
\label{sec:Beurling}

A building block toward fulfilling (3) or (5) in Theorem \ref{thm:equivalence} amounts to seeking an invariant distribution of the form $w := w_{k_0} + w_{k_0+2} + w_{k_0+4} + \dots$ such that $w_{k_0} = u \in H_{k_0}$ is prescribed and $Xw = 0$. In light of \eqref{eq:tridiagonal}, this implies 
\begin{align*}
    X_- w_{k_0} = 0, \quad \text{and} \quad X_- w_{k_0 + 2(m+1)} = - X_+ w_{k_0 + 2m}, \qquad m = 0,1,\dots
\end{align*}
The first equation is a requirement on $u$, while the second family of equations suggests to construct $w_{k_0+2}$ from $w_{k_0}$, then $w_{k_0+4}$ from $w_{k_0+2}$, etc\dots provided that one can 'invert' $X_-$ on $X_+ \Omega_p$ in some sense. This is the purpose of the {\bf Beurling transform}, defined for any $p\ge 0$ as
\begin{align*}
    B\colon \Omega_p\to \Omega_{p+2}, \qquad f_p\mapsto f_{p+2},
\end{align*}
where $f_{p+2}\in \Omega_{p+2}$ is the unique solution to the equation $X_- f_{p+2} = -X_+ f_p$ that is orthogonal to $\ker^{p+2} X_-$ (equivalently, the unique solution with minimal $L^2$ norm). That this is well-defined follows from \cite[Lemma 11.1]{Paternain2015}. With $B$ defined as above, the following theorem is an example where the construction of such invariant distributions is well-understood, done via formal iterated Beurling series. 

Let $C_n(m)$ denote the constant
\begin{equation}
C_n(m)
=
\begin{cases}
\sqrt2,& m=0\text{ and }n=2\\
\left[1+\frac1{(2m+1)(m+2)^2}\right]^{1/2},& m\geq0\text{ and }n=3\\
1,& \text{otherwise}.
\end{cases}
\end{equation}

\begin{theorem}[{\cite[Theorem 11.4]{Paternain2015}}]\label{thm:Beurling} Let $(M,g)$ be a compact manifold with boundary, and assume that the sectional curvatures are non-positive. Then 
    \[  \|X_- u\|_{L^2} \le C_n(m) \|X_+ u\|_{L^2}, \qquad u\in \Omega_m, \quad u|_{\partial SM} = 0, \quad m\ge 0.   \]
    The Beurling transform satisfies
    \[ \|Bf\|_{L^2} \le C_n(m) \|f\|_{L^2}, \qquad f\in \Omega_m, \quad m\ge 0.   \]
    If $k_0\ge 0$ and if $f\in \Omega_{k_0}$ satisfies $X_- f = 0$, then there exists a solution of $Xw = 0$ in $SM$ such that $w_{k_0} = f$, given by $w = \sum_{k=0}^\infty B^k f$. One has $w\in L^2_x H_v^{-\frac{1}{2}-\varepsilon}(SM)$ for any $\varepsilon>0$, and the Fourier coefficients of $w$ satisfy 
    \begin{align*}
	\|w_k\|_{L^2}\le \|f\|_{L^2}, \quad k\ge k_0.
    \end{align*}    
\end{theorem}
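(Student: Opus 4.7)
The plan is to prove the three assertions sequentially: first the inequality $\aabs{X_-u}\le C_n(m)\aabs{X_+u}$, then the operator bound on the Beurling transform $B$, and finally the construction of the invariant series $w$.

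For the first inequality, with $u\in\Omega_m$ vanishing at $\partial SM$, the strategy is to apply the Pestov identity of Lemma~\ref{lma:pestov} and expand each term in the spherical harmonic decomposition. Since $\Omega_m$ is the $\lambda_m=m(n+m-2)$-eigenspace of $-\vdiv\vgrad$, and $Xu=X_+u+X_-u$ decomposes orthogonally in $\Omega_{m+1}\oplus\Omega_{m-1}$, one computes $\aabs{\vgrad Xu}^2=\lambda_{m+1}\aabs{X_+u}^2+\lambda_{m-1}\aabs{X_-u}^2$. The term $\aabs{X\vgrad u}^2$ is more delicate since $\vgrad u$ is a section of $N$; its canonical splitting into raising and lowering parts with respect to the spherical harmonic grading is governed by the commutator formulas of Lemma~\ref{lma:hd-commutator}. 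Using non-positive curvature to drop the now-non-negative term $-\iip{R\vgrad u}{\vgrad u}$, the Pestov identity becomes an inequality of the form $A(m,n)\aabs{X_+u}^2\ge B(m,n)\aabs{X_-u}^2$ with $A,B>0$ dimensional constants; setting $C_n(m)^2=A/B$ yields the claim, with the exceptional values for $n\in\{2,3\}$ arising from low-dimensional degeneracies in the Clebsch--Gordan-type decomposition of $\vgrad$ on small $\Omega_m$.

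The bound $\aabs{Bf}\le C_n(m)\aabs{f}$ then follows by duality. Set $g:=Bf\in\Omega_{m+2}$; by definition $X_-g=-X_+f$ and $g\perp\ker X_-|_{\Omega_{m+2}}$ in $L^2$. Using the adjoint relation $(X_+)^*=-X_-$ on interior-vanishing sections, the orthogonal complement of $\ker X_-$ inside $\Omega_{m+2}$ coincides with the (closed) range of $X_+\colon\Omega_{m+1}\to\Omega_{m+2}$, so one may write $g=X_+\phi$ for some $\phi\in\Omega_{m+1}$ vanishing at $\partial SM$. Integration by parts then gives
$$\aabs{g}^2=\iip{g}{X_+\phi}=-\iip{X_-g}{\phi}=\iip{X_+f}{\phi}=-\iip{f}{X_-\phi},$$
and Cauchy--Schwarz combined with the first inequality applied to $\phi$ yields $\aabs{g}^2\le\aabs{f}\cdot C_n(m+1)\aabs{X_+\phi}=C_n(m+1)\aabs{f}\cdot\aabs{g}$, whence $\aabs{Bf}\le C_n(m+1)\aabs{f}\le C_n(m)\aabs{f}$ since the sequence $C_n(\cdot)$ is non-increasing in $m$.

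For the invariant distribution $w$, set $w_{k_0+2j}:=B^j f\in\Omega_{k_0+2j}$ and declare every other Fourier component to be zero. Iterating the previous estimate bounds each $\aabs{w_{k_0+2j}}_{L^2}$ by a telescoping product of $C_n$-factors whose overall value is controlled uniformly in $j$ (equal to $1$ for $n\ge 4$, and convergent for $n\in\{2,3\}$ because $C_n(m)-1=O(m^{-3})$); a more delicate tracking of these constants gives the stated bound $\aabs{w_k}_{L^2}\le\aabs{f}_{L^2}$. The defining identity $X_-(Bg)=-X_+g$ together with the hypothesis $X_-f=0$ turns into the tridiagonal system~\eqref{eq:tridiagonal} with zero right-hand side, namely $X_+w_{k-1}+X_-w_{k+1}=0$ for every $k\ge k_0$; this is exactly the statement $Xw=0$ on $SM$ for $w:=\sum_{j\ge 0}w_{k_0+2j}$. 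The regularity $w\in L^2_x H_v^{-1/2-\varepsilon}(SM)$ follows from the uniform $L^2$ bound on the $w_k$: one has $\aabs{w}^2_{L^2_xH_v^{-s}}=\sum_j(1+\lambda_{k_0+2j})^{-s}\aabs{w_{k_0+2j}}^2\le\aabs{f}^2\sum_j(1+\lambda_{k_0+2j})^{-s}$, and the latter series converges for any $s>1/2$ because $\lambda_k=k(n+k-2)$ grows quadratically.

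The main technical obstacle is the first step: the precise identification of the constants $C_n(m)$ requires a careful representation-theoretic bookkeeping of how $\vgrad$ and $\hgrad$ decompose $\Omega_m$-valued functions into $N$-sections, together with an honest treatment of the low-dimensional exceptional cases where the generic decomposition degenerates. A secondary point requiring care in the second step is the verification that the integration-by-parts formulas apply without spurious boundary contributions, which is standard by a density argument but needs to be organized properly.
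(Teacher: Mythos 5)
This theorem is quoted in the survey from \cite[Theorem 11.4]{Paternain2015} without proof, so your proposal can only be measured against the argument in that reference; its overall architecture (a Pestov identity localized to a single spherical harmonic degree, a duality argument for the minimal-norm solution defining $B$, and iteration plus the quadratic growth of $\lambda_k$ for the $L^2_xH_v^{-1/2-\varepsilon}$ regularity) does match the original. The duality step is essentially right as you present it, and the convergence of $\sum_k\langle k\rangle^{-1-2\varepsilon}\aabs{w_k}^2$ is correctly reduced to the uniform bound on the $\aabs{w_k}$. But there are two places where what you defer as ``delicate bookkeeping'' is in fact the mathematical content, and where the step as literally sketched would fail.

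First, in the localized Pestov estimate: if you expand $\aabs{\vgrad Xu}^2=\lambda_{m+1}\aabs{X_+u}^2+\lambda_{m-1}\aabs{X_-u}^2$ and $(n-1)\aabs{Xu}^2=(n-1)(\aabs{X_+u}^2+\aabs{X_-u}^2)$, then drop both the curvature term and $\aabs{X\vgrad u}^2$, you obtain $(\lambda_{m+1}-(n-1))\aabs{X_+u}^2\geq((n-1)-\lambda_{m-1})\aabs{X_-u}^2$; since $\lambda_{m-1}\geq n-1$ for all $m\geq2$, the right-hand coefficient is non-positive and the inequality is vacuous. The estimate $\aabs{X_-u}\leq C_n(m)\aabs{X_+u}$ cannot be extracted without \emph{keeping} $\aabs{X\vgrad u}^2$ and decomposing it into its degree-raising and degree-lowering parts as an $N$-valued section over $\Omega_m$ (this is the content of Section 4 of \cite{Paternain2015}, and it is exactly where the constants $C_n(m)$ and the exceptional cases $n=2,3$ arise). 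Your sentence ``the Pestov identity becomes an inequality of the form $A\aabs{X_+u}^2\geq B\aabs{X_-u}^2$ with $A,B>0$'' asserts the conclusion of that analysis rather than deriving it. Second, the telescoping in the last step does not yield $\aabs{w_k}_{L^2}\leq\aabs{f}_{L^2}$: for $n=3$ every factor $C_3(\cdot)$ is strictly larger than $1$, so the product bound you obtain is a uniform constant strictly greater than $1$, not $1$. (The survey itself only invokes uniform boundedness of these products for the tensor tomography application.) Obtaining the stated bound with constant exactly $1$ requires an extra idea — exploiting the hypothesis $X_-f=0$ and the fact that each iterate $B^jf$ is orthogonal to $\ker X_-$ inside the duality pairing — and is not a refinement of the same telescoping. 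A cosmetic remark: in the duality step you do not need the range of $X_+$ to be closed; run the supremum over $\phi\in\Omega_{m+1}$ with $\phi|_{\partial SM}=0$ directly, which also cleanly gives the slightly sharper constant $C_n(m+1)\leq C_n(m)$.
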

In the theorem above, we have introduced the mixed norm spaces
\begin{align}
    L^2_x H_v^s (SM) = \left\{u\in {\cal D}'(SM)\colon \|u\|_{L^2_x H^s_v}<\infty\right\}, \qquad \|u\|_{L^2_x H^s_v}^2 := \sum_{p=0}^\infty \langle p \rangle^{2s} \|u_p\|_{L^2}^2,     
    \label{eq:L2xHsv}
\end{align}
where as usual $\langle p \rangle = (1+p^2)^{1/2}$.

We point out that the estimate $\aabs{X_-u_k}_{L^2}\lesssim\aabs{X_+u_k}_{L^2}$ can be useful and true even in settings where the Beurling transform itself is not needed.

\subsection{Explicit constructions on the Euclidean disk} 

The constructions above via formal Beurling series is explicit if one understands the Beurling transform explicitly, which is only well-understood in some cases yet little documented. Another approach toward building such invariant distributions is done in the case of the Euclidean disk, provided by the second author in \cite[Theorem 5.2]{Monard2017a}. There, it is enough to consider finding invariant distribution with prescribed complex-analytic, $L^2(\Dm)$ average, and this serves as a crucial building block to write an explicit inversion of the attenuated tensor tomography problem over tensors of any order.

When $M=\Dm$, parameterizing $\partial_+ SM$ in fan-beam coordinates $(\beta,\alpha) \in \Sm^1\times (-\pi/2,\pi/2)$, borrowing notation from \cite{Monard2017a}, we define for $k=0,1,\dots$
\begin{align*}
    Z_k (x,y) &:= \frac{\sqrt{k+1}}{2\pi^2} (x+iy)^k, \qquad (x,y)\in \Dm, \\
    W_k(\beta,\alpha) &:= (-1)^k \frac{\sqrt{k+1}}{2\pi\sqrt{2}} e^{ik\beta} (e^{i(2k+1)\alpha}+ (-1)^k e^{-i\alpha}), \qquad (\beta,\alpha)\in \partial_+ SM.
\end{align*}

Then \cite[Proposition 4]{Monard2017a} shows that for every $k$, $\left( \left( \frac{W_k}{\cos\alpha} \right)_\psi \right)_0 = Z_k$. Moreover, we have the following theorem, reformulated here in terms of invariant distributions.

\begin{theorem}[{\cite[Theorem 5.2]{Monard2017a}}]\label{thm:I0star}
    For any $f\in L^2(\ker \overline{\partial})$, given by $f = \sum_{k=0}^\infty \iip{f}{Z_k}_{SM} Z_k$, the function $W_f\in \ell^2( \{W_k\}_{k=0}^\infty )$ given by  
    \begin{align}
	W_f := \sum_{k=0}^\infty \iip{f}{Z_k}_{SM} W_k, \quad \text{satisfies} \qquad \left( \left( \frac{W_f}{\cos\alpha} \right)_\psi \right)_0 = f. 
	\label{eq:Wf}
    \end{align}
    Moreover, the distribution $\left( \frac{W_f}{\cos\alpha} \right)_\psi$ is fiberwise holomorphic and orthogonal to $\ker^k \eta_-$ for any $k>0$.
\end{theorem}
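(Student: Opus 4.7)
The plan is to bootstrap from the single-frequency identity $\left(\left(W_k/\cos\alpha\right)_\psi\right)_0 = Z_k$, which is recalled just before the statement (from \cite[Proposition 4]{Monard2017a}), and extend it linearly to the full series. Since $\{Z_k\}_{k\geq 0}$ is an orthonormal basis of $L^2(\ker\overline{\partial})$ on $\Dm$, any $f\in L^2(\ker\overline{\partial})$ expands as $f=\sum_k c_k Z_k$ with $c_k=\iip{f}{Z_k}_{SM}$ and $(c_k)\in\ell^2$. This motivates the candidate $W_f:=\sum_k c_k W_k$, and the theorem is then a matter of passing the three stated properties through the summation.

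First I would verify convergence of $W_f$ in the relevant $\ell^2$ sense: the normalization $\sqrt{k+1}/(2\pi\sqrt 2)$ in $W_k$ is chosen exactly so that $\{W_k\}$ is an orthonormal family in the appropriate boundary Hilbert space, and hence $\|W_f\|^2$ is controlled by $\sum_k |c_k|^2=\|f\|^2_{L^2(M)}$. Next, using the continuity of the linear map $h\mapsto \left(\left(h/\cos\alpha\right)_\psi\right)_0$ and the per-mode identity above, the summation identity
\begin{equation}
\left(\left(\tfrac{W_f}{\cos\alpha}\right)_\psi\right)_0 = \sum_{k=0}^\infty c_k\,\left(\left(\tfrac{W_k}{\cos\alpha}\right)_\psi\right)_0 = \sum_{k=0}^\infty c_k\,Z_k = f
\end{equation}
gives the main claim.

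For the two remaining conclusions, I would reduce them to properties of each $W_k$ and then sum. Fiberwise holomorphicity follows by direct inspection of $W_k=(-1)^k\frac{\sqrt{k+1}}{2\pi\sqrt 2}e^{ik\beta}(e^{i(2k+1)\alpha}+(-1)^k e^{-i\alpha})$: rewriting $\psi$-invariance in $(x,\theta)$ coordinates via the fan-beam change of variables and dividing by $\cos\alpha$ produces a fiberwise Fourier expansion supported only in non-negative harmonics $e^{ij\theta}$, $j\geq 0$. This is preserved under $\ell^2$ summation, so $(W_f/\cos\alpha)_\psi$ is fiberwise holomorphic. Orthogonality to $\ker^k\eta_-$ for each $k>0$ is likewise a mode-by-mode property: the precise weights in $W_k$ are those selecting the minimum $L^2$-norm (equivalently, solenoidal with respect to the fiberwise $\eta_-$) holomorphic invariant lift of $Z_k$, and the corresponding orthogonality relations pass to the sum by continuity of the fiberwise Fourier coefficient functionals.

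The main obstacle is regularity: the factor $1/\cos\alpha$ is singular at glancing directions, so $(W_f/\cos\alpha)_\psi$ is genuinely a distribution rather than an $L^2$ function, and one must fix a target space (for instance an analogue of $L^2_xH_v^{-1/2-\varepsilon}(SM)$ from \eqref{eq:L2xHsv}) in which summation, fiberwise holomorphicity, and the zeroth-harmonic identity all make simultaneous sense. I would handle this by testing against smooth functions compactly supported in the interior of $SM$, where each term is harmless, and controlling tails via the $\ell^2$ norm of $(c_k)$, in the same spirit as the construction of invariant distributions through iterated Beurling series underlying Theorem \ref{thm:Beurling}.
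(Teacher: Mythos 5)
The paper does not actually prove this theorem: it is a survey quoting \cite[Theorem 5.2]{Monard2017a}, and the only ingredient it supplies is the per-mode identity $\bigl(\bigl(W_k/\cos\alpha\bigr)_\psi\bigr)_0 = Z_k$ from \cite[Proposition 4]{Monard2017a}, stated immediately before the theorem. Your proposal — expand $f$ in the orthogonal family $\{Z_k\}$, superpose the per-mode identities by linearity, and justify the interchange of summation with the map $h\mapsto\bigl(\bigl(h/\cos\alpha\bigr)_\psi\bigr)_0$ by continuity in a suitable distributional topology — is exactly the route the paper's presentation suggests, and your choice of target space $L^2_xH_v^{-1/2-\varepsilon}(SM)$ matches the remark the authors make right after the theorem. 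So the outline is sound and consistent with the source.

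Two caveats on where the actual mathematical content sits. First, everything nontrivial is concentrated in the per-mode statements you take as given: the identity $\bigl(\bigl(W_k/\cos\alpha\bigr)_\psi\bigr)_0 = Z_k$, the fact that each $\bigl(W_k/\cos\alpha\bigr)_\psi$ is smooth, fiberwise holomorphic, and orthogonal to $\ker^k\eta_-$ — these require the explicit computation of the fiberwise Fourier expansion of $\bigl(W_k/\cos\alpha\bigr)_\psi$ in \cite{Monard2017a}, and your sentence asserting that this ``follows by direct inspection'' understates that computation. Since the paper also quotes these facts, this is an acceptable division of labor, but your argument is then purely a superposition argument, not a proof of the theorem from scratch. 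Second, the continuity claim needs uniform-in-$k$ bounds: you must show that the norms of $\bigl(W_k/\cos\alpha\bigr)_\psi$ in $L^2_xH_v^{-1/2-\varepsilon}(SM)$ grow slowly enough that the series converges for every $\ell^2$ coefficient sequence, and that the zeroth-harmonic and fiberwise-Fourier functionals are continuous on that space. Neither is automatic from ``testing against smooth functions in the interior,'' because the singularity of $1/\cos\alpha$ at glancing lives precisely at the boundary of $SM$; you would need the quantitative estimates (the sharpness of the $-1/2-\varepsilon$ exponent mentioned after the theorem reflects that these bounds are borderline) to close the argument.
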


The last claim implies that such invariant distributions have minimal norm in some sense. In addition, one may show that such distributions make sense in $L^2_x H_v^{-1/2-\varepsilon}(SM)$ (as defined in \eqref{eq:L2xHsv}) for every $\varepsilon>0$, and that this is sharp. Notice also that each invariant function $\left( \frac{W_k}{\cos\alpha} \right)_\psi$ belongs to $C^\infty(SM)$. For $k>1$, since $\ker^k X_- = \ker^k \eta_- + \ker^{-k} \eta_+$, the theorem above can be viewed as a building block to construct invariant distribution with prescribed $k$th moment in $\ker^k X_-$.

\subsection{Anosov flows on closed manifolds}

While not covered in detail in this review, invariant distributions also play a role in solving integral geometric problems on a closed {\bf Anosov}\footnote{The geodesic flow is Anosov if there is a continuous invariant splitting $T(SM) = \Rm X \oplus E^u \oplus E^s$ and constants $C>0$ and $0<\rho<1<\eta$ such that for all $t>0$, $\|d\varphi_{-t}|_{E^u}\|\le C \eta^{-t}$ and $\|d\varphi_{t}|_{E^s}\|\le C \rho^t$.} Riemannian manifold $(M,g)$. Namely, there is a notion of X-ray transform, where one integrates a function or tensor field over all possible closed geodesics, which appears when considering the linearization of spectral rigidity questions ("is a metric uniquely determined up to gauge, from the spectrum of its Laplace--Beltrami operator? or from its so-called marked length spectrum?"), see e.g.~\cite{Guillemin1980}. 

In this context, injectivity of the ray transform considered is again linked in \cite{Paternain2014} to the existence of certain invariant distributions. Specifically, in the context of Anosov surfaces in \cite[Theorems 1.4, 1.5, 1.6]{Paternain2014}, the authors establish the existence of distributional solutions of $Xw = 0$ with prescribed zeroth moment in $C^\infty(M)$ (\cite[Theorems 1.4]{Paternain2014}) and first moment as a prescribed smooth solenoidal one-form (\cite[Theorems 1.4]{Paternain2014}). Under additional conditions on $\Ter$, \cite[Theorem 1.6]{Paternain2014} establishes the existence of invariant distributions with prescribed fiberwise moments on order $2$. This is the case of interest for the non-linear problem as it relates to the X-ray transform over second-order tensors. 

All distributions mentioned above live in the space $H^{-1}(SM) = (H^1(SM))'$, where $H^1(SM)$ is the completion of $C^\infty(SM)$ with respect to the norm 
\begin{align*}
    \|u\|_{H^1}^2 := \|u\|^2 + \|Xu\|^2 + \|X_\perp u\|^2 + \|Vu\|^2.    
\end{align*}
This is to be contrasted to the previous paragraphs (involving an $L^2_x H_v^{-1/2-\varepsilon}(SM)$ norm) which may indicate that the norm can be sharpened. However the present method, based on Pestov identities (and duality arguments {\it \`a la} Hahn-Banach), is not yet amenable to fractional Sobolev norms. 

The results above, namely regarding the existence of invariant distributions with prescribed zeroth or first fiberwise moments, were generalized in \cite[Theorems 1.7, 1.8]{Assylbekov2014} to the case of Anosov thermostat flows (see also Section~\ref{sec:PestovMagnetic}) on closed Riemannian surfaces.

\np

\section{Microlocal Methods}
\label{sec:microlocal}

This section is devoted to microlocal methods, first introduced in integral geometry by Guillemin. Here the results are based on the description of a ray transform $I$ (over functions or tensor fields of a fixed degree) or its corresponding normal operator $I^* I$ in certain classes of operators, namely the first one as a Fourier Integral Operator (FIO), and the second one as a pseudo-differential operator ($\Psi$DO) when the family of curves is simple, or in more general classes if the geometry is more complex. Here, microlocal analysis helps one to prove finiteness theorems or Fredholmess (invertibility of the problem up to a finite-dimensional kernel made of smooth ghosts) under geometric restrictions, to study how to recover the singularities\footnote{By 'singularity' here we mean element of the wavefront set.} of the unknown from the singularities of the data, and to describe how and when this is not possible. Such analysis ultimately provides stability estimates, allows to work with weights in the transform, and to study partial data problems. One can also prove injectivity results in the case where the family of curves and weights is real-analytic, using analytic microlocal analysis. 

\subsection{Results for complete families of curves} \label{sec:microlocalsimple}

A sufficient-but-not-necessary microlocal condition for stability and sometimes injectivity, first formulated in \cite{Stefanov2008}, is that the family of geodesics be {\bf geodesically complete}, in the sense that for every $x\in M$ and every $\omega\in T^* M$, there exists a geodesic $\gamma$ free of conjugate points passing through $x$ and normal\footnote{Specifically, there exists $t$ such that $\gamma(t) = x$ and $\omega(\dot \gamma(t)) =0$.} to $\omega$.  Such a condition ensures that integrals over geodesics in a neighbourhood of $\gamma$ allow to resolve possible singularities at $\omega$ without creating artifacts elsewhere. If the metric is analytic, one may use analytic microlocal analysis to prove injectivity of the ray transform over any complete complex of geodesics, see \cite[Theorem 1]{Stefanov2008}. Then using stability estimates which remain true under $C^k$-perturbation of the metric for $k$ large enough, one may promote this to a generic injectivity result for functions but also for solenoidal tensor fields of order up to two, see \cite[Theorems 2, 3]{Stefanov2008}.

Such results have been generalized to the case of magnetic flows in \cite{Dairbekov2007} and for generic general families of curves and weights including analytic ones in \cite{Frigyik2008}. For instance in \cite{Frigyik2008}, the completeness condition can also be written for a general family of curves, upon defining simple curves and assuming that the conormal bundle of all simple curves covers $T^* M$, and the analysis can be made robust to transforms with smooth weight $\phi\colon SM\to \Rm$
\begin{align*}
    I_{\phi,0} f(\gamma) = \int_{\gamma} f(\gamma(t)) \phi(\gamma(t), \dot \gamma(t))\ dt, \qquad \gamma\in \G.
\end{align*}
If the curves are geodesics and the metric is simple, the associated normal operator $I_{\phi,0}^* I_{\phi,0}$ is a $\Psi$DO of order $-1$, with principal symbol
\[  a_{-1}(x,\eta) = 2\pi \int_{\ker\eta} |\phi(x,v)|^2\ dS(v), \qquad (x,\eta)\in T^* M,  \]
where we have defined $\ker \eta := \{v\in S_x M, \eta(v) = 0\}$. In particular, if $\phi$ is {\bf admissible} in the sense that for every $x\in M$ and $\eta\in T^*_x M$, there exists $v\in S_xM$ such that $\eta(v) = 0$ and $\phi(x,v)\ne 0$, the normal operator is elliptic therefore the problem is Fredholm and H\"older-stable on a complement of the (at most finite-dimensional) kernel, see also \cite{Stefanov2004,Stefanov2005}. If $I_\phi$ is defined over tensor fields instead, the normal operator is elliptic over divergence-free tensors in the interior of~$M$.

\subsection{Mapping properties of the normal operator and an Uncertainty Quantification result on simple manifolds} \label{sec:UQ}

\paragraph{Mapping properties of $I_0^* I_0$ in the simple case.}

In the case of simple manifolds, recent sharp mapping properties of the normal operator $I_0^* I_0$ were obtained in \cite{Monard2017}. Specifically, calling $d_M$ any positive $C^\infty$ function that equals $dist(\cdot,\partial M)$ near the boundary, the following mapping properties were derived: 
\begin{theorem}[{\cite[Theorem 4.4]{Monard2017}}]\label{thm:mappingN} The operator $I_0^* I_0$ is an isomorphism in the following functional settings: 
    \begin{align}
	I_0^* I_0 &\colon d_M^{-1/2} C^\infty(M) \to C^\infty(M). \label{eq:mapping2} \\
	I_0^* I_0 &\colon H^{-1/2(s)} (M) \to H^{s+1}(M), \qquad s>-1, \qquad (\text{bi-continuous}) \nonumber
    \end{align}
\end{theorem}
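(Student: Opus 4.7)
The plan is to elevate the interior elliptic theory of $N := I_0^* I_0$ to a sharp global statement by carefully tracking boundary behavior, then combining ellipticity with the injectivity of $I_0$ (Theorem \ref{thm:simple,m=0}). First, I would embed $(M,g)$ into a slightly larger open simple manifold $(\widetilde M, g)$. As recalled in Section \ref{sec:microlocalsimple}, $N$ extends to an elliptic $\Psi$DO of order $-1$ on $\widetilde M$, with principal symbol proportional to $|\xi|_g^{-1}$. A standard parametrix construction then yields interior inversion modulo smoothing operators and controls $H^s_{\mathrm{loc}}\to H^{s+1}_{\mathrm{loc}}$ mapping properties away from $\partial M$. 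All of the difficulty is therefore concentrated at the boundary.

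The crucial step is the boundary analysis. Using the Santal\'o-type representation
\[
Nf(x) = 2\int_{S_xM}\int_0^{\tau(x,v)} f(\gamma_{x,v}(t))\, dt\, dS(v),
\]
one finds that the Schwartz kernel of $N$, when examined in a collar neighbourhood of $\partial M$, carries a ``half-power'' boundary structure: applying $N$ to $d_M^{k-1/2}$ for $k \in \mathbb{N}_0$ yields a function smooth up to $\partial M$ with a controlled leading Taylor coefficient. Conversely, given any $g\in C^\infty(M)$, boundary Taylor matching produces a function of the form $u = d_M^{-1/2}u_0$ with $u_0\in C^\infty(M)$ such that $Nu \equiv g$ modulo an operator that is smoothing near $\partial M$. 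This identifies $d_M^{-1/2}C^\infty(M)$ as exactly the right domain on which the range equals $C^\infty(M)$, and provides a two-sided parametrix between these spaces modulo a compact (in fact smoothing) remainder. The isomorphism \eqref{eq:mapping2} then follows from this parametrix together with Fredholm theory: injectivity of $N$ on the stated domain follows from injectivity of $I_0$ (Theorem \ref{thm:simple,m=0}), so an index-zero Fredholm operator which is injective is bijective.

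The Sobolev version on the scale $s>-1$ is obtained by interpolation and completion. One defines $H^{-1/2(s)}(M)$ as the completion of $d_M^{-1/2}C^\infty(M)$ in a norm making $N$ an isomorphism onto $H^{s+1}(M)$; the parametrix construction shows this is well-defined and coincides with the natural weighted Sobolev space with boundary weight $d_M^{-1/2}$. Bi-continuity on the whole range of $s$ then follows from continuity of $N$ between the weighted spaces (a kernel estimate combined with the standard $\Psi$DO mapping properties in the interior) together with the open mapping theorem, once bijectivity is known on the smooth scale.

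The main obstacle is the boundary step above: identifying precisely that the natural boundary behavior of preimages is $d_M^{-1/2}$ times a smooth function. This lies outside the standard interior $\Psi$DO calculus and requires either a direct asymptotic expansion of the kernel of $N$ near the glancing set at $\partial M$, obtained by using the Jacobi equation \eqref{eq:scalarJacobi} to control the near-tangent geodesic geometry, or a structured boundary pseudodifferential calculus adapted to the half-integer boundary vanishing characteristic of geodesic ray transforms. The analogous phenomenon for the classical Radon transform is due to Hertle, and one extracts the correct boundary structure here by adapting those ideas to the simple Riemannian setting.
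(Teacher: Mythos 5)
Your proposal is correct in outline and lands on essentially the same mechanism as the paper, but it is worth pinning down which of your two proposed routes is the one actually taken. The proof in \cite{Monard2017} does not proceed by a direct Hertle-style asymptotic expansion of the Schwartz kernel of $N=I_0^*I_0$ near the glancing set; instead it verifies that an extension $P$ of $N$ satisfies H\"ormander's $\mu$-transmission condition with $\mu=-1/2$ --- a parity condition on every homogeneous term $p_j$ of the full symbol at conormal directions, $\partial_x^\beta\partial_\xi^\alpha p_j(x,\nu_x)=e^{\pi i(m-2\mu-j-|\alpha|)}\partial_x^\beta\partial_\xi^\alpha p_j(x,-\nu_x)$ --- and then invokes Grubb's calculus \cite{Grubb2014,Grubb2015} to obtain Fredholmness between $H^{-1/2(s)}(M)$ and $H^{s+1}(M)$ and between $d_M^{-1/2}C^\infty(M)$ and $C^\infty(M)$; triviality of kernel and cokernel then follows from injectivity of $I_0$ (Theorem~\ref{thm:simple,m=0}) and formal self-adjointness. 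This is precisely the ``structured boundary pseudodifferential calculus adapted to half-integer boundary vanishing'' you mention as an alternative, so the heavy lifting you flag as ``the main obstacle'' is discharged by a symbolic computation plus a citation rather than by kernel asymptotics. Two small cautions: your characterization that $N$ maps $d_M^{k-1/2}C^\infty$ into $C^\infty$ is indeed equivalent to the $-1/2$-transmission condition, but asserting it without the symbol computation leaves the key step unproved; and defining $H^{-1/2(s)}(M)$ as ``the completion in a norm making $N$ an isomorphism'' is circular --- these are H\"ormander's $\mu$-transmission spaces, defined independently of $N$, and the content of the theorem is that $N$ is an isomorphism between them and $H^{s+1}(M)$.
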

The theorem above also applies to X-ray transforms with attenuation whenever they are injective. Looking at Eq. \eqref{eq:mapping2}, we see that it is in fact natural for the integrand to have a certain blow-up at the boundary. The spaces $H^{-1/2(s)} (M)$, so-called H\"ormander $\frac{-1}{2}$-transmission spaces, are Hilbert spaces whose elements are morally $H^{s}$ inside of $M$, with some special behavior near the boundary, see \cite{Grubb2014,Grubb2015}. The proof of Theorem \ref{thm:mappingN} is based on the fact that an extension $P$ of $I_0^* I_0$ satisfies a so-called $\mu$-transmission condition\footnote{a generalization of Boutet de Monvel's {\em transmission condition} which corresponds to the case $\mu =0$.} with $\mu=-1/2$ with respect to $\partial M$ in the sense that 
\begin{align*}
    \partial_x^\beta \partial_\xi^\alpha p_j (x,\nu_x) = e^{\pi i (m-2\mu-j-|\alpha|)} \partial_x^\beta \partial_\xi^\alpha p_j(x,-\nu_x), \qquad x\in \partial M, \quad j\ge 0,
\end{align*}
for all multi-indices $\alpha, \beta$, where $p \sim \sum_{j=0}^\infty p_j$ is the full symbol of $P$, of order $m=-1$ so that $p_j(x,t\xi) = t^{-1-j} p_j(x,\xi)$. Such a condition makes the operator $P$ Fredholm in the functional settings above, after which one proves that the kernel and co-kernel are trivial. 

\paragraph{Statistical interlude: regularization of noisy inversions and Uncertainty Quantification.} 

Theorem \ref{thm:mappingN} allows to give the first rigorous statistical approach to regularization in the case of inversions with noisy data in a Riemannian context. The setting is as follows: given $\psi\in C^\infty(M)$, suppose that one attempts at estimating a ``smooth aspect'' $\langle f, \psi \rangle_{L^2(M)}$ from noisy measurements 
\[ Y = I_0 f + \varepsilon \Wm, \] 
where $\varepsilon >0$ is the noise level and $\Wm$ is a Gaussian white noise on the space $L^2_\mu(\partial_+ SM)$. One then adopts a Bayesian approach where, upon a choice of Gaussian prior distribution and noise model, Bayes' formula gives the density of the posterior random variable $f|Y$. One may then attempt to understand where posterior densities concentrate depending on the choice of prior and, under the assumption that a true $f_0$ generated the noisy data, whether the posterior distributions concentrate their mass near~$f_0$. 

In this regard, \cite[Theorems 2.5, 2.7]{Monard2017} provide a limiting behavior of the posterior distribution that is independent of the choice of Gaussian prior for $f$, chosen among a rather loosely constrained family (including all priors modeling Sobolev smoothness), corrected at the boundary to account for the boundary behavior as in Eq. \eqref{eq:mapping2}. Upon choosing such a prior, if $\bar{f}$ is the mean of the posterior distribution and $f_0$ the ground truth, not only do we have convergence in distribution\footnote{${\cal N}(\mu,\sigma^2)$ denotes a Gaussian law (or ``normal density'') of mean $\mu$ and variance $\sigma^2$.} 
\begin{align*}
    \frac{1}{\varepsilon} \langle \bar{f}-f_0, \psi \rangle_{L^2(M)} \to Z \sim {\cal N} (0, \|I_0 (I_0^*I_0)^{-1} \psi\|_{L^2_\mu (\partial_+ SM)}^2), \qquad \text{as } \varepsilon\to 0,
\end{align*}
but in fact the posterior distribution of $\langle f,\psi \rangle_{L^2(M)}$ is approximately a normal density centered at $\langle \bar f,\psi \rangle_{L^2(M)}$ with variance $\|I_0 (I_0^*I_0)^{-1} \psi\|_{L^2_\mu (\partial_+ SM)}^2$. This variance is optimal and its expression justifies why sharp mapping properties of $I_0^* I_0$ were required. Note that since the posterior law is Gaussian here, the posterior mean $\bar f$ agrees with the Maximum A Posteriori (i.e., the argmax of the posterior distribution) which can also be obtained by Tychonov regularization. 

The result above tells us how the mass of the posterior distribution concentrates about the mean, and allows one to use {\bf Bayesian credible intervals} (intervals of the posterior distribution centered at the mean and containing $1-\alpha$ of the total mass with $\alpha\in (0,1)$ a fixed threshold) as approximate {\bf frequentist confidence sets} (sets which contain the ground truth with some given probability). While the latter are usually hard to compute, the former can be obtained by vizualizing ensembles of posterior draws. For uncertainty quantification purposes, the results above tell us that one may use the former intervals to infer where the true unknown lies with a certain probability.

\subsection{Microlocal analysis of cases with conjugate points}\label{sec:microlocalCP}

In the presence of conjugate points, $I_0^* I_0$ is no longer an elliptic $\Psi$DO, and the presence of a conjugate locus can destroy stability. One may view this microlocally by seeing that the Schwarz kernel of $I_0^* I_0$ develops singularities away from the diagonal, and this generates artifact singularities in reconstructions. In some case, they can be removed, either because they are ``weaker'' than the original singularity which generated them, or because the geometry allows to reconstruct the original singularity using integrals over other curves than the ones which generate the artifacts. The latter case only occurs in dimensions greater than three, where the problem is overdetermined and some geometries with conjugate points still allow to restrict the data in a way that fulfills the completeness condition, thereby restoring stability. While this is a somewhat global consideration, the aim of the following sections is to locally describe the ray transform when defined on a neighbourhood of a curve containing conjugate points, and possibly study what these local descriptions imply globally.

The first result in this direction appeared in \cite{Stefanov2012a} in the case of fold caustics, and the analysis was done by studying the structure of the normal operator $I_0^* I_0$. Following this approach, the results were extended to more general types of conjugate points in \cite{Holman2015}. In two dimensions, the results in \cite{Stefanov2012a} were refined in \cite{Monard2013b,Holman2017} by studying the operator $I_0$ as an FIO directly. We present these results in the next few sections.

\subsubsection{FIO's and the clean composition calculus}

One way to make our way up to this description is by expressing $\I$ and $I_0$ as composites of push-forwards and pull-backs by smooth submersions, following \cite{Holman2015,Stefanov2012a}, or equivalently, exploiting the double fibration structure below:
\begin{center}
\begin{tikzcd}[]
&  SM \arrow{dl}[swap]{F}\arrow{dr}{\pi} & \\
\partial_+ SM && M
\end{tikzcd}
\end{center}
Namely, if $(M,g)$ is non-trapping with strictly convex boundary, the canonical projection $\pi\colon SM\to M$ and basepoint map $F\colon SM\to \partial_+ SM$ defined by $F(x,v) = \varphi_{-\tau(x,-v)}(x,v)$ are both smooth submersions with $\pi$ proper, and as such define pull-backs $\pi^*\colon C_c^\infty(M)\to C_c^\infty (SM)$ ($\pi^* f(x,v):= f(x)$) and $F^*\colon C_c^\infty(\partial_+ SM)\to C^\infty(SM)$ ($F^* g (x,v) = g(F(x,v))$), as well as push-forwards $\pi_*\colon C_c^\infty(SM)\to C_c^\infty(M)$ and $F_*\colon C_c^\infty(SM)\to C_c^\infty(\partial_+ SM)$ via the relations
\begin{align*}
    \int_{SM} (\pi^* f) g\ d\Sigma^3 = \int_M f (\pi_* g)\ dVol_g, \qquad  \int_{SM} (F^* h) g\ d\Sigma^3 = \int_{\partial_+ SM} h\ (F_* g) \mu\ d\Sigma^2, 
\end{align*}
to be true for all $f\in C_c^\infty(M)$, $g\in C_c^\infty(SM)$ and $h\in C_c^\infty (\partial_+ SM)$. Note in particular that $\I = F_*$. As explained in \cite{Holman2015}, $\pi_*$ and $\pi^*$ are FIO's of order $\frac{1-n}{4}$ with canonical relations
\begin{align}
    \begin{split}
	\C_{\pi_*} &= \left\{ (\omega, d\pi|_{(x,v)}^T \omega):\ (x,v)\in SM,\ \omega \in T^*_x M \backslash\{0\} \right\} \subset T^* M \times T^* SM,	\\
	\C_{\pi^*} &= \left\{ (d\pi|_{(x,v)}^T \omega, \omega):\ (x,v)\in SM,\ \omega \in T^*_x M \backslash\{0\} \right\} \subset T^* SM \times T^* M,
    \end{split}    
    \label{eq:CRpi}
\end{align}
while $F_*$ and $F^*$ are FIO's of order $\frac{-1}{4}$ with canonical relations
\begin{align}
    \begin{split}
	\C_{F_*} &= \left\{ (\eta, dF|_{(x,v)}^T \eta):\ (x,v)\in SM,\ \eta \in T^*_{F(x,v)} \partial_+ SM \backslash\{0\} \right\} \subset T^* (\partial_+ SM) \times T^* SM,	\\
	\C_{F^*} &= \left\{ (dF|_{(x,v)}^T \eta, \eta):\ (x,v)\in SM,\ \eta \in T^*_{F(x,v)} \partial_+ SM \backslash\{0\} \right\} \subset T^* SM\times T^* (\partial_+ SM).
    \end{split}
    \label{eq:CRF}
\end{align}

Note that ${\cal I} = F_*$ and as such one can tell which singularities are ``collapsed'': given $\eta\in T^*_{\xi} (\partial_+ SM)$, all the singularities $dF|_{\varphi_t(\xi)}^T \eta$ for $t\in (0,\tau(\xi))$ are collapsed into $\eta$. 
Now, to understand $I_0$ as an FIO, one may view $I_0$ as $I_{0} = F_*\circ \pi^*$ and use the {\em clean composition calculus} of Duistermaat and Guillemin to obtain the canonical relation of $I_0$ by computing the composition $\C_{I_0} = \C_{F_*} \circ \C_{\pi^*} \subset T^* (\partial_+ SM) \times T^*M$.

Traditionally, similarly to geodesic completeness, it is of interest that the so-called {\bf Bolker condition} be satisfied in the sense that on the microlocal diagram below
\begin{center}
\begin{tikzcd}[]
    &  \C_{I_0} \arrow{dl}[swap]{}\arrow{dr}{} & \\
T^*(\partial_+ SM) & & T^* M
\end{tikzcd}
\end{center}
the projection $\C_{I_0}\to T^*(\partial_+ SM)$ is an injective immersion. When this the case (for $I_0$ or any other integral operator of interest, see e.g. \cite{Finch2003}), one may use the clean composition calculus again to compute the associated normal operator and show that this is in fact a $\Psi$DO, the most favorable scenario for inversion purposes. For geodesic X-ray transforms, the presence of conjugate points is precisely what invalidates the Bolker condition, and further analysis is required. Similar issues occur in \cite{Felea2007,Stefanov2013a,Zhang2017}. The next two sections summarize what can be said in cases with conjugate points, in dimension two, then three and higher.

\subsubsection{Two dimensions}\label{sec:FIO2D}

In two dimensions, $M$ and $\partial_+ SM$ have the same dimension, and some statements can be made more precise. In this section, we present results from \cite{Stefanov2012a,Monard2013b,Holman2017}. The results in \cite{Monard2013b,Holman2017} are presented in the context of the double fibration of the point-geodesic relation, though for consistency of the present article, we will present the ideas using the $SM$ notation. 

Let $(X,X_\perp,V)$ the canonical frame of $SM$ and $(X^\flat,X_\perp^\flat,V^\flat)$ its dual co-frame with respect to the Sasaki metric\footnote{It corresponds to $(\alpha,-\beta,\gamma)$ in \cite{Merry2011}.}. A basis of $T_{(x,v)}(\partial_+ SM)$ is given by $\left(V_{(x,v)}, T_{(x,v)} := \frac{1}{\dprod{v}{\nu_x}}\nabla_T|_{(x,v)}\right)$, where $\nabla_T|_{(x,v)} = \dprod{-(\nu_x)_\perp}{\nabla}$ (horizontal derivative along the tangent vector). Let us denote $(V^\flat, T^\flat)$ the dual co-frame to $(V,T)$ on $T^*_{(x,v)}\partial_+ SM$. Then the composition $\C_{I_0} = \C_{F_*} \circ \C_{\pi^*}$ can be computed explicitly as follows (the proof is relegated to Appendix~\ref{app:composition-proof}): 

\begin{lemma}\label{lem:composition} The composition $\C_{I_0} = \C_{F_*} \circ \C_{\pi^*}\subset T^* (\partial_+ SM)\times T^* M$ is given by 
    \begin{align}
	\begin{split}
	    \C_{F_*} \circ \C_{\pi^*} &= \Big\{ (\lambda \eta_{x,v,t}, \lambda \omega_{x,v,t}),\ (x,v)\in \partial_+ SM,\ t\in (0,\tau(x,v)),\ \lambda\in \Rm \Big\}, \qquad \text{where} \\
	    \eta_{x,v,t} &:= -b(x,v,t) V^\flat_{(x,v)} + a(x,v,t)T_{(x,v)}^\flat \in T^*_{(x,v)} \partial_+ SM, \\
	    \omega_{x,v,t} &:= (\dot\gamma_{x,v}(t))_\perp^\flat \in T^*_{\gamma_{x,v}(t)} M.
	\end{split}	
	\label{eq:relation}
    \end{align}    
\end{lemma}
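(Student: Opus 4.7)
The plan is to unfold the definitions~\eqref{eq:CRpi}--\eqref{eq:CRF}, which reduce the composition $\C_{F_*}\circ \C_{\pi^*}$ to a linear-algebraic matching problem: find all triples $(\omega,\eta,(y,w))$ with $(y,w)\in SM$, $\omega\in T^*_yM$ and $\eta\in T^*_{F(y,w)}\partial_+SM$ such that $d\pi^T|_{(y,w)}\omega = dF^T|_{(y,w)}\eta$. Since $M$ is non-trapping with strictly convex boundary, every interior point of $SM$ admits a unique parameterization $(y,w)=\varphi_t(x,v)$ with $(x,v)\in\partial_+SM$ and $t\in(0,\tau(x,v))$; then $F(y,w)=(x,v)$, $y=\gamma_{x,v}(t)$ and $w=\dot\gamma_{x,v}(t)$, so that $(x,v,t)$ is the natural parameterization of the answer.

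Next I would compute both sides of the matching equation in the canonical frame $(X, X_\perp, V)$ of $T(SM)$. The identities $d\pi(X)=w$, $d\pi(X_\perp)=w_\perp$ and $d\pi(V)=0$ directly yield
\begin{equation}
    d\pi^T|_{(y,w)}\omega = \omega(w)\, X^\flat + \omega(w_\perp)\, X_\perp^\flat.
\end{equation}
For $dF$, the key observation is the flow-invariance $F\circ\varphi_s=F$, which gives $dF|_{(y,w)} = dF|_{(x,v)}\circ d\varphi_{-t}|_{(y,w)}$. Since $F$ restricts to the identity on $\partial_+SM$, the map $dF|_{(x,v)}$ is the projection along $X$ onto $T_{(x,v)}(\partial_+SM)=\mathrm{span}(T,V)$; unwinding the definition of $T$ shows that, up to the factor $\langle v,\nu_x\rangle^{-1}$, it is the horizontal lift of $\nu_\perp$, producing the decomposition $X_\perp = T - \tfrac{\langle v,\nu_\perp\rangle}{\langle v,\nu\rangle}X$. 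Propagation on the $(X_\perp,V)$-plane is encoded by the Jacobi matrix of Section~\ref{sec:2d-SM},
\begin{equation}
    d\varphi_t|_{(x,v)}\begin{pmatrix} X_\perp \\ V \end{pmatrix} = \begin{pmatrix} a & -b \\ -\dot a & \dot b \end{pmatrix} \begin{pmatrix} X_\perp(t) \\ V(t) \end{pmatrix},
\end{equation}
whose determinant is identically $1$ by~\eqref{eq:scalarJacobi}, so that its inverse has entries $\begin{pmatrix} \dot b & b \\ \dot a & a \end{pmatrix}$. Composing these three maps produces explicit formulas for $dF^T|_{(y,w)}V^\flat$ and $dF^T|_{(y,w)}T^\flat$ in the coframe $(X^\flat, X_\perp^\flat, V^\flat)$.

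Finally, writing $\eta=\alpha V^\flat+\beta T^\flat$ and matching the coefficients of $X^\flat$, $X_\perp^\flat$ and $V^\flat$ in $d\pi^T\omega = dF^T\eta$ gives three scalar equations. The $X^\flat$ equation reads $\omega(w)=0$, forcing $\omega$ to be proportional to $(\dot\gamma_{x,v}(t))_\perp^\flat=\omega_{x,v,t}$. The $V^\flat$ equation is a one-dimensional linear constraint on $(\alpha,\beta)$ whose solution space is spanned by $(-b,a)$ (again thanks to the Wronskian). The remaining $X_\perp^\flat$ equation identifies the free scaling $\lambda$ in $\eta=\lambda(-bV^\flat+aT^\flat)=\lambda\eta_{x,v,t}$ with $\omega(w_\perp)$, delivering simultaneously $\omega=\lambda\omega_{x,v,t}$ and $\eta=\lambda\eta_{x,v,t}$, which is exactly~\eqref{eq:relation}.

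The main obstacle is bookkeeping rather than a conceptual step: one must identify $T$ with the correct horizontal lift (fixing the sign of $\nu_\perp$ and the scaling by $1/\langle v,\nu_x\rangle$), and keep the sign conventions for the fiber rotation $V$ and for the Jacobi matrix consistent so that the coefficients $(-b,a)$ emerge at the end rather than one of their sign variants. Beyond the structure equations~\eqref{eq:structure2D} and the Jacobi system~\eqref{eq:scalarJacobi}, no further geometric ingredient is required.
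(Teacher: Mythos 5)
Your proposal is correct and follows essentially the same route as the paper's appendix proof: both reduce the composition to the matching condition $d\pi^T\omega = dF^T\eta$ at the unique point $\varphi_t(x,v)$, compute $dF$ there by combining the flow-invariance of $F$ with the Jacobi propagation of $(X_\perp,V)$ and the Wronskian $a\dot b-b\dot a=1$ (the paper phrases this as $dF\circ d\varphi_t=\mathrm{id}$ on $T(\partial_+SM)$ rather than as $dF|_{(x,v)}\circ d\varphi_{-t}$, which is the same computation read backwards), and then solve the resulting three scalar equations. Only the bookkeeping you already flagged needs care: with the conventions of Section~\ref{sec:2d-SM} one has $d\pi(X_\perp)=-w_\perp$, and the propagation matrix is the transpose of the one you wrote, though neither affects the final line $\Rm\cdot(\eta_{x,v,t},\omega_{x,v,t})$.
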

Checking the Bolker condition now is just to ask whether, along a geodesic $\varphi_t(x,v)$, two (or more) singularities of the form $\lambda_1 \omega_{x,v,t_1}$ and $\lambda_2 \omega_{x,v,t_2}$ are mapped to the same $\eta\in T^*_{(x,v)} (\partial_+ SM)$? This is equivalent to asking whether the mapping $t\mapsto \eta_{x,v,t}$ in \eqref{eq:relation} is injective. Such a condition is violated precisely when $a(t_2)b(t_1)-a(t_1)b(t_2) = 0$ for some $0<t_1< t_2<\tau(x,v)$, but then this occurs precisely when the non-trivial Jacobi field
\begin{align*}
    J(t) := (a(x,v,t) b(x,v,t_1) - b(x,v,t) a(x,v,t_1)) \dot \gamma_{x,v}(t)_\perp,
\end{align*}
vanishes at $t_1$ and $t_2$, i.e., when $\varphi_{t_1}(x,v)$ and $\varphi_{t_2}(x,v)$ are conjugate, see also \cite[Theorem 4.1]{Monard2013b}. 

In such a case, upon defining $\omega_1 := \lambda_1 \omega_{x,v,t_1}$, $\omega_2 := \lambda_1 \frac{a(t_1)}{a(t_2)} \omega_{x,v,t_2}$ and $\eta\in T^*_{(x,v)}(\partial_+ SM)$ uniquely defined by $\eta_V = -b(t_1)\lambda_1$ and $\eta_T = a(t_1) \lambda_1$, we have that 
\begin{align*}
    (\eta, \omega_1) \in \C_{I_0}, \qquad \text{and} \quad (\eta,\omega_2) \in \C_{I_0},
\end{align*}
describing how both singularities at $\omega_1$ and $\omega_2$ collapse into $\eta$ upon applying~$I_0$. 

\paragraph{Cancellation of singularities and artifact-generating operators.} 

Next, one may exploit that $\C_{I_0}$ is a local graph to construct artifact-generating operators which will produce a proof of global instability of the problem. In the setting above, there exists a conical neighborhood $V$ of $\eta$ in $T^* (\partial_+ SM)$ and $V_{1,2}$, conical neighborhoods of $\omega_{1,2}$ in $T^* M$ such that $\C_{1,2}:= \C \cap (V\times V_{1,2})$ are diffeomorphisms. Then $\C_{21}:= \C_2^{-1}\circ \C_1$ is a canonical relation itself, and a diffeomorphism, and it provides the basis for constructing an artifact-generating FIO $F_{21}\colon H^s (V_1)\to H^s(V_2)$ (for all $s\in \Rm$) with canonical relation $\C_{21}$, see \cite[Theorem 4.3]{Monard2013b}. $F_{21}$ will be such that given $f_1$ a compactly supported distribution with wavefront set included in $V_1$, one may construct the artifact $F_{21} f_1$, whose wavefront set is included in $V_2$, and such that
\[ I(f_1 - F_{21} f_1) \in C^\infty(\partial_+ SM), \]
see also \cite[Corollary 4.1]{Monard2013b}. Such an approach therefore allows to construct distributions whose image is a smooth function, thereby removing any hopes for a global stability estimate on the Sobolev scale, of the form 
\begin{align*}
    \|f\|_{H^{s_1}(M)}\le C \|If\|_{H^{s_2}(\partial_+ SM)} + C' \|f\|_{H^{s_3}(M)}, 
\end{align*}
no matter the choice of indices $s_1,s_2,s_3$. In practice, if $\omega_1$ sits above $x_1$ and $\omega_2$ sits above $x_2$, upon defining $U_{1,2}$ simple neighborhoods of $x_{1,2}$ in $M$, and denoting $r_{U_{1,2}}\colon L^2(M)\to L^2(U_{1,2})$ the restriction operators, the transforms $I_0 r_{U_1}$ and $I_0 r_{U_2}$ are injective and explicitly invertible (if the domain is small enough, the error operators in \eqref{eq:reconsfh} become contractions and thus the Fredholm equations are explicitly invertible), and $F_{21}$ may be constructed as $F_{21} := (I_0 r_{U_2})^{-1}\circ I_0 r_{U_1}$. One may also define $F_{12}:= (I_0 r_{U_1})^{-1}\circ I_0 r_{U_2}$. To express the fact that such operators generates artifacts ``of the same strength'', it was further proved in \cite[Theorem 2.1]{Holman2017} that the operators $F_{21}\colon H^{-1/2}(V^1)\to H^{-1/2}(V^2)$ and $F_{12}\colon H^{-1/2}(V^2)\to H^{-1/2}(V^1)$ were {\bf principally unitary}, in the sense that $F_{12} F_{21} - Id$ and $F_{21} F_{12} - Id$ are smoothing operators. 

\paragraph{Outcomes of the Landweber iteration.} 

In cases where the operator $I_0$ is proved unstable in theory, one may wonder what happens if we run an adjoint-based inversion (such as Landweber's iteration) to the data. Let us recall that given $\L\colon \H_1\to \H_2$ a bounded operator between two Hilbert spaces, and given data $m = \L f$, the Landweber iteration consists in choosing a constant $\gamma>0$ and iterating the following scheme: 
\begin{align*}
    f^{(0)} = 0, \qquad f^{(k)} = f^{(k-1)} - \gamma \L^* (\L f^{(k-1)}-m), \qquad k=1,2,\dots
\end{align*}
One can use $\L = I_0$, though $\L = (-\Delta_g)^{1/2} \chi I_0^* I_0$ is an equivalent choice which removes the smoothing properties of $I_0$ and may speed up convergence ($\L^*\L$ is a $\Psi DO$ of order zero while $I_0^* I_0$ is not).

The analysis made in \cite{Holman2017}, specifically regarding the principal unitarity of the operators $F_{12}$ and $F_{21}$, allows to draw conclusions on what happens to the iterations in the presence of conjugate points. To be specific, considering the situation where $f$ is supported on $U_1\cup U_2$ with $WF(f) \subset V^1\cup V^2$, we may identify $f$ with the couple $(f_1,f_2) := (r_{U_1}f,r_{U_2}f)$. Then the operator $\L^* \L$ can be regarded as a $2\times 2$ matrix of operators involving the operators $F_{12}$ and $F_{21}$, namely, modulo smoother operators, one may write
\begin{align*}
    \L^*\L = \left(
    \begin{array}{cc}
	Id + F_{12}F_{21} & 2 F_{12} \\ 2 F_{21} & Id + F_{21} F_{12}.
    \end{array}
    \right) \mod \Psi^{-1}.
\end{align*}
Picking $f_1\in L^2(U_1)$ arbitrary and $f_2 = 0$, and writing the Landweber iteration at leading order, the iterations converge to 
\begin{align*}
    \text{Landweber solution} = (Id - P) f_1 + F_{21} Pf_1, \qquad P := (Id + F_{21}^* F_{21}),
\end{align*}
see \cite[Sec. 3.2]{Holman2017}. This means that, starting from an input initially supported on $U_1$, the Landweber iteration reconstructs a portion of the input, but also generates the artifact $F_{21} Pf_1$, supported on $U_2$. This is not so much a defect of the method than a common issue that any adjoint-based method will share. Indeed $f_1$ and $F_{21}f_1$, while having disjoint wavefront sets, lead to the same singularities in data. Without additional prior knowledge, any linear combination of the two would be an acceptable reconstruction given the data.

\paragraph{Attenuated transforms.} 

So far, the statements were made for transforms without weight, although the analysis carries over to the case of a transform with weight of the form 
\begin{align*}
    I_\phi f(x,v) := \int_{0}^{\tau(x,v)} f(\gamma_{x,v}(t)) \phi(\varphi_t(x,v))\ dt, \qquad (x,v)\in \partial_+ SM,
\end{align*}
where $\phi:SM\to \Rm$ is a smooth weight, non-vanishing for simplicity of exposition. A specific example is that of the {\em attenuated X-ray transform}, where the weight is given by $\phi(x,v) = \exp \left( -\int_0^{\tau(x,v)} a(\gamma_{x,v}(t))\ dt \right)$, with $a$ some {\em attenuation} function. The presence of a weight can salvage stability in certain scenarios, even in the presence of conjugate points. Namely, the following results were established in~\cite{Monard2013b}: 
\begin{itemize}
    \item If conjugate points occur at most in pairs, and for any conjugate pair $(x_1,v_1)$, $(x_2,v_2)$, we have $\det \left[\begin{smallmatrix} \phi(x_1,v_1) & \phi(x_2,v_2) \\ \phi(x_1,-v_1) & \phi(x_2,-v_2) \end{smallmatrix}\right]\ne 0$, then stability is possible. 
    \item If the condition above does not hold at some conjugate pair, or if some conjugate points occur in triples (i.e. there exists a Jacobi field vanishing more than twice along some geodesic), then the problem is unstable.  
\end{itemize}

The work in \cite{Monard2013b, Holman2017} also covers the case of attenuations, namely in constructing corresponding artifact-generating operators in the unstable case, and studying the long-term behavior of the Landweber iteration when applied to the attenuated X-ray transform. A characteristic example of the locality of the discussion regarding the interplay between the geometry and the presence of a weight can be found Fig. \ref{fig:attenuated_conjugate} (these pictures also appear in~\cite{Holman2017}).

\begin{figure}[htpb]
    \centering
    \includegraphics[trim = 10 50 0 40, clip, width=0.32\textwidth]{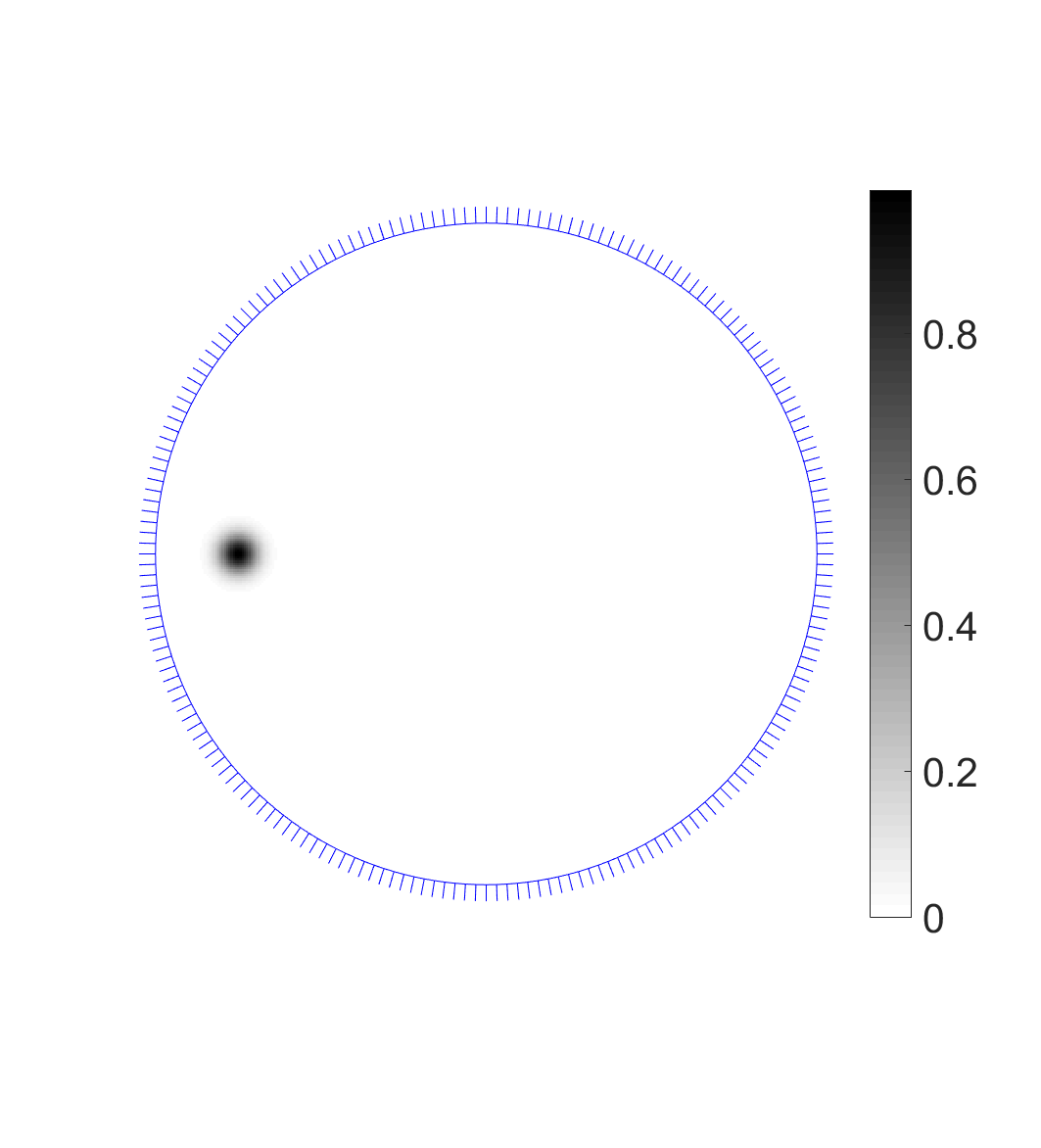}	    
    \includegraphics[trim = 10 50 0 40, clip, width=0.32\textwidth]{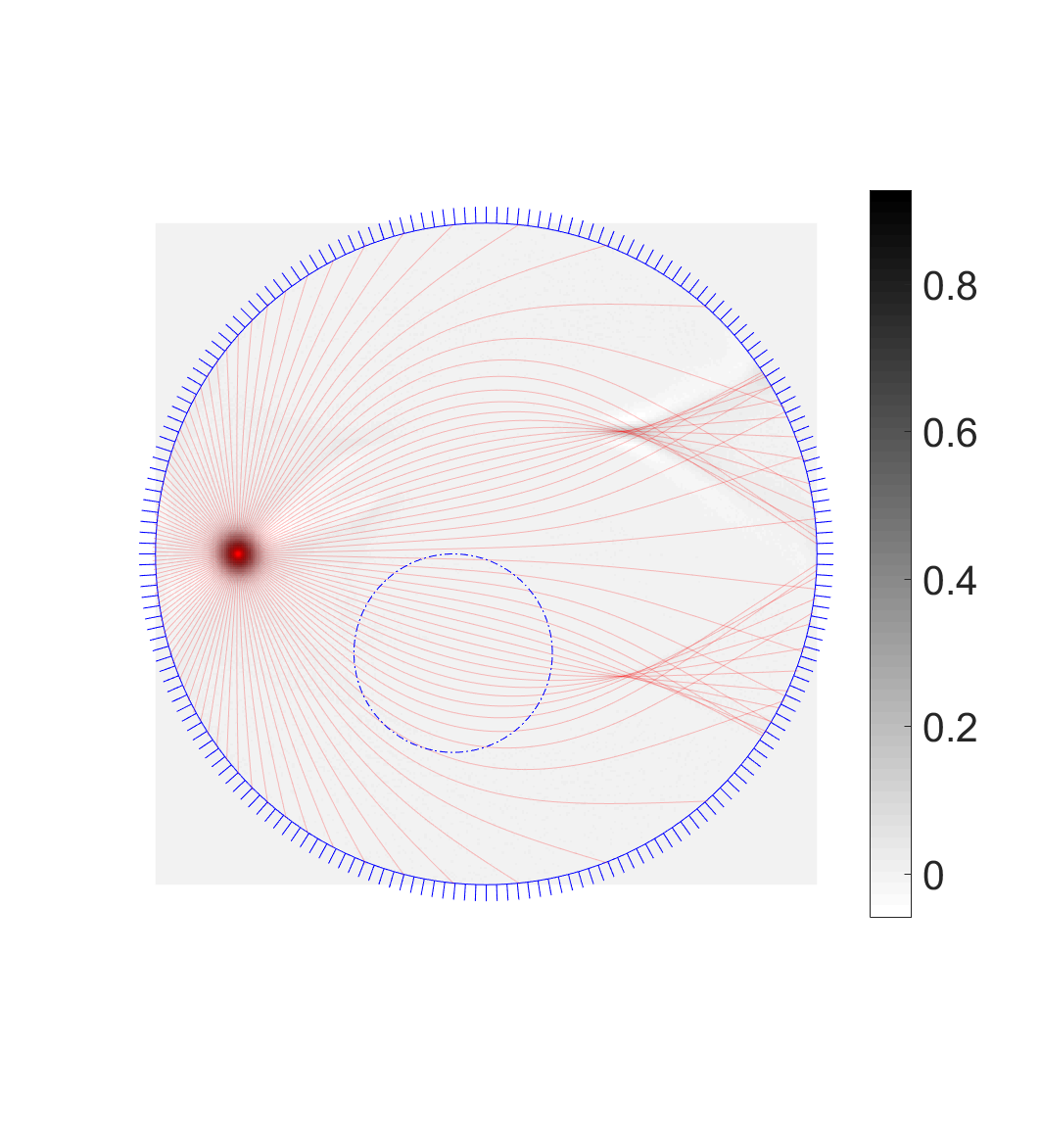}	    
    \includegraphics[trim = 10 50 0 40, clip, width=0.32\textwidth]{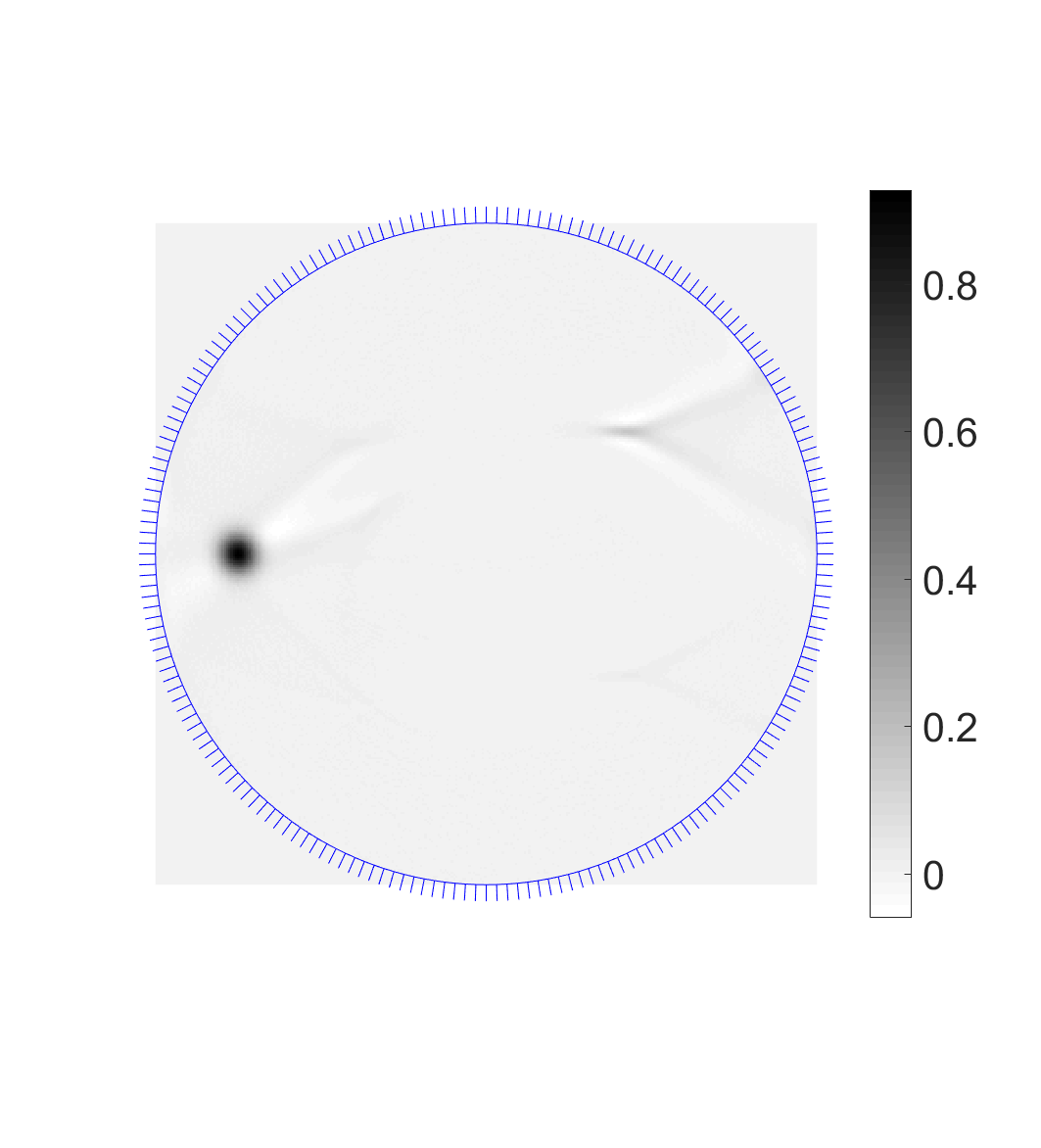}
    \caption{Left: a sharply peaked gaussian $f$ defined on the unit disk. Middle: some geodesics superimposed, showing where the conjugate locus of the (approximate) wavefront set of $f$ is. A positive attenuation $a$ supported inside the dashed circle is included in the transform. Right: reconstruction of $f$ from $I_a f$ after convergence of Landweber's iteration. Comments: In the absence of attenuation, one would expect that the Landweber iteration generates artifacts at both top and bottom conjugate loci. On the other hand, adding a positive attenuation supported inside the dashed circle on the middle picture ``stabilizes'' the reconstruction of certain singularities by removing the bottom artifact at convergence. }
    \label{fig:attenuated_conjugate}
\end{figure}

\np
\subsubsection{Three dimensions and higher}\label{sec:FIOnD}

In higher dimensions, $M$ has dimension $n$ while $\partial_+ SM$ has dimension $2n-2>n$, and the analysis from above is no longer formally determined. The analysis will also depend on the order of conjugate points, which can be of any order between $1$ and $n-1$. Following pioneering work in \cite{Stefanov2012a}, the most complete result known to date is given in \cite{Holman2015}, and applies to general weighted transforms of the form $\I_{\phi,0} := F_* \circ \phi \circ \pi^*$, where $\phi\colon SM\to \Rm$ is admissible in the sense of Section~\ref{sec:microlocalsimple}. We identify $\phi$ with the corresponding multiplication operator $f\mapsto \phi f$. As seen earlier, a natural approach to study $I_{\phi,0}$ is to study the normal operator
\begin{align*}
    N_\phi:= I_{\phi,0}^* I_{\phi,0} = \pi_* \circ \phi \circ F^* \circ F_* \circ\phi\circ \pi^*,
\end{align*}
using, again, the clean composition calculus of four FIOs. As mentioned in Section~\ref{sec:microlocalsimple}, in the absence of conjugate points, $N_\phi$ is an elliptic $\Psi$DO of order $-1$. In the case of conjugate points, two clean compositions can be carried out easily, while the third creates multiple connected components in the canonical relation, corresponding to differing orders of conjugate points. Before mentioning the main theorems, let us briefly introduce notation about the decomposition of the conjugate locus on~$M$. 

Two vectors on the same geodesic, say $(x,v)$ and $\varphi_t(x,v)$ for $t\ne 0$, are {\bf conjugate of order~$k$} ($1\le k\le n-1$) if 
\[  \dim \left(\V (x,v)\cap d\varphi_{-t}|_{\varphi_t(x,v)} \V (\varphi_t(x,v)) \right) = k, \]
and we write $C\subset SM\times SM$ the set of all conjugate pairs\footnote{This corresponds to the more traditional definition that on some geodesic $\gamma$, two points $x = \gamma(t_1)$ and $x' = \gamma(t_2)$ are conjugate of order $k$ along $\gamma$ if the space of Jacobi fields along $\gamma$ vanishing at $t_1$ and $t_2$ has dimension $k$, see \cite[Lemma 3]{Holman2015}.}. $C$ splits into $C = \bigcup_{k=1}^{n-1} C_{R,k} \cup C_S$ where $C_{R,k}$ consists of the {\bf regular conjugate pairs of order $k$} (those that have a neighborhood in $SM\times SM$ such that any pair is either non-conjugate or conjugate of order $k$), and $C_S$ (singular conjugate points) contains the rest. Each $C_{R,k}$ is an embedded submanifold of $SM\times SM$ of dimension $2n-1$ (see \cite[Theorem 3]{Holman2015}), and one may construct a vector bundle $J_{R,k}\subset T(SM)\times T(SM)$ of dimension $k$ on $C_{R,k}$, consisting of the pairs $\left((\gamma(t_1), \dot\gamma(t_1),\frac{dJ}{dt}(t_1)), (\gamma(t_2), \dot\gamma(t_2),\frac{dJ}{dt}(t_2))\right)$ whenever $\gamma(t_1), \gamma(t_2)$ are conjugate along $\gamma$ and $J$ is any Jacobi field along $\gamma$ vanishing at $t_1$ and $t_2$. Each such bundle maps into ${\cal C}_k (J_{R,k})\subset T^* M\times T^* M$, obtained by mapping each argument of $J_{R,k}$ using the connection map and the musical isomorphism, given at any point by $K_{(x,v)} \colon \V(x,v) \to \{v\}^\perp$ (where $\{v\}^\perp\subset T_x M$) and $\flat_g|_x\colon T_xM\to T_x^* M$, respectively. 

The main result is as follows. 
\begin{theorem}[{\cite[Theorem 4]{Holman2015}}]
\label{thm:Holman} Suppose $M$ has no self-intersecting geodesics and that $C_S = \emptyset$. Then the sets 
    \[  C_{A_k} = {\cal C}_k(J_{R,k}) \subset T^* M \times T^* M \]
    are either empty of are local canonical relations made out of $M_k$ connected components. On the level of operators, we have the decomposition
    \begin{align*}
	N_\phi = \Upsilon + \sum_{k=1}^{n-1} A_k, \qquad A_k = \sum_{m=1}^{M_k} A_{k,m},
    \end{align*}
    where $\Upsilon$ is a $\Psi$DO of order $-1$, elliptic where $\phi$ is admissible, and for each $k$, either 
    \[ A_{k,m} \in \I^{-( 1 + (n-1-k)/2)} \left( M\times M, C'_{A_{k,m}}; \Omega_{M\times M}^{1/2} \right),   \]
    where $C_{A_{k,m}}\subset C_{A_k}$ for each $m$, or $M_k = 1$ and $A_{k,1}=0$ if $C_{A_k} = \emptyset$. 
\end{theorem}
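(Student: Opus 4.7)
The plan is to realize $N_\phi$ as a clean composition of four FIOs, decompose the resulting canonical relation into a diagonal part plus conjugate-point components, and compute the orders via the clean composition formula. Writing $I_{\phi,0}=F_*\circ M_\phi\circ \pi^*$ with $M_\phi$ the multiplication operator by $\phi$, we have
\[
N_\phi = \pi_*\circ M_\phi\circ F^*\circ F_*\circ M_\phi\circ \pi^*,
\]
where by \eqref{eq:CRpi}--\eqref{eq:CRF} the operators $\pi^*,\pi_*$ are FIOs of order $(1-n)/4$ and $F^*,F_*$ are FIOs of order $-1/4$, while the $M_\phi$ factors are zero-order $\Psi$DOs that affect only the principal symbol, not the composed canonical relation.

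First I would carry out the inner composition $F^*\circ F_*$. Since $F\colon SM\to \partial_+SM$ is a proper submersion whose fibers are exactly the orbits of the geodesic flow, this composition is clean and its canonical relation consists of pairs $\bigl((x,v);\xi,(x',v');\xi'\bigr)$ with $(x',v')=\varphi_t(x,v)$ for some $t\in(-\tau(x,-v),\tau(x,v))$, and with $\xi,\xi'$ annihilating the geodesic direction and related by $d\varphi_t^T$. Composing on the outside with $\pi^*$ and $\pi_*$ then projects this relation to $T^*M\times T^*M$: the allowed pairs $(\omega,\omega')$ live above endpoints $x$ and $x'=\gamma_{x,v}(t)$ of a common geodesic and are both conormal to that geodesic at their respective basepoints.

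The composed canonical relation splits naturally into the diagonal component at $t=0$ and the non-diagonal components at $t\ne 0$. The $t=0$ contribution is always a Bolker-type immersion and produces the elliptic $\Psi$DO $\Upsilon$ of order $-1$, whose principal symbol is (up to constants) the admissibility integral $\int_{\ker\omega}|\phi(x,v)|^2\,dS(v)$ from Section~\ref{sec:microlocalsimple}. The non-diagonal components arise exactly where the map $(t,v)\mapsto \gamma_{x,v}(t)$ fails to be an immersion, i.e. where $d\varphi_t$ maps vertical vectors into vertical vectors, meaning $(x,v)$ and $\varphi_t(x,v)$ are conjugate. Under the hypothesis $C_S=\emptyset$, the conjugate locus stratifies into the smoothly embedded components $C_{R,k}$ of constant conjugate order $k$; the Jacobi bundle $J_{R,k}$ has constant fiber dimension $k$, and transporting its fibers to $T^*M\times T^*M$ via $K$ and $\flat_g$ produces precisely the local canonical relations ${\cal C}_k(J_{R,k})=C_{A_k}$. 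The hypothesis of no self-intersecting geodesics then guarantees that distinct connected components $C_{A_{k,m}}$ correspond to disjoint geodesic segments, so no spurious identifications occur.

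Finally, the orders are computed by iterating the clean composition formula $m_1+m_2+e/2$ across the four FIO factors. The bare sum of orders is $-n/2$; on the diagonal the Bolker-type excess $n-2$ restores the expected elliptic order $-1$, while on each component $C_{A_{k,m}}$ the excess of the critical composition is reduced by the codimension $n-1-k$ of the missing vertical directions in the intersection, producing the stated order $-(1+(n-1-k)/2)$. The main obstacle is verifying cleanliness and pinning down the excess on each stratum: this reduces to analyzing the fiber product at a $k$-conjugate pair, and it is exactly here that $C_S=\emptyset$ enters (singular conjugate pairs would create jumps in excess and ruin smoothness of the composed relation), while the no self-intersection hypothesis plays the complementary role of separating contributions from different parameter values $t$ that would otherwise map to the same endpoint pair $(x,x')$.
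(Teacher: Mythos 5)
Your proposal follows essentially the same route the paper takes: the survey states this result by citation to \cite{Holman2015} and sketches precisely the strategy you describe --- writing $N_\phi=\pi_*\circ\phi\circ F^*\circ F_*\circ\phi\circ\pi^*$, applying the clean composition calculus of four FIOs, and letting the stratification of the conjugate locus into the $C_{R,k}$ produce the components $A_{k,m}$, with your order arithmetic (total order $-n/2$, excess $n-2$ on the diagonal and $k-1$ on the order-$k$ stratum) consistent with the stated exponents. The genuinely technical steps you flag as the main obstacle (verifying cleanliness of the third composition and computing the excess on each stratum, which is where $C_S=\emptyset$ and the no-self-intersection hypothesis enter) are exactly the ones the survey defers to \cite{Holman2015}.
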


From this result, we see that the components $A_k$ for $1\le k<n-1$ are FIOs of order strictly less than $-1$. This hints at us that when $C_S = C_{R,n-1} = \emptyset$, and if one can prove Sobolev mapping properties of the form $A_k:H^s \to H^{s+ 1 + (n-1-k)/2}$ for $1\le k\le n-2$, the problem may still be stable in the sense that, when solving for $f$ the equation $N_\phi f = g$, writing $A = \sum_{k=1}^{n-2} A_k$ and applying a parametrix $Q$ for $\Upsilon$ such that $Q\Upsilon = I + K$ with $K$ a smoothing operator, we arrive at the equation $f + K f + QAf = Qg$, where $QA$ is also smoothing therefore compact. The problem is then Fredholm again, as such, solvable modulo a finite-dimensional kernel of smooth ghosts. 

The main challenge at this point is to prove said mapping properties. In \cite{Holman2015}, the argument is brought to completion under the following additional assumptions: suppose that there are only conjugate pairs of order $1$ and that no two points are conjugate along more than one geodesic, implying $M_1 = 1$; suppose in addition that $C_{A_1}$ is a local canonical graph. Under these additional assumptions, we give the second main theorem of \cite{Holman2015}.

\begin{theorem}[{\cite[Theorem 5]{Holman2015}}] \label{thm:Holman2} Suppose $n\ge 3$, let $(\wtM, \tilde g)$ a smooth extension of $M$ with no self-intersecting geodesic, conjugate pairs of order at most $1$, and such that $C_{A_1}$ is a local canonical graph. Let $\phi:SM\to [0,\infty)$ smooth and admissible at every $x\in M$. Then the kernel of $I_{\phi,0}$ acting on $L^2(\Omega_M^{1/2})$ is at most finite dimensional and contained in $C_c^\infty (\Omega_{M}^{1/2})$, and for any $f\in L^2(\Omega_M^{1/2})/\ker I_{\phi,0}$, 
	\[ \|\I_\phi f\|_{L^2(\Omega_{\partial_+ SM})} \sim \|f\|_{H^{-1/2}(\Omega_M^{1/2})}.  \]
\end{theorem}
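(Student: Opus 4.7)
The plan is to use the normal operator $N_\phi = I_{\phi,0}^* I_{\phi,0}$ and exploit the decomposition from Theorem \ref{thm:Holman}. Under the present hypotheses (only conjugate pairs of order $1$, no self-intersecting geodesics, $M_1=1$), that decomposition collapses to $N_\phi = \Upsilon + A_1$, where $\Upsilon$ is a pseudodifferential operator of order $-1$ that is globally elliptic since $\phi$ is admissible everywhere, and $A_1$ is an FIO of order $-(1+(n-2)/2) = -n/2$ whose canonical relation $C_{A_1}$ is, by hypothesis, a local canonical graph. Choosing $\Upsilon$ self-adjoint and replacing $A_1$ by $\tfrac12(A_1+A_1^*)$, we may assume both are self-adjoint, since $N_\phi$ is.

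Since $C_{A_1}$ is a local canonical graph, a microlocal partition of unity (adapted to injective pieces of the projection onto the base) combined with the standard Sobolev boundedness for canonical-graph FIOs shows that $A_1 \colon H^s(\Omega_M^{1/2}) \to H^{s+n/2}(\Omega_M^{1/2})$ is continuous. When $n \geq 3$, composing with the Rellich compact embedding $H^{s+n/2}\hookrightarrow H^{s+1}$ gives that $A_1 \colon H^{-1/2}(\Omega_M^{1/2}) \to H^{1/2}(\Omega_M^{1/2})$ is compact. Together with the isomorphism $\Upsilon \colon H^{-1/2} \to H^{1/2}$ (modulo a smoothing remainder) granted by ellipticity, this makes $N_\phi \colon H^{-1/2} \to H^{1/2}$ a self-adjoint Fredholm operator. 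Its kernel equals $\ker I_{\phi,0}$ because $\langle N_\phi f, f\rangle = \|I_{\phi,0} f\|_{L^2}^2$, hence it is finite-dimensional; and a bootstrap using the identity $\Upsilon f = -A_1 f$ for $f$ in the kernel, combined with elliptic regularity for $\Upsilon$ (each iteration raising regularity by $(n-2)/2>0$), shows that every element of $\ker I_{\phi,0}$ lies in $C^\infty$ (vanishing to infinite order at $\partial M$ by the boundary-determination for $I_{\phi,0}$).

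For the norm equivalence, the upper bound is immediate: $\|I_{\phi,0} f\|_{L^2}^2 = \langle N_\phi f, f\rangle \leq \|N_\phi f\|_{H^{1/2}}\|f\|_{H^{-1/2}} \lesssim \|f\|_{H^{-1/2}}^2$. For the lower bound I would apply G\aa rding's inequality to the elliptic, positive-principal-symbol operator $\Upsilon$, yielding
\begin{equation}
\langle \Upsilon f, f\rangle \geq c \|f\|_{H^{-1/2}}^2 - C\|f\|_{H^{-1}}^2,
\end{equation}
and control $\langle A_1 f, f\rangle$ together with the lower-order error $\|f\|_{H^{-1}}^2$ by compactness, via the standard argument that on the orthogonal complement of a finite-dimensional kernel the compact term can be absorbed. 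This yields $\|I_{\phi,0} f\|_{L^2}^2 = \langle N_\phi f, f\rangle \gtrsim \|f\|_{H^{-1/2}}^2$ for $f \in L^2(\Omega_M^{1/2}) \ominus \ker I_{\phi,0}$, completing the equivalence.

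The step I expect to be the main obstacle is rigorously converting the microlocal statement ``$A_1$ is an FIO of order $-n/2$ with local canonical graph $C_{A_1}$'' into the clean Sobolev compactness $A_1 \colon H^{-1/2}(\Omega_M^{1/2}) \to H^{1/2}(\Omega_M^{1/2})$. Because $C_{A_1}$ is only \emph{locally} a graph, one must construct a microlocal partition of unity that isolates the injective branches of the left projection and verify that the sum of local estimates yields a global one. A secondary subtlety is the interaction with $\partial M$: one must ensure that the construction of $\Upsilon$ and $A_1$, done inside the extended manifold $(\widetilde M, \tilde g)$, descends to $M$ without picking up boundary artifacts, which is where non-self-intersection of geodesics and strict convexity of $\partial M$ are used to cut off cleanly.
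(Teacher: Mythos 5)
Your proposal follows essentially the same route the paper sketches for \cite[Theorem 5]{Holman2015}: the decomposition $N_\phi = \Upsilon + A_1$ from Theorem \ref{thm:Holman}, the Sobolev gain $A_1\colon H^s\to H^{s+n/2}$ furnished by the local-canonical-graph hypothesis (which is indeed the crux, as you note), and Fredholm theory for the elliptic order $-1$ part $\Upsilon$. The only cosmetic difference is that you derive the lower bound via G\aa rding's inequality plus compactness absorption on the complement of the finite-dimensional kernel, whereas the survey's outline phrases it through the parametrix identity $Q\Upsilon = I + K$; these are equivalent here.
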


The hypothesis that $C_{A_1}$ be a local canonical graph was first formulated in \cite[Eq. (4.4)]{Stefanov2012a}. Some conditions on the geometry guaranteeing the graph condition were formulated in \cite{Stefanov2012a,Holman2015}. To the authors' knowledge, there are no examples yet of metrics satisfying that condition, but other families that do are given in \cite{Stefanov2012a}. Other counterexamples were provided there, hinting that conjugate loci which either contain pairs of higher order, or those that violate the graph condition, require further analysis. In this context, some open questions are formulated in Section~\ref{sec:open}.

\np
\section{Methods exploiting convexity}\label{sec:convexity}

\subsection{Heuristics} 

Let us first give local considerations. Let a smooth hypersurface $S = \{\rho=0\}$ with $p$ a point on it and $U$ a neighborhood of $p$. Then $U\backslash S$ splits into two components $U_{\pm} = \{\pm \rho>0 \}$. Considering a family of smooth curves $\G$ across $U$ which are near tangential to $S$, call $\G_+$ those that pass through $U_+$ and $\G_-$ the other ones. Then $p$ is a convex point on $S$ (as viewed from $U_+$, or concave as viewed from $U_-$) if for all curves in $\G$, we have $\frac{d^2}{dt^2} \rho(\gamma(t)) \le -c_0 <0$ for some constant $c_0>0$. This imposes curves in $\G_+$ to be short, poking through $S$ twice, coming from and returning to $U_-$, see Fig.~\ref{fig:convex}. 
\begin{figure}[htpb]
    \centering
    \includegraphics[height=0.2\textheight]{./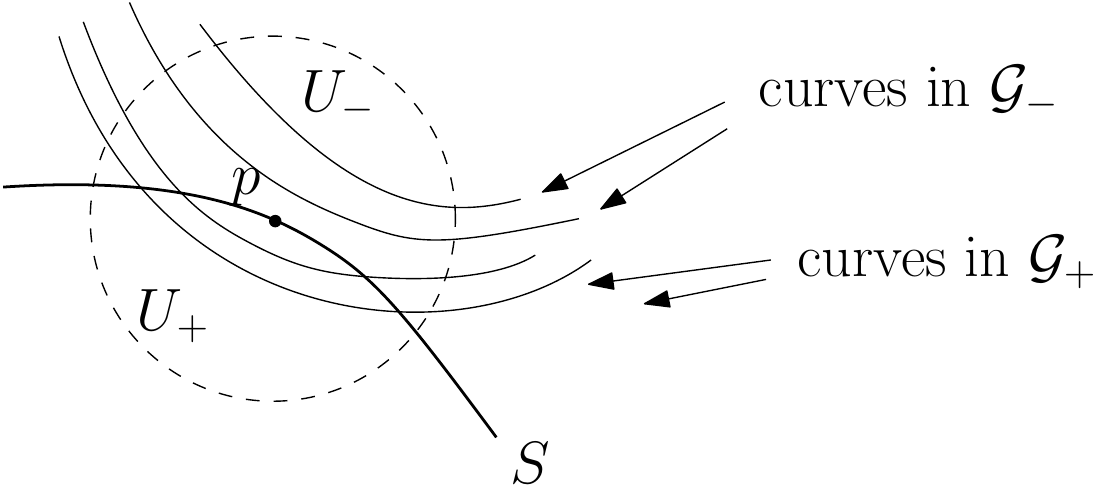}
    \caption{A locally convex setting}.
    \label{fig:convex}
\end{figure}
In this type of geometry, the problem of reconstructing $f$ supported in $U$ from $If|_{\G}$ admits the following triangular structure: 
\begin{align}
    \left[
    \begin{array}{c}
	If|_{\G_-} \\ If|_{\G_+}
    \end{array}
\right] = \left[
\begin{array}{ccc}
    A_1 & 0  \\ * & A_2 
\end{array}
\right] \left[
    \begin{array}{c}
	f|_{U_-} \\ f|_{U_+}
    \end{array}
\right].
\end{align} 
This implies that one may consider reconstructing $f|_{U_-}$ first, from $If|_{\G_-}$, then consider the reconstruction of $f|_{U_+}$ later. This is the key idea behind {\bf support theorems} (does $If|_{\G_-} = 0$ imply that $f$ is supported in $U_+$?), and several old and new inversion approaches. Such a condition makes the problem local in nature, and becomes global if one may exhaust the entire manifold in that way\footnote{The exhaustion can exclude a small set (zero measure, empty interior, or other) corresponding to the function space at hand.}, as one may reconstruct the function $f$ layer after layer. 

Convexity was initially assumed in \cite{Herglotz1905,Wiechert1907} in the case of radial, scalar metrics of the form $g = c^{-2}(r) Id$ where the sound speed $c$ satisfies $\frac{d}{dr} \left(r/c(r)\right) >0$, arising as a natural condition for invertibility. Such a condition is natural in that if $\frac{d}{dr} \left(r/c(r)\right)$ vanishes at $r_0$, then the sphere $\{|x| = r_0\}$ is totally geodesic, i.e. neither convex nor concave. Convexity conditions were exploited further in \cite[Sec 1.8]{Romanov1974} when the metric only depends on a single (not necessarily radial) variable, and in \cite{Sharafutdinov1997} for radial metrics to produce a positive answer to the tensor tomography problem. It was first used in a general context without symmetries in~\cite{Uhlmann2015}. 

In order to understand {\bf layer stripping} when a global foliation holds, split $\Omega$ into $\Omega_1\cup \Omega_2 \cup \Omega_3$ as in Fig. \ref{fig:layer} below, and for $1\le j\le 3$, denote $\G_j \subset \G$ the geodesics intersecting $\Omega_j$ but not $\Omega_{j+1}$, so that $\G = \G_1\cup \G_2\cup \G_3$. Then the convex foliation allows to establish that the forward operator has the triangular structure
\begin{align}
    \left[
    \begin{array}{c}
	If|_{\G_1} \\ If|_{\G_2} \\ If|_{\G_3}
    \end{array}
\right] = \left[
\begin{array}{ccc}
    A_1 & 0 & 0 \\ * & A_2 & 0 \\ * & * & A_3
\end{array}
\right] \left[
    \begin{array}{c}
	f|_{\Omega_1} \\ f|_{\Omega_2} \\ f|_{\Omega_3}
    \end{array}
\right],
\label{eq:layerStripping}
\end{align} 
where for each $1\le j\le 3$, the geodesics in $\G_j$ are geodesically complete over $\Omega_j$ (so the operators $A_j$ are Fredholm, i.e. injective up to a finite-dimensional space of smooth ghosts, and stable). In addition, adding more intermediate slabs preserves the triangular structure, and if the slabs are thin enough, then the error operators within the results of \cite{Uhlmann2015} becomes contractions, so that the operators $A_j$ are actually injective. 

\begin{figure}[htpb]4
    \centering
    \includegraphics[height=0.2\textheight]{./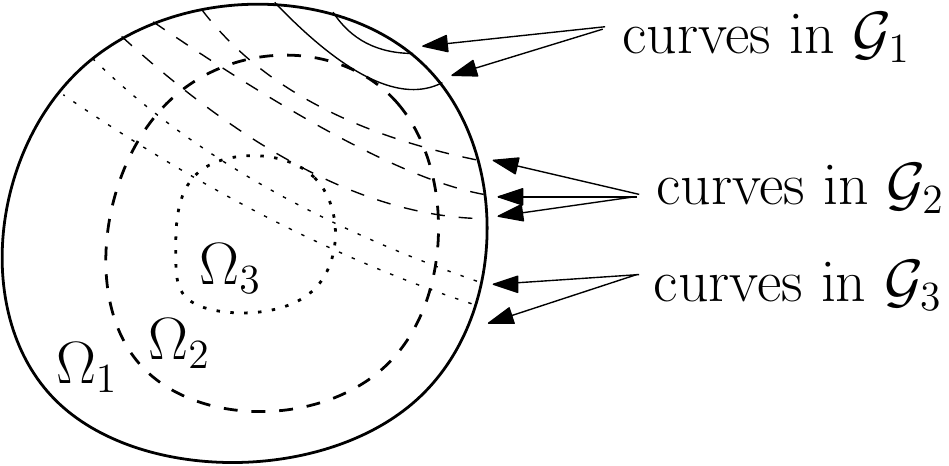}
    \caption{Layer stripping in action: reconstruct $f_1 = f|_{\Omega_1}$ from $If|_{\G_1}$ first, then $f_2 = f|_{\Omega_2}$ from $(If-If_1)|_{\G_2}$, then $f_3= f|_{\Omega_3}$ from $(If-If_1-If_2)|_{\G_3}$}.
    \label{fig:layer}
\end{figure}

In the following sections, we describe in more detail the literature based on exploiting convexity, first reviewing support theorems in Section~\ref{sec:support} then injectivity and reconstruction approaches in geometries with and without symmetries in the remaining sections.

\subsection{Support theorems}\label{sec:support}

A version of Helgason's support theorem states that if a compactly supported smooth function $f\colon\Rm^n\to\Rm$ integrates to zero over all lines that avoid a compact, convex obstacle $K\subset\Rm^n$, then $f=0$ in $\Rm^n\setminus K$.

There are a number of possible extensions of this result to Riemannian manifolds.
The mentioned Euclidean result is global: it concerns all rays that avoid a compact obstacle.
Typical support theorems on manifolds are local: a boundary point $x\in\partial M$ has a neighborhood $U$ so that if a sufficiently regular function $f\colon U\to\Rm$ integrates to zero over all maximal geodesics that stay in $U$, then $f$ vanishes in $U$.
If $f$ is a tensor field, then one can only expect it to vanish up to gauge, meaning that $f=\sigma\nabla h$ for a tensor field $f$ of one order lower.

On real analytic simple Riemannian manifold a local support theorem was obtained by Krishnan for both scalar~\cite{Krishnan2009a} fields and by Krishnan and Stefanov for second order tensor~\cite{Krishnan2009} fields using analytic microlocal analysis, an approach initiated by Boman and Quinto \cite{Boman1987}. The technique uses the complex stationary phase by Sj\"ostrand as applies for the first time in \cite{Frigyik2008}.
The result was later extended to tensor fields of all orders~\cite{Abhishek2017}.
A result without real analyticity was obtained by Uhlmann and Vasy in dimensions three and higher~\cite{Uhlmann2015}.

On radially symmetric manifolds it is natural to consider a spherical layer instead of a small neighborhood. 
The support theorem holds in any dimension when the boundary is strictly convex, as this guarantees that the Herglotz condition is satisfied locally~\cite{Sharafutdinov1997,Hoop2017}.

All of these results on Riemannian manifolds reproduce Helgason's theorem when applied to Euclidean geometry. Support theorems have also been obtained in Lorentzian geometry~\cite{Stefanov2017a,RabienaHaratbar2018}.

As explained in the previous section, local results of this kind lead to global injectivity results if one assumes a foliation condition compatible with the local support theorem.
A typical combination is a strictly convex foliation and a local support theorem near strictly convex boundary points.

\subsection{Metrics dependent on a single variable}

\subsubsection{Parallel layers}

V.G. Romanov studied integral geometric problems in \cite[Sec. I.4]{Romanov1974} on the slab $S = \Rm^{n-1} \times \{0\le x_n\le H\}$, integrating a function along a family of curves which is translation-invariant in the $\Rm^{n-1}$ factor, and with a diving behavior in the $x_n$ variable. Specifically, a crucial assumption is that through every point $x = (x', x_n)\in S$ and direction $v$ with $v_n =0$, there is a curve $\gamma$ passing through $(x,v)$, with both endpoints at $\{x_n=0\}$, and such that $x$ is the farthest point to $\{x_n=0\}$ on the curve $\gamma$. Together with additional curvature and smoothness conditions, this nothing but formulates that the planes $\{x_n = const.\}$ form a convex foliation of~$S$.

A way to achieve this with a geodesic flow is to consider a metric of the form $g = e^{2\lambda(x_n)} Id$, with $\lambda' <0$, a form of Herglotz condition adapted to parallel layers. 

In this case, a continuous function with compact support can be reconstructed from its integrals, see \cite[Theorem 1.5, Sec. I.4]{Romanov1974}, allowing for certain weights as well in \cite[Theorem 1.6, Sec. I.4]{Romanov1974}. To prove this, one notices that upon Fourier-transforming in $x'$, the problem diagonalizes frequency-wise, and for each frequency, the integral geometry problem looks like a Volterra equation of the second kind (integral equation with causal kernel). The case where curves are parabolas is also treated in \cite[Ch. 4.3]{Bukhgeuim2000} using Volterra operator equations.

\subsubsection{Spherical layers (radial metrics)} 

When the manifold is a ball equipped with a radial, scalar metric of the form $c^{-2}(r)Id$, the sphere of radius $r$ and center $0$ is strictly convex if and only if the Herglotz condition $\frac{d}{dr} \frac{r}{c(r)} >0$ is satisfied.
A rotation invariant Riemannian metric on an annulus may be written in this form~\cite[Proposition C.1]{Hoop2017b}.
In this context, Sharafutdinov has obtained a positive answer to the tensor tomography problem~\cite{Sharafutdinov1997}.

\begin{theorem}[{\cite[Theorem 1.1]{Sharafutdinov1997}}]
\label{thm:Sharafutdinov}
Let $g$ a Riemannian metric on the spherical layer 
    \[ D = \{x\in \Rm^n |\ \rho_0 \le |x|\le \rho_1\} \qquad (0<\rho_0<\rho_1,\ n\ge 2).  \]
    Assume $g$ invariant under all orthogonal transformations of $\Rm^n$ and such that the sphere $S_\rho = \{x|\ |x|=\rho\}$ is strictly convex for every $\rho\in [\rho_0,\rho_1]$. Let $G = S_{\rho_1}$. If a symmetric tensor field $f\in C^\ell(S^m(T^*D))$ ($\ell\ge 1, m\ge 0$) lies in the kernel of the ray transform $I_G$, then $f = dv$ for some $v\in C^\ell (S^{m-1} (T^* D))$ satisfying $v|_G=0$.
\end{theorem}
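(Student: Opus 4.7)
The plan is to exploit the rotational symmetry together with the strict convexity of the concentric spheres to reduce the problem to a family of operator-valued Volterra integral equations, one for each isotypical component of $f$ under the $O(n)$-action, each of which is uniquely solvable by the classical theorem on Volterra equations of the second kind. This is the same mechanism at work in the Herglotz--Wiechert scalar inversion and in \cite[Sec.~I.4]{Romanov1974}.

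First, since $g$ is $O(n)$-invariant, I would write it in polar coordinates $(r,\omega) \in [\rho_0,\rho_1]\times S^{n-1}$ as $g = a(r)^2\,dr^2 + b(r)^2\,g_{S^{n-1}}$ for smooth positive $a,b$. The strict convexity of every $S_\rho$ translates into a monotonicity condition on the function $b/a$ (a Herglotz-type condition), which guarantees that every non-radial geodesic in $D$ with endpoints on $G=S_{\rho_1}$ has a unique turning radius $r_\ast \in [\rho_0,\rho_1)$ at which it is tangent to $S_{r_\ast}$, and that the map $\gamma \mapsto (r_\ast(\gamma),\omega_\ast(\gamma))$ diffeomorphically parameterizes the space of such geodesics. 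By $O(n)$-equivariance of $I_G$, decomposing $f=\sum_{\lambda} f_\lambda$ into isotypical components under the $O(n)$-action on the $S^{n-1}$ factor decouples the equation $I_G f = 0$ into independent equations $I_G f_\lambda = 0$; each $f_\lambda$ is parameterized by finitely many scalar functions of $r$ valued in a fixed finite-dimensional $O(n)$-subrepresentation.

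Next comes the key analytic step: exhibiting the Volterra structure and iterating from the outer boundary $r=\rho_1$ inward. For each mode $\lambda$, parameterizing the geodesic with turning radius $r_\ast$ by $r\in [r_\ast,\rho_1]$ (well-defined away from the turning point by the Herglotz condition) yields an operator-valued integral equation
\[
(I_G f_\lambda)(r_\ast,\omega_\ast) = \int_{r_\ast}^{\rho_1} K_\lambda(r_\ast,r)\,\widetilde f_\lambda(r,\omega_\ast)\,dr,
\]
whose kernel $K_\lambda$ carries only an Abel-type integrable singularity at $r=r_\ast$ (coming from $dr/dt \sim \sqrt{r-r_\ast}$ near the turning point). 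Once the potential part is quotiented out, the strict convexity assumption guarantees that the leading diagonal of $K_\lambda$ is invertible, so the equation is an operator-valued Volterra equation of the second kind. Layer-stripping from $r_\ast=\rho_1$ downward then shows that $\widetilde f_\lambda$ equals the restriction of a potential $dv_\lambda$ to the mode. Assembling the modes produces a symmetric $(m-1)$-tensor field $v$ on $D$ with $f = dv$; the boundary condition $v|_G=0$ is enforced by choosing zero Cauchy data at $r=\rho_1$ in the ODEs that define $v_\lambda$ in terms of $f_\lambda$.

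The main obstacle is the tensorial bookkeeping. Identifying, within each $O(n)$-isotypical component of $S^m(T^\ast D)$, the exact subspace that is genuinely of the form $dv_\lambda$ requires explicit use of the representation theory of $O(n)$ acting on symmetric tensors, and one must verify that on the orthogonal complement of potentials the operator-valued Volterra kernel is non-degenerate along its diagonal. This is precisely where strict convexity enters in a genuine way, and is what prevents a direct extension of the scalar Herglotz--Wiechert argument. The equivariant reduction and the Volterra iteration are standard machinery; the substance of the theorem lies in this tensorial non-degeneracy.
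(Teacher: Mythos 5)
Your overall architecture --- polar coordinates, a Herglotz-type monotonicity condition equivalent to strict convexity of the spheres, decoupling by rotational symmetry, and Abel-type integral equations in the radial variable with a system structure coming from the gauge freedom --- is the same mechanism as in Sharafutdinov's proof as summarized in the text following the theorem. There is, however, one structural difference and one analytic misstep worth flagging.

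The structural difference: the paper's route first reduces the problem to the \emph{two-dimensional} case (every geodesic of a rotationally invariant metric lies in a $2$-plane through the origin), and only then block-diagonalizes by the angular Fourier series in $\theta\in S^1$, so that each mode is governed by finitely many scalar Abel-type equations. You instead propose to carry out the full $O(n)$-isotypical decomposition of $S^m(T^*D)$ in dimension $n$, and you yourself identify the resulting ``tensorial bookkeeping'' as the main obstacle. That obstacle is precisely what the reduction to two dimensions is designed to remove: after restriction to a $2$-plane the representation theory collapses to Fourier series on the circle, and the verification that the kernel is non-degenerate off the potential subspace becomes an explicit finite system of generalized Abel equations indexed by $k\in\Zm$. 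As written, your plan defers the hardest step rather than resolving it.

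The analytic misstep: the integral equation you write down is \emph{not} a Volterra equation of the second kind. The change of variables from arclength $t$ to radius $r$ near the turning point contributes a Jacobian $dt/dr \sim (r-r_\ast)^{-1/2}$, so the kernel $K_\lambda(r_\ast,r)$ blows up on the diagonal rather than having an invertible ``leading diagonal''; the equation is a weakly singular equation of the \emph{first} kind (a generalized Abel equation). The classical existence--uniqueness theorem for second-kind Volterra equations therefore does not apply directly. The substantive analytic input, as the paper states, is the injectivity of these generalized Abel transforms for every mode (which one obtains either by explicit Abel inversion converting the equation to second kind, or by a direct monotonicity argument using the Herglotz condition); in the tensor case this must be proved for the resulting \emph{system} on the complement of the potentials. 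Your layer-stripping intuition is correct in spirit, but the step ``uniquely solvable by the classical theorem on Volterra equations of the second kind'' would fail as literally written.
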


The same result for scalar fields was proven in lower regularity in~\cite{Hoop2017}, assuming only $f\in L^2$ and $g\in C^{1,1}$.
The metric may even have jump discontinuities.

\begin{theorem}[{\cite{Hoop2017}}]
Consider the spherical layer $D$ as in theorem~\ref{thm:Sharafutdinov}.
Suppose $c\colon[\rho_0,\rho_1]\to(0,\infty)$ is piecewise $C^{1,1}$ and satisfies the Herglotz condition.
At jump discontinuities the Herglotz condition is interpreted as positivity of the distributional derivative $\frac{d}{dr} \frac{r}{c(r)} >0$, meaning $c(r-)>c(r+)$ when $c$ is not continuous at $r$.
If $f\in L^2(D)$ intergates to zero over all geodesics between points on the outer boundary $S_{\rho_1}$, then $f=0$.
\end{theorem}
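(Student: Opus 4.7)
The strategy combines rotational symmetry, a Volterra (Abel-type) inversion per spherical harmonic mode, and layer stripping between jumps of $c$. First I would exploit $O(n)$-invariance: since both the metric $c^{-2}(r)\,\mathrm{Id}$ and the set of boundary-to-boundary geodesics are rotation invariant, the ray transform commutes with the induced unitary action on $L^2(D)$. Decomposing $f(x) = \sum_{k,j} f_{k,j}(|x|)\,Y_{k,j}(x/|x|)$ in spherical harmonics and noting that $I(f_{k,j}Y_{k,j})$ is determined by a single scalar-valued function on the orbit space of geodesics under rotations that fix the relevant isotypic component, the hypothesis $If=0$ decouples mode by mode. It thus suffices to prove that each radial profile $f_{k,j} \in L^2([\rho_0,\rho_1],\, r^{n-1}\,dr)$ vanishes.

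Next, I would set up a Volterra equation per mode. Parametrize geodesics (modulo rotation) by the impact parameter $p = r/c(r)$ at the turning point. The Herglotz condition, read distributionally, says that $r \mapsto r/c(r)$ is strictly increasing, so on each slab between successive jumps the turning radius $r_{\min}(p)$ is well defined and monotone, and the spheres $\{S_r\}$ foliate $D$ by strictly convex hypersurfaces. Writing the ray transform of $f_{k,j}Y_{k,j}$ in $(p,\text{rotation})$ coordinates and performing the angular integration explicitly via the addition formula for $Y_{k,j}$ produces an Abel-type integral equation for $f_{k,j}$, with integrable $1/\sqrt{\,\cdot\,}$ singularity at $r = r_{\min}(p)$ and otherwise smooth positive weight. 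Classical Abel inversion, extended from continuous to $L^2$ data by density and boundedness, yields $f_{k,j}\equiv 0$ on the outermost slab; layer stripping from $S_{\rho_1}$ inward then handles one slab at a time.

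The main obstacle I expect is handling the refraction of geodesics at jumps of $c$. Across a jump with $c(r_0-) > c(r_0+)$, Snell's law preserves the tangential covector and hence preserves $p$; consequently, geodesics from $S_{\rho_1}$ with $p$ in a certain window undergo total internal reflection at $S_{r_0}$ and never probe the inner slab, which must therefore be reached by a strictly smaller family of impact parameters. Within each inner slab the Volterra equation picks up a forcing term that is \emph{known} once the outer slabs have been inverted (by layer stripping), but one must verify that the resulting perturbed Abel-type equation remains uniquely solvable in $L^2$ and that the iteration converges across possibly many jumps. The low regularity ($f\in L^2$, $c$ only piecewise $C^{1,1}$) forces us to work with $L^2$-bounded inverses rather than smooth ones, which in turn requires uniform control of the Abel kernel across jumps; establishing this uniform control, and showing that the convex foliation and Volterra structure persist through refractions, is where the core technical work lies.
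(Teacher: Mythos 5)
Your plan follows essentially the same route as the paper: exploit rotational symmetry to block-diagonalize the transform into harmonic modes (the paper first reduces to two dimensions and then uses an angular Fourier series, which is equivalent to your direct spherical-harmonic decomposition), reduce each mode to an Abel-type Volterra equation indexed by turning radius, and invert from the outer boundary inward, with the low-regularity $L^2$ analysis of the Abel kernels across the jumps of $c$ being the core technical work. You correctly identify both the decoupling of modes and the role of Snell's law and total internal reflection at the discontinuities, so the proposal is consistent with the paper's argument.
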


The two proofs are similar.
The problem is first reduced to the two-dimensional case.
In two dimensions the ray transform is block-diagonalized by the angular Fourier transform.
More precisely, the function can be written in polar coordinates as $f(r,\theta)$ and expanded as a Fourier series in $\theta\in S^1$.
Identifying a point in the annulus with the geodesic whose minimal radius is at that point, one can regard $If(r,\theta)$ as a function on the annulus as well.
The $k$th Fourier component of $If(r,\theta)$ only depends on the $k$th Fourier component of $f(r,\theta)$.
This one-dimensional dependence is encoded in Abel-type integral transforms depending on the index $k$.
Once one proves that the Abel transform is injective for all $k$, injectivity of the X-ray transform follows.
In the case of tensor fields the radial problem for each $k$ becomes a system of integral equations due to the gauge freedom.

The Herglotz condition does not prohibit conjugate points. At least in two dimensions, this allows cases where the problem is definitely unstable yet the ray transform is solenoidally injective in all orders.

\subsection{Geometric method for piecewise constant functions}

Using a layer stripping argument associated with a foliation is somewhat technical when there is no explicit symmetry.
One can relax the geometrical assumptions on the underlying manifold if one restricts to a smaller class of functions.
The argument can be used to show that a piecewise constant function in the kernel of the X-ray transform has to vanish on any manifold of dimension two or higher, provided that there is a strictly convex foliation~\cite{Ilmavirta2017}.
One has to be careful with the wording, since the set of piecewise constant functions is not a vector space unless the partition is fixed.
In two dimensions this is not covered by any existing result, and in higher dimensions this provides a simpler proof than the scattering calculus approach described next.

\subsection{The Uhlmann--Vasy method by scattering calculus}\label{sec:Melrose}

All the methods above rely to some extent on a ``local'' problem near a convex boundary point, where convexity allows to write integral equations with causal kernels as in, e.g., \cite{Romanov1974,Sharafutdinov1997}, or to set up an invertible linear problem in the case of \cite{Ilmavirta2017}. Another method, first introduced by Uhlmann and Vasy in \cite{Uhlmann2015} to study the local problem in dimensions three and higher, is to express a post-processed version of the X-ray transform into a ``normal-like'' operator with good ellipticity properties within Melrose's scattering calculus. This yields local injectivity of X-ray transforms over functions, which we first discuss including a sketch of the approach. We then outline the many generalizations (to higher-order tensor fields, other flows and weigthed transforms), for which the approach's handling of each case proves very robust.

\subsubsection{Geodesic X-ray transform over functions}

The main result hinges on a local integral geometric problem near a concave hypersurface $\{x=0\}$, and suppose that on the slab $S= \{0\le x\le c\}$, every level set $\{x=x_0\}$ is geodesically concave when viewed from the super-level set $\{x\ge x_0\}$. Let us denote a general point $z = (x,y)$, and coordinatize a unit tangent vector $v\in T_zS$ as $v = \lambda \partial_x + \omega$ with $g_z(\partial_x, \omega) = 0$ and $g_z (\omega,\omega) = 1-\lambda^2$, see Fig.~\ref{fig:Melrose}.

\begin{figure}[htpb]
    \centering
    \includegraphics[height=0.17\textheight]{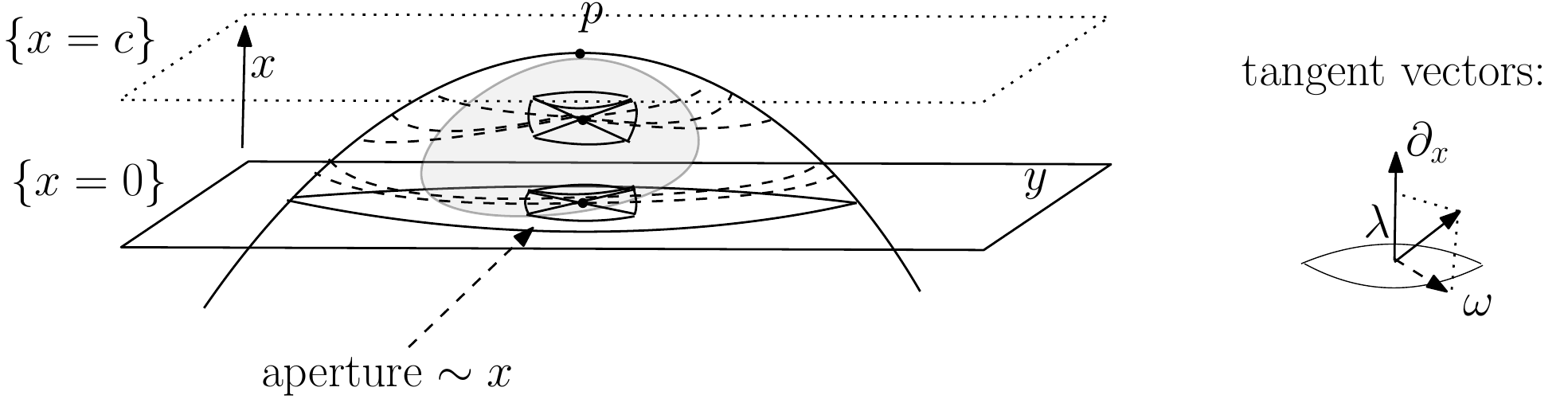}
    \caption{A local setting amenable to scattering calculus.}
    \label{fig:Melrose}
\end{figure}


Back to our case of a manifold with boundary, if $p$ is a convex point of $\partial M$, then there exists a neighborhood $O = M\cap \{0< x \le c\}$ of $p$ open in $M$ which can be described by a convex foliation as above. We denote by $\M_O$ the set of $O$-local geodesics. In this context, the main local result in \cite{Uhlmann2015} is:
\begin{theorem}\label{thm:UhlmannVasy}
    With the notation as above, if $c$ is small enough, the local transform $I$ is injective on $H^s(O)$ for any $s\ge 0$. Upon defining for $\digamma>0$
    \[ H_\digamma^s(O) = e^{\digamma/x} H^s = \{f\in H^s_{\text{loc}}(O): e^{\digamma/x}f\in H^s(O)\},   \]
    For any $s\ge 0$ there exists $C>0$ such that for all $f\in H_\digamma^s (O)$, 
    \begin{align*}
	\|f\|_{H_\digamma^{s-1} (O)} \le C \|If|_{\M_{O}}\|_{H^s(\M_{O})}.
    \end{align*}    
\end{theorem}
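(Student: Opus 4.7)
The plan is to follow the Uhlmann--Vasy scattering-calculus strategy. Near the convex point $p$, choose a boundary defining function $x$ on $M$ whose level sets realize the strictly convex foliation, with local coordinates $z=(x,y)$ and unit tangent vector written $v = \lambda\partial_x + \omega$. Strict convexity forces a geodesic starting at $\{x = x_0\}$ with tangential angle $\lambda$ to penetrate only to depth $\sim \lambda^2/c_0$ before returning, where $c_0 > 0$ is the convexity constant, so all maximal $O$-geodesics are short and nearly tangential. Let $\chi$ be a microlocal cutoff on $SM$ localizing to $|\lambda|$ small and $x \in (0, c]$, and form the cut-off, conjugated normal operator
\[
A_\digamma := e^{-\digamma/x}\, \chi\, I^* I\, \chi\, e^{\digamma/x},
\]
viewed as acting on distributions on the half-space $\{x > 0\}$ after scattering-compactification at the artificial boundary face $\{x = 0\}$.

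The main technical step is to prove that $A_\digamma$ belongs to Melrose's scattering pseudodifferential calculus, $A_\digamma \in \Psi^{-1,0}_{\mathrm{sc}}(X)$, where $X$ is the scattering-compactified region. This rests on a careful kernel analysis: the rescaling $\lambda = x\sigma$ sends the Schwartz kernel of $I^*I$, whose singular behavior naturally has $\lambda^2$-scaling coming from the tangential penetration depth, into unit-scale objects fitting the scattering double space with the correct conormal structure at the boundary face. The interior principal symbol of $A_\digamma$ is then a positive multiple of the cutoff integrated over the leaf-tangential sphere, hence elliptic in the ordinary sense. The scattering principal symbol at $\{x = 0\}$, after rescaling, becomes a Gaussian integral in the rescaled tangential momentum; the conjugation by $e^{\pm\digamma/x}$ shifts this Gaussian's argument by $\digamma$ via the commutator $[x^2\partial_x,\, e^{-\digamma/x}] = \digamma\, e^{-\digamma/x}$, strict convexity makes the Gaussian non-degenerate, and for $\digamma$ sufficiently large the shift pushes its argument into a uniformly elliptic region.

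With ellipticity in $\Psi^{-1,0}_{\mathrm{sc}}$ established, a scattering parametrix $Q_\digamma \in \Psi^{1,0}_{\mathrm{sc}}$ yields the Fredholm estimate $\|u\|_{H^{s-1}_{\mathrm{sc}}} \lesssim \|A_\digamma u\|_{H^s_{\mathrm{sc}}} + \|u\|_{H^{-N}_{\mathrm{sc}}}$, and taking $c$ small enough absorbs the residual error (the lower-order term is supported away from $\{x=0\}$ where the weight is trivial, and can be controlled by shrinking the slab). The substitution $u = e^{\digamma/x} f$ identifies the scattering Sobolev spaces with the weighted spaces $H^s_\digamma(O)$; combined with the continuity bound $\|A_\digamma u\|_{H^s_{\mathrm{sc}}} \lesssim \|I(e^{\digamma/x} f)|_{\mathcal{M}_O}\|_{H^s}$ (coming from the fact that $e^{-\digamma/x}\chi I^*$ is a bounded post-composition), this delivers the stated stability bound. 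Injectivity of $I$ on $H^s(O)$ is then immediate from the inclusion $H^s(O) \subset H^s_\digamma(O)$.

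The main obstacle is the coordinated verification that $A_\digamma \in \Psi^{-1,0}_{\mathrm{sc}}$ with both principal symbols simultaneously elliptic. Placing the conjugated operator in the scattering calculus requires precise identification of the conormal structure of the kernel of $I^*I$ after the $\lambda = x\sigma$ rescaling, and the boundary ellipticity fails for small $\digamma$: it relies on the cooperation of three ingredients — the exponential conjugation supplying a momentum shift, strict convexity of the foliation supplying positive definiteness of the rescaled Gaussian, and thinness of the slab $c \ll 1$ to absorb errors and justify the microlocal localization. This interplay is exactly what makes the result intrinsically local and forces the weight to be exponentially decaying.
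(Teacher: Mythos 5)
Your proposal follows essentially the same route as the paper's sketch of the Uhlmann--Vasy argument: a truncated, exponentially conjugated normal operator (the paper writes it as $A_\digamma = x^{-1}e^{-\digamma/x}Ae^{\digamma/x}$ with $Af(z)=x^{-1}\int_{S_z}If(\gamma_\nu)\chi(\lambda/x)\,d\mu(\nu)$, i.e.\ exactly your $\lambda=x\sigma$-rescaled cutoff), membership and ellipticity in $\Psi^{-1,0}_{\mathrm{sc}}$ checked both at fiber infinity and at the face $\{x=0\}$, a scattering parametrix, and shrinking $c$ so that the compact remainder becomes a contraction. The one small discrepancy is your claim that boundary ellipticity requires $\digamma$ sufficiently large: with the Gaussian-type choice of $\chi$ the boundary symbol is an explicitly computable nonvanishing integral for \emph{every} $\digamma>0$ (the exponential weight supplies convergence of the symbol integral rather than a large momentum shift), which is what the stated estimate for arbitrary $\digamma>0$ requires.
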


The main idea behind the theorem is to define a ``truncated'' normal operator
\begin{align*}
    Af (z) := x^{-1} \int_{S_z} If(\gamma_\nu) \chi (\lambda/x) d\mu(\nu),
\end{align*}
and to show that for $\digamma >0$, 
\begin{align*}
    A_\digamma = x^{-1} e^{-\digamma/x} A e^{\digamma/x} \in \Psi_{\text{sc}}^{-1,0} (\{x\ge 0\}),
\end{align*}
and is elliptic within Melrose's scattering algebra $\Psi_{\text{sc}}^{\cdot,\cdot} (\{x\ge 0\})$, a $\Zm\times \Zm$-graded algebra of pseudo-differential operators defined on a manifold with boundary, whose grading has two indices: a classical one measuring the growth rate in the momentum variable at ``fiber infinity'', and another measuring the rate of vanishing near the boundary\footnote{The boundary of the base is also called ``base infinity'', as the geometric model associated is usually such that geodesics do not attain the boundary in finite time.}. 

To prove ellipticity, one must compute the principal symbol of $A_\digamma$, and this accounts for much of the work. The Schwartz kernel $K_{A_\digamma}(z,z')$ is easy to compute via exponential coordinates; the main singularity of $K_{A_\digamma}$ is on the diagonal $z=z'$, and upon rewriting $K_{A_\digamma}$ as a function of $(z,\xi:= z'-z)$, the full symbol may be computed via Fourier transform $\sigma_{A_\digamma} (z,\zeta)\sim {\cal F}_{\xi\to \zeta} K_{A_\digamma} (z,\zeta)$. Once such an amplitude is computed, one must find the leading behavior in terms of $x$ near $x=0$ and in terms of $\zeta$ near $\zeta\to \infty$, and show that both leading-order terms vanish neither on $\{x=0\}\times \overline{\Rm^n_{\zeta}}$, nor on $\overline{O}\times \{|\zeta| = \infty\}$ ($\{|\zeta|=\infty\}$ is understood as the boundary of the radial compactification of $\Rm^n_\zeta$). 

Once ellipticity is proved, this shows Fredholm properties of such an operator, namely the existence of a $B$ such that $BA_{\digamma} = Id + K$, where $K$ is compact. In addition, if $c$ is small enough, then $K$ becomes a contraction, and the inversion of $Id + K$ can be done by Neumann series so that $A_\digamma$ is in fact injective and H\"older-stable. 
If the manifold can be globally foliated by convex hypersurfaces, with convexity constants bounded away from $0$, then compactness arguments allow to upgrade the local Theorem \ref{thm:UhlmannVasy} into a global one, see e.g. \cite[Corollary]{Uhlmann2015}.

\smallskip
Before discussing the latest results, let us mention some generalizations of this approach, the first two of which are covered in the topical review \cite{Uhlmann2017}. These generalizations attest to the robustness of the method for integral geometric problems in dimensions three and higher.

\subsubsection{General families of curves} 

In the appendix to \cite{Uhlmann2015}, Zhou generalized the results above to ray transforms of functions, where integration is performed along general families of curves on a Riemannian manifold $(M,g)$. Such curves are generated by fixing $(x,v)\in TM$ and solving the differential equation
\begin{align*}
    \nabla_{\dot\gamma} \dot\gamma = G(\gamma,\dot\gamma), \qquad \gamma(0) = x, \qquad \dot\gamma(0) = v, \qquad (\gamma:= \gamma_{x,v})
\end{align*}
where $\nabla$ is the Levi-Civita connection and $G$ is a fixed smooth bundle map from $TM$ to itself. The local result is first considered near a convex boundary point $p$, where by convexity here we mean that if $\rho$ is a boundary defining function near $p$ (such that $\rho =0$ on $\partial M$ and $\rho>0$ inside $M$), we have $\frac{d^2}{dt^2} \rho(\gamma_{p,v}(t))|_{t=0}<0$ for any $v\in T_p (\partial M)$. 

Related results are described in Section \ref{sec:microlocalsimple}.

\subsubsection{Injectivity over tensor fields and judicious choices of gauges} \label{sec:gauges}

The approach was then generalized by Stefanov--Uhlmann--Vasy in \cite{Stefanov2014} to prove solenoidal injectivity of geodesic X-ray transforms over tensor fields of order $1$ and $2$ (as a stepping stone toward proving boundary rigidity, see Section~\ref{sec:nonlinearMetric} below). In this case, the transform has a natural kernel, and given that one is working with two distinct metrics (the initial one and the scattering one), one must make a judicious choice of gauge. Upon choosing the solenoidal gauge with respect to the scattering metric, the work is then to show full ellipticity (up to gauge) of a certain operator defined out of appropriately restricted X-ray transforms. Ellipticity is here defined in the sense of a scattering algebra of operators defined on sections of bundles, namely, $\Psi^{\cdot,\cdot}_{\text{sc}} (X; {}^{\text{sc}}T^* M, {}^{\text{sc}}T^* M)$ and $\Psi^{\cdot,\cdot}_{\text{sc}} (X; \text{Sym}^2 ({}^{\text{sc}}T^* M), \text{Sym}^2 ({}^{\text{sc}}T^* M))$. Though not written, the results are expected to generalize to tensor fields of arbitrary order. 

The solenoidal gauge used above gives rise to an elliptic problem, which is an advantage to study Fredholmness in inverse problems, but a curse when considering local problems and extension issues, since elliptic gauges require solving a global problem in order to be computed. To gain more flexibility in the local problem and in fact tackle the non-linear ones (boundary and lens rigidity, see Section~\ref{sec:BRnormal}), the same authors improved the results above in \cite{Stefanov2017}, by working with tensor fields which are in the {\bf normal gauge} at the boundary (one-forms in the normal gauge only have tangential components, 2-tensors only have tangential-tangential components). One must then construct ``normal-like'' operators which land in this gauge instead of the solenoidal one. Such operators are only partially elliptic, and their Fredholm properties are obtained after a refined study of the characteristic directions. In the case of one-forms, the operator has real principal type with radial points, and those can be directly dealt with. The case of two-tensors is more delicate, as one must deal with double characteristics (i.e., second-order vanishing of the principal symbol at the characteristic set). The authors circumvent this by reducing the inversion to that in the elliptic gauge and working out the effect of the change of gauge. Such results are compatible with having microlocal weights in the ray transform, a necessary step toward using pseudo-linearization identities for the non-linear problem.

\subsubsection{Magnetic flows} 

Injectivity over tensor fields was also established in \cite{Zhou2016} in the case of magnetic flows. The added technical step is that the kernel of the magnetic ray transform couples pairs of tensor fields of consecutive order, and therefore invertibility is proved by jointly considering pairs (function, one-form) and pairs (one-form, symmetric two-tensor). A local injectivity result requires a ``magnetic convexity'' condition near a boundary point; a global injectivity result (over a gauge representative similar to a solenoidal gauge defined on pairs) can then be derived when the manifold can be foliated by magnetically convex hypersurfaces. The ellipticity of the system is established by working in the scattering algebras $\Psi^{\cdot,\cdot}_{\text{sc}} (M, {}^{\text{sc}}T^* M \times M, {}^{\text{sc}}T^* M \times M)$ and $\Psi^{\cdot,\cdot}_{\text{sc}} (M, \text{Sym}^2 ({}^{\text{sc}}T^* M)\times {}^{\text{sc}}T^* M, \text{Sym}^2 ({}^{\text{sc}}T^* M)\times {}^{\text{sc}}T^* M)$. Though not written, the results are expected to generalize to tensor fields of arbitrary order. 

\subsubsection{Transform with matrix weights, connections and Higgs fields}

Another generalization of the approach is to work with transforms with connections and Higgs fields, as done in \cite{Paternain2016a}, or in the most general form, on transforms with a matrix weight. Namely, given a (known) weight function $W \in C^\infty(SM, GL(N,\Cm))$, one may define the X-ray transform $\I_W \colon C^\infty(SM, \Cm^N) \to C^\infty(SM, \Cm^N)$
\begin{align*}
    \I_W h (x,v) = \int_0^{\tau(x,v)} W(\varphi_t(x,v)) h(\varphi_t(x,v))\ dt, \qquad (x,v)\in \partial_+ SM.
\end{align*}
Special examples of weights are those that arise from a pair $(A,\Phi)$ (connection, Higgs field) on the bundle $M\times \C^N$, where the weight is assumed to solve the transport equation
\begin{align*}
    XW = W (A+ \Phi) \qquad (\text{on } SM\times \Cm^N), \qquad W|_{\partial_- SM} = Id,
\end{align*}
in which case we exactly have $\I_W = \I_{A,\Phi}$. In \cite{Paternain2016a} the authors study the invertibility of $\I_W$ in the following cases: (i) $W$ is arbitrary and $h$ is a $\Cm^N$-valued function on $M$; (ii) $W$ arises from a pair $(A,\Phi)$, and $h$ is the sum of $\Cm^N$-valued functions and one-forms. 

The main resting assumption is that the manifold $(M,g)$ where the transform $\I_W$ is defined admits a strictly convex function. This implies the existence of a strictly convex foliation, allowing a successful study of the local problem by scattering calculus (see \cite[Theorem 1.5]{Paternain2016a}) followed by a global argument to prove injectivity of $\I_W$ in both settings described above (see \cite[Theorems 1.1, 1.6]{Paternain2016a}). In addition, an extended discussion is provided in \cite[Sec. 2]{Paternain2016a} regarding which manifolds admit a strictly convex function.
The injectivity result for $\I_{A,\Phi}$ over functions and one-forms also implies a positive answer to the non-linear Problem \ref{pb3}, as described in Section \ref{sec:nonlinearConnection} below.

\np
\section{From linear to non-linear results}\label{sec:nonlinear}

While linearization of non-linear inverse problems is a common approach to obtaining non-linear results in a neighborhood of ``favorable'' case, integral geometric problems have enjoyed striking identities, out of which one may derive a {\bf global} non-linear uniqueness result which ultimately relies on the injectivity of an X-ray transform. 

We give two examples of such identities below, one with applications to boundary/lens rigidity, the other with applications to inverse problems for connections and Higgs fields.

\subsection{From Problem 2 to Problem 1}\label{sec:nonlinearMetric}

In the case of inverse problems for metrics, a good example of a ``pseudo-linearization'' identity is given below, first appearing in \cite{Stefanov1998}. Let $N$ a manifold and $V,\widetilde{V}$ two vector fields on $N$. We denote $X(s,X^{(0)})$ the solution of $\dot X = V(X)$, similarly for $\widetilde{X}$ in terms of~$\widetilde{V}$. 

\begin{lemma}[{\cite{Stefanov1998,Stefanov2013}}]\label{lem:pseudolin} 
    For any $t>0$ and any initial condition $X^{(0)}$, if $\widetilde{X}(\cdot,X^{(0)})$ and $X(\cdot,X^{(0)})$ exist on the interval $[0,t]$, then 
    \begin{align}
	\widetilde{X}(t,X^{(0)}) - X(t,X^{(0)}) = \int_0^t \frac{\partial \widetilde{X}}{\partial X^{(0)}} (t-s,X(s,X^{(0)})) (\widetilde{V}-V)(X(s,X^{(0)}))\ ds.
	\label{eq:pseudolin}
    \end{align}    
\end{lemma}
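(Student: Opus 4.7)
The plan is to use the classical ``interpolation'' device between the two flows. Define
\[ F(s) := \widetilde{X}(t-s, X(s, X^{(0)})), \qquad s\in [0,t]. \]
At the endpoints one has $F(0) = \widetilde{X}(t, X^{(0)})$ (since $X(0,\cdot)=\mathrm{Id}$) and $F(t) = \widetilde{X}(0, X(t,X^{(0)})) = X(t, X^{(0)})$, so the left-hand side of \eqref{eq:pseudolin} equals $F(0) - F(t) = -\int_0^t F'(s)\,ds$. The remaining task is to rewrite the integrand $-F'(s)$ in the form prescribed by the right-hand side.

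Applying the chain rule, the derivative in the second slot of $\widetilde{X}$ contributes $\frac{\partial \widetilde{X}}{\partial X^{(0)}}(t-s, X(s,X^{(0)}))\cdot V(X(s,X^{(0)}))$ via $\dot{X}(s,X^{(0)}) = V(X(s,X^{(0)}))$, while the derivative in the time slot contributes $-\widetilde{V}(\widetilde{X}(t-s, X(s,X^{(0)})))$. To merge the two into a single evaluation of $\widetilde{V}-V$ along the unperturbed trajectory, I would invoke the elementary flow identity
\[ \widetilde{V}(\widetilde{X}(u,y)) \;=\; \frac{\partial \widetilde{X}}{\partial X^{(0)}}(u,y)\,\widetilde{V}(y), \]
obtained by differentiating the two equivalent forms $\widetilde{X}(u+h,y) = \widetilde{X}(u, \widetilde{X}(h,y)) = \widetilde{X}(h, \widetilde{X}(u,y))$ of the one-parameter group law at $h=0$. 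Setting $u = t-s$ and $y = X(s,X^{(0)})$ turns the offending term into $\frac{\partial \widetilde{X}}{\partial X^{(0)}}(t-s, X(s,X^{(0)}))\cdot \widetilde{V}(X(s,X^{(0)}))$, after which factoring gives $F'(s) = -\frac{\partial \widetilde{X}}{\partial X^{(0)}}(t-s, X(s,X^{(0)}))\cdot(\widetilde{V}-V)(X(s,X^{(0)}))$. Integrating over $[0,t]$ yields \eqref{eq:pseudolin}.

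The only substantive ingredient beyond the chain rule is the flow identity above, which is precisely what allows the perturbation $\widetilde{V}-V$ to be evaluated along the unperturbed trajectory $X(\cdot, X^{(0)})$ rather than along the perturbed one. The main pitfall is purely bookkeeping: tracking which variable of $\widetilde{X}$ is being differentiated and at which point the Jacobian $\frac{\partial \widetilde{X}}{\partial X^{(0)}}$ is evaluated. The assumption that both flows exist on $[0,t]$ is exactly what is needed to ensure that $F$ is well-defined and $C^1$, and no further regularity is required beyond smoothness of $V$ and $\widetilde{V}$.
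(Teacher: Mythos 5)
Your proof is correct: the interpolant $F(s)=\widetilde{X}(t-s,X(s,X^{(0)}))$, the fundamental theorem of calculus, and the flow identity $\widetilde{V}(\widetilde{X}(u,y))=\frac{\partial\widetilde{X}}{\partial X^{(0)}}(u,y)\,\widetilde{V}(y)$ (valid here because the vector fields are autonomous) are exactly the ingredients of the argument in the cited references \cite{Stefanov1998,Stefanov2013}, to which the paper defers rather than giving its own proof. This is essentially the same approach, so there is nothing to add.
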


See \cite{Stefanov2013} for a proof. To see how this relates to the Lens Rigidity Problem (see Problem \ref{pb1}), fix two metrics $g$ and $\tilde g$ on $M$ and suppose they have same lens data\footnote{i.e., same boundary distance function and same scattering relation.}. Both metrics give rise to geodesic vector fields $V$ and $\widetilde{V}$ on $T^* M$, each of which uniquely characterizes $g$ and $\tilde g$. If $g$ and $\tilde g$ have the same scattering relation at the boundary, then the left-hand side of \eqref{eq:pseudolin} vanishes for $X^{(0)}$ at the boundary and for $t>0$ is the length of the geodesic emanating from $X^{(0)}$ for either metric. Then this implies the vanishing of a weighted X-ray transform of $V-\widetilde{V}$ along the geodesics of $V$. If the weight has good properties and if the ray transform is injective, this solves the non-linear problem. 

Such an approach has been documented in the review \cite[Section 5.4.1]{Uhlmann2017} and the interested reader is invited to refer to it for earlier results. We now cover a couple of further recent uses of this approach. 

\subsubsection{Boundary and lens ridigity}\label{sec:BRnormal} 

The works \cite{Stefanov2013} and \cite{Stefanov2017} give the latest progress on boundary rigidity results to date. Specifically, \cite[Theorem 1.1]{Stefanov2017} states that on a Riemannian manifold $(M,g)$ of dimension three and higher, if $\partial M$ is stricly convex for two metrics whose boundary distance functions agree, then these metrics are gauge equivalent in a neighborhood of the boundary. Under a global foliation condition, a lens rigidity result is also established in \cite[Theorem 1.3]{Stefanov2017}, namely: if $(M,g)$ has a strictly convex foliation, and if $\hat g$ is another metric whose lens data agrees with that of $g$, then $g$ and $\hat g$ are gauge-equivalent. The results in \cite{Stefanov2013} were first established by the same authors in the case of the recovery of conformal a factor, a problem which does not require addressing ray transforms over tensor fields. 

To find the range of applicability of the results above, one must then study which manifolds admit strictly convex foliations, and this is discussed at length in \cite[Section 2]{Paternain2016a}. Note that it is still open whether simple manifolds of dimension three and higher admit strictly convex foliations, therefore the earlier result \cite{Stefanov2005}, establishing boundary rigidity for generic simple manifolds using microlocal methods, may cover cases which are not treated by \cite{Stefanov2013,Stefanov2017}.
A simple surface does admit a strictly convex foliation.

\subsubsection{Lens rigidity for Yang-Mills fields}

In \cite[Theorem 1.2]{Paternain2017a}, the authors prove the unique identifiability modulo gauge of a Yang-Mills potential from its scattering relation on manifolds of dimension three and higher satisfying a certain foliation condition. In this example, the scattering relation is defined in terms of the flow of a coupled dynamical system for a particle in $SM$ and a Lie algebra-valued ``color charge'', and the nature of the coupling is driven by the Yang-Mills potential. The approach combines the tools of Section \ref{sec:Melrose}, with a pseudolinearization identity similar to Lemma \ref{lem:pseudolin}. A similar problem for magnetic fields was considered earlier by Zhou in~\cite{Zhou2016a}.

\subsubsection{An inverse problem from condensed matter physics}

In \cite{Lai2017}, the authors consider the recovery of a potential from the dynamical behavior of vortex dipoles in an inhomogeneous Gross-Pitaevskii equation in the plane, a problem with applications to condensed matter physics. The inverse problem can be viewed as a lens rigidity problem where the measurements resemble a scattering relation for a flow perturbed by the unknown potential, and the recovery of the potential in \cite[Theorem 2]{Lai2017}) uses ideas such as Lemma~\ref{lem:pseudolin}. 

\subsection{From Problem 4 to Problem 3}\label{sec:nonlinearConnection}

The solenoidal injectivity of ray transforms $I_{A,\Phi}$ over sums of functions and one-forms implies reconstructibility of a connection and a Higgs field from their scattering data up to gauge, that is, a positive answer to Problem \ref{pb3}. 

\begin{theorem}[{\cite[Theorem 1.5]{Paternain2012}}]\label{thm:Conn3} Assume $M$ is a compact simple surface, let $A$ and $B$ be two Hermitian connections, and let $\Phi$ and $\Psi$ be two skew-Hermitian Higgs fields. Then $C_{A,\Phi} = C_{B,\Psi}$ implies that there exists a smooth $U\colon M\to U(n)$ such that $U|_{\partial M} = Id$ and $B = U^{-1} dU + U^{-1} A U$, $\Psi = U^{-1} \Phi U$. 
\end{theorem}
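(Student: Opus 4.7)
The plan is to realize the gauge $U$ as the (inverse of the) quotient of the two parallel transports attached to $(A,\Phi)$ and $(B,\Psi)$, and then to use Theorem~\ref{thm:Conn1} to show that this quotient descends from $SM$ to~$M$.

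First, let $U_{A,\Phi},\, U_{B,\Psi}\colon SM\to U(n)$ be the unique solutions of the transport equations $X U_{A,\Phi} + (A+\Phi) U_{A,\Phi} = 0$ and $X U_{B,\Psi} + (B+\Psi) U_{B,\Psi} = 0$ subject to value $\mathrm{Id}$ on $\partial_+SM$; unitarity follows from the Hermitian/skew-Hermitian hypotheses. The assumption $C_{A,\Phi}=C_{B,\Psi}$ translates into $U_{A,\Phi}|_{\partial_-SM}=U_{B,\Psi}|_{\partial_-SM}$, so $W:=U_{B,\Psi}\,U_{A,\Phi}^{-1}$ is a smooth $U(n)$-valued function on $SM$ satisfying $W|_{\partial SM}=\mathrm{Id}$. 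Differentiating and plugging in the two transport equations gives
\begin{equation*}
(X+\tilde A+\tilde\Phi)W=0,\qquad \tilde A(v)Y:=B(v)Y-YA(v),\quad \tilde\Phi Y:=\Psi Y-Y\Phi,
\end{equation*}
which is the transport equation associated with the induced pair $(\tilde A,\tilde\Phi)$ on the endomorphism bundle $E':=M\times\mathrm{End}(\Cm^n)$. Equipping $E'$ with the Hilbert--Schmidt pairing $\langle Y_1,Y_2\rangle=\mathrm{tr}(Y_1 Y_2^*)$ makes $\tilde A$ Hermitian and $\tilde\Phi$ skew-Hermitian; this is a straightforward verification from the hypotheses on $A,B,\Phi,\Psi$.

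Second, peel off the constant solution to reduce to a linear problem. The section $\mathrm{Id}\in C^\infty(M;E')$ satisfies $(X+\tilde A+\tilde\Phi)\mathrm{Id}=(B-A)+(\Psi-\Phi)$, so $u:=W-\mathrm{Id}$ vanishes on $\partial SM$ and solves
\begin{equation*}
(X+\tilde A+\tilde\Phi)\,u=f_1+f_0,\qquad f_1:=A-B,\quad f_0:=\Phi-\Psi,
\end{equation*}
where $f_1$ has fiberwise degree~$1$ and $f_0$ has degree~$0$. Because $u|_{\partial SM}=0$, the attenuated ray transform $I_{\tilde A,\tilde\Phi}(-f_1-f_0)$ vanishes, and applying Theorem~\ref{thm:Conn1} to $(E',\tilde A,\tilde\Phi)$ on the simple surface $M$ produces a smooth $p\colon M\to\mathrm{End}(\Cm^n)$ vanishing on $\partial M$ with $f_1=(d+\tilde A)p$ and $f_0=\tilde\Phi p$. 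Uniqueness of solutions of the transport problem with vanishing boundary data forces $u$ to coincide with the pullback of $p$ from $M$ to $SM$, so $W=\mathrm{Id}+p$ depends only on the base point and defines an element of $C^\infty(M;U(n))$ equal to $\mathrm{Id}$ on $\partial M$.

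Finally, one unpacks the identities $f_1=(d+\tilde A)p$ and $f_0=\tilde\Phi p$ after substituting $p=W-\mathrm{Id}$. The one-form identity collapses to $dW+BW=WA$, which, setting $U:=W^{-1}$, rearranges to $B=U^{-1}dU+U^{-1}AU$; likewise $f_0=\tilde\Phi p$ becomes $\Psi W=W\Phi$, giving $\Psi=U^{-1}\Phi U$. Since $U|_{\partial M}=\mathrm{Id}$ and $U$ is $U(n)$-valued, this is the required gauge equivalence. The main obstacle in this plan is the middle step: recognizing that the nonlinear equality of scattering data becomes the vanishing of a linear attenuated X-ray transform on the auxiliary endomorphism bundle, thereby reducing the theorem directly to Theorem~\ref{thm:Conn1} (at rank $n^2$). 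Once that reduction is set up, the remaining algebra—computing the transport equation for $W$, peeling off the constant-solution offset, and converting the conclusion of Theorem~\ref{thm:Conn1} into the gauge identities—is routine.
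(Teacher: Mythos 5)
Your proposal is correct and follows essentially the same route as the paper: form the quotient $W$ of the two fundamental solutions, observe it equals $\mathrm{Id}$ on $\partial SM$ by the scattering-data hypothesis, subtract $\mathrm{Id}$ to obtain a transport equation on the endomorphism bundle with the induced Hermitian connection $Y\mapsto BY-YA$ and skew-Hermitian Higgs field $Y\mapsto \Psi Y-Y\Phi$, and invoke Theorem~\ref{thm:Conn1} to conclude $W$ descends to $M$. The only cosmetic difference is that you work with the inverse of the paper's $U$ (so the roles of $(A,\Phi)$ and $(B,\Psi)$ are swapped in the induced pair), and you spell out the Hilbert--Schmidt verification and the uniqueness step that the paper leaves implicit.
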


While similar theorems exist in a perturbative context (see references in \cite{Paternain2012}), Theorem \ref{thm:Conn3} is a striking example of how injectivity of the linearized problem implies {\em global} injectivity (modulo gauge) of the non-linear operator $(A,\Phi)\mapsto C_{A,\Phi}$. The argument is as short as it is powerful and we repeat it here. 

\begin{proof}[Proof of Theorem \ref{thm:Conn3}] The equality $C_{A,\Phi} = C_{B,\Psi}$ implies that the fundamental matrix solutions $U_{A,\Phi},U_{B,\Psi}\colon SM\to U(n)$, satisfying 
    \begin{align*}
	(X+ A + \Phi) U_{A,\Phi} = 0, \qquad (X+B+\Psi)U_{B,\Psi} = 0, \qquad U_{A,\Phi}|_{\partial_+ SM} = U_{B,\Psi}|_{\partial_+ SM} = Id,
    \end{align*}
    agree on $\partial SM$. Then the proof consists in showing that $U := U_{A,\Phi} (U_{B,\Psi})^{-1}$, which is smooth by construction, only depends on the basepoint, and thus fulfills the conclusion of the theorem. 
    Looking at $W:= U - Id$, $W$ is a matrix solution, vanishing at $\partial SM$, of the transport equation on~$SM$
    \begin{align*}
	XW + AW - WB + \Phi W - W\Psi = B-A + \Psi-\Phi.
    \end{align*}
    This equation can be viewed as a transport equation on the bundle $M\times \Cm^{n\times n}$ with (Hermitian) connection $\hat A(R):= AR - RB$ and (skew-hermitian) Higgs field $\hat\Phi(R) := \Phi R - R\Psi$, and the vanishing of $W$ at $\partial SM$ expresses that $I_{\hat A, \hat \Phi} (B-A + \Psi-\Phi) =0$. Theorem \ref{thm:Conn1} then implies that $W$ is only a function on the basepoint, and thus $U = Id + W$ fulfills all the desired properties.
\end{proof}

Following these ideas, we briefly describe similar contexts where such results have been obtained.

\subsubsection{Skew-hermitian pairs on simple magnetic surfaces} 

A similar scheme of proof was used in \cite{Ainsworth2013} to show that a skew-hermitian pair $(A,\Phi)$ is determined by the scattering data $C_{A,\Phi}$ defined through a simple magnetic flow, see \cite[Theorem 1.4]{Ainsworth2013}. The proof relies on the injectivity of all magnetic ray transforms with skew-hermitian pairs, proved in this context via energy identities as described in Section~\ref{sec:Pestov}.

\subsubsection{Skew-hermitian pairs on manifolds with negative sectional curvature} 

The conclusion of Theorem \ref{thm:Conn3} also holds on arbitrary bundles with Hermitian connection and skew-Hermitian Higgs field over a manifold $(M,g)$ with negative sectional curvature and strictly convex boundary, as stated in \cite[Theorem 1.2]{Guillarmou2015}. The scheme of proof mimicks that of Theorem \ref{thm:Conn3} by relying on the injectivity of the linear problem as stated in Theorem \ref{thm:Conn2}. The added technicality is the presence of trapping, which in the case of negative sectional curvature, is hyperbolic and allows to control appropriately the regularity of transport solutions, as explained in Section~\ref{sec:trapping}.  

\subsubsection{General pairs on manifolds admitting a strictly convex function} 

In general, one may notice that the scheme of proof above also works for pairs $(A,\phi)$ which are not necessarily skew-Hermitian. Namely, on a fixed Riemannian manifold of dimension at least three (say contractible with strictly convex boundary), if $\I_{A,\Phi}$ is injective over functions and one-forms for any smooth connection $A$ and Higgs field $\Phi$, then for any smooth pair $(A,\Phi)$, the scattering data $C_{A,\Phi}$ determines the pair $(A,\Phi)$ up to gauge. 

In this context, on Riemannian manifolds of dimension three and higher admitting a strictly convex function, and for any smooth pair $(A,\Phi)$, $\I_{A,\Phi}$ is proved injective in \cite{Paternain2016a}, then the authors also prove there that general pairs $(A,\Phi)$ are determined (up to gauge) by their scattering data, see e.g. \cite[Theorem 1.1]{Paternain2016a}.

\section{Open questions} \label{sec:open}

We conclude this review with some open questions. 

\begin{enumerate}
    \item On simply connected, non-trapping Riemannian surfaces with strictly convex boundary, it is commonly conjectured that the tensor tomography problem is solvable. 
    \item Is the X-ray transform solenoidally injective on tensor fields of all orders on a simple Riemannian manifold of any dimension? There is a complete answer in dimension two, but only partial results in higher dimensions.
    \item On a simple Riemannian manifold with boundary in dimension three and higher, the boundary rigidity problem is still open. 
    \item Regarding injectivity of the X-ray transform with conjugate points in Section~\ref{sec:microlocalCP}:
	\begin{enumerate}
	    \item What happens in the presence of regular conjugate points of order $n-1$ when $n\ge 3$? What about singular conjugate points? In dimensions three and higher, singular conjugate pairs (e.g., of type $D_4$, as defined in \cite{Arnold1972}) occur generically and correspond to singular conjugate pairs of order $2$.
	    \item What happens in the presence of regular conjugate points of any order violating the graph condition? (And when is the graph condition satisfied in the first place?) Some works studying FIOs where the graph condition fails may be found in \cite{Felea2005,Felea2013}, and examples of metrics of product type in \cite{Stefanov2012a} shows that failure of this condition can destroy stability in some cases. The question is however open in general.
	\end{enumerate}
    \item On simple Riemannian surfaces, is the ray transform injective when considering it over a bundle with any connection and Higgs field with structure group $GL(n,\Cm)$?
    \item Is there a local support theorem for tensor fields of all orders without real analytic metrics?
    \item Can a Pestov identity be used to prove an estimate which is localized in space? Localization in frequency is a recent observation~\cite{Paternain2018}.
    \item What happens to the various linear and non-linear integral geometry problems when the metric is not $C^\infty$? Namely, the results that hold for smooth metrics would be expected to hold with roughly $C^2$ regularity. 
    \item In dimensions three and higher, do simple manifolds admit strictly convex foliations?
    \item On which manifolds is the geodesic X-ray transform and variants thereof \emph{not} (solenoidally) injective?
\end{enumerate}

\section*{Acknowledgement}

J.I.\ was supported by the Academy of Finland (decision 295853). F.M.\ was partially supported by NSF grants DMS-1712790 and DMS-1814104.

We are grateful for discussions and feedback with 
Colin Guillarmou,
Sean Holman,
Thibault Lefeuvre,
Jere Lehtonen,
Gabriel Paternain,
Mikko Salo,
Vladimir Sharafutdinov,
Plamen Stefanov,
Gunther Uhlmann,
Andr\'as Vasy, and
Hanming Zhou.

\appendix

\section*{Proof of Lemma \ref{lem:composition}}
\label{app:composition-proof}

\begin{proof}[Proof of Lemma \ref{lem:composition}]
    Given $\omega\in T_{x'}^* M$, we compute immediately
    \begin{align*}
	d\pi|_{(x',v')}^T \omega = \omega(v') X^\flat_{(x',v')} - \omega({v'}_\perp) X_{\perp,(x',v')}^\flat.    
    \end{align*}
    Given $\eta\in T^*_\xi (\partial_+ SM)$, we compute, for any $t\in (0,\tau(\xi))$, 
    \begin{align*}
	dF|^T_{\varphi_t(\xi)} \eta = \eta\left( dF|_{\varphi_t(\xi)} (X_{\perp,\varphi_t(\xi)}) \right) X_{\perp,\varphi_t(\xi)}^\flat + \eta\left( dF|_{\varphi_t(\xi)} (V_{\varphi_t(\xi)}) \right) V^\flat_{\varphi_t(\xi)}.
    \end{align*}
    Given $(x',(x,v))\in M\times \partial_+ SM$ and $(\omega,\eta)\in T_{x'}^* M \times T_{(x,v)}^* (\partial_+ SM)$, to find whether $(\eta,\omega)\in \C_{I_0}$, the only point in $SM$ where canonical relations can compose is $(x',v')$, where $x' = \gamma_{x,v}(t)$ for some $t\in (0,\tau(x,v))$, and $v' = \dot \gamma_{x,v}(t)$. There, writing the condition $d\pi|_{(x',v')}^T \omega = dF|^T_{\varphi_t(x,v)} \eta$ gives
    \begin{align}
	0 = \omega(v'), \qquad \eta\left( dF|_{\varphi_t(x,v)} (X_{\perp,\varphi_t(x,v)}) \right) = -\omega(v'_\perp),\qquad \eta\left( dF|_{\varphi_t(x,v)} (V_{\varphi_t(x,v)}) \right) = 0. 
	\label{eq:linsys}
    \end{align} 
    This imposes $\omega = \lambda (v'_\perp)^\flat$ for some $\lambda\in \Rm$, $\lambda = \omega({v'}_\perp)$. We now use the following fact, proved after the conclusion
    \begin{align}
	\begin{split}
	    dF|_{\varphi_t(x,v)} \left( X_{\perp, \varphi_t(x,v)} \right) &= \dot a(x,v,t) V_{(x,v)} + \frac{\dot b(x,v,t)}{\dprod{\nu_x}{v}} \nabla_T|_{(x,v)}, \\
	    dF|_{\varphi_t(x,v)} \left( V_{\varphi_t(x,v)} \right) &= a(x,v,t) V_{(x,v)} + \frac{b(x,v,t)}{\dprod{\nu_x}{v}} \nabla_T|_{(x,v)},    	
	\end{split}
	\label{eq:dF}
    \end{align}
    and upon writing $\eta_V = \eta (V_{(x,v)})$ and $\eta_T = \eta (T_{(x,v)})$, the linear system \eqref{eq:linsys} has a unique solution 
    \begin{align}
	\eta_V = - b(x,v,t) \lambda, \qquad \eta_T = a(x,v,t) \lambda,
	\label{eq:eta}
    \end{align}    
    hence the proof.

    To prove \eqref{eq:dF}, we want to compute
    \begin{align*}
	dF|_{\varphi_t(x,v)} (X_{\perp,\varphi_t(x,v)}) \qquad \text{and} \qquad dF|_{\varphi_t(x,v)} (V_{\varphi_t(x,v)}).    
    \end{align*}
    A basis of $T_{(x,v)}(\partial_+ SM)$ is given by $V_{(x,v)}$ and $\nabla_T|_{(x,v)} = \dprod{T_x}{\nabla}$ (horizontal derivative along the tangent vector), where $T_x := -(\nu_x)_\perp$, in particular expressed in the frame $(X,X_\perp,V)$ as 
    \begin{align*}
	\nabla_T|_{(x,v)} = \dprod{T_x}{v}\dprod{v}{\nabla} + \dprod{T_x}{v_\perp}\dprod{v_\perp}{\nabla} &= \dprod{T_x}{v} X_{(x,v)} - \dprod{T_x}{v_\perp} X_{\perp(x,v)} \\
	&= \dprod{\nu_x}{v_\perp} X_{(x,v)} + \dprod{\nu_x}{v} X_{\perp(x,v)}.
    \end{align*}
    Now for every $(x,v)\in \partial_+ SM$ and $t\in (0,\tau(x,v))$, we have $F(\varphi_t(x,v))= (x,v)$, so that the following identity holds
    \begin{align}
	dF|_{\varphi_t(x,v)}\left( d\varphi_t|_{(x,v)} Y\right) = Y, \qquad Y \in T_{(x,v)} (\partial_+ SM).
	\label{eq:tmp10}
    \end{align}
    In addition, we can compute directly that 
    \begin{align*}
	d\varphi_t|_{(x,v)} (V_{(x,v)}) &= -b(x,v,t) X_{\perp, \varphi_t(x,v)} + \dot b(x,v,t) V_{\varphi_t(x,v)} \\
	d\varphi_t|_{(x,v)} (\nabla_T|_{(x,v)}) &= \dprod{\nu_x}{v_\perp} X_{\varphi_t(x,v)} + \dprod{\nu_x}{v} (a(x,v,t) X_{\perp, \varphi_t(x,v)} - \dot a(x,v,t) V_{\varphi_t(x,v)}).
    \end{align*} 
    Applying $dF|_{\varphi_t(x,v)}$, using \eqref{eq:tmp10} and the fact that $dF|_{\varphi_t(x,v)}( X_{\varphi_t(x,v)}) = 0$, we obtain the relations
    \begin{align*}
	dF|_{\varphi_t(x,v)} \left( -b(x,v,t) X_{\perp, \varphi_t(x,v)} + \dot b(x,v,t) V_{\varphi_t(x,v)} \right) &= V_{(x,v)} \\
	dF|_{\varphi_t(x,v)} \left( a(x,v,t) X_{\perp, \varphi_t(x,v)} - \dot a(x,v,t) V_{\varphi_t(x,v)} \right) &= \frac{1}{\dprod{\nu_x}{v}} \nabla_T|_{(x,v)},    
    \end{align*}
    which, via linear combinations and using $a\dot b- b\dot a = 1$, yields \eqref{eq:dF}. 
\end{proof}

\bibliographystyle{siam}
\bibliography{./bibliographyTR}

\end{document}